\renewcommand\nomgroup[1]{%
	\ifstrequal{#1}{A}{\vspace{20pt} \item[\textbf{Abbreviations}]}{%
		\ifstrequal{#1}{B}{\vspace{20pt} \item[\textbf{Basic Mathematics}]}{%
			\ifstrequal{#1}{D}{\vspace{20pt} \item[\textbf{Feed-Forward Neural Networks}]}{%
				\ifstrequal{#1}{E}{\vspace{20pt} \item[\textbf{Delay Differential Equations}]}{%
					\ifstrequal{#1}{F}{\vspace{20pt} \item[\textbf{Neural DDEs}]}{%
						\ifstrequal{#1}{C}{\vspace{20pt} \item[\textbf{Function Spaces and Norms}]}{%
							\ifstrequal{#1}{G}{\vspace{20pt} \item[\textbf{Other Symbols}]}{}}}}}}}%
}
\numberwithin{equation}{section}
\newcommand*{\dd}{\mathrm{d}}
\newtheorem{theorem}{Theorem}[section]
\newtheorem{definition}[theorem]{Definition}
\newtheorem{example}[theorem]{Example}
\newtheorem{corollary}[theorem]{Corollary}
\newtheorem{lemma}[theorem]{Lemma}
\newtheorem{remark}[theorem]{Remark}
\newtheorem{assumption}[theorem]{Assumption}
\newenvironment{customthm}[1]
{\innercustomthm}
{\endinnercustomthm}
\newenvironment{customass}[1]
{\innercustomass}
{\endinnercustomass}
\newcommand{\eps}{\varepsilon}
\newcommand\norm[1]{\left\lVert#1\right\rVert}
\newcommand\normm[1]{\big\lVert#1\big\lVert}
\newcommand\normk[1]{\lVert#1\lVert}
\newcommand\abs[1]{\left\lvert#1\right\rvert}
\newcommand{\R}{\mathbb{R}}
\newcommand\blfootnote[1]{%
	\begingroup
	\renewcommand\thefootnote{}\footnote{#1}%
	\addtocounter{footnote}{-1}%
	\endgroup
}
\title{\textbf{The Influence of the Memory Capacity of \\ Neural DDEs on the \\ Universal Approximation Property}}
\author{Christian Kuehn \orcidlink{0000-0002-7063-6173}$^{1,2,3}$ 
	\& Sara-Viola Kuntz \orcidlink{0009-0000-4611-9742}$^{1,2,3}$
}
\date{
	\small{$^1$\textit{Technical University of Munich, School of Computation, Information and Technology, \\ Department of Mathematics, Boltzmannstraße 3, 85748 Garching, Germany} \\
		$^2$\textit{Munich Data Science Institute (MDSI), Garching, Germany } \\
		$^3$\textit{Munich Center for Machine Learning (MCML), München, Germany }}\\[5mm]
	\large{June 6, 2025}
}
\begin{document}
	\maketitle
	
	\vspace{-5mm}
	
	\begin{abstract}
		Neural Ordinary Differential Equations (Neural ODEs), which are the continuous-time analog of Residual Neural Networks (ResNets), have gained significant attention in recent years. Similarly, Neural Delay Differential Equations (Neural DDEs) can be interpreted as an infinite-depth limit of Densely Connected Residual Neural Networks (DenseResNets). In contrast to traditional ResNet architectures, DenseResNets are feed-forward neural networks that allow for shortcut connections across all layers. These additional inter-layer connections introduce memory in the network architecture, as is typical in many modern architectures, such as U-Nets and LSTMs. In this work, we explore how the memory capacity in neural DDEs influences the universal approximation property. The key parameter for studying the memory capacity is the product $K\tau$ of the Lipschitz constant of the vector field and the delay of the DDE. In the case of non-augmented architectures, where the network width is not larger than the input and output dimensions, neural ODEs and classical feed-forward neural networks cannot have the universal approximation property. We show that if the memory capacity $K\tau$ is sufficiently small, the dynamics of the neural DDE can be approximated by a neural ODE. Consequently, non-augmented neural DDEs with a small memory capacity also lack the universal approximation property. In contrast, if the memory capacity $K\tau$ is sufficiently large, we can establish the universal approximation property of neural DDEs with respect to the space of Lipschitz continuous functions. Moreover, if the neural DDE architecture is augmented, we can expand the parameter regions in which universal approximation is possible. Overall, our results show that by increasing the memory capacity $K\tau$, the infinite-dimensional phase space of DDEs with positive delay $\tau>0$ is not sufficient to guarantee a direct jump transition to universal approximation, but only after a certain memory threshold, universal approximation holds. 
	\end{abstract}
	
	\vspace{1mm}
	
	\noindent {\small \textbf{Keywords: neural networks, neural DDEs, DDEs with small delay, Morse functions, universal approximation, universal embedding}}
	
	\vspace{1mm}
	
	\noindent {\small \textbf{MSC2020: 34K07, 34K19,  58K05, 58K45, 68T07}}
	
	\vspace{-2mm}
	
	\blfootnote{\hspace{-5.4mm}\Letter \; ckuehn@ma.tum.de (Christian Kuehn)  \\
		\Letter\; saraviola.kuntz@ma.tum.de (Sara-Viola Kuntz, corresponding author)}
	
	\tableofcontents
	

	\section{Introduction}
	
	An important concept of the theoretical and practical study of neural networks is expressivity. Expressivity is often characterized by universal approximation theorems, which assure that for any element of a given function space, there exists a choice of the network parameters such that the considered function can be approximated by the neural network with arbitrary precision \cite{Kratsios2021}. The universal approximation property guarantees that a given neural network is powerful enough to complete a given task up to a pre-defined error.	Expressivity via universal approximation focuses on the properties of the input-output map of a neural network with fixed parameters and does not answer the question of how and if a learning algorithm reaches optimal parameters. Nevertheless, as outlined in this work, expressivity depends strongly on the structure of the neural network, such that it is advantageous to characterize powerful and efficient architectures before the neural network is trained. 
	
	Probably among the first universal approximation-type theorems are results by Kolmogorov and Arnold in the 1950s~\cite{Arnold2009, Kolmogorov1956}. In the early 1960s, Sharkovskii showed that iterations of even relatively simple nonlinear maps can be incredibly rich and expressive~\cite{Sharkovskii1995}. Later on, these ideas were also embedded into other architectures. A well-known universal approximation theorem was proven by Cybenko in 1989 for single-layer neural networks with sigmoidal activation functions \cite{Cybenko1989} and then extended by Hornik to multilayer perceptrons (MLPs) with more general activation functions \cite{Hornik1991,Hornik1989}. MLPs are feed-forward neural networks (FNNs) structured in layers $h_l \in \mathbb{R}^{n_l}$, $l \in \{0,1,\ldots,L\}$ consisting of  $n_l$ neurons, where $h_0$ is the input layer, $h_L$ is the output layer, and $h_1,h_2,\ldots,h_{L-1}$ are the hidden layers. The layers are for $l\in\{0,\ldots,L-1\}$ iteratively updated by
	\begin{equation} \label{eq:MLP}
		h_{l+1} = f_l(h_{l},\theta_l)
	\end{equation}
	with parameters $\theta_l\in\R^{p_l}$ and component-wise applied activation function $f_l:\R^{n_l}\times \R^{p_l}\rightarrow \R^{n_{l+1}}$, typically chosen as $\tanh$, soft-plus, sigmoid or (normal, leaky or parametric) $\textup{ReLU}$. The resulting neural network is a map $\Phi_\theta: \R^{n_0}\rightarrow \R^{n_L}$, $h_0\mapsto h_L$ with parameters $\theta = (\theta_1,\ldots,\theta_L)$, depth $L$ and width $n_\text{max} = \max\{n_0,\ldots,n_L\}.$
	
	Most universal approximation theorems for FNNs use that either the depth $L$ or the width $n_\text{max}$, and the number of parameters of the neural network need to be sufficiently large \cite{Kidger2022, Lu2017}. As larger networks are harder to train, it is of interest to define powerful but still efficient architectures. Hence, recent studies also analyzed the minimal width necessary for MLPs to have the universal approximation property \cite{Hanin2018,Johnson2018,Park2020}. As a result, these works show that an augmented architecture is necessary for universal approximation, i.e., $n_\text{max}>n_0$, which means that at least one layer has to be wider than the input layer. For non-augmented networks with $n_\text{max}\leq n_0$, where all layers $h_l$ have a width equal to or smaller than the input layer, no universal approximation is possible.
	
	Besides increasing the width or the depth of FNNs,  other modifications of the network architecture can increase the approximation capability. In this work, we mainly focus on additional inter-layer connections, which can skip one or more layers. One famous architecture using identity shortcut connections is the residual neural network, also called ResNets~\cite{He2016}. In ResNets, all layers consist of the same number of neurons $m = n_l$ for all $l \in\{0,\ldots,L\}$, and the layers are iteratively updated by
	\begin{equation} \label{eq:ResNet}
		h_{l+1} = h_{l} + f_{\text{ResNet},l}(h_{l}, \theta_l)
	\end{equation} 
	for $l \in \{0,\ldots,L-1\}$ with activation function   $f_{\text{ResNet},l}:\R^{m}\times \R^{p_l}\rightarrow \R^{m}$ and parameters \mbox{$\theta_l\in \R^{p_l}$.} Especially for deep networks with a large number of layers $L \gg 1$, it is advantageous to use residual architectures, as they mitigate the vanishing/exploding gradient problem during the training process~\cite{Glorot2010,He2016}. Furthermore, the additional identity shortcut connection in \eqref{eq:ResNet} increases the approximation capability of ResNets compared with MLPs \cite{Lin2018}.
	
	More advanced feed-forward neural networks allow for more general and longer inter-layer connections. In that way, information or features of previous layers, which might be forgotten during the iteration steps of deep neural networks, can be reintroduced in future layers, building a memory. Examples of networks with a few inter-layer connections are Highway Networks \cite{Srivastava2015}, Residual Memory Networks~(RMNs) \cite{Baskar2017}, and U-Nets \cite{Ronneberger2015}. In the case of Densely Connected Convolutional Neural Networks (DenseNets)~\cite{Huang2017}, Mixed Link Networks (MixNet) \cite{Wang2018}, and Dense Shortcut Networks (DSNets)~\cite{Zhang2020b}, the network is structured in layers, but all possible feed-forward inter-layer connections are allowed. The cited works show numerically that the performance of the neural network increases with the number and the length of the inter-layer connections. In the context of recurrent neural networks (RNNs), advanced architectures also introduce memory in terms of shortcut connections; examples are Long Short-Term Memory Networks (LSTMs) \cite{Hochreiter1997} and Memory-Augmented Neural Networks (MANNs) \cite{Santoro2016}. Also, in neuroscience, the positive influence of memory terms, such as the delay of spiking neurons in Spiking Neural Networks (SNNs), is studied \cite{DAgostino2023,Masquelier2023, Bohte2016}. 
	
	In this work, we aim to study theoretically how the memory introduced by additional inter-layer connections influences the universal approximation capability of FNNs. Depending on the length of the inter-layer connections, we aim to distinguish whether universal approximation is possible or not. To include as many architectures as possible but to stay mathematically treatable, we consider densely connected feed-forward neural networks with a ResNet structure called DenseResNets. In analogy to ResNets, all layers have the same width $m$, and the update rule includes an identity shortcut connection. DenseResNets are defined by 
	\begin{equation}\label{eq:DenseResNet}
		h_{l+1} = h_{l} + f_{\textup{Dense},l}(h_{l}, h_{l-1}, \ldots, h_0, \theta_{\textup{Dense},l}), 
	\end{equation}
	for $l \in \{0,\ldots,L-1\}$ with $f_{\textup{Dense},l}: \mathbb{R}^{m} \times \ldots \times \mathbb{R}^m \times \mathbb{R}^{p_l} \rightarrow \mathbb{R}^m$, where and $\theta_{\textup{Dense},l} \in \mathbb{R}^{p_l}$ stores all weights between the layers $h_{l+1}$ and $h_{l+1-i}$ for all $i \in \{1,\ldots,l+1\}$. As in MLPs and ResNets, the DenseResNet~\eqref{eq:DenseResNet} defines an input-output map $\Phi_\theta: \R^m\rightarrow \R^m: h_0 \mapsto h_L$. If inter-layer connections of the update rule~\eqref{eq:DenseResNet} are not used, the corresponding parameters are zero. We say that the memory capacity of the DenseResNet \eqref{eq:DenseResNet} increases if longer inter-layer connections are allowed. The memory capacity of the DenseResNet not only depends on the inter-layer connections but also on the choice of the activation functions $f_{\text{Dense},l}$.
	
	In Figure \ref{fig:DenseResNet}\subref{fig:DenseResNet_a}, the layer structure of classical FNNs, such as MLPs and ResNets, is visualized. As there exist only connections between layer $h_{l-1}$ and $h_l$ for $l\in\{1,\ldots,L\}$, the underlying graph with nodes $h_l$, $l\in\{0,\ldots,L\}$ has $L$ edges. In Figure \ref{fig:DenseResNet}\subref{fig:DenseResNet_b}, the additional shortcut connections of densely connected FNNs are shown. As there exist  connections between layer $h_{l-i}$ and $h_l$ for every $i \in \{1,\ldots,l\}$ and $l\in\{1,\ldots,L\}$, the underlying graph with nodes $h_l$, $l\in\{0,\ldots,L\}$ has $\sum_{l = 1}^L l = \frac{L(L-1)}{2}$ edges. A densely connected FNN has as an underlying graph a complete graph, i.e., a graph where every node is connected to every node via an edge (cf.\ \cite{Diestel2025}). As the network has a feed-forward structure, every edge of the complete graph has a pre-defined orientation, as visualized in Figure \ref{fig:DenseResNet}\subref{fig:DenseResNet_c}. To avoid confusion between terminologies of different research areas, we clarify that in the context of FNNs, we refer to a neural network, where every layer can be connected with every layer in the feed-forward sense as in \eqref{eq:DenseResNet}, as a densely connected network. The underlying graph, which consists of nodes $h_l$, $l\in\{0,\ldots,L\}$, is a complete graph, i.e., the graph is fully connected. In contrast, a fully connected FNN is characterized by the property that every neuron of layer $h_l$ is connected to every neuron of layer $h_{l+1}$. In that case, the graph consisting of all neurons of layers $h_l$ and $h_{l+1}$ as nodes is a complete bipartite graph (cf.\ \cite{Diestel2025}), as shown in  Figure \ref{fig:DenseResNet}\subref{fig:DenseResNet_d}. \medskip
	
	\begin{figure}
		\centering
		\begin{subfigure}{0.9\textwidth}
			\centering
			\vspace{2mm}\includegraphics[width=0.75\textwidth]{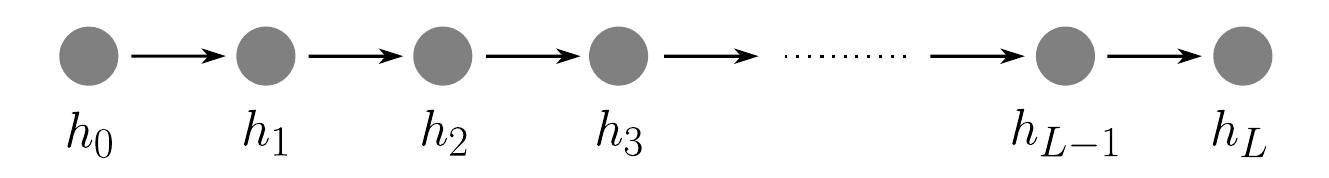}
			\caption{Layer structure of classical FNNs, such as MLPs and ResNets. Every layer $h_l$ is represented as a node of the graph.}
			\label{fig:DenseResNet_a}
		\end{subfigure}
		\begin{subfigure}{0.9\textwidth}
			\centering
			\includegraphics[width=0.75\textwidth]{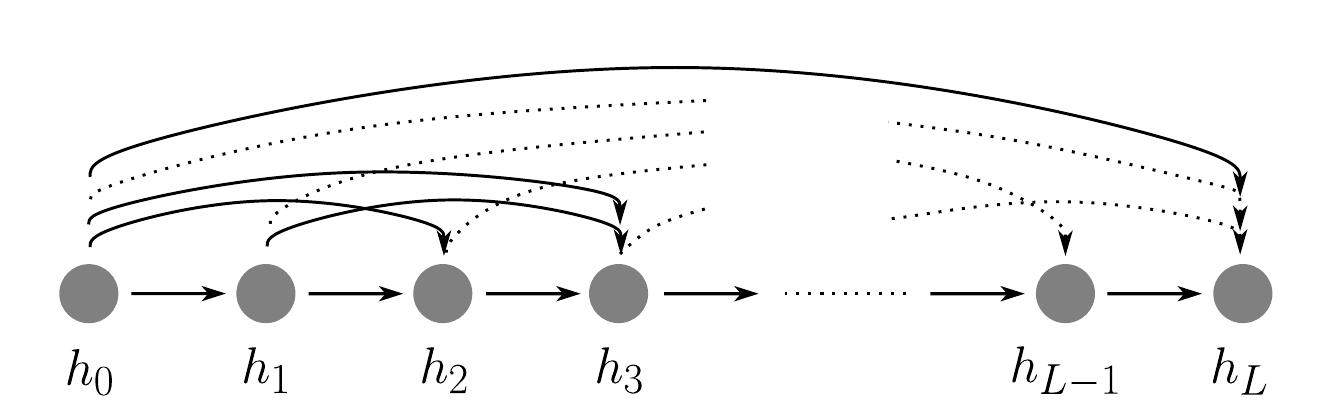}
			\caption{Additional inter-layer connections of densely connected FNNs, such as DenseResNets. Every layer $h_l$ is represented as a node of the graph.}
			\label{fig:DenseResNet_b}
		\end{subfigure}
		\begin{subfigure}{0.4\textwidth}
			\centering
			\vspace{2mm}
			\includegraphics[width=0.8\textwidth]{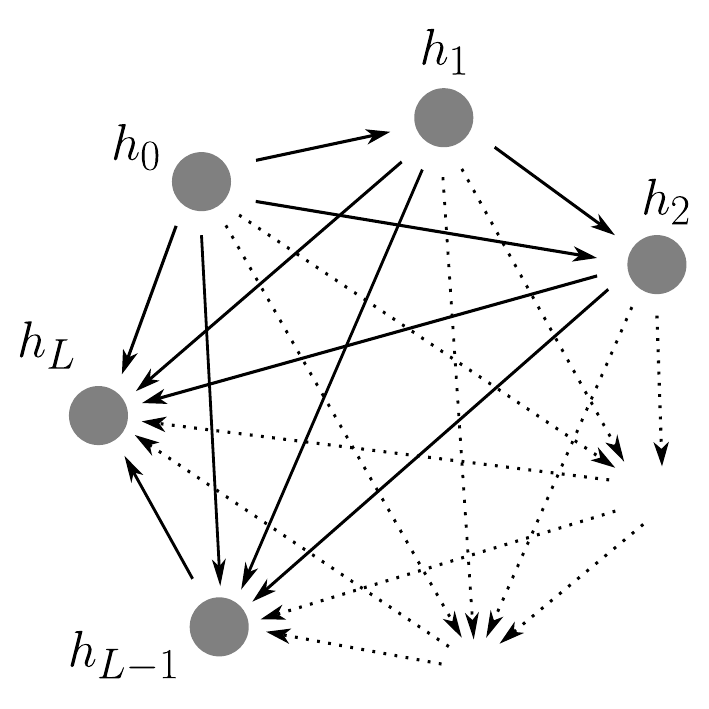}
			\caption{Representation of the graph of the densely connected FNN in (b) as a directed complete graph.}
			\label{fig:DenseResNet_c}
		\end{subfigure}
		\begin{subfigure}{0.1\textwidth}
			\textcolor{white}{.}
		\end{subfigure}
		\begin{subfigure}{0.4\textwidth}
			\centering
			\vspace{2mm}
			\includegraphics[width=0.8\textwidth]{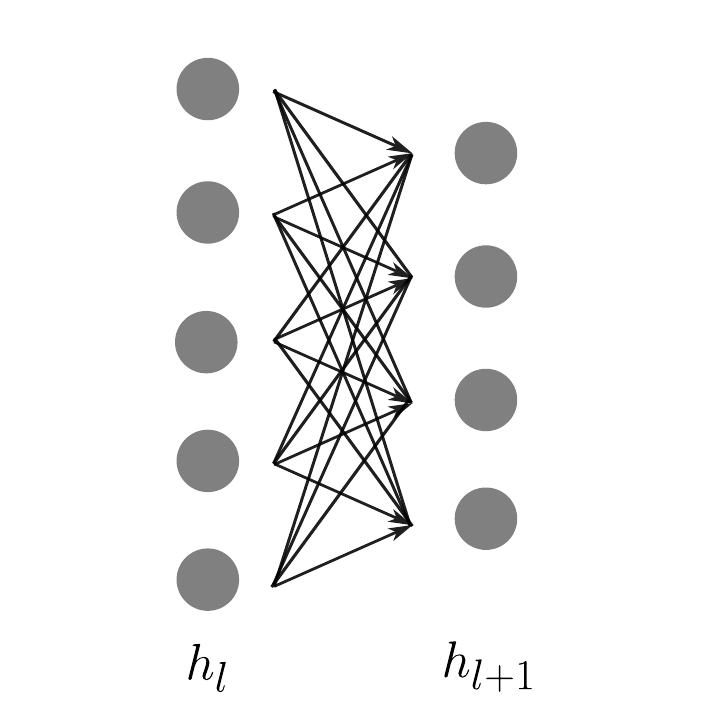}
			\caption{The connections between all neurons of the layers $h_l$, $h_{l+1}$ of a fully connected FNN build a complete bipartite directed graph.}
			\label{fig:DenseResNet_d}
		\end{subfigure}
		\caption{Graphical representation of classical FNNs and densely connected FNNs.}
		\label{fig:DenseResNet}
	\end{figure}

	A common approach to studying the network dynamics of large networks is to consider the infinite network or mean-field limit. In the case of ResNets and DenseResNets, the width $m$ of the network is fixed, and in the context of deep learning, it is of interest to study the infinite-depth limit $L \rightarrow \infty$. The goal of this work is to study the influence of the memory capacity of the DenseResNet \eqref{eq:DenseResNet} in the infinite-depth limit, which is called a neural delay differential equation (neural DDE). We have chosen a ResNet structure with the identity shortcut connection in \eqref{eq:DenseResNet} to define a similar network limit to that of ResNets, which is called a neural ordinary differential equation (neural ODE) \cite{Chen2018,Ruthotto2018,Weinan2017}. To illustrate the idea of the infinite-depth limit of residual architectures, we first introduce neural ODEs.
	In the case that $f_\textup{ResNet} = f_{\textup{ResNet},l}$ with parameter dimension $p =p_l$ for all $l \in \{0,\ldots,L-1\}$, the iterative updates~\eqref{eq:ResNet} of ResNets can be obtained as an Euler discretization of the neural ODE
	\begin{equation}\label{eq:neuralODE}
		\frac{\dd h}{\dd t} = f_\text{ODE}(h(t),\theta(t)), \qquad h(0) = h_0,
	\end{equation}
	on the time interval $[0,T]$ with step size $\delta = \frac{T}{L}$ and rescaled vector field $f_\textup{ODE}(\cdot,\cdot) = \frac{1}{\delta} f_\textup{ResNet}(\cdot,\cdot):\R^m\times\R^p\rightarrow \R^m$. The function $h:[0,T]\rightarrow  \mathbb{R}^m$ hereby represents the layers and the function $\theta:[0,T]\rightarrow  \mathbb{R}^{p}$ the parameters. The initial condition is the first layer $h_0$, and the output is the \mbox{time-$T$} map $h(T)$ (cf.~\cite{Guckenheimer2002}), corresponding to the last layer $h_L$. Hence, neural ODEs can be interpreted as an infinite-depth limit of ResNets. Neural ODEs are used for various machine learning tasks and are an active field of research \cite{Cipriani2024, Kidger2022, Kuehn2023, Kuehn2024, RuizBalet2023}. 
	
	To be flexible with respect to the input and output dimension of the neural ODE, two affine linear layers $\lambda: \R^n \rightarrow \R^m$ and $\tilde \lambda: \R^m \rightarrow \R^q$ can be added before and after the solution of the initial value problem (IVP) \eqref{eq:neuralODE}, leading to an input-output map $\Phi_\theta: \R^n \rightarrow \R^q$. The neural ODE is called augmented if the dimension $m$ of the IVP is larger than the input dimension~$n$ and the output dimension~$q$, otherwise non-augmented \cite{Dupont2019, Kuehn2023}. Depending on the relationship between the dimensions $n$, $m$, and $q$, neural ODEs show similar behavior to MLPs with respect to the approximation capability \cite{Kuehn2024}. As for most universal approximation theorems for FNNs, no universal approximation is possible if the architecture is non-augmented. In the case of an augmented architecture, the vector field of the neural ODE \eqref{eq:neuralODE} can be chosen in such a way that every continuous function can be represented exactly \cite{Kuehn2023}. If an exact representation is established, we refer to it as the universal embedding property, which is stronger than the universal approximation property. In this work, we aim to answer the question of whether additional inter-layer connections in DenseResNets can overcome the restrictions that FNNs and neural ODEs have in the non-augmented case. It is natural to expect that the approximation capability of DenseResNets increases with the length of the allowed inter-layer connections. 
	
	In analogy to neural ODEs, we study in this work the infinite-depth limit of the DenseResNet~\eqref{eq:DenseResNet}, given by neural DDEs. The transition to a continuous-time model gives us the opportunity to use the well-developed theory of delay differential equations and dynamical systems. In Section \ref{sec:modeling}, we explain how the DenseResNet \eqref{eq:DenseResNet} can be obtained as an Euler discretization of the DDE
	\begin{align}
		\label{eq:intro_DDE} \frac{\dd y}{\dd t} &= f_\textup{DDE}(y_t,\theta(t)) \qquad &&\text{for } t\geq 0, \\  
		\label{eq:intro_DDE_initialdata} y(t) &= u(t) &&\text{for } t \in [-\tau,0],
	\end{align}
	with delay $\tau \geq 0$,  parameter function $\theta:[0,T]\rightarrow  \mathbb{R}^{ p}$, initial function $u:[-\tau,0]\rightarrow \R^m$ and $f_\textup{DDE}: \mathcal{C}\times \mathbb{R}^{p} \rightarrow \mathbb{R}^m$, where $\mathcal{C}\coloneqq C^0([-\tau,0],\mathbb{R}^m)$ is the space of continuous functions mapping from the interval $[-\tau,0]$ to $\mathbb{R}^m$. The delayed function $y_t \in \mathcal{C}$ is  defined by $y_t(s) = y(t+s)$ for $s\in [-\tau,0]$ and all $t\in\mathbb{R}$ such that $y(t+s)$ exists. In contrast to IVPs such as \eqref{eq:neuralODE}, it is in general necessary to define an initial function $u \in \mathcal{C}$ over the time interval $[-\tau,0]$ in~\eqref{eq:intro_DDE_initialdata} for the DDE \eqref{eq:intro_DDE}, as the phase space $\mathcal{C}$ is infinite-dimensional \cite{Hale1993}. The state $y(0)$ corresponds to the first layer $h_0$ and the time-$T$ map $y(T)$ to the last layer $h_L$ of \eqref{eq:DenseResNet}. Multiple choices exist for the function $f_\text{DDE}$, motivating neural DDEs as an infinite-depth limit of DenseResNets with the update rule \eqref{eq:DenseResNet}. Different modeling approaches are discussed in Section \ref{sec:modeling_discretization}, as multiple choices of the delayed function $y_t$ and the initial data $u$ are possible. However, a common property is that the length of the delay~$\tau$ corresponds to the length of the longest inter-layer connection of the DenseResNet. A typical choice for the initial function $u$ is the constant initial data $c_{y_0}:[-\tau,0] \rightarrow \mathbb{R}^m$, $c_{y_0}(t) = y_0$.
	
	In \cite{Zhu2021}, neural DDEs with a single, constant delay $\tau$ are studied, i.e., $y_t = y(t-\tau)$. The adjoint sensitivity method is used to optimize the parameters $\theta$ of the neural DDE \eqref{eq:intro_DDE}. It is shown theoretically that neural DDEs have the universal approximation property if $\tau = T$, and the performance is validated on real-world image data sets. The concept of neural ODEs with a single delay has been extended to Neural Piecewise-Constant Delay Differential Equations (PCDDEs) with multiple but piece-wise constant delays to increase the modeling capability of neural DDEs \cite{Zhu2022}. In \cite{Monsel2024}, it is shown that State Dependent Neural Delay Differential Equations (SDDDEs), which allow for delay terms $y_t = y(t-\tau(t,y(t)))$ depending on the time $t$ and the state $y(t)$, numerically outperform neural DDEs with constant delays. In the literature, other concepts also exist for introducing memory in neural ODE models, like neural ODEs based on second-order ODEs, which can be interpreted as the continuous-time analog of Momentum ResNets \cite{RuizBalet2022,Sander2021}.
	
	To treat all models, which are either discussed in literature or introduced in Section \ref{sec:modeling_discretization}, simultaneously, we drop the dependency on the parameter function $\theta$ and study neural DDEs based on general non-autonomous vector fields with general delayed functions $y_t \in\mathcal{C}$. Hence, the system of interest is of the form  
	\begin{align}
		\label{eq:intro_DDE_2} \frac{\dd y}{\dd t} &= F(t,y_t) \qquad &&\text{for } t\geq 0, \\  
		\label{eq:intro_DDE_initialdata_2} y(t) &= c_{y_0}(t) &&\text{for } t \in [-\tau,0],
	\end{align}
	with vector field $F:\mathbb{R}\times \mathcal{C}\rightarrow \mathbb{R}^m$ and delay $\tau \geq 0$. In analogy to neural ODEs, we can add two affine linear transformations  $\lambda:\mathbb{R}^n\rightarrow \mathbb{R}^m$ and $\tilde \lambda:\mathbb{R}^m\rightarrow \mathbb{R}^q$ before and after the solution of the DDE~\eqref{eq:intro_DDE_2}-\eqref{eq:intro_DDE_initialdata_2}, resulting in the general neural DDE architecture 
	\begin{equation} \label{eq:neuralDDE}
		\Phi: \mathbb{R}^n \rightarrow \mathbb{R}^q, \quad x \mapsto \tilde{\lambda}(y(0,c_{\lambda(x)})(T)),
	\end{equation}
	where $y(0,c_{\lambda(x)})(T)$ denotes the time-$T$ map of the solution of \eqref{eq:intro_DDE_2} with in time constant initial data $c_{y_0} = c_{\lambda(x)}$, $c_{\lambda(x)} =  \lambda(x)$ for $t\in [-\tau,0]$. Assumptions guaranteeing the well-posedness of the neural DDE \eqref{eq:neuralDDE} are discussed in Section \ref{sec:architecture}. \medskip
	
	Our original question, how the length of the inter-layer connections in the DenseResNet \eqref{eq:DenseResNet} influences the approximation capability, transfers in the continuous-time model~\eqref{eq:intro_DDE_2}-\eqref{eq:intro_DDE_initialdata_2} to the question, how the length of the delay $\tau$ influences the approximation capability. In the case that $\tau = 0$, the neural DDE architecture becomes a neural ODE. As non-augmented neural ODEs cannot have the universal approximation property, also non-augmented neural DDEs with $\tau = 0$ cannot have the universal approximation property. As already shown in a special case in \cite{Zhu2021}, a large delay $\tau = T$  leads to the universal approximation property for non-augmented neural DDEs. In this work, we analyze the transition from $\tau = 0$ to $\tau = T$ and characterize parameter regions with universal and no universal approximation. It turns out that the approximation capability of neural DDEs does not only depend on the delay $\tau$ but also on the Lipschitz constant $K$ of the vector field in~\eqref{eq:intro_DDE_2} with respect to the second variable. The delay $\tau$ corresponds to the length of the inter-layer connections of the DenseResNet, and the Lipschitz constant $K$ characterizes the chosen activation function. As our main results depend on the product $K \tau$, we call $K\tau$ the memory capacity of a neural DDE. 
	
	In Section \ref{sec:overview}, we give an overview of the results proven in this work. First, we discuss in Section~\ref{sec:universal} the relationship between the general DDE \eqref{eq:intro_DDE_2} and the parameterized DDE \eqref{eq:intro_DDE} with respect to the property of universal approximation and universal embedding. Afterwards, we state in Section~\ref{sec:nonaugmentedbasic} the theoretical results obtained by Zhu et al.\ regarding the universal approximation property of non-augmented neural DDEs without affine linear layers $\lambda$, $\tilde \lambda$, in the case that $\tau = T$ \cite{Zhu2021}. In Section~\ref{sec:nonaugmentedgeneral}, we collect our results for general non-augmented neural DDEs with the architecture~\eqref{eq:neuralDDE}. First, we prove as a positive result Theorem \ref{th:universal_embedding}, showing via an explicit construction that if the memory capacity $K\tau$ is sufficiently large, any Lipschitz continuous function can be represented exactly by a neural DDE with Lipschitz constant $K$ and delay $\tau$. Hence, we establish the universal embedding property for non-augmented neural DDEs in a specific parameter regime, showing that a large memory capacity increases the approximation capability. Next, we state as a negative result Theorem~\ref{th:tau0}, which shows that if the memory capacity $K\tau$ is sufficiently small, neural DDEs cannot have the universal approximation property. 
	
	Our most important tool to prove the negative result for non-augmented neural DDEs is the theory of DDEs with small delay \cite{Chicone2003,Chicone2004a,Driver1968,Driver1976,Jarnik1975}. The main properties of DDEs with small delays are introduced in Section \ref{sec:dde_small_delay}. If the product $K \tau$ of the Lipschitz constant and delay is sufficiently small, every solution of a DDE is exponentially attracted towards a special solution, which is the solution of an ODE. In that way, the long-term behavior of DDEs in the infinite-dimensional phase space can be characterized by special solutions on a finite-dimensional inertial manifold. Hence, if~$K\tau$ is sufficiently small, the dynamics of a DDE resembles the dynamics of an ODE. As the proof of Theorem \ref{th:tau0} includes a careful tracking of intermediate error terms, the proof is postponed to Section \ref{sec:sectionproof}. The main idea is to explicitly construct a function with a local extreme point, which cannot be approximated by a non-augmented neural ODE. Then, the exponential attraction of neural DDE solutions towards special solutions is used to show that the same function also cannot be approximated by a non-augmented neural DDE. As the proof needs to combine results of different mathematical areas, we include a nomenclature at the end of this work.
	
	After having analyzed non-augmented neural DDEs, we continue with augmented neural DDEs in Section \ref{sec:augmented}. The universal embedding Theorem \ref{th:universal_embedding} for non-augmented neural DDEs also holds in the augmented case by adding unused components to the vector field. If the dimension $m$ of the neural DDE is larger than the sum $n+q$ of input and output dimensions, we can enlarge the parameter regime of Theorem \ref{th:universal_embedding} for universal embedding. The proof only uses the additional dimensions of the vector field and holds analogously for neural ODEs. Hence, in the augmented case, no delay is needed to establish the universal embedding property. Overall, we characterize in the three-dimensional parameter space of Lipschitz constant $K$, delay $\tau$, and dimension $m$ of the vector field, parameter regions of universal embedding and no universal approximation. We are able to establish universal embedding results due to the freedom of choice of the vector field $F$ in \eqref{eq:intro_DDE_2}. If a parameterized vector field $f_\text{DDE}$ in \eqref{eq:intro_DDE} is used, the universal embedding results become universal approximation theorems if the parameterization of $f_\text{DDE}$ is rich enough to approximate the general vector field $F$.
	
	The main insight of this work is that by increasing the memory capacity $K\tau$, there exists a transition in the approximation capability of non-augmented neural DDEs. The existence of a positive delay $\tau >0$ makes the state space directly infinite-dimensional, which overcomes the topological restrictions of finite-dimensional ODEs. Nevertheless, the infinite-dimensional state space is not sufficient to establish the universal approximation property for non-augmented neural DDEs. Transferred to DenseResNets, we expect that non-augmented architectures with a small memory capacity show the same restrictions as non-augmented FNNs, but large inter-layer connections can resolve this problem. This is a rigorous mathematical pointer as to why architectures with longer shortcut connections, such as DenseNets and U-Nets, show better practical performance than those without shortcut connections.

	\section{Modeling of Neural Delay Differential Equations}
	\label{sec:modeling}
	
	In this section, we study different modeling approaches for neural DDEs. The goal is to define neural DDEs in such a way that an Euler discretization leads to the iterative update rule of a DenseResNet. To that purpose, we establish conditions on the initial data and the concrete form of the delay functions. Finally, we define the general neural DDE model we study in this work, including two additional affine linear layers before and after the DDE to be flexible with respect to the input and output dimensions. Depending on the dimension of the input, the output, and the dimension of the DDE, we distinguish augmented and non-augmented models. Readers who are completely familiar with neural DDE modeling and the relation between discrete models and their continuous limits, or those who want to start directly from the main results for the DDE limit, can skip ahead to Section~\ref{sec:overview} and return to Section~\ref{sec:modeling}.  
	
	\subsection{Euler Discretization of Differential-Difference Equations}
	\label{sec:modeling_discretization}
	
	As explained in the introduction, an Euler discretization of the neural ODE based on the initial value problem \eqref{eq:neuralODE} results in the update rule \eqref{eq:ResNet} of a residual neural network. In the case of a delay differential equation, additional conditions on the initial data and the delay functions are required to obtain a DenseResNet with the update rule
	\begin{equation}\label{eq:DenseResNet2}
		h_{l+1} = h_{l} + f_{\textup{Dense},l}(h_{l}, h_{l-1}, \ldots, h_0, \theta_{\textup{Dense},l}), 
	\end{equation}
	for $l \in \{0,\ldots,L-1\}$ and  $f_{\textup{Dense},l}: \mathbb{R}^{m} \times \cdots \times \mathbb{R}^m \times \mathbb{R}^{p_l} \rightarrow \mathbb{R}^m$, as an Euler discretization of a DDE. The general layer structure of a DenseResNet \eqref{eq:DenseResNet2} is visualized in Figure \ref{fig:DenseResNet}\subref{fig:DenseResNet_b}. The parameterized DDE we consider in the following is given by
	\begin{equation} \label{eq:DDE}
		\begin{aligned}
			\frac{\dd y}{\dd t} &= f_\textup{DDE}(y_t,\theta(t)) \qquad &&\text{for } t\geq 0, \\  
			y(t) &= u(t) &&\text{for } t \in [-\tau,0],
		\end{aligned}
	\end{equation} 
	with delay $\tau \geq 0$, parameter function $\theta:\mathbb{R}\rightarrow \mathbb{R}^{p}$, vector field $f_\textup{DDE}: \mathcal{C} \times \mathbb{R}^{p}\rightarrow \mathbb{R}^m$ and initial data $u \in \mathcal{C}$, where we abbreviate by $\mathcal{C}\coloneqq C^0([-\tau,0],\mathbb{R}^m)$ the space of continuous functions mapping from the interval $[-\tau,0]$ to $\mathbb{R}^m$. The delayed function $y_t \in \mathcal{C}$ is defined by $y_t(s) = y(t+s)$ for $s\in [-\tau,0]$ and all $t\in \mathbb{R}$ such that $y(t+s)$ exists. Hence, $y_t$ includes the current value $y(t)$, but also the history until $y(t-\tau)$. We call $y$ a solution of the DDE~\eqref{eq:DDE} if there exists $c\in(0,\infty]$, such that $y:[-\tau,c)\rightarrow \mathbb{R}^m$ is continuous and fulfills \eqref{eq:DDE} for $t\in[-\tau,c)$.  In the following, we always assume that the solution of \eqref{eq:DDE} exists for all $t\in [0,T]$ such that the time-$T$ map $y(T)$ is well-defined. The local existence of solutions of DDEs is discussed in Section~\ref{sec:architecture} and the global existence in Section \ref{sec:specialsolutions}.

	Special cases of the DDE \eqref{eq:DDE} are, for example, ordinary differential equations for $\tau = 0$ or integro-differential equations with continuous delay \cite{Hale1993}. As we study feed-forward neural networks structured in layers with fixed inter-layer connections, we restrict our modeling to DDEs with discrete delay. A general form of DDEs with discrete delay are differential-difference equations defined by
	\begin{equation} \label{eq:DDEdifference}
		\begin{aligned}
			\frac{\dd y}{\dd t} &= f_\textup{DDE}(y(t),y(t-\tau_1(t)),\ldots,y(t-\tau_J(t)),\theta(t)) \qquad &&\text{for } t \geq 0, \\  
			y(t) &= u(t) &&\text{for } t \in [-\tau,0],
		\end{aligned}
	\end{equation}
	with $0 \leq \tau_j(t) \leq \tau$ for $j\in \{1,\dots,J\}$, $J \geq 0$ \cite{Hale1993}. In the following, we subdivide the modeling of neural DDEs as differential-difference equations in multiple steps. First, we discuss the discretization, then the choice of the initial data $u$, and finally the form of the delay functions $\tau_j$, $j\in \{1,\ldots,J\}$.

	\subsubsection*{Euler Discretization}
	
	In order to discretize the DDE \eqref{eq:DDEdifference}, we define the step size $\delta\coloneqq \frac{T}{L}$, where $T>0$ is the time up to which we solve the DDE, and $L>0$ is the number of layers the DenseResNet should have. Furthermore, we require that there exists a number $R \geq 0$, such that $\tau = R\delta$. Then, an Euler discretization of the DDE \eqref{eq:DDEdifference} on the time interval $[-\tau,T]$ is given by 
	\begin{equation} \label{eq:DDEdifference1}
		\begin{aligned}
			y_{l+1} &= y_{l} + \delta f_\textup{DDE}(y_{l},y(t_l-\tau_1(t_{l})),\ldots,y(t_{l}-\tau_J(t_{l})),\theta_l), && l\in \{0,\ldots,L-1\}, \\
			y_{-r} &= u(t_l), && r\in \{0,\ldots,R\},
		\end{aligned}
	\end{equation}
	where $t_l = l\delta$, $y_l\coloneqq y(t_l) \in \mathbb{R}^m$ and $\theta_l \coloneqq \theta(t_l)\in\mathbb{R}^{p}$. To match the update rule \eqref{eq:DenseResNet2}, we define $h_l \coloneqq y_l \in \mathbb{R}^m$ for $l \in \{0,\ldots,L\}$ and $\theta_{\text{Dense},l} \in \mathbb{R}^{p_l}$ has to be a sub-matrix of $ \theta_l \in \mathbb{R}^{p}$ for every $l \in \{0,\ldots,L-1\}$, hence  $p_l \leq p$ for all $l\in\{0,\ldots,L-1\}$. 
	
	The discretization \eqref{eq:DDEdifference1} can only correspond to the update rule~\eqref{eq:DenseResNet2} if the delayed functions $y(t-\tau_j(t))$, $j \in \{1,\ldots,J\}$, evaluated at the discrete time points $t_l = l \delta$, $l\in\{0,\ldots,L-1\}$, correspond to different grid points $y_i$, $i\in\{-R,\ldots,L-1\}$ and if also the grid points with negative indices $y_i$, $i \in \{-R,\ldots,-1\}$ correspond to layers $h_l$, $l \in \{0,\ldots,L\}$. Hence, we make the following assumption on the discretization:
	\begin{assumption}[Discretization]\label{ass:discretization}
		There exist numbers $\alpha_{j,l} \in \{-R,\ldots,l\}$ such that 
		\begin{equation*}
			t_l - \tau_j(t_l) = l\delta-\tau_j(l \delta) = \alpha_{j,l}\delta  \qquad \text{for all } j \in \{1,\ldots,J\} \text{ and } l \in \{0,\ldots,L-1\},
		\end{equation*}
		such that $y(t_l - \tau_j(t_l)) = y_{\alpha_{j,l}}$. 
	\end{assumption}
	In Figure \ref{fig:discretization}, the discretization \eqref{eq:DDEdifference1} over the time interval $[-\tau,T]$ and  Assumption \ref{ass:discretization} are visualized. If Assumption \ref{ass:discretization} holds, we can define 
	\begin{equation} \label{eq:vectorfield}
		f_{\text{Dense},l}(h_l,h_{l-1},\ldots,h_0,\theta_{\text{Dense},l}) \coloneqq \delta \tilde{f}_\text{DDE}(h_l,h_{l-1},\ldots,h_0,\theta_{\text{Dense},l}),
	\end{equation}
	where $\tilde{f}_\text{DDE}$ arises from the vector field $f_\text{DDE}$ by joining coinciding entries, reordering and extracting the sub-matrix $\theta_{\text{Dense},l}$ from $\theta_l$. In that way, the update rule \eqref{eq:DenseResNet2} of the DenseResNet is re-obtained from the discretization \eqref{eq:DDEdifference1} of the differential-difference equation \eqref{eq:DDEdifference}. The procedure of equation~\eqref{eq:vectorfield} is illustrated in Example \ref{ex:discretization} at the end of this section. 
	
	\begin{figure}
		\centering
		\begin{overpic}[scale = 0.65,,tics=10]
			{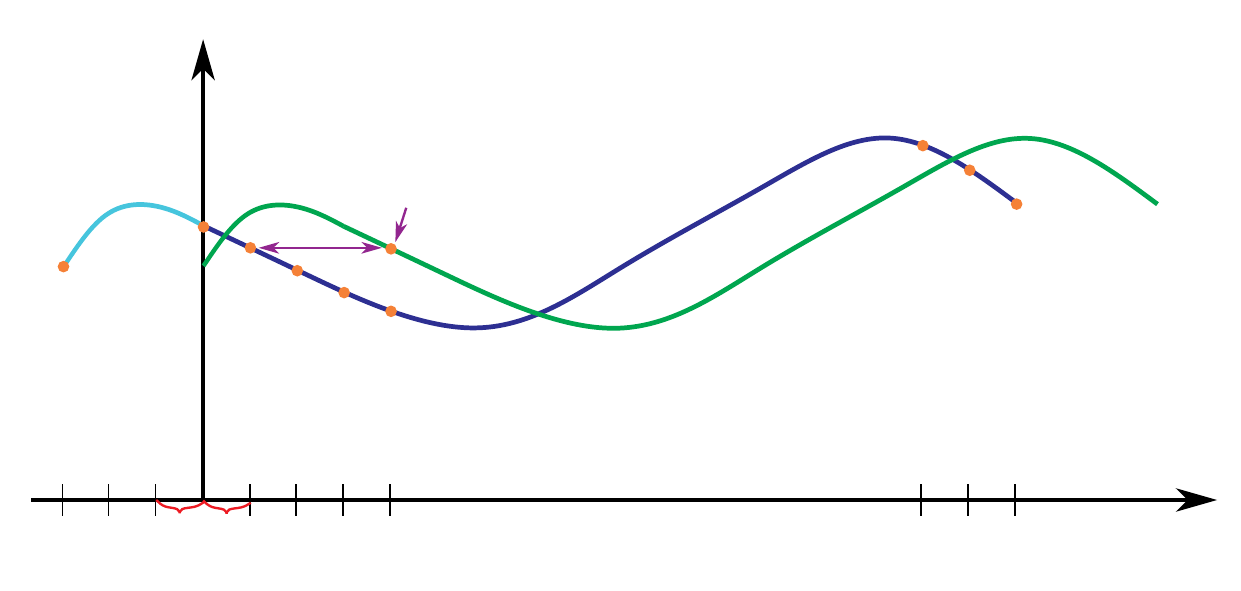}
			\put(2,17){$\tau = R \delta$}
			\put(3,3){$-\tau$}
			\put(25,28){\textcolor{Plum}{$\tau$}}
			\put(29,32){\textcolor{Plum}{$y(t_4-\tau_1(t_4)) = y_1$}}
			\put(81,3){$T=L\delta$}
			\put(100,6.5){$t$}
			\put(15.5,46){$\R^m$}
			\put(57,34){\textcolor{Blue}{$y(t)$}}
			\put(65,25){\textcolor{Green}{$y(t-\tau)$}}
			\put(9,33){\textcolor{SkyBlue}{$u(t)$}}
			\put(14,3){\textcolor{Red}{$\delta$}}
			\put(17.5,3){\textcolor{Red}{$\delta$}}
			\put(19,10){$t_1$}
			\put(23,10){$t_2$}
			\put(27,10){$t_3$}
			\put(31,10){$t_4$}
			\put(12,10){$t_{-1}$}
			\put(7,10){$t_{-2}$}
			\put(2,10){$t_{-R}$}
			\put(70,10){$t_{L-2}$}
			\put(76,10){$t_{L-1}$}
			\put(82,10){$t_{L}$}
			\put(3,24){\textcolor{Orange}{$y_{-R}$}}
			\put(13,28){\textcolor{Orange}{$y_{0}$}}
			\put(19,25){\textcolor{Orange}{$y_{1}$}}
			\put(23,23){\textcolor{Orange}{$y_{2}$}}
			\put(26,21){\textcolor{Orange}{$y_{3}$}}
			\put(30,20){\textcolor{Orange}{$y_{4}$}}
			\put(71,38){\textcolor{Orange}{$y_{L-2}$}}
			\put(74,31.5){\textcolor{Orange}{$y_{L-1}$}}
			\put(81,28.5){\textcolor{Orange}{$y_{L}$}}
		\end{overpic}
		\vspace{-3mm}
		\caption{Discretization of the solution $y(t)$ of the neural DDE \eqref{eq:DDEdifference} (shown in light and dark blue) over the time interval $[-\tau,T]$. For a constant delay $\tau_1(t) = \tau = 3 \delta$, the delayed function $y(t-\tau_1(t))$ is shown in green. Assumption \ref{ass:discretization} is visualized for $l = 4$ in purple, it holds $\alpha_{1,l} = 1$, i.e., $y(t_4-\tau_1(t_4)) = y(\alpha_{1,l} \delta) = y_1$.}
		\label{fig:discretization}			
	\end{figure}

	\subsubsection*{Choice of the Initial Data}
	
	A crucial property of the DenseResNet  \eqref{eq:DenseResNet2} is that the initial data is, independently of the length and the number of inter-layer connections, only the value of the first layer $h_0 \in \mathbb{R}^m$, i.e., no layers with negative indices exist. Transferred to the DDE \eqref{eq:DDEdifference}, we require that the initial data is again entirely given by the value $y(0) = u(0) = h_0$, corresponding to the first layer. In the following, we present two ways to model this property.
	
	A first option is to use the feature of DenseResNets, that inter-layer connections of length~$i$ between the layers $h_l$ and $h_{l-i}$ can only occur for $i \leq l$. Transferred to the discretization~\eqref{eq:DDEdifference1}, we obtain the following assumption.
	\begin{customass}{2.2(a)}[One-Point Initial Data]\label{ass:initialdata}
		It holds 
		\begin{equation}
			t_l - \tau_j(t_l) \geq 0 \qquad \text{for all } j \in \{1,\ldots,J\} \text{ and } l \in \{0,\ldots,L-1\},
		\end{equation}
		such that every $y(t_l - \tau_j(t_l))$ corresponds to layer a $y_l = h_l$, $l \in \{0,\ldots,L\}$.
	\end{customass}
	The property of Assumption \ref{ass:initialdata} implies that no grid points with negative indices $y_i$, $i \in \{-R,\ldots,-1\}$ are used. For the original DDE \eqref{eq:DDEdifference}, Assumption \ref{ass:initialdata} implies that it is sufficient to specify one initial condition $u(0) = h_0$ at time $t = 0$ instead of an initial function $u$ on the interval $[-\tau,0]$. A second option, as proposed in \cite{Zhu2021}, is to use a constant initial function as initial data $u$.
	\begin{customass}{2.2(b)}[Constant Initial Data] \label{ass:constantInitialData}
		The DDE \eqref{eq:DDEdifference} uses the constant initial data 
		\begin{equation*}
			c_{h_0}:[-\tau,0] \rightarrow \mathbb{R}^m, \quad c_{h_0}(t) = h_0,
		\end{equation*}
		which is for every fixed $h_0\in \R^m$ a constant function in time.
	\end{customass}
	In that way, again, only the value $u(0) = h_0$ is used as an initial condition, but the initial function is defined on the whole interval $[-\tau,0]$ instead of at the point $t = 0$ only. For the discretization~\eqref{eq:DDEdifference1}, the constant initial function implies that $y_{-r} = h_0$ for $r\in\{0,\ldots,R\}$. Without any further restriction on the delay functions $\tau_j$, it follows from Assumption \ref{ass:discretization} that $y(t_l-\tau_j(t_l)) = y_{\alpha_{j,l}}$ with $\alpha_{j,l}\in \{0,\ldots,l\}$ for $j \in \{1,\ldots,J\}$ and $ l \in \{0,\ldots,L-1\}$. In contrast to Assumption \ref{ass:initialdata}, Assumption \ref{ass:constantInitialData} allows for the usage of delay functions $\tau_j$ with $t_l - \tau_j(t_l) < 0$ for some $j \in \{1,\ldots,J\}$ and $l \in \{0,\ldots,L-1\}$.
	
	In both cases of Assumptions  \ref{ass:initialdata} and \ref{ass:constantInitialData}, the choice of the initial data implies that the delayed functions $y(t_l-\tau_j(t_l))$ only correspond to layers $h_0,\ldots,h_l$ for $j \in \{1,\ldots,J\}$ and $ l \in \{0,\ldots,L-1\}$, such that the discretization~\eqref{eq:DDEdifference1} has after the transformation~\eqref{eq:vectorfield} the same update rule as the DenseResNet \eqref{eq:DenseResNet2}. As for every neural DDEs fulfilling Assumption \ref{ass:initialdata}, we can choose without loss of generality constant initial data, we always assume in the following that Assumption \ref{ass:constantInitialData} holds. Assumption \ref{ass:initialdata} is optional but supports the interpretation of DenseResNets as discretized neural DDEs. The results proven in this work also hold for fixed, continuous, non-constant initial data. The proofs then require slightly more work, but the main idea stays the same. To maintain the analogy to DenseResNets, we stick to the assumption that the initial data is constant in time.

	\subsubsection*{Choice of the Delay Functions}
	
	In the last modeling step, we now show constructively the existence of delay functions $\tau_j$, $j\in \{1,\ldots,J\}$, such that Assumption \ref{ass:discretization} is fulfilled. Furthermore, we give examples of delay functions, such that additionally, the optional Assumption \ref{ass:constantInitialData} is fulfilled. The three types of delay functions introduced in the following are shown in Figure \ref{fig:delayfunctions}.
	
	The first type of delay function $\tau_{j,A}$ we present corresponds to the idea that the distance of inter-layer connections is fixed. Every inter-layer connection of length $i$ between two layers $h_l$ and $h_{l-i}$ should correspond to a fixed delay of length $i\delta$. We define for $j \in \{1,\ldots,R\}$:
	\begin{equation*}
		\tau_{j,A}\in C^\infty([0,T],[0,\tau]), \qquad \tau_{j,A}(t) = j \delta.
	\end{equation*}
	As $j \leq R$, every delay function $\tau_{j,A}$ fulfills $0\leq \tau_{j,A}(t) \leq \tau = R\delta$, and Assumption \ref{ass:discretization} holds. The delay function $\tau_{j,A}$ can be modified to additionally fulfill Assumption \ref{ass:initialdata}. For that purpose, we define for $j \in \{1,\ldots,R\}$ the second type of delay function $\tau_{j,B}$:
	\begin{equation*}
		\tau_{j,B} \in C^\infty([0,T],[0,\tau]), \qquad \tau_{j,B}(t) = \gamma_{(j-1)\delta,j\delta}(t) \cdot j \delta =\begin{cases}
			0  &\text{ if } t \in [0,(j-1)\delta], \\
			j\delta  &\text{ if } t \in [j\delta,T].
		\end{cases}
	\end{equation*}
	Hereby $\gamma_{r_1,r_2}:[0,T]\rightarrow [0,1]$ is a smooth bump function with
	\begin{equation*}
		\gamma_{r_1,r_2} \in C^\infty([0,T],[0,1]), \qquad \gamma_{r_1,r_2}(t)  =\begin{cases}
			0  &\text{ if } t \in [0,r_1], \\
			1  &\text{ if } t \in [r_2,1].
		\end{cases}
	\end{equation*}
	and $0<\gamma_{r_1,r_2}(t)<1$ for $t\in(r_1,r_2)$. The existence of such a smooth bump function is guaranteed by \cite[Lemma 2.21]{Lee2013} via the partition of unity method. 	As $j \leq R$, the delay function $\tau_{j,B}$ fulfills $0\leq \tau_{j,B}(t) \leq \tau = R\delta$ and by construction also Assumptions \ref{ass:discretization} and \ref{ass:initialdata} are fulfilled as $\tau_{j,B}(t_l)\leq t_l$ for all $l \in \{1,\ldots,L-1\}$, see also Figure \ref{fig:delayfunctions}.
	
	The third type of delay function corresponds to the idea that a DenseResNet with update rule~\eqref{eq:DenseResNet2} can refer at every time step to the state of every previous layer $h_j$, $j\in \{0,\ldots,l\}$. These layers correspond to the grid points $y_j = y(t_j)$ at the time steps $t_j = j\delta$ of the discretization~\eqref{eq:DDEdifference1}. If in the iteration step $l$, the function $f_{\text{Dense},l}$ accesses the state of the layer $h_j$, the inter-layer connection has length $l-j \geq 0$, which corresponds to a delay of $(l-j)\delta$ in the DDE \eqref{eq:DDEdifference}. The length of the maximal inter-layer connection is $R$, corresponding to the maximal delay $\tau = R\delta$. Under the assumption that $R\geq 3$, we define for $j \in \{0,\ldots,L-1\}$ the following smooth delay functions:
	\begin{equation*}
		\tau_{j,C} \in C^\infty([0,T],[0,\tau]), \qquad \tau_{j,C}(t) =\begin{cases}
			\tau_{j,C1}(t)  &\text{ if } t \in [0,(j+1) \delta], \\
			\tau_{j,C2}(t) &\text{ if } t \in [(j+1) \delta,T],
		\end{cases}
	\end{equation*}
	with 
	\begin{align*}
		\tau_{j,C1}\in C^\infty(\mathbb{R},\mathbb{R}), \qquad \tau_{j,C1} &= \gamma_{j\delta,(j+1)\delta}(t) \cdot (t-j\delta), \\
		\tau_{j,C2}\in C^\infty(\mathbb{R},\mathbb{R}), \qquad \tau_{j,C2} &= (1-\gamma_{(j-1)\delta+\tau,j\delta+\tau}(t)) \cdot (t-j\delta-\tau) + \tau.
	\end{align*}
	The function $\tau_{j,C}$ is smooth, as $\tau_{j,C1}$ and $\tau_{j,C2}$ are both smooth functions, which coincide on an interval of positive length, as
	\begin{align*}
		\tau_{j,C1}(t) = t-j\delta = t-j\delta-\tau+\tau = \tau_{j,C2}(t) \qquad \text{for } t \in [(j+1)\delta,(j-1)\delta+\tau],
	\end{align*}
	with interval length $\tau-2\delta = (R-2)\delta \geq \delta > 0$. On the intervals, where the functions $\gamma_{r_1,r_2}$ are constant, we can simplify the functions $\tau_{j,C}$ and obtain
	\begin{align*}
		\tau_{j,C}(t)&=\begin{cases}
			0  &\text{ if } t \in [0,j\delta], \\
			t-j\delta  &\text{ if } t \in [(j+1)\delta,\min\{(j-1)\delta+\tau,T\}], \\
			\tau &\text{ if } t \in [j\delta+\tau,T].
		\end{cases}
	\end{align*}
	Depending on the value of $j$, the interval $[j\delta+\tau,T]$ can be empty. As $0 \leq \gamma_{r_1,r_2}(t)\leq 1$ for all $t\in \mathbb{R}$, every function $\tau_{j,C}$ fulfills $0\leq \tau_{j,C}(t)\leq \tau$ for $t\in[0,T]$ and $j\in\{0,\ldots,L-1\}$, such that it is a delay function with maximal delay $\tau$. Furthermore, all $\tau_{j,C}$ fulfill Assumptions \ref{ass:discretization} and \ref{ass:initialdata}, see also Figure~\ref{fig:delayfunctions}. \medskip

	\begin{figure}
		\centering
		\vspace{-2mm}
		\begin{overpic}[scale = 0.9,,tics=10]
			{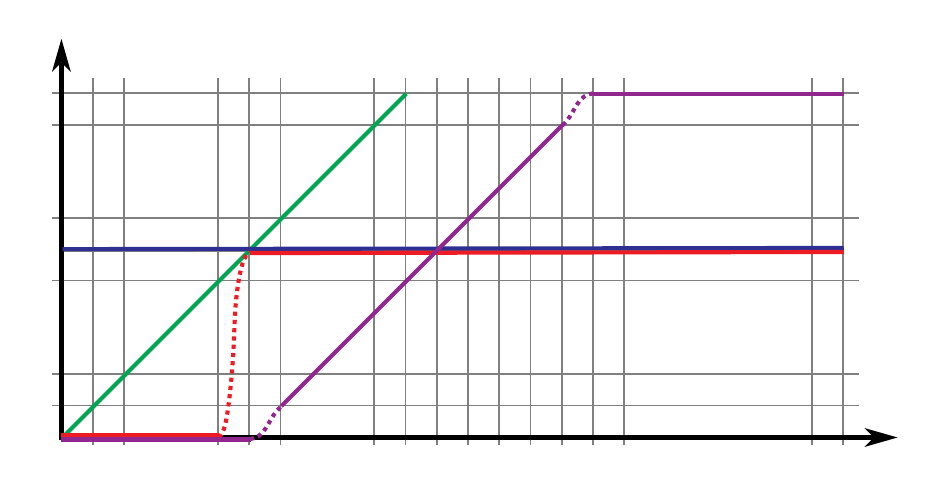}
			\put(89,4){\footnotesize\textcolor{gray}{$T$}}
			\put(83,4){\footnotesize \textcolor{gray}{$T-\delta$}}
			\put(9.5,4){\footnotesize\textcolor{gray}{$\delta$}}
			\put(12,4){\footnotesize \textcolor{gray}{$2\delta$}}
			\put(17.5,3){\footnotesize \textcolor{gray}{$(j-1)\delta$}}
			\put(25.5,4){\footnotesize \textcolor{gray}{$j\delta$}}
			\put(28,3){\footnotesize \textcolor{gray}{$(j+1)\delta$}}
			\put(42.5,4){\footnotesize \textcolor{gray}{$\tau$}}
			\put(37,3){\footnotesize \textcolor{gray}{$\tau-\delta$}}
			\put(44,3){\footnotesize \textcolor{gray}{$\tau+\delta$}}
			\put(60,4){\footnotesize \textcolor{gray}{$j\delta+\tau$}}
			\put(-1.5,42.5){\footnotesize \textcolor{gray}{$\tau = R\delta$}}
			\put(0,39.5){\footnotesize \textcolor{gray}{$\tau-\delta$}}
			\put(-2,22.5){\footnotesize \textcolor{gray}{$(j-1)\delta$}}
			\put(3.5,26){\footnotesize \textcolor{gray}{$j\delta$}}
			\put(-2,29.5){\footnotesize \textcolor{gray}{$(j+1)\delta$}}
			\put(4,9.5){\footnotesize\textcolor{gray}{$\delta$}}
			\put(3,13){\footnotesize \textcolor{gray}{$2\delta$}}
			\put(35,33){\textcolor{Green}{$\text{id}_{\R}$}}
			\put(16,28){\textcolor{Blue}{$\tau_{j,A}$}}
			\put(73,24.5){\textcolor{Red}{$\tau_{j,B}$}}
			\put(73,44.5){\textcolor{Plum}{$\tau_{j,C}$}}
			\put(96,6){$t$}
			\put(5,50){$\tau(t)$}
		\end{overpic}
		\vspace{-2mm}
		\caption{Visualization of the three types of delay functions $\tau_{j,A}$, $\tau_{j,B}$ $\tau_{j,C}:[0,T]\rightarrow [0,\tau]$. $\tau_{j,A}$ and $\tau_{j,B}$ are defined for $j \in \{1,\ldots,R\}$ and $\tau_{j,C}$ is defined for $j\in\{0,\ldots,L-1\}$. Assumption \ref{ass:discretization} graphically means that the gray axes are only crossed at grid points. Assumption \ref{ass:initialdata} implies that $\tau_{j,B}$ and $\tau_{j,C}$ are at the discrete time steps $l\delta$, $l\in\{0,\ldots,T\}$ always smaller than the identity function shown in green.} 
		\label{fig:delayfunctions}		
	\end{figure}
	
	Altogether, we have shown that for constant initial data on the interval $[-\tau,0]$, there exist smooth delay functions, such that an Euler discretization of the differential-difference equation \eqref{eq:DDEdifference} results in a DenseResNet with update rule \eqref{eq:DenseResNet2}. To illustrate the discretization and the conditions stated, we discuss a one-dimensional example with all three types of delay functions in the following section.
	
	\addtocounter{theorem}{1}
	
	\begin{example}[DenseResNet as Discretized Neural DDE]\label{ex:discretization}
		\normalfont Let $\delta>0$ and consider the one-dimensional differential-difference equation
		\begin{equation*}
			\begin{aligned}
				\frac{\dd y}{\dd t} &= f_\textup{DDE}(y(t),y(t-\tau_{1,A}(t)),y(t-\tau_{2,B}(t)),y(t-\tau_{1,C}(t)),\theta(t)) \qquad &&\text{for } t \geq 0, \\  
				y(t) &= c_{h_0}(t) &&\text{for } t \in [-\tau,0],
			\end{aligned}
		\end{equation*}
		with delay $\tau = 3 \delta$, $\theta:\mathbb{R}\rightarrow \mathbb{R}^{1\times L}$ and constant initial data $c_{h_0}$ for some $h_0\in \mathbb{R}$. The three chosen delay functions are visualized in Figure \ref{fig:example_discretization}. Let $L \geq 3$ and $T = L \delta$, then an Euler discretization of the DDE with step size $\delta$ on the time interval $[-\tau,T]$ is given by 
		\begin{equation*}
			\begin{aligned}
				y_{l+1} &= y_{l} + \delta f_\textup{DDE}(y_{l},y(t_l-\tau_{1,A}(t_{l})),y(t_{l}-\tau_{2,B}(t_{l})),y(t_{l}-\tau_{1,C}(t_{l})),\theta_l), && l\in \{0,\ldots,L-1\}, \\
				y_{-r} &= h_0, && r\in \{0,\ldots,3\},
			\end{aligned}
		\end{equation*}
		where $t_l = l\delta$ and $\theta_l \coloneqq \theta(t_l)$. With the layers $h_l \coloneqq y_l$ for $l \in \{0,\ldots,L\}$, the discretization becomes the update rule of the DenseResNet
		\begin{equation*}
			h_{l+1} = h_{l} + f_{\textup{Dense},l}(h_{l}, h_{l-1}, \ldots, h_0, \theta_{\textup{Dense},l}), 
		\end{equation*}
		for $l \in \{0,\ldots,L-1\}$ with $\theta_{\textup{Dense},l} \coloneqq [\theta_l]_{1,\ldots,l+1} \in \R^{1 \times (l+1)}$. Hereby it holds
		\begin{equation*}
			f_{\text{Dense},l}(h_l,h_{l-1},\ldots,h_0,\theta_{\text{Dense},l}) \coloneqq \delta \tilde{f}_\text{DDE}(h_l,h_{l-1},\ldots,h_0,\theta_{\text{Dense},l}),
		\end{equation*}
		and the function $\tilde{f}_\text{DDE}$ arises from $f_\text{DDE}$ by reordering and merging entries. For the given activation functions, it holds explicitly
		\begin{flalign*}
			&f_{\text{Dense},0}(h_0,\theta_{\text{Dense},0}) &&\hspace{-3mm} \coloneqq \delta \tilde{f}_\text{DDE}(h_0,[\theta_0]_{1}) &&\hspace{-3mm} = \delta f_\text{DDE}(h_0,h_{-1},h_0,h_0,\theta_0), \hspace{5cm} \\
			&f_{\text{Dense},1}(h_1,h_0,\theta_{\text{Dense},1}) &&\hspace{-3mm} \coloneqq \delta \tilde{f}_\text{DDE}(h_1,h_0,[\theta_1]_{1,2}) &&\hspace{-3mm} = \delta f_\text{DDE}(h_1,h_0,h_1,h_1,\theta_1), \\
			&f_{\text{Dense},2}(h_2,h_1,h_0,\theta_{\text{Dense},2}) &&\hspace{-3mm} \coloneqq \delta \tilde{f}_\text{DDE}(h_2,h_1,h_0,[\theta_2]_{1,2,3}) &&\hspace{-3mm} = \delta f_\text{DDE}(h_2,h_1,h_0,h_1,\theta_2),\\
			&f_{\text{Dense},3}(h_3,\ldots,h_0,\theta_{\text{Dense},3})&&\hspace{-3mm} \coloneqq \delta \tilde{f}_\text{DDE}(h_3,\ldots,h_0,[\theta_3]_{1,\ldots,4})  &&\hspace{-3mm} = \delta f_\text{DDE}(h_3,h_2,h_1,h_1,\theta_3), \\
			&f_{\text{Dense},l}(h_l,\ldots,h_0,\theta_{\text{Dense},l})&&\hspace{-3mm} \coloneqq \delta \tilde{f}_\text{DDE}(h_l,\ldots,h_0,[\theta_l]_{1,\ldots,l+1})&& \hspace{-3mm} = \delta f_\text{DDE}(h_l,h_{l-1},h_{l-2},h_{l-3},\theta_l),
		\end{flalign*}
		for $l\geq 4$. The state of the delayed functions   $y(t-\tau_{1,A}(t))$, $y(t-\tau_{2,B}(t))$ and $y(t-\tau_{1,C}(t))$   evaluated at the grid points $l\delta$, $l\in\{0,\ldots,L\}$, used in the equations above are visualized in Figure \ref{fig:example_discretization}. 
	\end{example}

	\begin{figure}
		\centering
		\vspace{-7mm}
		\begin{overpic}[scale = 1.6,,tics=10]
			{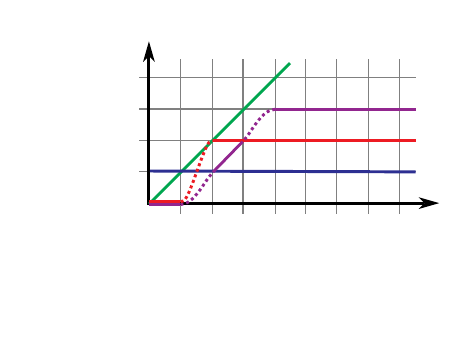}
			\put(37.5,27){\footnotesize\textcolor{gray}{$\delta$}}
			\put(44,27){\footnotesize \textcolor{gray}{$2\delta$}}
			\put(50.5,27){\footnotesize \textcolor{gray}{$3\delta$}}
			\put(57,27){\footnotesize \textcolor{gray}{$4\delta$}}
			\put(63.5,27){\footnotesize \textcolor{gray}{$5\delta$}}
			\put(70,27){\footnotesize \textcolor{gray}{$6\delta$}}
			\put(77,27){\footnotesize \textcolor{gray}{$7\delta$}}
			\put(83.5,27){\footnotesize \textcolor{gray}{$8\delta$}}
			\put(21.5,51.5){\footnotesize \textcolor{gray}{$\tau = 3\delta$}}
			\put(26,45){\footnotesize \textcolor{gray}{$2\delta$}}
			\put(27,38){\footnotesize \textcolor{gray}{$\delta$}}
			\put(59.5,64){\textcolor{Green}{$\text{id}_{\R}$}}
			\put(89,38){\textcolor{Blue}{$\tau_{1,A}$}}
			\put(89,45){\textcolor{Red}{$\tau_{2,B}$}}
			\put(89,51.5){\textcolor{Plum}{$\tau_{1,C}$}}
			\put(95,31){$t$}
			\put(30,68){$\tau(t)$}
			\put(30.5,22){$y_0$}
			\put(37,22){$y_1$}
			\put(44,22){$y_2$}
			\put(50.5,22){$y_3$}
			\put(57,22){$y_4$}
			\put(63.5,22){$y_5$}
			\put(70,22){$y_6$}
			\put(77,22){$y_7$}
			\put(83.5,22){$y_8$}
			\put(30.5,17){$y_{-1}$}
			\put(37,17){$y_0$}
			\put(44,17){$y_1$}
			\put(50.5,17){$y_2$}
			\put(57,17){$y_3$}
			\put(63.5,17){$y_4$}
			\put(70,17){$y_5$}
			\put(77,17){$y_6$}
			\put(83.5,17){$y_7$}
			\put(30.5,12){$y_0$}
			\put(37,12){$y_1$}
			\put(44,12){$y_0$}
			\put(50.5,12){$y_1$}
			\put(57,12){$y_2$}
			\put(63.5,12){$y_3$}
			\put(70,12){$y_4$}
			\put(77,12){$y_5$}
			\put(83.5,12){$y_6$}
			\put(30.5,7){$y_0$}
			\put(37,7){$y_1$}
			\put(44,7){$y_1$}
			\put(50.5,7){$y_1$}
			\put(57,7){$y_1$}
			\put(63.5,7){$y_2$}
			\put(70,7){$y_3$}
			\put(77,7){$y_4$}
			\put(83.5,7){$y_5$}
			\put(25,7){$=$}
			\put(25,12){$=$}
			\put(25,17){$=$}
			\put(25,22){$=$}
			\put(17,22){$y(t)$}
			\put(6,17){\textcolor{Blue}{$y(t-\tau_{1,A}(t))$}}
			\put(6,12){\textcolor{Red}{$y(t-\tau_{2,B}(t))$}}
			\put(6,7){\textcolor{Plum}{$y(t-\tau_{1,C}(t))$}}
			\put(91,22){$\cdots$}
			\put(91,17){$\cdots$}
			\put(91,12){$\cdots$}
			\put(91,7){$\cdots$}
		\end{overpic}
		\vspace{-5mm}
		\caption{Visualization of the delay functions $\tau_{1,A}$, $\tau_{2,B}$ and $\tau_{1,C}$ of Example \ref{ex:discretization}. Furthermore, the state of the delayed functions $y(t-\tau_{1,A}(t))$, $y(t-\tau_{2,B}(t))$ and $y(t-\tau_{1,C}(t))$ evaluated at the grid points $l\delta$, $l\in\{0,\ldots,L\}$ are calculated.}
		\label{fig:example_discretization}
	\end{figure}

	\subsection{Neural DDE Architectures}
	\label{sec:architecture}
	
	In this section, we introduce the general neural DDE architecture we study in this work and explain the relationship to the differential-difference equation \eqref{eq:DDEdifference} of the last section. We have shown that there exist choices of the vector field $f_\textup{DDE}: \mathcal{C} \times \mathbb{R}^{p}\rightarrow \mathbb{R}^m$, such that an Euler discretization of the~DDE
	\begin{equation} \label{eq:DDEparametrized}
		\begin{aligned}
			\frac{\dd y}{\dd t} &= f_\textup{DDE}(y_t,\theta(t)) \qquad &&\text{for } t\geq 0, \\  
			y(t) &= c_{y_0}(t) &&\text{for } t \in [-\tau,0],
		\end{aligned}
	\end{equation} 
	over the time interval $[0,T]$ with delay $\tau \geq 0$ and constant initial data $c_{y_0}:[-\tau,0]\rightarrow \mathbb{R}^m$, $c_{y_0}(t) = y_0$ leads to the update rule of a DenseResNet. The parameter function $\theta:\mathbb{R}\rightarrow \mathbb{R}^{p}$ depends explicitly on the time $t$, making the vector field  $f_\text{DDE}$ non-autonomous. To treat all possible parameterizations of the vector field $f_\text{DDE}$ at once, we study in this work neural DDE architectures, which are based on the  general non-autonomous DDE
	\begin{equation} \label{eq:DDEconstantIC}
		\begin{aligned}
			\frac{\dd y}{\dd t} &= F(t,y_t) \qquad &&\text{for } t\geq 0, \\  
			y(t) &= c_{y_0}(t) &&\text{for } t \in [-\tau,0],
		\end{aligned}
	\end{equation}
	with  $F:\Omega\rightarrow \mathbb{R}^m$, $\Omega = \Omega_t \times \Omega_y \subset \mathbb{R}\times \mathcal{C}$ open, $[0,T]\subset \Omega_t$, and constant initial data $c_{y_0}\in \Omega_y$. In the following Assumption \ref{ass:vectorfield}  and Definition \ref{def:NDDE}, additional requirements on the vector field $F$ and the subsets $\Omega_t$, $\Omega_y$ are made in order to have a well-defined neural DDE. The relationship between the parameterized DDE \eqref{eq:DDEparametrized} and the general DDE \eqref{eq:DDEconstantIC} with respect to the property of universal embedding and universal approximation is discussed in Section \ref{sec:universal}. In the general DDE \eqref{eq:DDEconstantIC}, we have the flexibility to choose the vector field $F$, whereas in the parameterized DDE \eqref{eq:DDEparametrized}, the vector field $f_\text{DDE}$ is fixed and the parameter function $\theta: \R \rightarrow \R^p$ can be chosen. 
	
	If there exists a constant $c\in(0,\infty]$, such that $y:[-\tau,c)\rightarrow \mathbb{R}^m$ is continuous and fulfills~\eqref{eq:DDEconstantIC} for $t\in[-\tau,c)$, then $y$ is called a solution of the DDE~\eqref{eq:DDEconstantIC}. The maximal time interval of existence $[-\tau,c)$ for the DDE~\eqref{eq:DDEconstantIC} with constant initial data $c_{y_0}$ is denoted by $\mathcal{I}_{y_0}$. The function $y_t$, which corresponds to the solution $y$ of \eqref{eq:DDEconstantIC}, is defined by $y_t(s) \coloneqq y(t+s)$ for $s\in[-\tau,0]$ and is an element of the infinite-dimensional state space $\mathcal{C}$.  The norm of an element $y_t$ of the Banach space $\mathcal{C}$ is denoted by $\norm{y_t}_\infty \coloneqq \sup_{s\in[-\tau,0]} \norm{y_t(s)}_\infty$. In general, we denote the state of the solution $y$ at time $t$ with initial function~$u$ on the time interval $[t_0-\tau,t_0]$ by $y(t_0,u)(t)$. Hence, the solution of the DDE \eqref{eq:DDEconstantIC} with initial data~$c_{y_0}$ on the time interval $[-\tau,0]$ is denoted by $y(0,c_{y_0})(t)$ for $t \in \mathcal{I}_{y_0}$.
	
	ResNets and DenseResNets introduced in \eqref{eq:ResNet} and \eqref{eq:DenseResNet} have the property that all layers have the same dimension $m$. General neural networks are not restricted to learning functions with the same input and output dimensions. To be flexible with respect to the input and output dimensions of the neural DDE, we add one affine linear layer before and one affine linear layer after the solution map of the DDE. Under the assumptions specified in the following, we define the general neural DDE architecture
	\begin{equation}\label{eq:DDEaffinelinear}
		\Phi: \mathcal{X} \rightarrow \mathbb{R}^q, \quad x \mapsto \tilde{\lambda}(y(0,c_{\lambda(x)})(T)) = \widetilde{W} \cdot y(0,c_{Wx+b})(T) + \tilde{b} ,
	\end{equation}
	with $\mathcal{X}\subset \mathbb{R}^n$ open, affine linear layers $\lambda:\mathbb{R}^n\rightarrow \mathbb{R}^m$ and $\tilde \lambda:\mathbb{R}^m\rightarrow \mathbb{R}^q$ and $y$ being the solution of the DDE \eqref{eq:DDEconstantIC} existing on the time interval $[0,T]$.  The map $\lambda: \mathbb{R}^n \rightarrow \mathbb{R}^m$ is represented by a matrix $W \in \mathbb{R}^{m \times n}$ and a vector $b \in \mathbb{R}^m$, such that $\lambda(x) = Wx+b$, and the map $\widetilde{\lambda}: \mathbb{R}^m\rightarrow \mathbb{R}^q$ is represented by a matrix $\widetilde{W} \in \mathbb{R}^{q\times m}$ and a vector $\tilde{b} \in \mathbb{R}^q$, such that $\tilde\lambda(x) = \widetilde{W}x+\tilde b$. In analogy to classical feed-forward neural networks such as MLPs, the matrices $W$ and $\widetilde{W}$ are called weight matrices, and the vectors $b$ and $\tilde{b}$ biases. 
	The neural DDE architecture uses the activated state $a \coloneqq \lambda(x)$, calculated as the affine linear transformation $\lambda$ of the initial value $x\in \mathcal{X}$, to define the initial data $c_a = c_{\lambda(x)}$ for the DDE \eqref{eq:DDEconstantIC}, which is constant in time. We abbreviate the set of all activated states $a$ by $\mathcal{A}\coloneqq \lambda(\mathcal{X})$. The output $\Phi(x)$ of the neural DDE architecture is calculated by applying the second affine linear transformation $\tilde \lambda$ to the time-$T$ map of the solution $y(0,c_{\lambda(x)})$, denoted by  $y(0,c_{\lambda(x)})(T)$. In principle, we can also use non-constant initial data $u:[-\tau,0]\rightarrow \mathbb{R}^m$ for the DDE \eqref{eq:DDEconstantIC} if it is uniquely specified by the value $\lambda(x)\in\mathcal{A}$ and it holds $u(0) = \lambda(x)$. To maintain the relationship to DenseResNets explained in the last section, we restrict the upcoming analysis to neural DDEs defined by \eqref{eq:DDEaffinelinear} with constant initial data. Nevertheless, our results can be extended to fixed, non-constant, continuous initial data with $u(0) = \lambda(x)$. In the following, we specify the necessary assumptions, such that the neural DDE in~\eqref{eq:DDEaffinelinear} is well-defined.
	
	\begin{assumption}[Neural DDE Well-Definedness]\label{ass:vectorfield}
		For the neural DDE architecture \eqref{eq:DDEaffinelinear} based on the DDE \eqref{eq:DDEconstantIC} it holds that $F:\Omega\rightarrow\mathbb{R}^m$ is defined on $\Omega = \Omega_t \times \Omega_y \subset \mathbb{R}\times \mathcal{C}$ open, and $[0,T]\subset \Omega_t $, $[-\tau,T]\subset \mathcal{I}_{y_0}$ for all $y_0\in \mathcal{A} = \lambda(\mathcal{X})$ and $\Omega_0 \coloneqq \left\{c_{y_0}\in \mathcal{C}: y_0 \in \mathcal{A}\right\} \subset \Omega_y$.
	\end{assumption}
	
	The regularity of the neural DDE \eqref{eq:DDEaffinelinear} follows from the regularity of the underlying vector field~$F$. In the following, we state the classical result on local existence, uniqueness, and differentiability of solutions of the general DDE
	\begin{equation} \label{eq:DDEgeneral}
		\begin{aligned}
			\frac{\dd y}{\dd t} &= F(t,y_t) \qquad &&\text{for } t\geq t_0, \\  
			y_{t_0}(t) &= u(t) &&\text{for } t \in [-\tau,0],
		\end{aligned}
	\end{equation}
	with vector field $F:\Omega\rightarrow \mathbb{R}^m$, $\Omega \subset \mathbb{R}\times \mathcal{C}$ open and initial data $u \in \mathcal{C}$. In contrast to previously defined DDEs, we state the results for an arbitrary initial time $t_0$, such that the initial data $u \in \mathcal{C}$ defines the state of the function $y_{t_0}(t)$ for $t\in[-\tau,0]$, i.e., the state of $y$ on the time interval $[t_0-\tau,t_0]$. To state the following result, we define the space  $C^{0,k}_b(\Omega,\mathbb{R}^m)$ of functions \mbox{$F:\Omega\rightarrow \mathbb{R}^m$}, $\Omega \subset \mathbb{R}\times \mathcal{C}$, which are continuous in the first variable, and $k$ times continuously differentiable with bounded derivatives in the second variable. The space $C^{0,k}_b(\Omega,\mathbb{R}^m)$, equipped with the norm $\norm{\cdot}_k$, which takes the supremum over all derivatives up to order~$k$, becomes a Banach space for every $k \geq 0$~\cite{Hale1993}.

	\begin{lemma}[DDE Local Existence, Uniqueness, Differentiability \cite{Hale1993}] \label{lem:dde_regularity}
		Let $F\in C^{0,k}_b(\Omega,\mathbb{R}^m)$ with $\Omega \subset \mathbb{R}\times \mathcal{C}$ open and $k \geq 1$. Then for each initial data $(t_0,u)\in\Omega$, there exists a unique solution $y(t_0,u): [t_0-\tau,t_0+c)\rightarrow \mathbb{R}^m$ of \eqref{eq:DDEgeneral} for some $c\in(0,\infty]$. Furthermore, $y(t_0,u)(t)$ is $k$ times continuously differentiable with respect to $u$ for $t$ in any compact set in the domain of definition of $y(t_0,u)$.
	\end{lemma}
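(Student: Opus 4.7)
The plan is to reduce the DDE to an integral equation and then apply a Banach fixed-point / contraction mapping argument for local existence and uniqueness, followed by a standard smooth-dependence-on-parameters argument for the differentiability claim.

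First, I would rewrite \eqref{eq:DDEgeneral} as the integral equation
\begin{equation*}
	y(t) = \begin{cases} u(t-t_0) & \text{for } t \in [t_0-\tau,t_0], \\ u(0) + \int_{t_0}^t F(s,y_s)\,\dd s & \text{for } t \in [t_0,t_0+c]. \end{cases}
\end{equation*}
Since $(t_0,u) \in \Omega$ is an interior point and $F \in C^{0,k}_b$ with $k\geq 1$, there exist open neighborhoods $U \subset \mathcal{C}$ of $u$ and an interval $[t_0,t_0+c_0]$ on which $F$ is bounded by some $M$ and Lipschitz in its second argument with constant $K$ (the Lipschitz constant comes from the bounded first derivative). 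Fix a small $c \in (0,c_0]$ and $\rho>0$, and consider the closed set
\begin{equation*}
	\mathcal{Y} = \bigl\{ y \in C^0([t_0-\tau,t_0+c],\R^m) : y_{t_0} = u,\; \sup_{t\in[t_0,t_0+c]} \norm{y(t)-u(0)}_\infty \leq \rho \bigr\},
\end{equation*}
choosing $\rho$ and $c$ so that $y_t \in U$ throughout. Define the operator $\mathcal{T}$ sending $y$ to the right-hand side of the integral equation. For $c$ small enough so that $cM \leq \rho$ and $cK < 1$, $\mathcal{T}$ maps $\mathcal{Y}$ into itself and is a contraction in the supremum norm. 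The Banach fixed-point theorem then yields a unique fixed point, which is the desired local solution $y(t_0,u)$. Extension to a maximal interval of existence $[t_0-\tau,t_0+c)$ with $c \in (0,\infty]$ follows by the standard continuation argument.

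For the differentiability claim, I would view the fixed-point construction as a parameterized contraction, with $u$ as the parameter ranging in $\mathcal{C}$. The map $(y,u) \mapsto \mathcal{T}(y;u)$ is $C^k$ jointly in $(y,u)$: differentiability in $u$ is immediate since $u$ enters only through the initial segment and the additive constant $u(0)$, while differentiability in $y$ up to order $k$ follows from $F \in C^{0,k}_b$ together with the chain rule applied to the composition $s \mapsto F(s,y_s)$ (this uses that the delay map $y \mapsto y_s$ is bounded linear, hence $C^\infty$). The uniform contraction principle with parameters (sometimes called the fiber contraction theorem, see Hale) then gives that the fixed point depends in a $C^k$ manner on $u$ on any compact time interval where the solution exists. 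Concretely, the Fréchet derivative $D_u y(t_0,u)(t)\cdot v$ is the unique solution of the linear variational DDE
\begin{equation*}
	\frac{\dd z}{\dd t} = D_2 F(t,y_t) z_t, \qquad z_{t_0} = v,
\end{equation*}
and higher derivatives solve analogous inhomogeneous linear DDEs whose inhomogeneities involve lower-order derivatives of $y$ and $F$.

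The bulk of the work is routine, but the step I expect to require the most care is verifying that the fixed-point operator depends smoothly on $u$ in the Fréchet sense and that one can legitimately differentiate under the integral sign in the infinite-dimensional state space $\mathcal{C}$; the rigorous justification relies on the boundedness of derivatives built into the definition of $C^{0,k}_b$ and on the continuity of the evaluation map $(t,y)\mapsto y_t$ from $\R \times C^0([t_0-\tau,t_0+c],\R^m)$ into $\mathcal{C}$. Once this smoothness of $\mathcal{T}$ in both arguments is established, the implicit function theorem applied to the equation $y - \mathcal{T}(y;u) = 0$ (whose linearization in $y$ is invertible because $\mathcal{T}(\cdot;u)$ is a contraction) delivers the $C^k$ dependence of the solution on $u$ on compact subintervals, completing the argument.
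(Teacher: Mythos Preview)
The paper does not prove this lemma; it is stated as a classical result cited directly from Hale's monograph \cite{Hale1993}, so there is no ``paper's own proof'' to compare against. Your outline is the standard argument one finds in that reference: integral reformulation, contraction mapping for local existence and uniqueness, and the uniform contraction principle (or equivalently the implicit function theorem applied to $y - \mathcal{T}(y;u)=0$) for $C^k$ dependence on the initial function. The sketch is correct and appropriately flags the delicate point, namely verifying Fr\'echet smoothness of the fixed-point operator in the infinite-dimensional argument using the boundedness of derivatives encoded in $C^{0,k}_b$.
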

	
	Under Assumption \ref{ass:vectorfield}, we can deduce from the last lemma that the neural DDE \eqref{eq:DDEaffinelinear} is $k$ times continuously differentiable, i.e., an element of the space $C^k(\mathcal{X},\mathbb{R})$, if the vector field of the DDE is $k$~times continuously differentiable with respect to its second component. 
	
	\begin{lemma}[Neural DDE Regularity]\label{lem:ndde_regularity}
		Let $F\in C^{0,k}_b(\Omega,\mathbb{R}^m)$ with $\Omega = \Omega_t \times \Omega_y \subset \mathbb{R}\times \mathcal{C}$ open and $k \geq 1$ satisfy Assumption~\ref{ass:vectorfield}. Then, it holds for the neural DDE $\Phi$ defined by \eqref{eq:DDEaffinelinear} that $\Phi\in C^k(\mathcal{X},\mathbb{R})$.
	\end{lemma}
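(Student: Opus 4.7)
The plan is to decompose $\Phi$ into a chain of maps each of which is at least $C^k$ on the relevant domain, and then invoke the stability of the $C^k$ property under composition. Concretely, write
\begin{equation*}
\Phi = \tilde{\lambda} \circ E_T \circ S \circ \iota \circ \lambda,
\end{equation*}
where $\lambda: \mathcal{X}\to\mathbb{R}^m$, $x\mapsto Wx+b$, is the input affine layer; $\iota:\mathbb{R}^m\to\mathcal{C}$, $a\mapsto c_a$, is the constant-function embedding; $S$ sends a continuous initial datum $u$ to the solution $y(0,u)$ of \eqref{eq:DDEgeneral}; $E_T$ evaluates the solution at time $T$; and $\tilde{\lambda}:\mathbb{R}^m\to\mathbb{R}^q$ is the output affine layer.

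Next I would verify each factor. The affine maps $\lambda$ and $\tilde{\lambda}$ are polynomials of degree one, hence $C^\infty$. The embedding $\iota$ is linear, and $\|\iota(a)\|_\infty = \|a\|_\infty$, so it is a bounded linear map between Banach spaces and thus $C^\infty$. By Assumption~\ref{ass:vectorfield}, for every $x\in\mathcal{X}$ we have $(\iota\circ\lambda)(x) = c_{\lambda(x)}\in\Omega_0\subset\Omega_y$ and the resulting solution exists on $[-\tau,T]$, so the composition $E_T\circ S\circ\iota\circ\lambda$ is at least well-defined pointwise on $\mathcal{X}$.

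The core step is to promote this pointwise well-definedness to $C^k$ regularity on a neighborhood. Fix $x_0\in\mathcal{X}$ and set $u_0 = c_{\lambda(x_0)}\in\Omega_y$. By Lemma~\ref{lem:dde_regularity}, the map $u\mapsto y(0,u)(T)$ is $C^k$ on some $\mathcal{C}$-neighborhood $\mathcal{V}$ of $u_0$; continuous dependence on initial data (a standard companion statement to Lemma~\ref{lem:dde_regularity}) ensures we may shrink $\mathcal{V}$ so that solutions starting in $\mathcal{V}$ all exist up to time $T$, hence $E_T\circ S$ is $C^k$ on $\mathcal{V}$. Because $\iota\circ\lambda:\mathcal{X}\to\mathcal{C}$ is $C^\infty$, preimaging $\mathcal{V}$ yields an open neighborhood of $x_0$ in $\mathcal{X}$ on which $E_T\circ S\circ\iota\circ\lambda$ is a composition of $C^k$ maps, therefore $C^k$ by the Banach-space chain rule. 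Composing on the left with the $C^\infty$ map $\tilde{\lambda}$ preserves $C^k$.

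The only subtle point in this argument is justifying that Lemma~\ref{lem:dde_regularity} applies along the \emph{one-parameter} family $\{c_{\lambda(x)}\}_{x\in\mathcal{X}}$ and delivers differentiability uniformly enough to compose. The expected main obstacle is thus the transition from pointwise existence of a $C^k$ neighborhood in $\mathcal{C}$ to joint $C^k$-dependence along the embedded finite-dimensional family; this is resolved precisely because $\iota\circ\lambda$ is $C^\infty$ between Banach spaces, so local neighborhoods in $\mathcal{X}$ map continuously into local $\mathcal{C}$-neighborhoods where Lemma~\ref{lem:dde_regularity} applies. Since $x_0\in\mathcal{X}$ was arbitrary, $\Phi\in C^k(\mathcal{X},\mathbb{R}^q)$ as claimed.
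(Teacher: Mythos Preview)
Your proposal is correct and follows essentially the same route as the paper: apply Lemma~\ref{lem:dde_regularity} to obtain $C^k$ dependence of $y(0,u)(T)$ on the initial datum $u\in\mathcal{C}$, note that the constant-embedding $y_0\mapsto c_{y_0}$ is smooth (bounded linear), and conclude by the chain rule together with smoothness of the affine layers $\lambda,\tilde\lambda$. Your version is somewhat more explicit about the Banach-space decomposition and the local existence-up-to-$T$ issue, but the underlying argument is identical.
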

	
	\begin{proof}
		By Lemma~\ref{lem:dde_regularity}, $y(0,c_{y_0})(T)$ is $k$ times continuously differentiable with respect to $c_{y_0}\in \Omega_y$ for $t =T$, as $\{T\}\subset \Omega_t$ is a compact set in the domain of definition. As the function $c_{y_0}:[-\tau,0]\rightarrow \mathbb{R}^m$, $c_{y_0}(t) = y_0$ is smooth, the chain rule implies that $y(0,c_{y_0})(T)$ is $k$ times continuously differentiable with respect to $y_0\in \mathbb{R}^m$. It follows that $y(0,c_{y_0})(T)$ is also $k$ times continuously differentiable with respect to $y_0$, if $y_0$ is restricted to be in the set $\mathcal{A}\subset \mathbb{R}^m$. As both affine linear layers $\lambda$ and $\tilde \lambda$ are smooth, the chain rule implies that $\Phi$ is $k$ times continuously differentiable with respect to the initial data $x \in \mathcal{X}$.
	\end{proof}

	After having analyzed the regularity of the neural DDE architecture \eqref{eq:DDEaffinelinear}, we can define the classes $\textup{NDDE}_\tau^k(\mathcal{X},\mathbb{R}^q)$ of neural DDEs we study in this work. We subdivide these classes into non-augmented and augmented architectures, which are visualized in Figure \ref{fig:ndde_architectures} and defined in the following.

	\begin{definition}[General Neural DDE] \label{def:NDDE}
		The set of all neural DDE architectures $\Phi: \mathcal{X} \rightarrow \mathbb{R}^q$, $\mathcal{X}\subset \mathbb{R}^n$ open, defined by~\eqref{eq:DDEaffinelinear} with vector field $F: \Omega \rightarrow \mathbb{R}^m$, $\Omega \subset \mathbb{R}\times \mathcal{C}$, satisfying Assumption \ref{ass:vectorfield} and delay $0\leq \tau\leq T$, is denoted by
		\begin{enumerate}[label=(\alph*), font=\normalfont]
			\item \label{def:NDDE_a} $\textup{NDDE}^k_\tau(\mathcal{X},\mathbb{R}^q) \subset C^k(\mathcal{X},\mathbb{R}^q)$, if  $F\in C^{0,k}_b(\Omega,\mathbb{R}^m)$ with $k\geq 1$.
			\item \label{def:NDDE_b} $\textup{NDDE}_{\tau}^0(\mathcal{X},\mathbb{R}^q)\subset C^0(\mathcal{X},\mathbb{R}^q)$, if the solution of the DDE \eqref{eq:DDEconstantIC} is continuous and unique.
		\end{enumerate}
		For $k \geq 0$, if $m \leq \max\{n,q\}$, the neural DDE architecture is called non-augmented and denoted by $\textup{NDDE}^k_{\tau,\textup{N}}(\mathcal{X},\mathbb{R}^q)$ (see Figure \ref{fig:ndde_architectures}\subref{fig:ndde_nonaugmented}), and if $m > \max\{n,q\}$, the neural DDE architecture is called augmented and denoted by $\textup{NDDE}^k_{\tau,\textup{A}}(\mathcal{X},\mathbb{R}^q)$ (see Figure \ref{fig:ndde_architectures}\subref{fig:ndde_augmented}).
	\end{definition}
	
	\begin{figure}[h]
		\centering
		\begin{subfigure}{0.85\textwidth}
			\centering
			\includegraphics[width=0.9\textwidth]{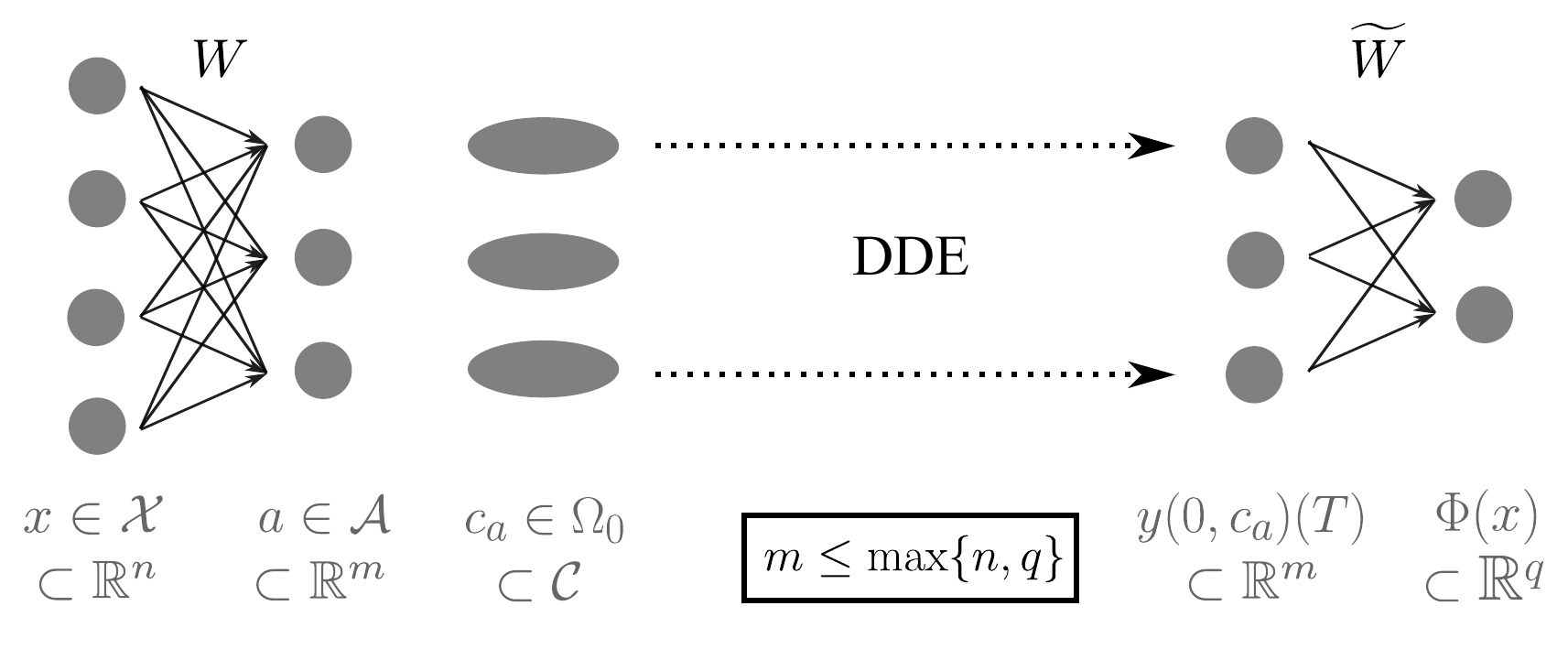}
			\caption{Structure of a non-augmented neural DDE $\Phi\in \textup{NDDE}^k_{\tau,\textup{N}}(\mathcal{X},\mathbb{R}^q)$. The phase space of the DDE is non-augmented, i.e., it holds $m\leq\max\{n,q\}$.}
			\label{fig:ndde_nonaugmented}
		\end{subfigure}
		\begin{subfigure}{0.85\textwidth}
			\centering
			\vspace{5mm}
			\includegraphics[width=0.9\textwidth]{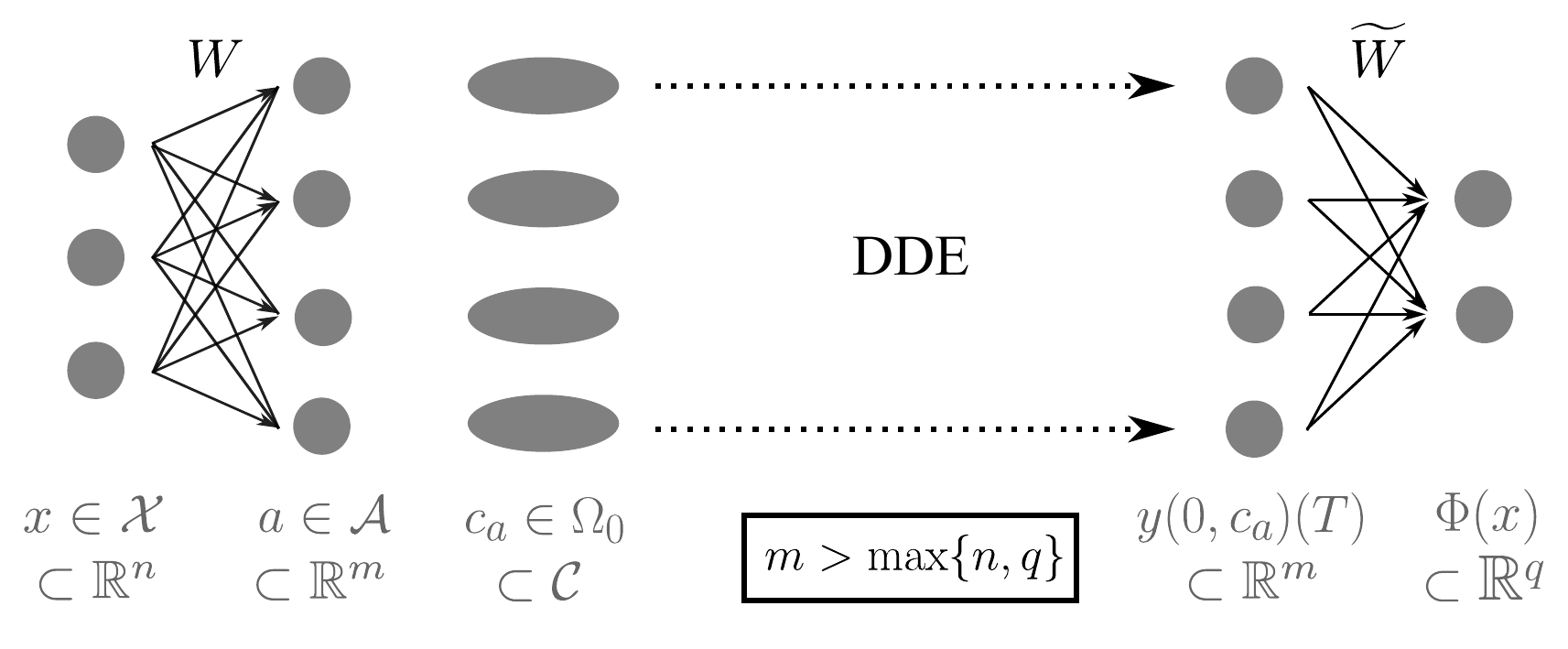}
			\caption{Structure of an augmented neural DDE $\Phi\in \textup{NDDE}^k_{\tau,\textup{A}}(\mathcal{X},\mathbb{R}^q)$. The phase space of the DDE is augmented, i.e., it holds $m>\max\{n,q\}$.}
			\label{fig:ndde_augmented}
		\end{subfigure}
		\caption{Visualization of the two different types of neural DDE architectures non-augmented $\textup{NDDE}^k_{\tau,\textup{N}}(\mathcal{X},\mathbb{R}^q)$ and augmented $\textup{NDDE}^k_{\tau,\textup{A}}(\mathcal{X},\mathbb{R}^q)$. The initial data of the DDE is the constant function $c_a\in\Omega_0 \subset \mathcal{C}$, based on the output $a = \lambda(x)$ of the first affine linear layer. The output of the neural DDE is the second affine linear layer $\tilde \lambda$ applied to the time-$T$ map $y(0,c_a)(T)$ of the solution of the DDE.}
		\label{fig:ndde_architectures}
	\end{figure}	
	
	For $k\geq 1$, the definition of the class $\textup{NDDE}^k_\tau(\mathcal{X},\mathbb{R}^q)$, $\mathcal{X}\subset \mathbb{R}^n$ open, includes regularity assumptions on the vector field of the underlying DDE. For $k = 0$, all continuous and well-defined neural DDEs are collected in $\textup{NDDE}^0_\tau(\mathcal{X},\mathbb{R}^q)$, $\mathcal{X}\subset \mathbb{R}^n$ open, without any regularity assumptions on the vector field. Continuous and unique solutions of the DDE \eqref{eq:DDEconstantIC} can, for example, be guaranteed if $F$ satisfies a Lipschitz condition for the second variable and is either continuous in $t$ or fulfills the Carathéodory conditions, which allow for discontinuities in the time variable  (cf.\ \cite{Hale1993}).

	In the proof of our main result in Section \ref{sec:sectionproof}, we distinguish if the matrices $W$, $\widetilde{W}$ of the affine linear transformations $\lambda$, $\tilde \lambda$ of the neural DDE architecture have full rank or not. To that purpose, we define the parameter space $\mathbb{V}$ of weights and biases of neural DDE architectures $\Phi \in \textup{NDDE}^k_\tau(\mathcal{X},\mathbb{R}^q)$ and the subspaces $\mathbb{V}^\ast$ and $\mathbb{V}^0$ of full rank or non-full rank weight matrices, respectively. The parameter space only includes the weight matrices of the two affine linear transformations; the parameterization of the vector field is not considered.
	
	\begin{definition}[Parameter Space]\label{def:DDEweightspace}
		Let $\Phi\in \textup{NDDE}^k_\tau(\mathcal{X},\mathbb{R}^q)$, $\mathcal{X}\subset \mathbb{R}^n$ open, $k \geq 0$, be a neural DDE with weight matrices $W\in \mathbb{R}^{m \times n}$, $\widetilde{W}\in\mathbb{R}^{q \times m}$ and bias vectors $b \in \mathbb{R}^m$, $\tilde{b}\in \mathbb{R}^q$. The parameter space~$\mathbb{V}$ of all possible weights and biases is defined as
		\begin{equation*}
			\mathbb{V}\coloneqq \mathbb{R}^{m \times n} \times \mathbb{R}^{q \times m} \times \mathbb{R}^m \times \mathbb{R}^q. 
		\end{equation*}
		The subset of $\mathbb{V}$, such that both weight matrices $W$ and $\widetilde{W}$ have full rank, is defined by 
		\begin{equation*}
			\mathbb{V}^\ast \coloneqq \left\{(W,\widetilde{W},b,\tilde{b}) \in \mathbb{V}: \textup{rank}(W) = \min\{m,n\} \text{ and } \textup{rank}(\widetilde{W}) = \min\{q,m\} \right\}.
		\end{equation*}
	\end{definition}

	\section{Overview and Results}
	\label{sec:overview}
	
	In this section, we collect our main results regarding the universal embedding and approximation property of neural DDEs. Furthermore, we connect our established theorems to already existing results about basic neural DDEs without affine linear layers and neural ODEs, which are neural DDEs with delay $\tau = 0$. To that purpose, we define in Section \ref{sec:universal} the properties of universal embedding and universal approximation of $k$ times continuously differentiable functions. Additionally, we discuss the relationship between neural DDEs with general or parameterized vector fields with respect to the properties of universal embedding and universal approximation. Afterwards, we collect in Section~\ref{sec:nonaugmentedbasic} existing results for basic non-augmented neural DDEs without affine linear transformations before and after the DDE provided by \cite{Zhu2021}. In Section \ref{sec:nonaugmentedgeneral}, we prove for general non-augmented neural DDEs the universal embedding property for continuous maps if the memory capacity $K\tau$ of the neural DDE is sufficiently large. Furthermore, we state our main result about the non-universal approximation of continuous maps by non-augmented neural DDEs if the memory capacity $K\tau$ is sufficiently small, which is proven in Section \ref{sec:sectionproof}. For augmented neural DDEs, we show in Section \ref{sec:augmented} the universal embedding property for continuous functions independently of the delay if the dimension of the DDE is as large as the sum of input and output dimensions. 
	
	\subsection{Universal Embedding and Universal Approximation}
	\label{sec:universal}
	
	An important property of neural network models is universal approximation, i.e., the property to approximate any function in a given function space to an arbitrary level of precision. This means that the set of functions, which a neural network can represent, is dense in the underlying function space. In the following, we state the abstract definition of universal approximation of $k$~times continuously differentiable functions, $k \geq 0$.
	
	\begin{definition}[Universal Approximation \cite{Kratsios2021}] \label{def:universalapproximation}
		A neural network $\Phi_\theta: \mathcal{X} \rightarrow \mathcal{Y}$ with parameters $\theta$, topological space $\mathcal{X}$ and metric space $\mathcal{Y}$ has the universal approximation property with respect to the space $C^k(\mathcal{X},\mathcal{Y})$, $k \geq 0$, if for every $\varepsilon >0$ and for each function $\Psi \in C^k(\mathcal{X},\mathcal{Y})$, there exists a choice of parameters~$\theta$, such that $\textup{dist}_\mathcal{Y}(\Phi_\theta(x),\Psi(x)) < \varepsilon$ for all $x \in \mathcal{X}$.
	\end{definition}
	
	In the case of the general neural DDE architecture $\textup{NDDE}_\tau^k(\mathcal{X},\mathbb{R}^q)$, $\mathcal{X}\subset \mathbb{R}^n$ open, the parameters are the weights and biases of the parameter space $\mathbb{V}$ and the vector field $F$. If the underlying vector field of the neural DDE \eqref{eq:DDEaffinelinear} is parameterized as in \eqref{eq:DDEparametrized}, then the vector field $f_\text{DDE}$ is fixed and only the parameter function $\theta:\mathbb{R}\rightarrow \mathbb{R}^{p}$ and the weights and biases in $\mathbb{V}$ can be chosen. Classical feed-forward neural networks can obtain the universal approximation property by increasing the network width, depth, and the number of parameters~\cite{Hornik1989, Lin2018, Pinkus1999, Schaefer2006}. For neural ODEs, the approximation capability is increased by augmenting the phase space~\cite{Dupont2019,Kidger2022,Kuehn2023,Zhang2020a}. For the analysis of neural ODEs, the property of an exact representation, called the universal embedding property, is also used in the literature.	
	
	\begin{definition}[Universal Embedding \cite{Kuehn2023}]\label{def:universalembedding}
		A neural network $\Phi_\theta: \mathcal{X} \rightarrow \mathcal{Y}$ with parameters $\theta$ and topological spaces $\mathcal{X}$ and $\mathcal{Y}$ has the universal embedding property with respect to the space $C^k(\mathcal{X},\mathcal{Y})$, $k \geq 0$, if for every function $ \Psi \in C^k(\mathcal{X},\mathcal{Y})$, there exists a choice of parameters~$\theta$, such that $\Phi_\theta(x) = \Psi(x)$ for all $x \in \mathcal{X}$.
	\end{definition}
	
	In the case of neural ODEs it is shown, that an augmented neural ODE in an $(n+q)$-dimensional phase space with general non-autonomous vector field is sufficient to represent any continuous map $\Psi\in C^0(\mathbb{R}^n,\mathbb{R}^q)$ exactly \cite{Kuehn2023}. This property can be used to show, that if there exists a choice of parameters, such that a fixed parameterized vector field is arbitrary close to the general non-autonomous vector field, then the given map $\Psi$ can be approximated by a parameterized neural ODE in terms of Definition \ref{def:universalapproximation}. 
	
	In the following, we transfer this idea to neural DDEs. We discuss the relationship between the neural DDE architecture $\textup{NDDE}^k_{\tau}(\mathcal{X},\mathbb{R}^q)$, which is based on the solution map of the general non-autonomous DDE \eqref{eq:DDEconstantIC}, and the neural DDE architecture, for which the underlying vector field $f_\text{DDE}$ is parameterized by a function $\theta: \mathbb{R}\rightarrow \mathbb{R}^{p}$ as in \eqref{eq:DDEparametrized}, defined in the following.
	
	\begin{definition}[Parameterized Neural DDE] \label{def:NDDEparameterized}
		The set of all neural DDE architectures $\Phi_\theta: \mathcal{X} \rightarrow \mathbb{R}^q$, $\mathcal{X}\subset \mathbb{R}^n$ open, defined by~\eqref{eq:DDEaffinelinear}, but with underlying vector field $f_\textup{DDE}:\mathcal{C}\times \mathbb{R}^{p}\rightarrow \mathbb{R}^m$, delay $0\leq \tau\leq T$ and parameter function $\theta:\mathbb{R}\rightarrow \mathbb{R}^{p}$, is denoted by
		\begin{enumerate}[label=(\alph*), font=\normalfont]
			\item \label{def:NDDEparameterized_a} $\textup{NDDE}_{\tau,\theta}^k(\mathcal{X},\mathbb{R}^q)\subset C^k(\mathcal{X},\mathbb{R}^q)$, if $F(t,y_t) \coloneqq f_\textup{DDE}(y_t,\theta(t))$,  $F\in C_b^{0,k}(\Omega,\mathbb{R}^m)$ with $k \geq 1$ and $\Omega\subset \mathbb{R}\times \mathcal{C}$ open satisfies Assumption \ref{ass:vectorfield}.
			\item \label{def:NDDEparameterized_b} $\textup{NDDE}_{\tau,\theta}^0(\mathcal{X},\mathbb{R}^q)\subset C^0(\mathcal{X},\mathbb{R}^q)$, if $F: \Omega \rightarrow \mathbb{R}^m$, $F(t,y_t) \coloneqq f_\textup{DDE}(y_t,\theta(t))$ with $\Omega\subset \mathbb{R}\times \mathcal{C}$ open satisfies Assumption~\ref{ass:vectorfield} and the solution of the  DDE \eqref{eq:DDEparametrized} is continuous and unique. 
		\end{enumerate}
		For a fixed vector field $f_\textup{DDE}:\mathcal{C}\times \mathbb{R}^{p}\rightarrow \mathbb{R}^m$, the set of all parameter functions $\theta:\mathbb{R}\rightarrow \mathbb{R}^{p}$, such that the corresponding neural DDE defined by \eqref{eq:DDEaffinelinear} is an element of $\textup{NDDE}_{\tau,\theta}^k(\mathcal{X},\mathbb{R}^q)$, $k \geq 0$, is denoted by $\Theta^k(\mathbb{R},\mathbb{R}^{p})$.
	\end{definition}
	
	In the case $k\geq 1$, we trace the regularity of the parameterized neural DDE back to the regularity of the general neural DDE architecture $\textup{NDDE}_{\tau}^k(\mathcal{X},\mathbb{R}^q)$. In order to be $k$ times continuously differentiable, both the vector field $f_\text{DDE}$ and the parameter function $\theta$ have to be sufficiently regular. In the case $k = 0$, we collect in $\textup{NDDE}_{\tau,\theta}^0(\mathcal{X},\mathbb{R}^q)\subset C^0(\mathcal{X},\mathbb{R}^q)$ all neural DDE architectures $\Phi_\theta$ which are continuous and well-defined. We want to emphasize that the parameter $k$ in the set of parameter functions $\Theta^k(\mathbb{R},\mathbb{R}^{p})$ does not describe the regularity of $\theta \in \Theta^k(\mathbb{R},\mathbb{R}^{p})$, but the regularity of the corresponding neural DDE defined by \eqref{eq:DDEaffinelinear} with underlying vector field $f_\text{DDE}(y_t,\theta(t))$: if $\theta \in \Theta^k(\mathbb{R},\mathbb{R}^{p})$, then $\Phi \in C^k(\mathcal{X},\mathbb{R}^q)$. Using the theory of Carathéodory DDEs, existence, uniqueness, and differential dependence on initial data can also be proven for vector fields $f_\text{DDE}$, which depend, for example, on functions $\theta$, which are only piece-wise continuous \cite{Hale1993}.
	
	In the upcoming theorem, we relate the properties of universal embedding and approximation of the general neural DDE architecture $\textup{NDDE}_{\tau}^k(\mathcal{X},\mathbb{R}^q)$  (cf.\ Definition \ref{def:NDDE}) studied in this work, to universal embedding and approximation of functions by the parameterized neural DDE architecture $\textup{NDDE}_{\tau,\theta}^k(\mathcal{X},\mathbb{R}^q)$ (cf.\ Definition \ref{def:NDDEparameterized}), which is mainly used in practice. In part~\ref{th:DDEgerenalparameterized_a} we show, that if the general neural DDE architecture  $\textup{NDDE}_{\tau}^k(\mathcal{X},\mathbb{R}^q)$ does not have the universal approximation property, then independently of how the parameterization of the weights $\theta$ is chosen, every neural DDE architecture $\textup{NDDE}_{\tau,\theta}^k(\mathcal{X},\mathbb{R}^q)$ also does not have the universal approximation property. In part~\ref{th:DDEgerenalparameterized_b} we show, that if the parameterization of the weights $\theta \in \Theta^i(\mathbb{R},\mathbb{R}^{p})$ is chosen in such a way, that the vector field $f_\text{DDE}$ approximates every general vector field $F$ arbitrary well, then the property of universal approximation of the general architecture  $\textup{NDDE}_{\tau}^k(\mathcal{X},\mathbb{R}^q)$ can be transferred to the parameterized architecture $\textup{NDDE}_{\tau,\theta}^i(\mathcal{X},\mathbb{R}^q)$.
	For the proof, we need the sup-norm of a matrix, defined by
	\begin{equation}\label{eq:matrixnorm}
		\norm{A}_\infty = \max_{\norm{x}_\infty = 1} \norm{Ax}_\infty
	\end{equation}
	for $A \in \mathbb{R}^{r \times s}$ and $x \in \mathbb{R}^s$.

	\begin{theorem}[Relationship General and Parameterized Neural DDE] \label{th:DDEgerenalparameterized}
		Let $\mathcal{X}\subset \mathbb{R}^n$ open and consider the neural DDE architecture $\textup{NDDE}^k_{\tau}(\mathcal{X},\mathbb{R}^q)$, $k\geq 0$, based on the general non-autonomous DDE~\eqref{eq:DDEconstantIC} and the architecture $\textup{NDDE}^i_{\tau,\theta}(\mathcal{X},\mathbb{R}^q)$, $i\geq 0$, based the parameterized DDE~\eqref{eq:DDEparametrized}.
		\begin{enumerate}[label=(\alph*), font=\normalfont]
			\item \label{th:DDEgerenalparameterized_a} If the neural DDE architecture $\textup{NDDE}^k_{\tau}(\mathcal{X},\mathbb{R}^q)$, $k \geq 0$,  does not have the universal approximation (embedding) property with respect to the space $C^j(\mathcal{X}, \mathbb{R}^q)$, $j\geq 0$, then also the architecture $\textup{NDDE}^{k}_{\tau,\theta}(\mathcal{X},\mathbb{R}^q)$ does not have the universal approximation (embedding) property with respect to the space $C^j(\mathcal{X}, \mathbb{R}^q)$.
			\item \label{th:DDEgerenalparameterized_b} Let the following three assumptions hold:
			\begin{itemize}
				\item The neural DDE architecture $\textup{NDDE}^k_{\tau}(\mathcal{X},\mathbb{R}^q)$, $k\geq 0$, has the universal approximation or embedding property with respect to the space $C^j(\mathcal{X}, \mathbb{R}^q)$, $j \geq 0$.
				\item The vector field $f_\textup{DDE}:\mathcal{C}\times \mathbb{R}^{p}\rightarrow \mathbb{R}^m$ and a space of parameter functions $\Theta^i(\mathbb{R},\mathbb{R}^{p})$, $i \geq 0$, are fixed.
				\item For any vector field $F: \Omega_t \times \Omega_y \rightarrow \mathbb{R}^m$ satisfying Assumption \ref{ass:vectorfield} and corresponding to the architecture $\textup{NDDE}^k_{\tau}(\mathcal{X},\mathbb{R}^q)$,  and every  $\delta >0$, there exists $\theta \in \Theta^i(\mathbb{R},\mathbb{R}^{p})$, such that 
				\begin{equation*}
					\norm{F(t,y_t)-f_\textup{DDE}(y_t,\theta(t))}_\infty < \delta \quad \textup{ for all } (t,y_t)\in [0,T] \times \Omega_y \subset \Omega_t \times \Omega_y.
				\end{equation*}
			\end{itemize}
			Then also the neural DDE architecture $\textup{NDDE}^i_{\tau,\theta}(\mathcal{X},\mathbb{R}^q)$ has the universal approximation property with respect to the space $C^j(\mathcal{X}, \mathbb{R}^q)$. 
		\end{enumerate}
	\end{theorem}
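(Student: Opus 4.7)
For part~\ref{th:DDEgerenalparameterized_a}, I would simply observe the inclusion $\textup{NDDE}^{k}_{\tau,\theta}(\mathcal{X},\mathbb{R}^q)\subseteq \textup{NDDE}^{k}_{\tau}(\mathcal{X},\mathbb{R}^q)$: any $\Phi_\theta$ in the parameterized class is, by Definition~\ref{def:NDDEparameterized}, built from the vector field $F(t,y_t)\coloneqq f_\textup{DDE}(y_t,\theta(t))$, which lies in $C^{0,k}_b(\Omega,\mathbb{R}^m)$ (or is continuous with unique solution for $k=0$) and satisfies Assumption~\ref{ass:vectorfield}, so it is also an element of the general class. If the smaller set enjoyed universal approximation (respectively embedding) with respect to $C^j(\mathcal{X},\mathbb{R}^q)$, so would the larger; the contrapositive gives the claim.

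For part~\ref{th:DDEgerenalparameterized_b}, my plan is to chain two approximations. Given $\Psi\in C^j(\mathcal{X},\mathbb{R}^q)$ and $\varepsilon>0$, first use the hypothesized universal approximation (or embedding) of $\textup{NDDE}^{k}_\tau$ to select weights $(W,\widetilde W,b,\tilde b)\in\mathbb{V}$ and a general vector field $F$ whose associated architecture $\Phi$ satisfies $\norm{\Phi(x)-\Psi(x)}_\infty<\varepsilon/2$ for every $x\in\mathcal{X}$ (with error zero in the embedding case). Keep the same affine layers $\lambda,\tilde\lambda$, and for a parameter $\delta>0$ to be fixed later, invoke the density hypothesis to produce $\theta\in\Theta^i(\mathbb{R},\mathbb{R}^{p})$ with $\norm{F(t,y_t)-f_\textup{DDE}(y_t,\theta(t))}_\infty<\delta$ on $[0,T]\times\Omega_y$. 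Denote the resulting parameterized architecture by $\Phi_\theta\in\textup{NDDE}^{i}_{\tau,\theta}(\mathcal{X},\mathbb{R}^q)$, and let $y,\tilde y$ be the DDE trajectories associated with $F$ and $\tilde F\coloneqq f_\textup{DDE}(\cdot,\theta(\cdot))$, both starting from the common initial datum $c_{\lambda(x)}$.

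The core technical step is a Grönwall-type continuous-dependence bound. Both $F$ and $\tilde F$ are Lipschitz in the second argument with a common constant $L$ on the relevant set, coming from the $C^{0,k}_b$/$C^{0,i}_b$ bounds when $k,i\geq 1$ and from the well-posedness hypotheses in the continuous regime. Setting $z(t)=\tilde y(t)-y(t)$, which vanishes on $[-\tau,0]$, and splitting $\tilde F(s,\tilde y_s)-F(s,y_s)=[\tilde F(s,\tilde y_s)-\tilde F(s,y_s)]+[\tilde F(s,y_s)-F(s,y_s)]$, one obtains
\begin{equation*}
\sup_{r\in[0,t]}\norm{z(r)}_\infty \;\leq\; L\int_0^t \sup_{r\in[0,s]}\norm{z(r)}_\infty\,\dd s + \delta T,
\end{equation*}
and Grönwall yields $\sup_{t\in[0,T]}\norm{z(t)}_\infty\leq \delta T\me^{LT}$. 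Since $\Phi_\theta(x)-\Phi(x)=\widetilde W(\tilde y(T)-y(T))$, the matrix sup-norm from~\eqref{eq:matrixnorm} gives $\norm{\Phi_\theta(x)-\Phi(x)}_\infty\leq \norm{\widetilde W}_\infty\,\delta T\me^{LT}$ uniformly in $x\in\mathcal{X}$. Choosing $\delta<\varepsilon/(2\norm{\widetilde W}_\infty T\me^{LT})$ and combining with the first approximation via the triangle inequality proves universal approximation for $\textup{NDDE}^{i}_{\tau,\theta}$.

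The main obstacle I anticipate is ensuring the perturbed history $\tilde y_s$ actually stays inside the open set $\Omega_y$ on which the density estimate for $F-\tilde F$ was asserted; this can be arranged by first shrinking $\delta$ so that $\tilde y$ remains in a tubular neighborhood of $y$ inside $\Omega_y$, exploiting that by Assumption~\ref{ass:vectorfield} the reachable set of unperturbed trajectories over all $x\in\mathcal{X}$ and $t\in[0,T]$ is contained in $\Omega_y$. A minor secondary point is sourcing the Lipschitz constant $L$ in the $k=0$ or $i=0$ regime, where it must come from the Carathéodory/well-posedness setting rather than from a $C^{0,k}_b$ bound.
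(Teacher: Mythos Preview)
For part~(a) your argument coincides with the paper's: both note that every $\Phi_\theta\in\textup{NDDE}^{k}_{\tau,\theta}$ is, via $F(t,y_t)\coloneqq f_{\textup{DDE}}(y_t,\theta(t))$, already an element of $\textup{NDDE}^{k}_{\tau}$, and conclude by contraposition.

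For part~(b) the paper takes a shorter route than yours. After selecting $\Phi$ with $\sup_x\norm{\Phi(x)-\Psi(x)}_\infty<\varepsilon/2$ and then $\theta$ with $\norm{F(t,\cdot)-f_{\textup{DDE}}(\cdot,\theta(t))}_\infty<\delta$, it writes $\Phi(x)-\Phi_\theta(x)$ as
\[
\widetilde W\Big(\int_0^T F(t,y_t)\,\dd t-\int_0^T f_{\textup{DDE}}(y_t,\theta(t))\,\dd t\Big)
\]
and bounds this directly by $\normm{\widetilde W}_\infty T\delta$, so that $\delta=\varepsilon/(2\normm{\widetilde W}_\infty T)$ suffices and no Gr\"onwall step or exponential factor appears. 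Note, however, that this display inserts the \emph{unperturbed} history $y_t$ into both integrals, whereas the second term of the difference is really $y_\theta(T)=\lambda(x)+\int_0^T f_{\textup{DDE}}\big((y_\theta)_t,\theta(t)\big)\,\dd t$; as written, the identity conflates $y_t$ with $(y_\theta)_t$. Your Gr\"onwall continuous-dependence estimate is exactly what closes this gap, since it honestly tracks the drift between $y$ and $y_\theta$, at the harmless price of the factor $\me^{LT}$ in the choice of $\delta$. The obstacles you anticipate are genuine but secondary; in particular, in your splitting the density bound is applied to $\tilde F(s,y_s)-F(s,y_s)$, which only requires the \emph{unperturbed} history $y_s$ to lie in $\Omega_y$, and that holds along the well-posed solution. (You should also treat the degenerate case $\normm{\widetilde W}_\infty=0$ separately, as the paper does.)
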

	
	\begin{proof}
		We prove part \ref{th:DDEgerenalparameterized_a} by contraposition. Let $j\geq 0$, $k\geq 0$ and assume that the neural DDE architecture $\textup{NDDE}^{k}_{\tau,\theta}(\mathcal{X},\mathbb{R}^q)$ has the universal approximation property with respect to the space $C^j(\mathcal{X},\mathbb{R}^q)$. Hence, for any map $\Psi\in C^j(\mathcal{X}, \mathbb{R}^q)$, there exists a choice of the parameter function $\theta\in \Theta^k(\mathbb{R},\mathbb{R}^{p})$, such that the neural DDE $\Phi_\theta \in\textup{NDDE}^k_{\tau,\theta}(\mathcal{X},\mathbb{R}^q)$ satisfies
		\begin{equation}\label{eq:approximation}
			\norm{\Phi_\theta(x)-\Psi(x)}_\infty < \varepsilon \quad \textup{ for all } x \in \mathcal{X}. 
		\end{equation}
		The neural DDE $\Phi_\theta \in\textup{NDDE}^k_{\tau,\theta}(\mathcal{X},\mathbb{R}^q)$ is based on a parameterized vector field  $f_\textup{DDE}:\mathcal{C}\times \mathbb{R}^{p}\rightarrow \mathbb{R}^m$ with fixed parameter function $\theta\in \Theta^k(\mathbb{R},\mathbb{R}^{p})$. For every $\theta\in \Theta^k(\mathbb{R},\mathbb{R}^{p})$, the vector field 
		\begin{equation*}
			F: \Omega \rightarrow \mathbb{R}^m, \quad F(t,y_t) \coloneqq f_\textup{DDE}(y_t,\theta(t))
		\end{equation*}
		with  $\Omega =  \mathbb{R}\times \mathcal{C}$ satisfies  Assumption \ref{ass:vectorfield}. If $k\geq 1$, it holds by assumption $F(t,y_t) \in C_b^{0,k}(\Omega,\mathbb{R}^m)$ and if $k = 0$, the solution of the DDE \eqref{eq:DDEconstantIC}, hence also of the DDE \eqref{eq:DDEparametrized} is continuous and unique. It follows that the general non-autonomous vector field $F$ corresponds to a neural DDE $\Phi \in\textup{NDDE}^k_{\tau}(\mathcal{X},\mathbb{R}^q)$, \mbox{$k \geq 0$.} By construction $\Phi(x) = \Phi_\theta(x)$ for all $x \in \mathcal{X}$, such that also the class of neural DDE architectures $\textup{NDDE}^k_{\tau}(\mathcal{X},\mathbb{R}^q)$ has the universal approximation property with respect to the space $C^j(\mathcal{X},\mathbb{R}^q)$, as  $\Psi\in C^j(\mathcal{X}, \mathbb{R}^q)$ was arbitrary. The statement follows, as this is a contradiction to the assumption that the class  $\textup{NDDE}^k_{\tau}(\mathcal{X},\mathbb{R}^q)$ does not have the universal approximation property. The same argumentation holds verbatim for the universal embedding property, if the definition of approximation in~\eqref{eq:approximation} is replaced by the definition of embedding, i.e., $\Phi_\theta(x) = \Psi(x)$ for all $x \in \mathcal{X}$.
		
		To prove part \ref{th:DDEgerenalparameterized_b}, let $\Psi \in C^j(\mathcal{X},\mathbb{R}^q)$ for a fixed $j \geq 0$. As $\textup{NDDE}^k_{\tau}(\mathcal{X},\mathbb{R}^q)$ has the universal approximation or embedding property for a fixed $k\geq 0$, there exists for every given $\eps>0$ a neural DDE $\Phi \in \textup{NDDE}^k_{\tau}(\mathcal{X},\mathbb{R}^q)$, such that
		\begin{equation*}
			\norm{\Phi(x)-\Psi(x)}_\infty  < \frac{\varepsilon}{2} \quad \textup{ for all } x \in \mathcal{X}.
		\end{equation*}
		If $\normm{\widetilde{W}}_\infty = 0$ for the matrix norm of the weight matrix $\widetilde{W}$ of the affine linear layer $\tilde \lambda$, let $\delta = 1$ and if $\normm{\widetilde{W}}_\infty >0$, let $\delta = \frac{\eps}{2 \normk{\widetilde{W}}_\infty T} >0$. By assumption, there exists for the vector field $F: \Omega_t \times \Omega_y \rightarrow \mathbb{R}^m$ of the neural DDE $\Phi$ a parameter function $\theta\in\Theta^i(\mathbb{R},\mathbb{R}^{p})$, $i \geq 0$, such that
		\begin{equation*}
			\norm{F(t,y_t)-f_\textup{DDE}(y_t,\theta(t))}_\infty < \delta \quad \textup{ for all } (t,y_t)\in [0,T] \times \Omega_y \subset \Omega_t \times \Omega_y.
		\end{equation*}
		It follows for the neural DDE $\Phi_\theta\in\textup{NDDE}^i_{\tau,\theta}(\mathcal{X},\mathbb{R}^q)$, based on the fixed vector field $f_\textup{DDE}$ and the parameter function $\theta\in\Theta^i(\mathbb{R},\mathbb{R}^{p})$ chosen before that
		\begin{align*}
			\sup_{x \in \mathcal{X}} \norm{\Phi_\theta(x)-\Psi(x)}_\infty &\leq  \sup_{x \in \mathcal{X}} \norm{\Phi(x)-\Psi(x)}_\infty + \sup_{x \in \mathcal{X}} \norm{\Phi_\theta(x)-\Phi(x)}_\infty\\
			&< \frac{\eps}{2} +  \sup_{x \in \mathcal{X}} \norm{\tilde\lambda(y(0,c_{\lambda(x)})(T) - \tilde\lambda(y_\theta(0,c_{\lambda(x)})(T)}_\infty,
		\end{align*}
		where we denote by $y$ the solution of the DDE \eqref{eq:DDEconstantIC} corresponding to $\Phi$ and by $y_\theta$ the solution of the DDE \eqref{eq:DDEparametrized} corresponding to $\Phi_\theta$. It holds
		\begin{align*}
			&\sup_{x \in \mathcal{X}} \norm{\tilde\lambda(y(0,c_{\lambda(x)})(T) - \tilde\lambda(y_\theta(0,c_{\lambda(x)})(T)}_\infty \\
			=\, &\sup_{x \in \mathcal{X}} \; \norm{\widetilde{W}\left( Wx+b + \int_0^T F(t,y_t)  \; \dd t \right) + \tilde{b} - \widetilde{W}\left( Wx+b + \int_0^T f_\textup{DDE}(y_t,\theta(t)) \; \dd t \right) - \tilde{b} \, }_\infty \\
			\leq \,&\sup_{x \in \mathcal{X}} \; \normm{\widetilde{W}}_\infty \norm{\int_0^T F(t,y_t) - f_\textup{DDE}(y_t,\theta(t))  \; \dd t \,  }_\infty \\
			\leq \,&\sup_{x \in \mathcal{X}} \; \normm{\widetilde{W}}_\infty \int_0^T \norm{F(t,y_t) - f_\textup{DDE}(y_t,\theta(t)) }_\infty \dd t < \normm{\widetilde{W}}_\infty T \delta = \frac{\eps}{2}.
		\end{align*}
		Consequently it holds
		\begin{equation*}
			\sup_{x \in \mathcal{X}} \norm{\Phi_\theta(x)-\Psi(x)}_\infty  < \frac{\eps}{2} + \frac{\eps}{2} = \eps,
		\end{equation*}
		and as $\eps>0$ and $\Psi\in C^j(\mathcal{X},\mathbb{R}^q)$  were arbitrary, also the architecture $\textup{NDDE}^i_{\tau,\theta}(\mathcal{X},\mathbb{R}^q)$ has the universal approximation property with respect to the space $C^j(\mathcal{X},\mathbb{R}^q)$.
	\end{proof}

	In this work, we restrict our analysis to neural DDEs with the architecture $\textup{NDDE}^k_\tau(\mathcal{X},\mathbb{R}^q)$, $k\geq 0$, defined by \eqref{eq:DDEaffinelinear} with general non-autonomous vector field \eqref{eq:DDEconstantIC}. If a vector field $f_\text{DDE}$ is fixed and parameterized by a function $\theta \in \Theta^i(\mathbb{R},\mathbb{R}^{p})$ as in \eqref{eq:DDEparametrized}, all our results can be transferred from the general architecture $\textup{NDDE}^k_\tau(\mathcal{X},\mathbb{R}^q)$ to the parameterized architecture $\textup{NDDE}^i_{\tau,\theta}(\mathcal{X},\mathbb{R}^q)$ using Theorem \ref{th:DDEgerenalparameterized}.

	\subsection{Basic Non-Augmented Neural DDEs}
	\label{sec:nonaugmentedbasic}
	
	To study the universal embedding and approximation property of neural DDEs, we begin with non-augmented architectures $\Phi\in \textup{NDDE}_{\tau,\textup{N}}^k(\mathcal{X},\mathbb{R}^q)$, $\mathcal{X}\subset \mathbb{R}^n$ open, $k\geq 0$, where the dimension $m$ of the DDE is smaller or equal to the maximum of the input dimension~$n$ and the output dimension~$q$. In this section, we restrict ourselves to the case that both affine linear layers are identity transformations with $n = m = q$. Especially, it holds $W = \widetilde{W} = \text{Id}_n \in \mathbb{R}^{n \times n}$, which is the identity matrix and $b = \tilde{b} = 0 \in \mathbb{R}^n$, such that $\lambda = \tilde{\lambda}= \text{id}_{\mathbb{R}^n}$ is the identity transformation on $\mathbb{R}^n$. We call the resulting architecture, already introduced in \cite{Zhu2021}, a basic non-augmented neural DDE. The case of general non-augmented neural DDEs is treated in the upcoming Section~\ref{sec:nonaugmentedgeneral}.
	
	A basic non-augmented neural DDE $\Phi\in \textup{NDDE}_{\tau,\textup{N}}^k(\mathcal{X},\mathbb{R}^n)$, $\mathcal{X}\subset \mathbb{R}^n$ open, maps the initial data $x \in \mathcal{X}$ to the time-$T$ map of the DDE with constant initial function $c_x \in \mathcal{C}$. First, we show with a one-dimensional example that introducing a large delay term $\tau>0$ increases the embedding capability of neural DDEs compared to neural ODEs. In the one-dimensional case, a typical example to show that basic neural ODEs, i.e., neural DDEs $\Phi \in \textup{NDDE}_{0,\textup{N}}^k(\mathbb{R},\mathbb{R})$ with $k \geq 0$, $\tau = 0$ and $\lambda = \tilde\lambda = \text{id}_\mathbb{R}$ do not have the universal embedding property with respect to the space $C^j(\mathbb{R},\mathbb{R})$ for any $j \geq 0$, is the following.
	
	\begin{example} \label{ex:minusxembedding_a}
		\normalfont
		Let $\Psi\in C^\infty(\mathbb{R},\mathbb{R})$, $x \mapsto -x$, $\Phi \in \textup{NDDE}_{0,\textup{N}}^k(\mathbb{R},\mathbb{R})$ with  $\lambda = \tilde\lambda = \textup{id}_\mathbb{R}$ and $k \geq 0$. As the solution of the ODE underlying the neural ODE $\Phi$ is, by definition of the architecture, unique and exists for $t \in [0,T]$, the solution curves of the ODE cannot cross in the phase space $\mathbb{R}$. Hence, the map $\Psi$ cannot be embedded as a time-$T$ map in the neural ODE $\Phi$, as solution curves would need to cross, see Figure~\ref{fig:example_minusx}\subref{fig:example_minusx_a}. As $\Psi\in C^j(\mathbb{R},\mathbb{R})$ for every $j \geq 0$, the architecture $ \textup{NDDE}_{0,\textup{N}}^k(\mathbb{R},\mathbb{R})$ does not have the universal embedding property with respect to the space $C^j(\mathbb{R},\mathbb{R})$ for any $j \geq 0$.
	\end{example}
	
	As a special case of our main result in the following Section \ref{sec:nonaugmentedgeneral}, it follows that the architecture $ \textup{NDDE}_{0,\textup{N}}^k(\mathbb{R},\mathbb{R})$, which allows for two additional affine linear transformations, also does not have the universal approximation property with respect to the space $C^j(\mathbb{R},\mathbb{R})$ for any $j \geq 0$. In comparison to neural ODEs, the phase space of neural DDEs is infinite-dimensional, such that we expect that the embedding capability of neural DDEs is larger than the embedding capability of neural ODEs. In \cite{Zhu2021} it is shown, that for $\tau = T = 1$ it is possible to embed the map $\Psi$ of Example \ref{ex:minusxembedding_a} in the neural DDE architecture $\textup{NDDE}_{1,\textup{N}}^\infty(\mathbb{R},\mathbb{R})$ with $\lambda = \tilde\lambda = \text{id}_\mathbb{R}$. 
	
	\begin{example} \label{ex:minusxembedding_b}
		\normalfont
		Let $\Psi\in C^\infty(\mathbb{R},\mathbb{R})$, $x \mapsto -x$ and let $\Phi \in \textup{NDDE}_{1,\textup{N}}^\infty(\mathbb{R},\mathbb{R})$ with  $\lambda = \tilde\lambda = \textup{id}_\mathbb{R}$, defined by the one-dimensional DDE
		\begin{equation*}
			\begin{aligned}
				\frac{\dd y}{\dd t} &= -2y(t-1), \qquad&& t \in [0,1], \\
				y(t) &= c_x(t), &&t \in [-1,0],
			\end{aligned}
		\end{equation*}
		with constant in time initial function $c_x:[-1,0]\rightarrow \mathbb{R}$, $c_x(t) = x$. On the time interval $[0,1]$, the DDE is uniquely solved by $y(0,c_x)(t) = (1-2t)\cdot x$, such that $y(1) = -x$ for all $x \in \mathbb{R}$. Hence, the map $\Psi$ is embedded in the basic neural DDE architecture $\textup{NDDE}_{1,\textup{N}}^\infty(\mathbb{R},\mathbb{R})$, see Figure \ref{fig:example_minusx}\subref{fig:example_minusx_b}.
	\end{example}
	
	The idea used in the construction of the vector field in Example \ref{ex:minusxembedding_b} is that for the large delay $\tau = T$, the  DDE is uniquely solvable via direct integration on the whole interval $[0,T]$. The following theorem generalizes this idea to $n$-dimensional  maps $\Psi\in C^k(\mathcal{X}, \mathbb{R}^n)$, $\mathcal{X} \subset \mathbb{R}^n$ open, and shows that~$\Psi$ can be embedded in the neural DDE architecture $\textup{NDDE}_{T,\textup{N}}^k(\mathcal{X},\mathbb{R}^n)$ with $\lambda = \tilde\lambda = \text{id}_{\mathbb{R}^n}$ and $\tau = T$.
	
	\begin{theorem}[Universal Embedding for Basic Neural DDEs with $\tau = T$ \cite{Zhu2021}] \label{th:universal_zhu}
		Let $\Psi\in C^k(\mathcal{X}, \mathbb{R}^n)$ with $\mathcal{X} \subset \mathbb{R}^n$ open and $k \geq 0$. Then the map $\Psi$ can be embedded in the  neural DDE architecture $\textup{NDDE}_{T,\textup{N}}^k(\mathcal{X},\mathbb{R}^n)$ with $\lambda = \tilde\lambda = \textup{id}_{\mathbb{R}^n}$ and delay $\tau = T$.
	\end{theorem}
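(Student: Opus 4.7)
The plan is to exploit the fact that with delay $\tau = T$ and constant initial data $c_x$ on $[-T,0]$, the delayed value $y(t-T)$ is frozen to $x$ throughout the integration interval $t \in [0,T]$. This effectively turns the DDE into a non-autonomous ODE in which the state $x$ appears only as a parameter via the delay term, so any target value $\Psi(x)$ can be reached by an explicit linear interpolation in time.

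Concretely, I would pick a smooth scalar profile $\phi \in C^\infty([0,T],[0,1])$ with $\phi(0) = 0$ and $\phi(T) = 1$ (e.g.\ obtained from one of the smooth bump functions $\gamma_{r_1,r_2}$ already used in Section~\ref{sec:modeling_discretization}) and define the vector field
\begin{equation*}
F : \mathbb{R} \times \mathcal{C} \to \mathbb{R}^n, \qquad F(t, y_t) \coloneqq \phi'(t)\bigl(\Psi(y_t(-T)) - y_t(-T)\bigr),
\end{equation*}
where $y_t(-T) = y(t-T)$. Because $c_x$ is constant on $[-T,0]$, for $t \in [0,T]$ we have $y(t-T) = c_x(t-T) = x$, so the DDE collapses to $\dot y = \phi'(t)(\Psi(x) - x)$ with $y(0) = x$. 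Direct integration yields $y(t) = x + \phi(t)(\Psi(x) - x)$ and in particular $y(T) = \Psi(x)$. Since $\lambda = \tilde\lambda = \textup{id}_{\mathbb{R}^n}$, the resulting map $\Phi : \mathcal{X} \to \mathbb{R}^n$ satisfies $\Phi(x) = \Psi(x)$ for every $x \in \mathcal{X}$, which is exactly the universal embedding property.

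The only remaining step is to verify that this construction places $\Phi$ in $\textup{NDDE}^k_{T,\textup{N}}(\mathcal{X},\mathbb{R}^n)$, i.e.\ to meet Assumption~\ref{ass:vectorfield} and the regularity demand on $F$ from Definition~\ref{def:NDDE}. For $k \geq 1$, the functional $y_t \mapsto y_t(-T)$ is a bounded linear evaluation map on $\mathcal{C}$, so $F$ inherits $k$-fold differentiability in the second variable from $\Psi$, and its Fréchet derivatives up to order $k$ are bounded by products of $\phi'$ and derivatives of $\Psi$. To ensure membership in $C^{0,k}_b$ globally (rather than just locally along solution curves) I would choose $\Omega_y$ to be a suitable open neighborhood of $\Omega_0 = \{c_x : x \in \mathcal{X}\}$ on which $\Psi$ and its first $k$ derivatives are uniformly bounded, or alternatively multiply $F$ by a smooth cutoff in $y_t(-T)$; this does not alter the embedding since the relevant trajectories stay within the region where the cutoff equals one. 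For $k = 0$ the same $F$ is continuous and Lipschitz in $y_t$ on compact subsets, so the DDE has a unique continuous solution by the standard existence theory cited in Lemma~\ref{lem:dde_regularity}, placing $\Phi$ in $\textup{NDDE}^0_{T,\textup{N}}(\mathcal{X},\mathbb{R}^n)$.

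The main obstacle, and the only place where genuine care is needed, is reconciling the purely local construction above with the global boundedness requirement in $C^{0,k}_b(\Omega,\mathbb{R}^n)$ when $\mathcal{X}$ or the derivatives of $\Psi$ are unbounded. This is handled by the cutoff argument sketched above, since for each fixed $x \in \mathcal{X}$ the solution $y(t) = x + \phi(t)(\Psi(x) - x)$ stays in an a~priori compact tube around the segment joining $x$ to $\Psi(x)$, which lets us truncate $F$ outside this tube without affecting the time-$T$ map. Everything else is a direct computation using the frozen-delay phenomenon that is characteristic of the $\tau = T$ regime.
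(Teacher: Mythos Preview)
Your proposal is correct and follows essentially the same approach as the paper: both exploit the frozen-delay phenomenon $y(t-T)=x$ on $[0,T]$ to reduce the DDE to a trivially integrable ODE whose time-$T$ map is $\Psi(x)$. The paper simply takes the linear profile $\phi(t)=t/T$, i.e.\ the vector field $F(t,y_t)=\tfrac{1}{T}\bigl(\Psi(y(t-T))-y(t-T)\bigr)$, and does not elaborate on the $C^{0,k}_b$ membership or cutoff issues you raise; your treatment of those points is more careful than the original, but the core construction is identical.
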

	
	\begin{proof}
		Let $\Phi \in \textup{NDDE}_{T,\textup{N}}^k(\mathcal{X},\mathbb{R}^n)$ with  $\lambda = \tilde\lambda = \text{id}_{\mathbb{R}^n}$ be defined by the $n$-dimensional DDE
		\begin{align} \label{eq:vectorfield_zhu}
			\begin{aligned}
				\frac{\dd y}{\dd t} &= \frac{1}{T} (\Psi(y(t-T))-y(t-T)), \qquad&& t \in [0,T], \\
				y(t) &= c_x(t), &&t \in [-T,0],
			\end{aligned}
		\end{align}
		with constant initial data $c_x: [0,T] \rightarrow \mathbb{R}^n$, $c_x(t) = x$ for $x \in \mathcal{X} \subset \mathbb{R}^n$. On the time interval $[0,T]$ the DDE is uniquely solved by the $k$ times continuously differentiable function $y(t) = x + \frac{t}{T}(\Psi(x)-x)$, resulting in the time-$T$ map $y(T) = \Psi(x)$. As $\lambda = \tilde\lambda = \text{id}_{\mathbb{R}^n}$, the map $\Psi$ is embedded in the basic neural DDE architecture $\textup{NDDE}_{T,\text{N}}^k(\mathcal{X},\mathbb{R}^n)$.
	\end{proof}
	
	Theorem \ref{th:universal_zhu} is restricted to basic non-augmented neural DDEs with identity transformations as affine linear layers and a large delay $\tau = T$. Hereby a constant discrete delay of type $A$, $\tau_{L,A}(t) = L\delta =T$ for $t\in[0,T]$ is used, cf.\ Section \ref{sec:modeling_discretization}.  A delay of length $\tau = T$ corresponds to the existence of an inter-layer connection of length $L$, such that the last layer $h_L$ can be directly computed from the initial data~$h_0$, which is a strong assumption on the underlying neural network architecture. We are interested in knowing if shorter inter-layer connections can also lead to an improvement of the expressivity of a neural network. In the following sections, we study the influence of parameters on the universal embedding and approximation capability of neural DDEs, including the length of the delay $\tau$, the Lipschitz constant~$K$, and the dimension $m$ of the vector field. Depending on specific parameter areas, we show universal embedding and non-universal approximation results.
	
	\begin{figure}
		\centering
		\begin{subfigure}[t]{0.45\textwidth}
			\centering
			\includegraphics[width=1\textwidth]{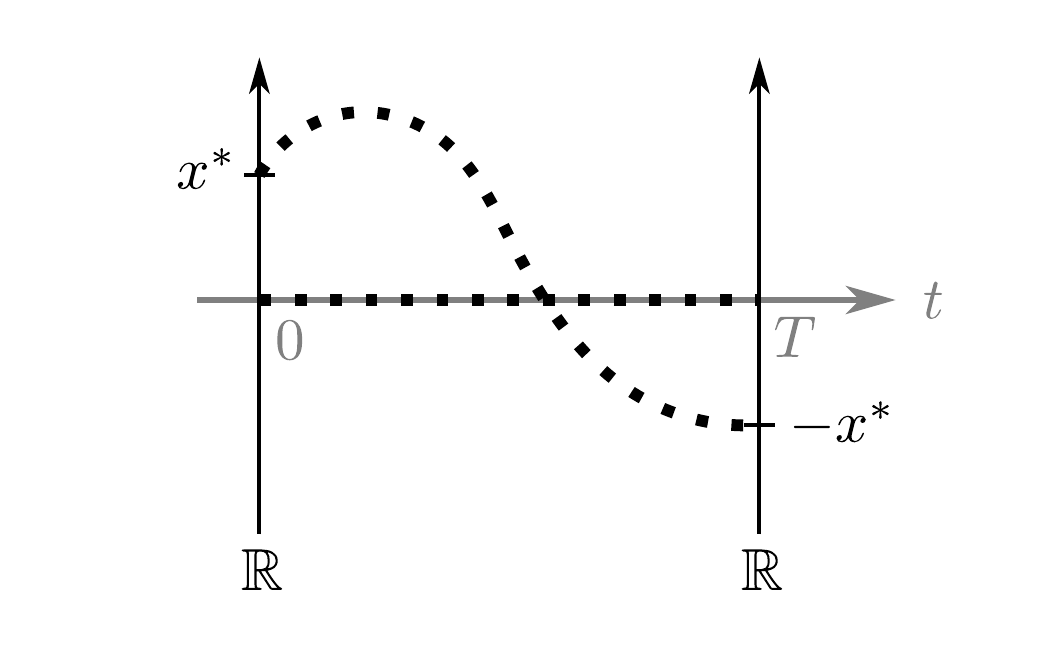}
			\caption{Every two possible dotted trajectories from $y(0) = 0$ to $y(T) = 0$ and from $y(0) = x^*$ to $y(T) = -x^*$ for some $x^* \neq 0$ have to intersect, which is not possible for ODE solution curves.}
			\label{fig:example_minusx_a}
		\end{subfigure}
		\begin{subfigure}[t]{0.05\textwidth}
			\textcolor{white}{.}
		\end{subfigure}
		\begin{subfigure}[t]{0.45\textwidth}
			\centering
			\includegraphics[width=1\textwidth]{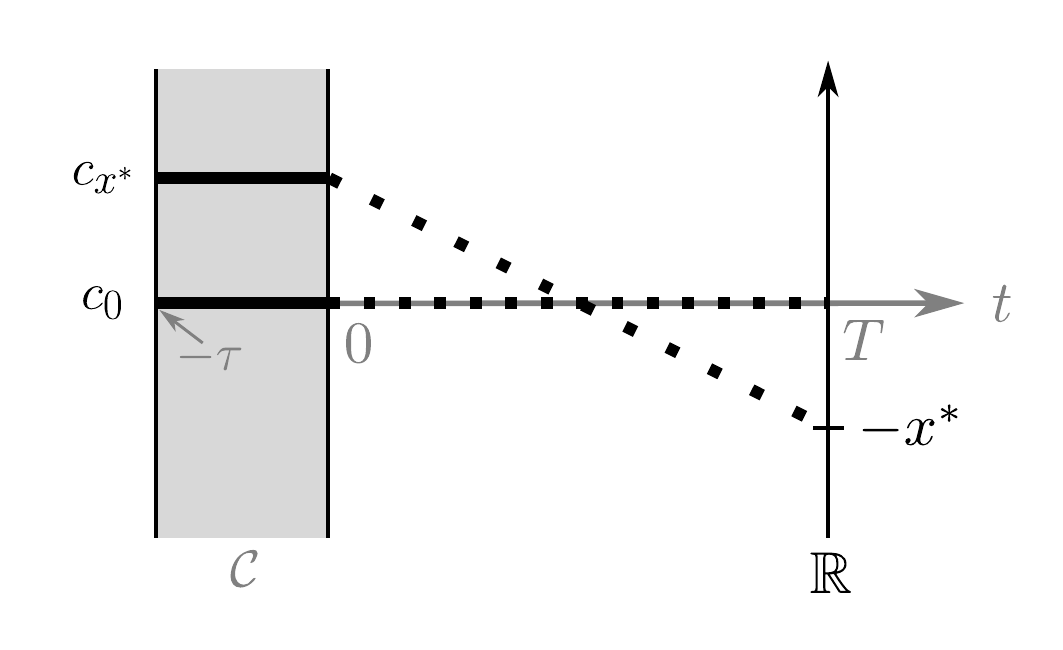}
			\caption{The trajectories of DDEs can intersect and there exists a DDE with constant initial data and solutions curves going from $y(0) = 0$ to $y(T) = 0$ and from $y(0) = x^*$ to $y(T) = -x^*$.}
			\label{fig:example_minusx_b}
		\end{subfigure}
		\caption{The map $\Psi\in C^\infty(\mathbb{R},\mathbb{R})$, $x \mapsto -x$  cannot be embedded in the basic neural ODE architecture $\textup{NDDE}_{0,\textup{N}}^k(\mathbb{R},\mathbb{R})$, $k \geq 0$, but in the basic neural DDE architecture $\textup{NDDE}_{1,\textup{N}}^0(\mathbb{R},\mathbb{R})$.}
		\label{fig:example_minusx}
	\end{figure}	
	
	\subsection{General Non-Augmented Neural DDEs}
	\label{sec:nonaugmentedgeneral}
	
	In this section, we study the universal embedding and approximation capability of the general non-augmented neural DDE architecture $ \textup{NDDE}_{\tau,\textup{N}}^k(\mathcal{X},\mathbb{R}^q)$, $\mathcal{X}\subset \mathbb{R}^n$ open, $k\geq 0$. By definition, the dimension $m$ of the underlying DDE fulfills $m \leq \max\{n,q\}$, where $n$ is the input and $q$ the output dimension. In contrast to the setting of basic neural DDEs in Section~\ref{sec:nonaugmentedbasic}, general affine linear transformations $\lambda$, $\tilde \lambda$, are used, and the input and output dimensions $n$ and $q$ do not need to be the same. 
	
	Theorem \ref{th:universal_zhu} of the last section has shown that even without the affine linear layers $\lambda$, $\tilde{\lambda}$, it is possible to embed any map  $\Psi\in C^k(\mathcal{X}, \mathbb{R}^n)$ as a time-$T$ map of a DDE with constant initial data if the delay $\tau$ coincides with the final time $T$. Naturally, the question arises if an exact representation is also possible if $0 \leq \tau < T$ and if the input and output dimensions of the map $\Psi$ are not the same. In the following we prove in Theorem \ref{th:universal_embedding} the universal embedding property of the non-augmented neural DDE architecture $\textup{NDDE}^k_{\tau,\text{N}}(\mathcal{X},\mathbb{R}^q)$, $k \geq 0$, with delay $0 < \tau \leq T$ with respect to the space of globally Lipschitz continuous functions in $C^0(\mathcal{X},\mathbb{R}^q)$ for $k = 0$ or $C^k_b(\mathcal{X},\mathbb{R}^q)$ for $k \geq 1$. The space $C^{k}_b(\mathcal{X},\mathbb{R}^q)$ consists of all $k$ times continuously differentiable functions $\Psi:\mathcal{X}\rightarrow\mathbb{R}^q$ with bounded derivatives on $\mathcal{X}$. Another definition we need for the upcoming theorem is the property of a vector field to be globally Lipschitz continuous with respect to the second variable. 
	
	\begin{definition}[Lipschitz Continuity in the Second Variable]
		A vector field $F: \Omega_t \times \Omega_y \rightarrow \mathbb{R}^m$, $\Omega_t \times \Omega_y \subset \mathbb{R}\times \mathcal{C}$ is globally Lipschitz continuous in the second variable on $\Omega_t \times \Omega_0$, $\Omega_0 \subset \Omega_y$, with Lipschitz constant $K$ if 
		\begin{equation*}
			\norm{F(t,y_t)-F(t,z_t)}_\infty \leq K \norm{y_t-z_t}_\infty \qquad \text{ for all } t \in \Omega_t \text{ and } y_t,z_t \in \Omega_0.
		\end{equation*}
	\end{definition}
	
	In the following theorem we show that if the memory capacity $K\tau$ is sufficiently large, the neural DDE architecture $\textup{NDDE}^k_{\tau,\textup{N}}(\mathcal{X},\mathbb{R}^q)$, $k \geq 0$ has the universal embedding property with respect to globally Lipschitz continuous functions in $C^0(\mathcal{X},\mathbb{R}^q)$ for $k = 0$ or $C^k_b(\mathcal{X},\mathbb{R}^q)$ for $k \geq 1$. Hence, the universal embedding property is shown for every $\tau\in(0,T]$, but with the drawback that the global Lipschitz constant of the underlying vector field increases if the delay becomes smaller.
	
	\begin{theorem}[Universal Embedding for Non-Augmented Neural DDEs with Large Memory Capacity] \label{th:universal_embedding}
		Let $k \geq 0$ and $\Psi\in C^0(\mathcal{X}, \mathbb{R}^q)$ if $k = 0$ or $\Psi\in C^k_b(\mathcal{X},\mathbb{R}^q)$ if $k\geq 1$, with $\mathcal{X}\subset \mathbb{R}^n$ open and global Lipschitz constant $K_\Psi\in[0,\infty)$. Fix constants $K \in [0,\infty)$, $\tau\in(0,T]$ and $w,\tilde{w}\in(0,\infty)$, such that $$K\tau  \geq2\left(1+\frac{K_\Psi}{w \tilde{w}}\right).$$ Then there exists a non-augmented neural DDE $\Phi \in \textup{NDDE}_{\tau,\textup{N},K}^k(\mathcal{X},\mathbb{R}^q)$ embedding the map $\Psi$ with the following properties: 
		\begin{itemize}
			\item The underlying vector field $F: \mathbb{R}\times \Omega_y\rightarrow \mathbb{R}^m$ is globally Lipschitz continuous in the second variable on $\mathbb{R}\times \Omega_0$  with Lipschitz constant $K\in[0,\infty)$, where $\Omega_0 = \{c_{\lambda(x)}\in\mathcal{C}: x\in\mathcal{X}\}\subset \Omega_y$ is the set of constant initial data used.
			\item The neural DDE $\Phi$ has weight matrices $W$, $\widetilde{W}$ with $\normm{W}_\infty = w$ and $\normm{\widetilde{W}}_\infty = \tilde w$.
		\end{itemize}
		Consequently, any globally Lipschitz continuous map $\Psi\in C^0(\mathcal{X},\mathbb{R}^q)$ or $\Psi\in  C^k_b(\mathcal{X},\mathbb{R}^q)$ can be embedded for every $\tau \in (0,T]$ in the non-augmented neural DDE architecture $\textup{NDDE}^k_{\tau,\textup{N},K}(\mathcal{X},\mathbb{R}^q)$ with $m = \max\{n,q\}$.
	\end{theorem}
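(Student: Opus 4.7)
The strategy is to generalize the direct construction in Theorem~\ref{th:universal_zhu}: use the affine linear layers $\lambda, \tilde\lambda$ to mediate between input dimension $n$, output dimension $q$, and a common DDE dimension $m = \max\{n,q\}$, and confine the DDE dynamics to $[0,\tau]$ via a non-negative bump function in time, so that the prescribed target is reached at $t = \tau$ and then held constant on $[\tau,T]$.

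First I would fix the affine layers explicitly. Assume without loss of generality that $n \leq q$, so $m = q$; take
\begin{equation*}
W = \begin{pmatrix} w\, I_n \\ 0 \end{pmatrix} \in \mathbb{R}^{q\times n}, \qquad \widetilde{W} = \tilde w\, I_q,
\end{equation*}
with zero biases, so that $\norm{W}_\infty = w$ and $\norm{\widetilde{W}}_\infty = \tilde w$ by \eqref{eq:matrixnorm}. Next, extend $\Psi$ from $\mathcal{X}$ to all of $\mathbb{R}^n$ while preserving its global Lipschitz constant: McShane's extension theorem does this componentwise for $k = 0$, and for $k \geq 1$ a small-scale mollification of McShane yields a $C^k_b$ extension with Lipschitz constant arbitrarily close to $K_\Psi$. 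Denoting the extension again by $\Psi$, define
\begin{equation*}
\tilde\Psi : \mathbb{R}^m \to \mathbb{R}^m, \qquad \tilde\Psi(z) := \tfrac{1}{\tilde w}\, \Psi\bigl(\tfrac{1}{w}\, z_{1:n}\bigr),
\end{equation*}
where $z_{1:n}$ denotes the first $n$ components of $z$. A direct computation gives $\tilde\lambda(\tilde\Psi(\lambda(x))) = \Psi(x)$ for every $x \in \mathcal{X}$, and a chain-rule estimate bounds the Lipschitz constant of $\tilde\Psi$ (with respect to $\norm{\cdot}_\infty$) by $K_\Psi/(w\tilde w)$.

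I would then pick a non-negative bump $\chi:[0,T]\to[0,\infty)$ supported in $[0,\tau]$ with $\int_0^\tau \chi(t)\,\dd t = 1$ and $\norm{\chi}_\infty \leq 2/\tau$ (a triangular profile saturates the bound and is admissible for $k = 0$, while a smooth mollification suffices for $k \geq 1$), and set
\begin{equation*}
F(t,\phi) := \chi(t)\bigl(\tilde\Psi(\phi(-\tau)) - \phi(-\tau)\bigr), \qquad (t,\phi) \in [0,T] \times \mathcal{C}.
\end{equation*}
A short estimate yields $\norm{F(t,\phi) - F(t,\psi)}_\infty \leq \chi(t)\bigl(K_\Psi/(w\tilde w) + 1\bigr)\norm{\phi - \psi}_\infty$, so $F$ is globally Lipschitz in the second variable with constant at most $(2/\tau)(1 + K_\Psi/(w\tilde w)) \leq K$ by the standing hypothesis, which also delivers global existence and uniqueness of the DDE in $\mathcal{C}$ via a Picard argument. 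The time-$T$ map is then immediate: with constant initial data $c_{\lambda(x)}$, on $[0,\tau]$ one has $y(t-\tau) = \lambda(x)$ and hence $\dot y(t) = \chi(t)(\tilde\Psi(\lambda(x)) - \lambda(x))$, so integration yields $y(\tau) = \tilde\Psi(\lambda(x))$; on $[\tau,T]$, $\chi \equiv 0$ freezes $y$, and therefore $\tilde\lambda(y(T)) = \tilde\lambda(\tilde\Psi(\lambda(x))) = \Psi(x)$.

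The main technical obstacle I anticipate is, for $k \geq 1$, producing an extension of $\Psi$ from $\mathcal{X}$ to $\mathbb{R}^n$ that is simultaneously $C^k_b$ and globally Lipschitz with the \emph{same} constant $K_\Psi$: Whitney-type extensions preserve $C^k_b$ but can inflate the Lipschitz bound, while McShane preserves the Lipschitz constant but only yields $C^0$ regularity. The remedy is a small-scale mollification of McShane combined with choosing $\norm{\chi}_\infty$ just below $2/\tau$; any residual slack is absorbed via the non-strict hypothesis $K\tau \geq 2(1+K_\Psi/(w\tilde w))$ through a short continuity argument, or equivalently by first proving the result under the strict inequality and passing to a limit. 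The symmetric case $n > q$ is handled identically with $m = n$, $W = w\, I_n$, and $\widetilde{W} = \begin{pmatrix} \tilde w\, I_q & 0 \end{pmatrix}$.
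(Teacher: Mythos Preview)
Your construction is essentially the paper's: same affine layers $W = w\cdot\mathrm{Id}_{m,n}$, $\widetilde W = \tilde w\cdot\mathrm{Id}_{q,m}$, same delayed vector field $\chi(t)(\tilde\Psi(y(t-\tau)) - y(t-\tau))$, and in fact the paper uses precisely your triangular profile $\chi(t) = \tfrac{2(\tau-t)}{\tau^2}$ on $[0,\tau]$. The explicit integration and Lipschitz estimate are identical.

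The one substantive difference is how you make $\tilde\Psi$ well-defined on the DDE phase space. You extend $\Psi$ from $\mathcal{X}$ to all of $\mathbb{R}^n$ and then worry, rightly, about simultaneously preserving $C^k_b$ regularity and the exact Lipschitz constant $K_\Psi$. The paper sidesteps this entirely: it does \emph{not} extend $\Psi$, but instead restricts the domain of $F$ to the open set $\Omega_y = \{u\in\mathcal{C}: \tfrac{1}{w}\,\mathrm{Id}_{n,m}\,u(t)\in\mathcal{X}\text{ for all }t\in[-\tau,0]\}$, which contains all the constant initial data $\Omega_0$ that the architecture actually uses. Since the theorem only asks for Lipschitz continuity on $\mathbb{R}\times\Omega_0$, this is enough, and the extension problem simply does not arise. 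Your workaround (mollified McShane plus a continuity/limit argument) is plausible but unnecessary.

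Your second concern, that the triangular $\chi$ is only admissible for $k=0$ and must be smoothed for $k\geq 1$, is unfounded here: the architecture requires $F\in C^{0,k}_b$, meaning merely continuous in $t$ and $C^k_b$ in the second variable. The triangular profile is continuous in $t$, and the $C^k$ regularity in $y_t$ is inherited from $\Psi\in C^k_b(\mathcal{X},\mathbb{R}^q)$, so no smoothing of $\chi$ is needed.
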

	
	\begin{proof}
		Fix $\tau\in(0,T]$, $K \in[0,\infty)$ and two constants $w,\tilde{w}\in(0,\infty)$ fulfilling the assumptions of the theorem. Let $m = \max\{n,q\}$, such that the resulting architecture is non-augmented. For the neural DDE $\Phi \in \textup{NDDE}^k_{\tau,\textup{N},K}(\mathcal{X},\mathbb{R}^q)$ we choose the weights and biases $W = w \cdot \text{Id}_{m,n} \in \mathbb{R}^{m \times n}$, $b = 0 \in \mathbb{R}^m$, $\widetilde{W} = \tilde{w} \cdot \text{Id}_{q,m} \in \mathbb{R}^{q \times m}$, $\tilde{b} = 0 \in \mathbb{R}^q$, where $\text{Id}_{m,n}$ denotes the matrix in $\mathbb{R}^{m\times n}$, which has ones on the diagonal and zeros everywhere else. In order to embed the map $\Psi$, we define the DDE
		\begin{equation}\label{eq:DDE_universal}
			\begin{aligned}
				\frac{\dd y}{\dd t} &= F(t,y_t) \qquad &&\text{for } t\geq 0, \\  
				y(t) &= c_{\lambda(x)}(t) &&\text{for } t \in [-\tau,0],
			\end{aligned}
		\end{equation}
		with constant initial data $c_{\lambda(x)}:[-\tau,0]\rightarrow \Omega_y$, $\lambda(x) = Wx$, $x \in \mathcal{X}$ and continuous vector field $	F: \mathbb{R}\times \Omega_y\rightarrow \mathbb{R}^m$ with 
		\begin{equation*}
			\Omega_y =   \left\{u\in\mathcal{C}: \tfrac{1}{w}\cdot \textup{Id}_{n,m}\cdot u(t) \in \mathcal{X} \text{ for } t \in [-\tau,0]\right\}.
		\end{equation*}
		$\Omega_y$ is an open subset of $\mathcal{C}$, as the first $n$ components of $u(t)$ are contained in the open set $\frac{1}{w}\cdot \mathcal{X}$ and if $m>n$, the components $n+1,\ldots,m$ of $u(t)$ are contained in the open set $\mathbb{R}^{m-n}$.  Consequently, the domain of definition  $\mathbb{R}\times \Omega_y$ of the vector field $F$ is an open subset of $\mathbb{R} \times \mathcal{C}$ and $\Omega_y$ contains the  set of initial functions used in \eqref{eq:DDE_universal}, given by $\Omega_0 = \{c_{\lambda(x)}\in\mathcal{C}:x\in\mathcal{X}\}$. The vector field $F$ is defined as
		\begin{equation}\label{eq:vectorfield_universal}
			F(t,y_t) = \begin{cases} F(0,y_0), & t\in(-\infty,0), \\\frac{2(\tau-t)}{\tau^2} \left(\frac{1}{\tilde w}\cdot \textup{Id}_{m,q}\cdot \Psi\left(\frac{1}{w}\cdot \textup{Id}_{n,m}\cdot y(t-\tau_{1,A}(t))\right)-y(t-\tau_{1,A}(t))\right), \qquad &t \in [0,\tau], \\ 0, &t \in (\tau,\infty)
			\end{cases}
		\end{equation}
		where $\tau_{1,A}:[0,T]\rightarrow [0,\tau]$, $\tau_{1,A}(t) = \delta =  \tau$ is a delay function of type A defined in Section \ref{sec:modeling_discretization}. For $t \in [0,\tau]$ and $u \in \Omega_y$ it holds by definition $ \frac{1}{w}\cdot \textup{Id}_{n,m} \cdot  u(t-\tau) \in \mathcal{X}$, such that $F$ is well-defined. As $\Psi\in C^0(\mathcal{X}, \mathbb{R}^q)$ if $k = 0$ or $\Psi\in C^k_b(\mathcal{X},\mathbb{R}^q)$ if $k\geq 1$, it follows that $F \in C^{0,0}(\Omega,\mathbb{R}^m)$ if $k = 0$ or $F\in C_b^{0,k}(\Omega,\mathbb{R}^m)$ if $k\geq 1$.  For the initial function $u = c_{\lambda(x)}$ it holds $\frac{1}{w}\cdot \textup{Id}_{n,m} \cdot \lambda(x) = x$ as $m\geq n$. Hence, the unique and continuous solution of the  DDE \eqref{eq:DDE_universal} is given by
		\begin{align*}
			y(t) &= \lambda(x)+ \left(\frac{1}{\tilde w}\cdot \textup{Id}_{m,q}\cdot \Psi(x)-\lambda(x)\right) \int_0^t \frac{2(\tau-s)}{\tau^2}  \, \dd s \\
			&= Wx + \left(\frac{1}{\tilde w}\cdot\textup{Id}_{m,q}\cdot\Psi(x)-Wx\right)\left(\frac{t(2\tau-t)}{\tau^2}\right) \\		
			\Rightarrow \qquad  y(\tau) &= Wx+ \frac{1}{\tilde w}\cdot \textup{Id}_{m,q}\cdot \Psi(x)-Wx = \frac{1}{\tilde w}\cdot \textup{Id}_{m,q}\cdot \Psi(x).
		\end{align*}
		As for $t \in (\tau,T]$ the vector field is zero, it follows $y(t) = y(\tau)$ for all $t\in(\tau,T]$, such that $$\Phi(x) = \tilde \lambda (y(T)) = \widetilde{W}\cdot \frac{1}{\tilde w}\cdot \textup{Id}_{m,q}\cdot \Psi(x) = \Psi(x) \qquad \text{ for all } x \in \mathcal{X}$$ as $m\geq q$. Since for every $x \in \mathbb{R}^n$, we determined via explicit integration the unique and continuous solution of the corresponding DDE, which satisfies Assumption \ref{ass:vectorfield}, the defined neural DDE $\Phi$ embeds the map $\Psi$ and is an element of $\textup{NDDE}^k_{\tau,\textup{N},K}(\mathcal{X},\mathbb{R}^q)$. $\Phi$ has by construction a delay $\tau\in(0,T]$ and weight matrices $W$, $\widetilde{W}$ with $\normm{W}_\infty = w$ and $\normm{\widetilde{W}}_\infty = \tilde w$. The chosen vector field \eqref{eq:vectorfield_universal} is globally Lipschitz continuous in the second variable with Lipschitz constant $K$, as for all $t \in [0,\tau]$ and $y_t,z_t \in \Omega_0$ it follows
		\begin{align*}
			&\hspace{4mm}\norm{F(t,y_t)-F(t,z_t)}_\infty  \leq 	\norm{F(0,y_0)-F(0,z_0)}_\infty \\ & = \norm{\frac{2}{\tau}\Bigl(\frac{1}{\tilde w}\cdot\textup{Id}_{m,q}\cdot \Psi\Bigl(\frac{1}{w}\cdot \textup{Id}_{n,m}\cdot y(-\tau)\Bigl)-y(-\tau) -\Bigl(\frac{1}{\tilde w}\cdot\textup{Id}_{m,q}\cdot\Psi\Bigl(\frac{1}{w}\cdot\textup{Id}_{n,m}\cdot z(-\tau)\Bigl)-z(-\tau)\Bigl)\Bigl)}_\infty \\
			& \leq \frac{2}{\tau}\left(\frac{1}{\tilde w}\norm{\Psi\Bigl(\frac{1}{w}\cdot\textup{Id}_{n,m}\cdot y(-\tau)\Bigl)-\Psi\Bigl(\frac{1}{w}\cdot\textup{Id}_{n,m}\cdot z(-\tau)\Bigl)}_\infty + \norm{y(-\tau)-z(-\tau)}_\infty \right) \\
			& \leq \frac{2}{\tau}\left(\frac{K_\Psi}{\tilde w}\norm{\frac{1}{w}\cdot \textup{Id}_{n,m}\cdot y(-\tau)-\frac{1}{w}\cdot\textup{Id}_{n,m}\cdot z(-\tau)}_\infty + \norm{y_t-z_t}_\infty \right) \\
			& \leq \frac{2}{\tau}\left(\frac{K_\Psi}{w \tilde w}\norm{y(-\tau)-z(-\tau)}_\infty + \norm{y_t-z_t}_\infty \right) \\
			& \leq \frac{2}{\tau }\left(1+\frac{K_\Psi}{w \tilde{w}}\right)\norm{y_t-z_t}_\infty \leq K\norm{y_t-z_t}_\infty.
		\end{align*}
		In the first step of the proof we used that for $t\in[0,\tau]$, the function $\frac{2(\tau-t)}{\tau^2}$ is monotonically decreasing and $y_t,z_t \in \Omega_0$ are constant, such that the difference is maximal for $t = 0$. From the third to the fourth line, the global Lipschitz continuity of $\Psi$ is used. The last step follows from the given parameter inequality. For $t\notin[0,\tau]$, the Lipschitz condition follows trivially as the vector field is zero. Hence, for every choice of $\tau$, $w$, $\tilde{w}$ and $K$ fulfilling the assumptions, we constructed a neural DDE $\Phi \in \textup{NDDE}_{\tau,\textup{N},K}^0(\mathcal{X},\mathbb{R}^m)$ with delay $\tau$, vector field with global Lipschitz constant $K$ in the second component on $\mathbb{R}\times \Omega_0$ and weight matrices with   $\normm{W}_\infty = w$ and $\normm{\widetilde{W}}_\infty = \tilde w$, embedding $\Psi$.
	\end{proof}
	
	\begin{remark}
		If $\Psi\in C^k_b(\mathcal{X},\mathbb{R}^q)$, $\mathcal{X}\subset \mathbb{R}^n$ open, $k\geq 1$, then the norm of the first derivative of $\Psi$ is bounded on $\mathcal{X}$, defining a global Lipschitz constant $K_\Psi$. If $\Psi \in C^0(\mathcal{X},\mathbb{R}^q)$, Theorem \ref{th:universal_embedding} only treats the case that $\Psi$ is globally Lipschitz continuous with Lipschitz constant $K_\Psi \in [0,\infty)$, which can be arbitrarily large. 
		
	\end{remark}
	
	\begin{remark} \label{rem:universal}
		The vector field~\eqref{eq:vectorfield_universal} of Theorem \ref{th:universal_embedding} has a similar construction as the vector field~\eqref{eq:vectorfield_zhu} of Theorem \ref{th:universal_zhu} for basic non-augmented neural DDEs. The pre-factor $\frac{2(\tau-t)}{\tau^2}$ in \eqref{eq:vectorfield_universal} instead of $\frac{1}{\tau}$ in \eqref{eq:vectorfield_zhu} is necessary to guarantee continuity in the case that $\tau <T$ and the definition of the vector field is different on the time intervals $[0,\tau]$ and $(\tau,T]$.
	\end{remark}
	
	\begin{remark}\label{rem:universal2}
		Every non-augmented neural DDE architecture $\textup{NDDE}^k_{\tau,\textup{N}}(\mathcal{X},\mathbb{R}^q)$, $\mathcal{X} \subset \mathbb{R}^n$, $k \geq 0$  with $m \leq \max\{n,q\}$, is a special case of the augmented neural DDE architecture $\textup{NDDE}^k_{\tau,\textup{A}}(\mathcal{X},\mathbb{R}^q)$ for every dimension $m > \max\{n,q\}$ by adding dimensions to the vector field, which are not used if the weight matrices and biases are extended with zero rows and columns. Hence, Theorem \ref{th:universal_embedding} not only holds for the non-augmented neural DDE architecture $\textup{NDDE}^k_{\tau,\textup{N}}(\mathcal{X},\mathbb{R}^q)$ with $m = \max\{n,q\}$ but also extends to every augmented neural DDE architecture  $\textup{NDDE}^k_{\tau,\textup{A}}(\mathcal{X},\mathbb{R}^q)$ with $m > \max\{n,q\}$ if the parameter inequality is fulfilled.
	\end{remark}

	The area of universal embedding defined by the parameter constraint of Theorem \ref{th:universal_embedding} is given by
	\begin{equation*}
		A_\text{UE} \coloneqq \left\{(K,\tau) \in [0,\infty) \times [0,T]: K \tau \geq2\left(1+\frac{K_\Psi}{w \tilde{w}}\right)\right\} 
	\end{equation*}
	and visualized in Figure \ref{fig:parameter}\subref{fig:parameter_a}. For a fixed choice of the Lipschitz constant $K_\Psi\in[0,\infty)$ of the map $\Psi$ and fixed values $w,\tilde w$ of the norms of the weight matrices $W,\widetilde{W}$, the region of universal embedding is highlighted in the $K$-$\tau$ parameter plane. In the non-colored region, Theorem \ref{th:universal_embedding} makes no statement about whether universal embedding and approximation are possible or not. This includes the case of neural ODEs with $\tau = 0$, which is a special case of the architectures treated in the upcoming Theorem \ref{th:tau0}.
	
	To analyze the expressivity of neural DDEs when the memory capacity $K\tau$ is sufficiently small, we use the theory of DDEs with small delay, which is introduced in Section \ref{sec:dde_small_delay}. With this theory, we prove in Section \ref{sec:sectionproof} our main result, Theorem \ref{th:noapproximation}, that under some additional assumptions, for every  neural DDE $\Phi \in \textup{NDDE}_{\tau,\textup{N},K}^k(\mathcal{X},\mathbb{R}^q)$, $\mathcal{X}\subset \mathbb{R}^n$, $k \geq 1$, there exists a smooth function $\Psi \in C^\infty(\mathcal{X},\mathbb{R}^q)$, such that for all $\tau \in[0,\tau_0(K)]$ the neural DDE $\Phi$ cannot approximate $\Psi$. Hereby $\tau_0: [0,\infty)\rightarrow [0,T]$ is a continuous function of the Lipschitz constant $K$. A direct consequence of our main result is the following theorem that for $\tau \in[0,\tau_0(K)]$, both the general neural DDE architecture $\textup{NDDE}_{\tau,\textup{N}}^k(\mathcal{X},\mathbb{R}^q)$ and parameterized neural DDE architecture $\textup{NDDE}_{\tau,\theta,\textup{N}}^k(\mathcal{X},\mathbb{R}^q)$, cannot have the universal approximation property for the space $C^j(\mathcal{X},\mathbb{R}^q)$ for every $j\geq 0$. 
	
	\begin{figure}
		\centering
		\begin{subfigure}[t]{0.45\textwidth}
			\centering
			\begin{overpic}[scale = 0.7,,tics=10]{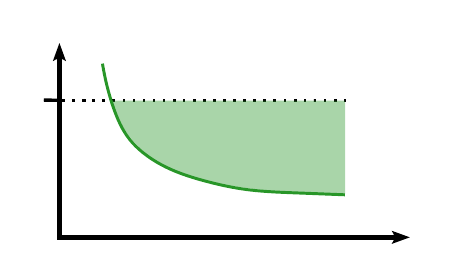}
				\put(10.5,50){$\tau$}
				\put(2,33){$T$}
				\put(89,4){$K$}
				\put(25,41){\textcolor[RGB]{40, 150, 50}{$K \tau \geq2\left(1+\frac{K_\Psi}{w \tilde{w}}\right)$}}
			\end{overpic}
			\caption{Parameter region $A_\text{UE}$ in green of universal embedding for fixed $K_\Psi$, $w$ and $\tilde w$ for the neural DDE architecture $\textup{NDDE}^k_{\tau,\textup{N},K}(\mathcal{X},\mathbb{R}^q)$,  $k \geq 0$, cf.\ Theorem~\ref{th:universal_embedding}.}
			\label{fig:parameter_a}
		\end{subfigure}
		\begin{subfigure}[t]{0.05\textwidth}
			\textcolor{white}{.}
		\end{subfigure}
		\begin{subfigure}[t]{0.45\textwidth}
			\centering
			\begin{overpic}[scale = 0.7,,tics=10]{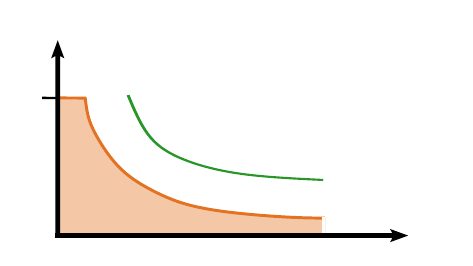}
				\put(10,49){$\tau$}
				\put(2,34){$T$}
				\put(89,4){$K$}
				\put(15,39){\textcolor[RGB]{227,114,34}{$\tau_0(K)$}}
				\put(35,28){\textcolor[RGB]{40, 150, 50}{$K\tau = \frac{1}{e}$}}
			\end{overpic}
			\caption{Parameter region $A_\text{nUA}$ in orange of non-universal approximation for fixed $w$ and $\tilde w$ for the neural DDE architecture $\textup{NDDE}_{\tau,\textup{N},K}^k(\mathcal{X},\mathbb{R}^q)$, \mbox{$k \geq 1$}, cf.\ Theorem \ref{th:tau0}.}
			\label{fig:parameter_b}
		\end{subfigure}
		\caption{Areas  $A_\text{UE}$ of universal embedding and $A_\text{nUA}$ of non-universal approximation for the non-augmented neural DDE architecture $\textup{NDDE}^k_{\tau,\textup{N},K}(\mathcal{X},\mathbb{R}^q)$, $\mathcal{X}\subset \mathbb{R}^n$  with $m = \max\{n,q\}$.}
		\label{fig:parameter}
	\end{figure}	 
	
	\begin{theorem}[No Universal Approximation for Non-Augmented Neural DDEs with Small Memory Capacity]\label{th:tau0}
		There exists a continuous function $\tau_0\in C^0((0,\infty),[0,T])$ with $K \tau_0(K) e<1$ for $K\in(0,\infty)$ with the following property: if $\tau \in [0,\tau_0(K)]$ and constants $K,w,\tilde w\in (0,\infty)$ and $A\geq 0$ are fixed, the class of neural DDEs $\textup{NDDE}_{\tau,\textup{N},K}^k(\mathcal{X},\mathbb{R}^q)$, $k \geq 1$, $\mathcal{X}\subset \mathbb{R}^n$ with
		\begin{itemize}
			\item  vector field $F:\Omega_t \times \mathcal{C} \rightarrow \mathbb{R}^m$ with global Lipschitz constant~$K$ in its second variable on $\Omega_t \times \mathcal{C}$ and $\norm{F(t,0)}_\infty \leq A$ for all $t\in [0,T]$,
			\item weight matrices $W, \widetilde{W}$ with $\normm{W}_\infty \leq w$ and $\normm{\widetilde{W}}_\infty \leq\tilde{w}$,
		\end{itemize}
		does not have the universal approximation property with respect to the space  $C^j(\mathcal{X},\mathbb{R}^q)$ for every $j\geq 0$. 	The same statement holds verbatim by replacing the class of general neural DDE architectures $\textup{NDDE}_{\tau,\textup{N},K}^k(\mathcal{X},\mathbb{R}^q)$ by the parameterized neural DDE architecture $ \textup{NDDE}_{\tau,\theta,\textup{N},K}^k(\mathcal{X},\mathbb{R}^q)$. 
	\end{theorem}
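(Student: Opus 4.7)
The plan is to combine the small-delay reduction of DDEs to ODEs with a topological obstruction for non-augmented neural ODEs. First, I would construct a concrete smooth target map $\Psi^{\ast}\in C^{\infty}(\mathcal{X},\mathbb{R}^{q})$ that cannot be written in the form $\tilde\lambda\circ\phi_{T}\circ\lambda$ for any ODE time-$T$ map $\phi_{T}$ in dimension $m\le\max\{n,q\}$ and any affine layers satisfying $\|W\|_\infty\le w$, $\|\widetilde W\|_\infty\le\tilde w$. Because $\phi_{T}$ is necessarily a diffeomorphism, the composition is rank-constrained; a canonical choice is a map with a non-degenerate local extremum (e.g.\ $\Psi^{\ast}(x)=x-x^{3}$ in one dimension, and a Morse-type analogue in higher dimensions), which forces any approximating non-augmented neural ODE to violate monotonicity or the appropriate Morse normal form. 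The goal of this stage is a quantitative lower bound $\varepsilon_{0}(w,\tilde w,A)>0$ on the $C^{0}$-distance from $\Psi^{\ast}$ to every non-augmented neural ODE in the admissible class.

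Second, I would invoke the theory of DDEs with small delay from Section~\ref{sec:dde_small_delay} to show that, under $K\tau e<1$, every neural DDE in the admissible class is $C^{0}$-close to a non-augmented neural ODE sharing the same affine layers. Concretely, the fixed-point argument that produces the special (inertial) solution on the invariant manifold yields an associated reduced non-autonomous vector field $G$, and a Gronwall estimate on the finite interval $[0,T]$ together with the Lipschitz bound $K$ and the bound $\|F(t,0)\|_\infty\le A$ gives
\[
\sup_{x\in\mathcal{X}}\|\Phi(x)-\Phi_{\textup{ODE}}(x)\|_\infty\le \tilde w\cdot C(K,T,A,w)\cdot (K\tau),
\]
where $\Phi_{\textup{ODE}}$ is the non-augmented neural ODE with layers $\lambda,\tilde\lambda$ and vector field $G$, and $C$ depends continuously on $K$. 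Here the initial data for the ODE comparison is $\lambda(x)$, matching the constant initial function of the DDE; the transient from $c_{\lambda(x)}$ to the inertial manifold must be absorbed into the constant~$C$.

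Third, I would define $\tau_{0}(K)$ to be the largest $\tau\in[0,T]$ satisfying simultaneously $K\tau e<1$ and $\tilde w\cdot C(K,T,A,w)\cdot K\tau\le\varepsilon_{0}/2$; continuity of $\tau_{0}$ follows from continuity of $C$ in $K$. Assume towards a contradiction that universal approximation holds for some $\tau\in[0,\tau_{0}(K)]$ and some $j\ge 0$. Then there exists $\Phi$ in the class with $\|\Phi-\Psi^{\ast}\|_\infty<\varepsilon_{0}/2$; combining with Stage~2 produces a non-augmented neural ODE within $\varepsilon_{0}$ of $\Psi^{\ast}$, contradicting Stage~1. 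Since $\Psi^{\ast}\in C^{\infty}\subset C^{j}$ for every $j\ge 0$, the failure extends to every regularity class, and the transfer to $\textup{NDDE}_{\tau,\theta,\textup{N},K}^{k}$ is immediate from Theorem~\ref{th:DDEgerenalparameterized}\ref{th:DDEgerenalparameterized_a}.

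The hard part will be the quantitative reduction in Stage~2. The small-delay results of Chicone, Driver and Jarn\'ik give exponential attraction of DDE trajectories to the special solution, but converting this asymptotic statement into a uniform bound on $[0,T]$ that is linear in $\tau$, independent of the particular admissible vector field $F$, and explicit in $w,\tilde w,A,K,T$, requires re-examining the contraction estimate at its root and tracking the initial transient carefully. In particular, because the constant initial function $c_{\lambda(x)}$ is generally off the inertial manifold, the error between the true DDE trajectory and the ODE shadow on the manifold must be controlled either via a direct fixed-point iteration on $C^{0}([-\tau,T],\mathbb{R}^{m})$ exploiting $K\tau e<1$, or by explicit integration that uses the constancy of the initial data; both approaches feed into the implicit definition of $\tau_{0}(K)$ and account for the care needed in Section~\ref{sec:sectionproof}.
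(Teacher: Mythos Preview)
Your three-stage strategy is conceptually cleaner than the paper's, but it has a genuine gap in Stage~2 and understates the difficulty of Stage~1.

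\textbf{Stage~2 gap.} The inertial-manifold route does not yield the linear-in-$\tau$ bound you claim. The exponential attraction of Theorem~\ref{th:specialsolutions_expattraction} pulls the DDE trajectory $y(0,c_{\lambda(x)})$ towards the special solution $\bar y(0,\bar y_0)$, where $\bar y_0$ is determined by the limit~\eqref{eq:ICspecialsolution} and in general $\bar y_0\neq\lambda(x)$. At $t=0$ the attraction estimate only gives $\|\lambda(x)-\bar y_0\|_\infty\le C_u$, an $O(1)$ quantity. Hence the ODE you want, namely the reduced flow of Theorem~\ref{th:specialsolutionsODEs} started from $\lambda(x)$, differs from the attracting special solution by an $O(1)$ shift propagated forward, and the claimed bound $\tilde w\,C(K,T,A,w)\,(K\tau)$ does not follow. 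The paper sidesteps this entirely: it never compares the DDE to an ODE started at $\lambda(x)$, but instead splits $[0,T]=[0,t^\ast]\cup[t^\ast,T]$ with $t^\ast=\beta\tau$, controls the drift on $[0,t^\ast]$ by $\delta_{1,t^\ast}=C_1\beta\tau$ (Lemma~\ref{lem:mapG}), and on $[t^\ast,T]$ uses the special-solution flow $\overline I$ as a diffeomorphism together with the attraction bounds $\delta_{2,\beta}=C_2e^{-\beta}$ and $\delta_{3,\tau}=C_2e^{-T/\tau}$ (Lemma~\ref{lem:exponentialattraction}). The two small parameters $\beta\tau$ and $e^{-\beta}$ are then balanced; there is no linear-in-$\tau$ comparison to a single ODE. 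Your alternative suggestion of a direct Gronwall comparison with the ODE $z'=F(t,c_{z(t)})$ would in fact give an $O(K\tau)$ bound on a fixed compact set and does not need $K\tau e<1$; if you intend that route, you should commit to it and drop the inertial-manifold framing.

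\textbf{Stage~1 is the real content.} You treat the quantitative lower bound $\varepsilon_0(w,\tilde w,A)>0$ for non-augmented neural ODEs as essentially known, citing monotonicity in one dimension and a ``Morse-type analogue'' otherwise. In dimension one this is indeed easy, but for $n=m\ge 2$ the paper needs the full machinery of Section~\ref{sec:sectionproof}: the Morse--Palais normal form (Lemma~\ref{lem:morse}), the Jordan--Brouwer separation theorem applied to the compact hypersurface $\partial\mathcal K_1$ and to the level sets $\mathcal H_z=\overline I^{-1}(\tilde\lambda_i^{-1}(z))$ (Lemmas~\ref{lem:Khypersurface}--\ref{lem:intersectionpoint}), and the rank analysis of the affine layers in Step~2 of the proof. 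This is not a standard lemma you can cite; it is the heart of the argument, and your proposal does not indicate how you would obtain a uniform $\varepsilon_0$ over all admissible $(W,\widetilde W,b,\tilde b,F)$ in higher dimensions.

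\textbf{Comparison.} The paper's proof is not a triangle-inequality reduction to a separate ODE obstruction; it argues directly with the DDE, using the special-solution diffeomorphism $\overline I$ to pull back the necessary approximation conditions~\eqref{eq:Y3estimate}--\eqref{eq:Z3estimate} to hypersurface-separation conditions at time $t^\ast$, and then derives a contradiction from the initial-drift bound. Your modular approach, if repaired along the lines above, would be more transparent but would still need the level-set separation argument to make Stage~1 quantitative. The transfer to $\textup{NDDE}_{\tau,\theta,\textup{N},K}^{k}$ via Theorem~\ref{th:DDEgerenalparameterized}\ref{th:DDEgerenalparameterized_a} is correct and matches the paper.
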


	\begin{proof}
		The theorem directly follows from Corollary \ref{cor:noapproximation} proven in Section \ref{sec:sectionproof} and Theorem \ref{th:DDEgerenalparameterized}\ref{th:DDEgerenalparameterized_a} about the relationship between general and parameterized neural DDEs.
	\end{proof}
	
	In order to use the theory of DDEs with small delay, it is necessary to have a vector field $F$ which is globally defined in space, i.e., $F$ is defined on all of $\mathcal{C}$. As the considered neural DDEs are of the class $\textup{NDDE}_{\tau,\textup{N},K}^k(\mathcal{X},\mathbb{R}^q)$, Assumption \ref{ass:vectorfield} implies that $[0,T]\subset \Omega_t$.
	The area of non-universal approximation defined by the parameter constraint of Theorem \ref{th:tau0} is given by
	\begin{equation*}
		A_\text{nUA} \coloneqq \left\{(K,\tau) \in [0,\infty) \times [0,T]: K \tau \leq K \tau_0(K)\right\}
	\end{equation*}
	and visualized in Figure \ref{fig:parameter}\subref{fig:parameter_b}. For every global Lipschitz constant $K$ of the second component of the vector field, no universal approximation is possible if $\tau \in [0,\tau_0(K)]$. A crucial assumption of DDEs with small delay, which are used in Sections~\ref{sec:dde_small_delay} and~\ref{sec:proof} to prove the main result, is that $K\tau e<1$. Hence, it holds for the upper bound $\tau_0$ that $K \tau_0(K) e<1$. Independently of the choice of the weights and biases in $\mathbb{V}$, and the chosen map $\Psi$, the parameter regions $A_\text{UE} $ of Theorem \ref{th:universal_embedding} and 	$A_\text{nUA}$ of Theorem  \ref{th:tau0} shown in Figure \ref{fig:parameter} never overlap as
	\begin{align*}
		A_\text{UE} &\subset \left\{(K,\tau) \in [0,\infty) \times [0,T]: K \tau \geq 2\right\}, \\
		A_\text{nUA} & \subset \left\{(K,\tau) \in [0,\infty) \times [0,T]: K \tau < \tfrac{1}{e}\right\},
	\end{align*}
	which are disjoint as $\frac{1}{e}<2$. 	A special case of Theorem \ref{th:tau0} are neural DDEs without delay, also called neural ODEs. As a consequence, independently of the Lipschitz constant $K$ of the vector field, no universal approximation or universal embedding results can exist for the non-augmented neural DDE architecture $\textup{NDDE}^k_{0,\text{N}}(\mathcal{X},\mathbb{R}^q)$, $k \geq 1$. The non-universal approximation property of non-augmented neural ODEs is also discussed in \cite{Kidger2022,Kuehn2023}. If $(K,\tau)\in A_\text{nUA}$, the corresponding neural DDE shows the same properties regarding universal approximation as neural ODEs and differs from neural DDEs with parameters $(K,\tau)\in A_\text{UE}$. 
	
	Theorem \ref{th:tau0} shows that the infinite-dimensional state space of DDEs for $\tau>0$ does not always suffice to guarantee, independently of the Lipschitz constant of the vector field, universal approximation of neural DDEs. The expressivity of neural DDEs depends non-trivially on the delay $\tau$ and the Lipschitz constant $K$. The memory capacity $K\tau$ can be interpreted as a parameter describing the transition from no universal approximation to universal embedding. Theorems \ref{th:universal_embedding} and \ref{th:tau0} characterize the expressivity for $(K,\tau) \in 	A_\text{UE} \cup A_\text{nUA}$, for the remaining parameter regions near the critical transition, we cannot make any statements yet and leave the question open for future work. 
	

	\subsection{Augmented Neural DDEs}
	\label{sec:augmented}
	
	In the last sections above, we have restricted our analysis to non-augmented neural ODEs and showed how the global Lipschitz constant of the vector field and the delay influence the expressivity. Another parameter, which can influence the universal embedding and approximation property, is the dimension $m$ of the underlying DDE. For non-augmented architectures it holds $m\leq \max\{n,q\}$, whereas for augmented architectures $\textup{NDDE}_{\tau,\textup{A}}^k(\mathcal{X},\mathbb{R}^q)$, $\mathcal{X}\subset \mathbb{R}^n$ open, $k\geq 0$, it holds $m> \max\{n,q\}$, i.e., the dimension $m$ of the DDE is larger than both the input dimension~$n$ and the output dimensions~$q$. In the case of augmented neural ODEs, i.e., neural DDEs with $\tau = 0$, the assumption $m \geq n+q$ is sufficient to show that any Lipschitz continuous map $\Psi \in C^0(\mathcal{X},\mathbb{R}^{q})$, $\mathcal{X} \subset \mathbb{R}^{n}$ open, can be embedded in an augmented neural ODE  \cite{Kuehn2023}. As every ODE can be interpreted as a DDE, the same idea applies to augmented neural DDEs if $m \geq n+q$. 
	
	\begin{theorem}[Universal Embedding for Augmented Neural DDEs] \label{th:universalaugmented}
		Let $k \geq 0$ and $\Psi\in C^0(\mathcal{X}, \mathbb{R}^q)$ if $k = 0$ or $\Psi\in C^k_b(\mathcal{X},\mathbb{R}^q)$ if $k\geq 1$, with $\mathcal{X}\subset \mathbb{R}^n$ open and global Lipschitz constant $K_\Psi\in[0,\infty)$. Fix constants $m \geq n+q$, $\tau \in [0,T]$, $w,\tilde w\in (0,\infty)$ and $K \in [0,\infty)$, such that $$K T \geq \frac{K_\Psi}{w \tilde{w}}.$$ Then there exists an augmented neural DDE architecture $\Phi \in \textup{NDDE}_{\tau,\textup{A},K}^k(\mathcal{X},\mathbb{R}^q)$  embedding the map~$\Psi$ with the following properties:
		\begin{itemize}
			\item The underlying vector field $F: \mathbb{R}\times \Omega_y\rightarrow \mathbb{R}^m$  is globally Lipschitz continuous in the second component on $\mathbb{R}\times \Omega_0$  with Lipschitz constant $K\in[0,\infty)$, where $\Omega_0 = \{c_{\lambda(x)}\in\mathcal{C}: x\in\mathcal{X}\}\subset \Omega_y$ is the set of constant initial data used.
			\item The neural DDE $\Phi$ has weight matrices $W$, $\widetilde{W}$ with $\normm{W}_\infty = w$ and $\normm{\widetilde{W}}_\infty = \tilde w$.
		\end{itemize}
		Consequently, any globally Lipschitz continuous map $\Psi\in C^0(\mathcal{X},\mathbb{R}^q)$ or $\Psi\in  C^k_b(\mathcal{X},\mathbb{R}^q)$ can be embedded for every $\tau \in [0,T]$ in the augmented neural DDE architecture $\textup{NDDE}^k_{\tau,\textup{A},K}(\mathcal{X},\mathbb{R}^q)$with $m \geq n+q$.
	\end{theorem}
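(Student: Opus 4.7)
My plan is to exploit the extra $m - n \geq q$ dimensions of the augmented phase space to split the role of storing the input and producing the output, so that no delay is actually needed in the construction. Concretely, I will choose the weight matrices in block form: let $W \in \mathbb{R}^{m \times n}$ have $w \cdot \textup{Id}_n$ as its top $n \times n$ block and zeros below, so that $\lambda(x) = (w x, 0, \ldots, 0)^\transpose$ and $\normm{W}_\infty = w$; and let $\widetilde{W} \in \mathbb{R}^{q \times m}$ have $\tilde{w} \cdot \textup{Id}_q$ in columns $n+1, \ldots, n+q$ and zeros elsewhere, so that $\widetilde{W}$ extracts those coordinates with $\normm{\widetilde{W}}_\infty = \tilde{w}$. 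This uses that $m \geq n + q$.

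Next, I will define the vector field $F:\mathbb{R}\times \mathcal{C} \to \mathbb{R}^m$ so that it depends only on the current value $y(t) = y_t(0)$ (making the delay functionally irrelevant): components $1, \ldots, n$ and $n+q+1, \ldots, m$ are set to zero, and components $n+1, \ldots, n+q$ are set to $\frac{1}{T \tilde{w}} \Psi\bigl(\frac{1}{w} [y_t(0)]_{1:n}\bigr)$. Because the first $n$ coordinates have zero derivative, they remain equal to $w x$ throughout, so the nontrivial components integrate to $t \cdot \frac{1}{T\tilde{w}} \Psi(x)$ and yield $\frac{1}{\tilde{w}} \Psi(x)$ at $t = T$; applying $\widetilde{W}$ then gives $\Psi(x)$ exactly, independently of $\tau \in [0, T]$. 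The solution is globally defined on $[-\tau,T]$ and Assumption \ref{ass:vectorfield} holds by explicit integration.

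It remains to verify the Lipschitz bound on $F$. Since $F$ only uses $[y_t(0)]_{1:n}$, for all $t$ and $y_t, z_t \in \mathcal{C}$,
\begin{align*}
\norm{F(t,y_t) - F(t,z_t)}_\infty
&\leq \tfrac{1}{T\tilde{w}} \norm{\Psi\bigl(\tfrac{1}{w} [y_t(0)]_{1:n}\bigr) - \Psi\bigl(\tfrac{1}{w} [z_t(0)]_{1:n}\bigr)}_\infty \\
&\leq \tfrac{K_\Psi}{T w \tilde{w}} \norm{y_t(0) - z_t(0)}_\infty
\leq \tfrac{K_\Psi}{T w \tilde{w}} \norm{y_t - z_t}_\infty,
\end{align*}
and the hypothesis $K T \geq K_\Psi / (w \tilde{w})$ makes this at most $K \norm{y_t - z_t}_\infty$. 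The smoothness of $F$ in its second argument matches whichever regularity class $\Psi$ belongs to, and boundedness of derivatives follows from $\Psi \in C^k_b$ for $k \geq 1$, so $F \in C^{0,k}_b$ as required.

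I do not foresee a serious obstacle: the proof is essentially the augmented neural ODE construction from \cite{Kuehn2023} repurposed for the DDE framework, with the delay being inert because $F$ depends only on $y_t(0)$. The only care needed is in the block structure of $W$ and $\widetilde{W}$ (to get the operator norms exactly $w$ and $\tilde{w}$) and in ensuring the Lipschitz bound is computed in the $\mathcal{C}$-norm rather than the pointwise Euclidean norm, which is immediate since $\norm{y_t(0)}_\infty \leq \norm{y_t}_\infty$. Global existence on $[-\tau, T]$ for every constant initial datum $c_{\lambda(x)}$ is built in by explicit integration, verifying the remaining part of Assumption \ref{ass:vectorfield}.
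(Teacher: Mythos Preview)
Your proposal is correct and follows essentially the same approach as the paper: identical block structures for $W$ and $\widetilde{W}$, the same ODE-type vector field $F(t,y_t)=\bigl(0,\tfrac{1}{T\tilde w}\Psi(\tfrac1w[y_t(0)]_{1:n}),0\bigr)^\top$ that ignores the delay, explicit integration to $y(T)=(wx,\tfrac1{\tilde w}\Psi(x),0)^\top$, and the same Lipschitz estimate yielding $\tfrac{K_\Psi}{Tw\tilde w}\leq K$. The only cosmetic difference is that the paper restricts the domain of $F$ to an open $\Omega_y\subset\mathcal{C}$ on which $\tfrac1w[y_t(0)]_{1:n}\in\mathcal{X}$ (so that $\Psi$ is defined), whereas you write $F:\mathbb{R}\times\mathcal{C}\to\mathbb{R}^m$; you should adjust the domain accordingly, but this does not affect the argument.
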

	
	\begin{proof}
		Fix $m \geq n+q$, $\tau\in[0,T]$, $w, \tilde w \in (0,\infty)$, $K \geq \frac{K_\Psi}{w \tilde w T}$, and define an augmented neural DDE $\Phi \in \textup{NDDE}_{\tau,\textup{A},K}^k(\mathcal{X},\mathbb{R}^q)$ with weights and biases $W = w \cdot  \textup{Id}_{m,n}\in\mathbb{R}^{m \times n}$, $b = 0\in\mathbb{R}^m$, $\tilde{b} = 0 \in \mathbb{R}^q$ and
		\begin{equation*}
			\widetilde{W} = \begin{pmatrix}
				0_{q,n} && \hspace{-4mm}\tilde w \cdot \textup{Id}_{q,q} && \hspace{-4mm} 0_{q,m-n-q}
			\end{pmatrix} \in \mathbb{R}^{q \times m}.
		\end{equation*}
		In order to embed the map $\Psi$, we define the DDE
		\begin{equation}\label{eq:DDE_universal_aug}
			\begin{aligned}
				\frac{\dd y}{\dd t} &= F(t,y_t) \qquad &&\text{for } t\geq 0, \\  
				y(t) &= c_{\lambda(x)}(t) &&\text{for } t \in [-\tau,0],
			\end{aligned}
		\end{equation}
		with constant initial data $c_{\lambda(x)}:[-\tau,0]\rightarrow \Omega_y$, $\lambda(x) = Wx$, $x \in \mathcal{X}$ and continuous vector field $	F: \mathbb{R}\times \Omega_y\rightarrow \mathbb{R}^m$ with 
		\begin{equation*}
			\Omega_y =   \left\{u\in\mathcal{C}: \tfrac{1}{w}\cdot u_{1,\ldots,n}(t) \in \mathcal{X} \text{ for } t \in [-\tau,0]\right\}.
		\end{equation*}
		$\Omega_y$ is an open subset of $\mathcal{C}$, as the first $n$ components of $u(t)$ are contained in the open set $w \cdot \mathcal{X}$ and the components $u_{n+1,\ldots,m}(t)$ are contained in the open set $\mathbb{R}^{m-n}$.  Consequently, the domain of definition  $\mathbb{R}\times \Omega_y$ of the vector field $F$ is an open subset of $\mathbb{R} \times \mathcal{C}$ and $\Omega_y$ contains the  set of initial functions used, given by $\Omega_0 = \{c_{\lambda(x)}\in\mathcal{C}:x\in\mathcal{X}\}$. The vector field $F$ is defined as 
		\begin{equation} \label{eq:vectorfield_aug}
			F(t,y_t) =\begin{pmatrix}
				\big[\frac{\dd y}{\dd t}\big]_{1,\ldots,n} \textcolor{white}{\ldots ....} \\
				\big[\frac{\dd y}{\dd t}	\big]_{n+1,\ldots,n+q}\textcolor{white}{.} \\
				\big[\frac{\dd y}{\dd t}\big]_{n+q+1,\ldots,m}
			\end{pmatrix}
			= 
			\begin{pmatrix}
				0 \textcolor{white}{\big[\frac{1}{1}\big]_1}\\
				\frac{1}{\tilde w T} \cdot \Psi\left(\frac{1}{w}\cdot y_{1,..,n}(t)\right)\\
				0 \textcolor{white}{\big[\frac{1}{1}\big]_1}
			\end{pmatrix}, \qquad t\in\mathbb{R},
		\end{equation}
		which is the vector field of an ODE independently of the delay $\tau\in[0,T]$. The initial data is only used at the point $t = 0$ and given by
		\begin{equation*}
			\begin{pmatrix}
				y_{1,\ldots,n}(0) \\
				y_{n+1,\ldots,n+q}(0) \\
				y_{n+q+1,\ldots,m}(0)
			\end{pmatrix}
			= \lambda(x) = Wx+b = \begin{pmatrix}
				wx \\ 0 \\ 0
			\end{pmatrix}.
		\end{equation*} 
		With the constant initial function $c_{\lambda(x)}$ on the time interval $[-\tau,0]$, the specified initial value problem can be interpreted as a DDE. For $t\in[0,T]$ and $x\in\mathcal{X}$ it holds $y_{1,\ldots,n}(t) = wx$, such that the vector field $F$ is well defined. As $\Psi\in C^0(\mathcal{X}, \mathbb{R}^q)$ if $k = 0$ or $\Psi\in C^k_b(\mathcal{X},\mathbb{R}^q)$ if $k\geq 1$, it follows that $F \in C^{0,0}(\Omega,\mathbb{R}^m)$ if $k = 0$ or $F\in C_b^{0,k}(\Omega,\mathbb{R}^m)$ if $k\geq 1$. The unique, continuous solution of \eqref{eq:DDE_universal_aug} is given by
		\begin{equation*}
			\begin{pmatrix}
				y_{1,\ldots,n}(t) \\
				y_{n+1,\ldots,n+q}(t) \\
				y_{n+q+1,\ldots,m}(t)
			\end{pmatrix}
			=  \begin{pmatrix}
				wx \\ \frac{t}{\tilde w T} \cdot \Psi(x) \\ 0
			\end{pmatrix},
		\end{equation*} 
		such that it follows with the affine linear layer $\tilde \lambda: y \mapsto \widetilde{W}y+b$ that
		\begin{equation*}
			\Phi(x) = \tilde{\lambda}(y(T))  = \widetilde{W}\cdot \begin{pmatrix}
				x \\ \frac{1}{\tilde w}\cdot \Psi(x) \\ 0
			\end{pmatrix} = \Psi(x) \qquad \text{for all } x \in \mathcal{X}.
		\end{equation*}
		Since for every $x \in \mathbb{R}^n$, we determined via explicit integration the unique and continuous solution of the corresponding DDE, which satisfies Assumption \ref{ass:vectorfield}, the defined neural DDE $\Phi$ embeds the map $\Psi$ and is an element of $\textup{NDDE}^k_{\tau,\textup{A},K}(\mathcal{X},\mathbb{R}^q)$. $\Phi$ has by construction a delay $\tau\in[0,T]$ and the defined vector field \eqref{eq:vectorfield_aug} is globally Lipschitz continuous in the second variable on $\mathbb{R}\times \Omega_y$ with Lipschitz constant $K$, as for all $t \in \mathbb{R}$ and $y_t,z_t \in \Omega_y$ it follows
		\begin{align*}
			\norm{F(t,y_t)-F(t,z_t)}_\infty  & \leq \norm{\frac{1}{\tilde w T}\Psi\left(\frac{1}{w}\cdot y_{1,..,n}(t)\right)- \frac{1}{\tilde w T}\Psi\left(\frac{1}{w}\cdot z_{1,..,n}(t)\right)}_\infty \\
			& = \frac{1}{\tilde wT}\norm{\Psi\left(\frac{1}{w}\cdot y_{1,..,n}(t)\right)- \Psi\left(\frac{1}{w}\cdot z_{1,..,n}(t)\right)}_\infty \\
			& \leq \frac{K_\Psi}{w \tilde w T} \norm{y_{1,..,n}(t)-z_{1,..,n}(t)}_\infty  \\
			& \leq K \norm{y(t)-z(t)}_\infty \leq K\norm{y_t-z_t}_\infty.
		\end{align*}
		In the proof we used in the second line that the map $\Psi$ is globally Lipschitz continuous on $\mathcal{X}$ with Lipschitz constant $K_\Psi\in[0,\infty)$ and the last line follows from the assumption that $\frac{K_\Psi}{w \tilde w T} \leq K$. 
	\end{proof}
	
	\begin{figure}
		\centering
		\begin{overpic}[scale = 0.8,,tics=10]{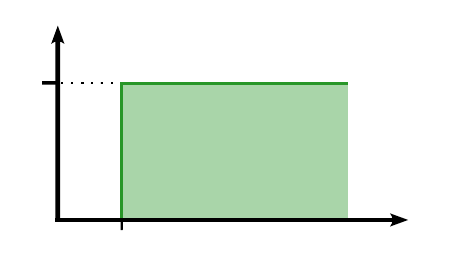}
			\put(10.5,53){$\tau$}
			\put(2,37){$T$}
			\put(89,8){$K$}
			\put(21,0){\textcolor[RGB]{40, 150, 50}{$\frac{K_\Psi}{w \tilde w T}$}}
		\end{overpic}
		\caption{Parameter region $\widetilde{A}_\text{UE}$ in green of universal embedding for fixed Lipschitz constant  $K_\Psi$ of the map~$\Psi$ to embed in the neural DDE architecture $\textup{NDDE}_{\tau,\textup{A},K}^k(\mathcal{X},\mathbb{R}^q)$, $\mathcal{X}\subset \mathbb{R}^n$, $k\geq 0$ with $m \geq n+q$, cf.\ Theorem \ref{th:universalaugmented}.}
		\label{fig:parameter3}
	\end{figure}
	
	The area of universal embedding of the augmented neural DDE architecture $\textup{NDDE}_{\tau,\textup{A}}^k(\mathcal{X},\mathbb{R}^q)$, $\mathcal{X}\subset \mathbb{R}^n$, $k \geq 0$ in dimension $m \geq p+q$ is given by
	\begin{equation*}
		\widetilde{A}_\text{UE} \coloneqq \left\{(K,\tau) \in [0,\infty) \times [0,T]: KT \geq \frac{K_\Psi}{w \tilde w}\right\},
	\end{equation*}
	as visualized in Figure \ref{fig:parameter3}. The corresponding result for neural ODEs is contained in Theorem~\ref{th:universalaugmented} as the special case $\tau = 0$. If $K<\frac{K_\Psi}{w \tilde w T}$, Theorem \ref{th:universalaugmented} does not make any statement about whether universal embedding or approximation is possible or not. 
	
	In the case of scalar neural DDEs used, for example, for classification tasks, i.e., $q = 1$, all augmented architectures fulfill $m \geq n+1$, such that Theorems \ref{th:universal_embedding} and \ref{th:universalaugmented} give universal embedding results for all possible dimensions $m$ and $n$. In the case that $q \geq 2$, our theorems do not characterize the expressivity of augmented neural DDEs if the dimension $m$ fulfills $\max\{n,q\}<m<n+q$. From the proof of Theorem \ref{th:universalaugmented} we observe that $n+q$ dimensions are necessary for the simple construction of the vector field. Hence, we expect that more advanced methods are required to prove a result for $\max\{n,q\}<m<n+q$. 
	
	In Figure \ref{fig:parameter4} we visualize the results of Theorems \ref{th:universal_embedding}, \ref{th:tau0} and \ref{th:universalaugmented} for maps $\Psi \in C^k_b(\mathcal{X},\mathbb{R}^q)$, $\mathcal{X}\subset \mathbb{R}^n$ $k \geq 1$. Depending on the dimension $m$, the Lipschitz constant $K$ and the delay $\tau$ of the neural DDE architecture $\textup{NDDE}_{\tau,K}^k(\mathcal{X},\mathbb{R}^q)$, the areas of universal embedding $A_\text{UE}$, $\widetilde{A}_\text{UE}$ and no universal approximation $A_\text{nUA}$ are shown.  Figure \ref{fig:parameter} is a special case of Figure \ref{fig:parameter4} and shows the embedding and approximation capability of the non-augmented neural DDE architecture $\textup{NDDE}_{\tau,\text{N},K}^k(\mathcal{X},\mathbb{R}^q)$ for $m = \max\{n,q\}$. The universal embedding Theorem \ref{th:universal_embedding} holds by Remark~\ref{rem:universal2} for all $m \geq \max\{n,q\}$, and the universal embedding Theorem \ref{th:universalaugmented} holds for all $m \geq n+q$. For $m \geq n+q$, the parameter region $A_\text{UE}$ is contained in the parameter region $\widetilde{A}_\text{UE}$:
	\begin{equation*}
		(K,\tau)\in A_\text{UE} \quad \Rightarrow \quad K \geq\frac{2}{\tau}\left(1+\frac{K_\Psi}{w \tilde{w}}\right) \geq \frac{K_\Psi}{w \tilde w T} \quad 	\Rightarrow \quad 	(K,\tau)\in \widetilde A_\text{UE},
	\end{equation*}
	as $\frac{2}{\tau}>0$ and $\tau \leq T \leq 2T$. Theorem \ref{th:tau0} shows that under certain assumptions, no universal approximation is possible for all non-augmented architectures, i.e., for all $m \leq \max\{n,q\}$.  Our results already cover large areas of the three-dimensional parameter space $(K,\tau,m)\in[0,\infty) \times [0,T]\times \mathbb{N}$, for future work, it remains to study the parameter regions closer to the critical transition from universal embedding to no universal approximation.

	\begin{figure}
		\centering
		\vspace{-4mm}
		\includegraphics[width=0.9\textwidth]{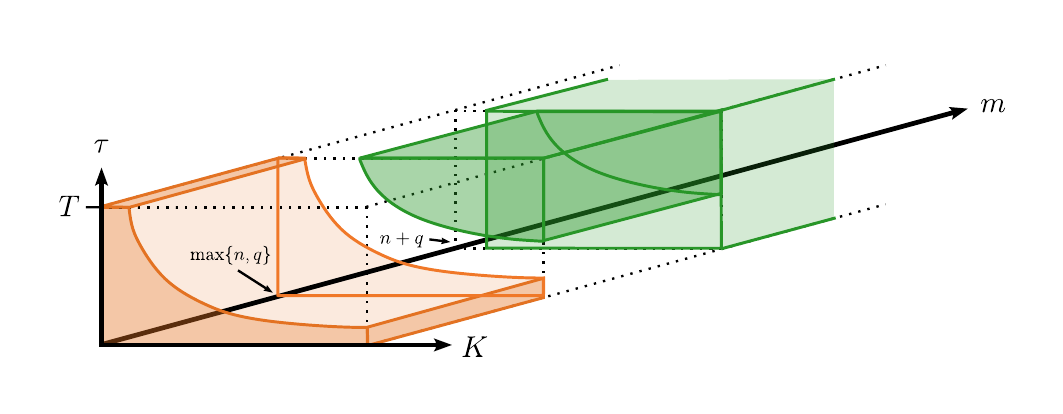}
		\caption{Common visualization for the results of Theorems  \ref{th:universal_embedding}, \ref{th:tau0} and \ref{th:universalaugmented}  for maps $\Psi \in C^k_b(\mathcal{X},\mathbb{R}^q)$, $\mathcal{X}\subset \mathbb{R}^n$ $k \geq 1$ and the neural DDE architecture $\textup{NDDE}_{\tau,K}^k(\mathcal{X},\mathbb{R}^q)$ in the three-dimensional parameter space $(K,\tau,m)\in[0,\infty) \times [0,T]\times \mathbb{N}$. Parameter regions, where no universal approximation is possible, are visualized in orange, and parameter regions with the universal embedding property are visualized in green.} 
		\label{fig:parameter4}
	\end{figure}

	\section{Delay Differential Equations with Small Delay}
	\label{sec:dde_small_delay}
	
	In this section, we state a few results from Chicone \cite{Chicone2003}, Jarnik and Kurzweil \cite{Jarnik1975}, and Ryabov and Driver \cite{Driver1962,Driver1968} about delay differential equations with small delay. With the introduced results, we prove in Section \ref{sec:sectionproof} our main Theorem \ref{th:noapproximation}, which shows that if the memory capacity $K \tau$  is sufficiently small, non-augmented neural DDEs cannot have the universal approximation property.
	
	First, we state in Section \ref{sec:globalexistence} conditions which guarantee the global existence of solutions of DDEs. In the context of neural DDEs, we aim for global existence to guarantee the existence of solutions on the whole time interval $[0,T]$. In Section \ref{sec:specialsolutions} we introduce the concept of special solutions of DDEs and show their existence if the delay $\tau$ is sufficiently small. Under certain conditions, all solutions of DDEs with small delay are exponentially attracted towards the finite-dimensional inertial manifold of special solutions. Afterwards, we show in Section \ref{sec:specialODE} that the inertial manifold of special solutions is generated by an ordinary differential equation. Due to the exponential attraction towards solution curves of ODEs, we can use properties of ODEs to approximate the dynamics of DDEs with small delay. Finally, we connect in Section \ref{sec:NDDEsmalldelay} the results stated for DDEs with small delay to the neural DDE architectures defined in Section \ref{sec:architecture}. 
	
	\subsection{Conditions for Global Existence}
	\label{sec:globalexistence}
	
	As already introduced in Section \ref{sec:modeling}, we abbreviate by $\mathcal{C}\coloneqq C^0([-\tau,0],\mathbb{R}^m)$ the Banach space of continuous functions equipped with the sup-norm $\norm{y_t}_\infty \coloneqq \sup_{s\in[-\tau,0]} \norm{y_t(s)}_\infty$. An element $y_t \in \mathcal{C}$ is defined by $y_t(s)\coloneqq y(t+s)$ for $s\in[-\tau,0]$ and all $t\in \mathbb{R}$ such that $y(t+s)$ exists. In this section, we state results for the general non-autonomous DDE
	\begin{equation}\label{eq:DDEnonautonomous}
		\begin{aligned}
			\frac{\dd y}{\dd t} &= F(t,y_t) \qquad &&\text{for } t\geq t_0, \\  
			y_{t_0}(t) &= u(t) &&\text{for } t \in [-\tau,0],
		\end{aligned}
	\end{equation}
	with globally defined vector field $F:\mathbb{R}\times \mathcal{C}\rightarrow \mathbb{R}^m$ and initial data $u\in \mathcal{C}$. If there exists $c\in(0,\infty]$, such that $y:[t_0-\tau,t_0+c)\rightarrow \mathbb{R}^m$ is continuous and fulfills \eqref{eq:DDEnonautonomous} for $t \in [t_0-\tau, t_0+c)$, then $y(t_0,u)(t)$ is called a solution of the DDE \eqref{eq:DDEnonautonomous} with initial data $y(t_0,u)_{t_0} = u$.
	
	In Section \ref{sec:architecture}, we already discussed Lemma \ref{lem:dde_regularity} about the local existence, uniqueness and differential dependence of solutions of the DDE \eqref{eq:DDEnonautonomous} for vector fields $F\in C^{0,k}_b(\Omega,\mathbb{R}^m)$ with bounded derivatives up to order $k$, $k \geq 1$, in the second variable, and $\Omega \subset \mathbb{R}\times \mathcal{C}$ open. To be consistent with the literature, we state in this section the results for DDEs with small delay for the more general class of continuous vector fields, which are globally Lipschitz continuous with respect to the second variable on $\Omega$. Note that $F\in C^{0,k}_b(\Omega,\mathbb{R}^m)$ with $\Omega \subset \mathbb{R}\times \mathcal{C}$ open and $k \geq 1$ implies, that $F\in C^{0,0}(\Omega,\mathbb{R}^m)$ is globally Lipschitz continuous with respect to the second variable: the global bound of the first derivative with respect to the second variable can be taken as a global Lipschitz constant. The relevant result on local existence, uniqueness, and continuous dependence for Lipschitz continuous DDEs is stated below.
	
	\begin{theorem}[DDE Local Existence, Uniqueness, Continuous Dependence \cite{Hale1993}] \label{th:DDEexistenceuniqueness}
		Let $F\in C^{0,0}(\Omega, \mathbb{R}^m)$ with $\Omega \subset \mathbb{R}\times \mathcal{C}$ open be Lipschitz continuous with respect to the second variable in each compact set $\Omega' \subset \Omega$, i.e., there is a constant $K>0$ such that
		\begin{equation*}
			\norm{F(t,y_t)-F(t,z_t)}_\infty \leq K \norm{y_t-z_t}_\infty \qquad \text{for all } (t,y_t), (t,z_t) \in \Omega'.
		\end{equation*}
		Then for each initial data $(t_0,u) \in \Omega$ there exists a unique continuous solution  of \eqref{eq:DDEnonautonomous}, denoted by $y(t_0,u):[t_0-\tau,t_0+c)$ $\rightarrow  \mathbb{R}^m$  for some $c\in(0,\infty]$. Furthermore, the solution $y(t_0,u)$ depends continuously on the initial data $u$.
	\end{theorem}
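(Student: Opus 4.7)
The plan is to recast the DDE as a Volterra-type integral equation and apply a Banach fixed-point argument on a suitable space of continuous functions, which is the standard route for such local existence results. Specifically, I would observe that any continuous solution $y:[t_0-\tau,t_0+c)\to \mathbb{R}^m$ of \eqref{eq:DDEnonautonomous} with initial data $u$ satisfies
\begin{equation*}
y(t)=
\begin{cases}
u(t-t_0), & t\in[t_0-\tau,t_0],\\[1mm]
u(0)+\int_{t_0}^{t}F(s,y_s)\,\mathrm{d}s, & t\in[t_0,t_0+c),
\end{cases}
\end{equation*}
and conversely any continuous $y$ satisfying this Volterra equation is a solution. Since $\Omega$ is open and $(t_0,u)\in\Omega$, I can choose radii $a,b>0$ such that the tube
$\Omega'=\{(t,\phi):\,t\in[t_0,t_0+a],\,\norm{\phi-u}_\infty\le b\}$
is a compact subset of $\Omega$. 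On $\Omega'$, the vector field $F$ is bounded by some $M>0$ (by continuity and compactness) and Lipschitz in the second variable with some constant $K>0$ by hypothesis.

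Next, I fix $c\in(0,a]$ small enough that $Mc\le b$ and $Kc<1$, and consider the complete metric space
\begin{equation*}
\mathcal{E}_c\coloneqq\left\{y\in C^0([t_0-\tau,t_0+c],\mathbb{R}^m):\, y_{t_0}=u,\ \sup_{t\in[t_0,t_0+c]}\norm{y_t-u}_\infty\le b\right\}
\end{equation*}
equipped with the sup-norm. On $\mathcal{E}_c$ I define the operator $(\mathcal{T}y)(t)=u(t-t_0)$ on $[t_0-\tau,t_0]$ and $(\mathcal{T}y)(t)=u(0)+\int_{t_0}^t F(s,y_s)\,\mathrm{d}s$ on $[t_0,t_0+c]$. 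The boundedness of $F$ on $\Omega'$ gives $\norm{(\mathcal{T}y)(t)-u(0)}_\infty\le Mc\le b$, showing that $\mathcal{T}$ maps $\mathcal{E}_c$ into itself, while the Lipschitz estimate gives
\begin{equation*}
\norm{(\mathcal{T}y)(t)-(\mathcal{T}z)(t)}_\infty\le K\int_{t_0}^{t}\norm{y_s-z_s}_\infty\,\mathrm{d}s\le Kc\sup_{s\in[t_0,t_0+c]}\norm{y_s-z_s}_\infty,
\end{equation*}
so $\mathcal{T}$ is a contraction since $Kc<1$. Banach's fixed-point theorem then yields a unique fixed point, which is the desired unique local solution $y(t_0,u)$. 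Extension to the maximal interval $[t_0-\tau,t_0+c)$ with $c\in(0,\infty]$ follows by the standard continuation argument: if the solution stays in a compact subset of $\Omega$, one can repeatedly re-apply the local result.

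For continuous dependence, given two initial data $u,v\in\mathcal{C}$ close in the sup-norm, I would compare the corresponding solutions on a common interval $[t_0-\tau,t_0+c]$ contained in both domains. Writing the difference via the integral formulation and invoking the Lipschitz estimate leads to
\begin{equation*}
\norm{y(t_0,u)(t)-y(t_0,v)(t)}_\infty\le \norm{u-v}_\infty+K\int_{t_0}^{t}\norm{y(t_0,u)_s-y(t_0,v)_s}_\infty\,\mathrm{d}s,
\end{equation*}
and Gronwall's inequality applied to the function $\phi(t)\coloneqq \sup_{s\in[t_0-\tau,t]}\norm{y(t_0,u)(s)-y(t_0,v)(s)}_\infty$ gives $\phi(t_0+c)\le \norm{u-v}_\infty e^{Kc}$, which is precisely continuous dependence on the initial data. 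The main technical obstacle in executing this program cleanly is the initial segment bookkeeping: one must keep the interval $[t_0-\tau,t_0]$ attached to every candidate solution and verify that the map $t\mapsto y_t\in\mathcal{C}$ remains inside $\Omega$ so that the local Lipschitz constant $K$ indeed applies throughout the integration; this requires the careful choice of $a,b,c$ above and the compactness of $\Omega'$ to produce uniform bounds.
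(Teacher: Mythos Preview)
Your argument is correct and is precisely the standard Picard--Banach fixed-point proof for functional differential equations. Note, however, that the paper does not supply its own proof of this statement: Theorem~\ref{th:DDEexistenceuniqueness} is quoted directly from Hale~\cite{Hale1993} without proof, so there is nothing to compare against beyond observing that your contraction-mapping argument on the space $\mathcal{E}_c$ together with the Gronwall estimate for continuous dependence is exactly the classical route taken in that reference.
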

	
	In the case that $F\in C^{0,k}_b(\Omega,\mathbb{R}^m)$ with $\Omega \subset \mathbb{R}\times \mathcal{C}$ open and $k \geq 1$, it holds in addition to Theorem \ref{th:DDEexistenceuniqueness} differential dependence on initial conditions of Theorem \ref{lem:dde_regularity}, i.e., that $y(t_0,u)(t)$ is $k$ times continuously differentiable with respect to $u$ for $t$ in any compact set in the domain of definition of $y(t_0,u)$. 
	
	For all upcoming results, we assume that the vector field $F$ of \eqref{eq:DDEnonautonomous} is only weakly nonlinear, which means $F$ fulfills the following properties: $F$ is continuous and globally defined,  globally Lipschitz continuous with respect to the second variable and there exists an upper bound for $\norm{F(t,0)}_\infty $ for all $t \in \mathbb{R}$. In Section \ref{sec:NDDEsmalldelay} we argue why it is in the context of neural DDEs no restriction to assume that the vector field $F$ is only weakly nonlinear.
	
	\begin{assumption}[Weakly Nonlinear Vector Field \cite{Driver1968}] \label{ass:weaklynonlinear}
		The vector field $F$ is weakly nonlinear, i.e., it fulfills the following properties:
		\begin{enumerate}[label=(\alph*), font=\normalfont]
			\item $F$ is continuous and globally defined, i.e., $F\in C^{0,0}(\mathbb{R}\times \mathcal{C}, \mathbb{R}^m)$.
			\item There exists $A \geq 0$ such that $\norm{F(t,0)}_\infty \leq A $ for all $t \in \mathbb{R}$.
			\item $F$ is globally Lipschitz continuous with respect to the second variable, i.e., there exists a constant $K>0$ such that
			\begin{equation*}
				\norm{F(t,y_t)-F(t,z_t)}_\infty \leq K \norm{y_t-z_t}_\infty \qquad \text{for all } (t,y_t), (t,z_t) \in \mathbb{R} \times \mathcal{C}.
			\end{equation*}
		\end{enumerate}
	\end{assumption}

	Theorem \ref{th:DDEexistenceuniqueness} guarantees only the existence of local unique solutions of \eqref{eq:DDEnonautonomous}. In order to define a neural DDE as in Definition \ref{def:NDDE}, we need to guarantee that the solution of the DDE exists globally on the time interval $[0,T]$. Under Assumption~\ref{ass:weaklynonlinear} that the vector field $F$ is only weakly nonlinear, global existence of unique solutions of \eqref{eq:DDEnonautonomous} can be proven for all $t \geq t_0$, as the following theorem shows.
	
	\begin{theorem}[Global Existence for DDEs with Weakly Nonlinear Vector Field \cite{Driver1962,Driver1968}] \label{th:globalexistence}
		Let $F\in C^{0,0}(\mathbb{R}\times \mathcal{C}, \mathbb{R}^m)$ fulfill Assumption \ref{ass:weaklynonlinear}. Then for each initial data $(t_0,u) \in \mathbb{R}\times \mathcal{C}$, there exists a unique continuous and globally defined solution $y(t_0,u): [t_0-\tau,\infty)\rightarrow \mathbb{R}^m$ of the DDE \eqref{eq:DDEnonautonomous}. For $u,v\in \mathcal{C}$ the following estimates hold for $t \geq t_0$:
		\begin{enumerate}[label=(\alph*), font=\normalfont]
			\item \quad $\norm{y(t_0,u)(t)}_\infty \leq \norm{u}_\infty e^{K(t-t_0)} + \frac{A}{K}\left(e^{K(t-t_0)}-1\right)$,
			\item \quad $\norm{y(t_0,u)(t)-y(t_0,v)(t)}_\infty \leq \norm{u-v}_\infty e^{K(t-t_0)}$.
		\end{enumerate}
	\end{theorem}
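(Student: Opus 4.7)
The plan is to combine the local existence and uniqueness result of Theorem \ref{th:DDEexistenceuniqueness} with two a priori estimates derived from Gr\"onwall's inequality. The estimates (a) and (b) are themselves the core of the proof: once (a) rules out finite-time blow-up on any maximal interval of existence, a standard continuation argument upgrades the local solution to a global one on $[t_0-\tau,\infty)$. Throughout I would work with the integrated form of \eqref{eq:DDEnonautonomous} and with the running supremum of $\norm{y(\cdot)}_\infty$ in order to translate pointwise bounds into bounds on the history segment $y_s\in\mathcal{C}$.

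First I would fix $(t_0,u)\in\mathbb{R}\times\mathcal{C}$ and, by Theorem \ref{th:DDEexistenceuniqueness}, obtain a unique continuous solution $y(t_0,u)$ on some maximal interval $[t_0-\tau,t_0+c)$ with $c\in(0,\infty]$. For (a), writing $y(t)=u(0)+\int_{t_0}^t F(s,y_s)\,\dd s$ and using Assumption \ref{ass:weaklynonlinear} gives $\norm{F(s,y_s)}_\infty\leq A+K\norm{y_s}_\infty$. Define the running supremum $M(t)\coloneqq\sup_{s\in[t_0-\tau,t]}\norm{y(s)}_\infty$, which dominates $\norm{y_s}_\infty$ for every $s\leq t$; since $M(t_0)\leq\norm{u}_\infty$, one obtains
\begin{equation*}
M(t)\leq \norm{u}_\infty+\int_{t_0}^t\bigl(A+KM(s)\bigr)\,\dd s \qquad \text{for } t\in[t_0,t_0+c).
\end{equation*}
Gr\"onwall's inequality then yields $M(t)\leq\norm{u}_\infty e^{K(t-t_0)}+\tfrac{A}{K}(e^{K(t-t_0)}-1)$, which implies (a) because $\norm{y(t)}_\infty\leq M(t)$.

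For (b), I would set $z(t)\coloneqq y(t_0,u)(t)-y(t_0,v)(t)$ on the common interval of existence, use the integral form and the global Lipschitz bound to get $\norm{F(s,y_s^u)-F(s,y_s^v)}_\infty\leq K\norm{y_s^u-y_s^v}_\infty$, and introduce $N(t)\coloneqq\sup_{s\in[t_0-\tau,t]}\norm{z(s)}_\infty$. The same argument as above gives
\begin{equation*}
N(t)\leq \norm{u-v}_\infty+K\int_{t_0}^t N(s)\,\dd s,
\end{equation*}
and Gr\"onwall delivers (b). Finally, global existence follows from (a): if $c<\infty$, then $y$ and $F(\cdot,y_\cdot)$ are bounded on $[t_0,t_0+c)$, so $y$ is uniformly continuous there and extends continuously to $t_0+c$; the segment $y_{t_0+c}\in\mathcal{C}$ can then be used as new initial data in Theorem \ref{th:DDEexistenceuniqueness} to push the solution past $t_0+c$, contradicting maximality. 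Hence $c=\infty$.

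The main technical obstacle is the mismatch between the pointwise quantity $\norm{y(t)}_\infty$ appearing in the integral form and the history-norm $\norm{y_s}_\infty=\sup_{\theta\in[-\tau,0]}\norm{y(s+\theta)}_\infty$ needed to invoke the Lipschitz hypothesis; the trick of passing to the monotone majorant $M(t)$ (respectively $N(t)$) is what makes Gr\"onwall directly applicable and produces the clean exponential bounds stated in the theorem.
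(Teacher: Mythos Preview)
The paper does not supply its own proof of this theorem; it is quoted from the literature (Driver 1962, 1968) and used as a black box. Your argument is correct and is essentially the classical one found in those references: local existence from Theorem~\ref{th:DDEexistenceuniqueness}, the running-supremum trick $M(t)=\sup_{s\in[t_0-\tau,t]}\norm{y(s)}_\infty$ to absorb the history segment, Gr\"onwall for the two estimates, and a continuation argument to rule out $c<\infty$. The only step worth making slightly more explicit is the passage from the pointwise bound on $\norm{y(t)}_\infty$ to the bound on $M(t)$ itself (one uses that the right-hand side of the integral inequality is nondecreasing in $t$, so the same bound holds for every $t'\in[t_0,t]$, while on $[t_0-\tau,t_0]$ one has $\norm{y(t')}_\infty\leq\norm{u}_\infty$); once that is said, Gr\"onwall applies cleanly to $M(t)+A/K$ and yields exactly the stated bound.
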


	\subsection{Existence of Special Solutions}
	\label{sec:specialsolutions}
	
	An important concept of DDEs with small delay $\tau$ are special solutions, which are characterized by global existence on all of $\mathbb{R}$ and an exponential growth bound. The general definition of a special solution of the DDE \eqref{eq:DDEnonautonomous} is given in the following. 
	
	\begin{definition}[Special Solution \cite{Chicone2003}] \label{def:specialsolution}
		A solution $\bar{y}:\mathbb{R}\rightarrow \mathbb{R}^m$ of the DDE \eqref{eq:DDEnonautonomous} is called a special solution if $\bar{y}$ is defined on all of $\mathbb{R}$ and 
		\begin{equation}\label{eq:growthcondition}
			\sup_{t \in \mathbb{R}} e^{-\frac{\abs{t}}{\tau} } \norm{\bar{y}(t)} < \infty.
		\end{equation}
	\end{definition} 
	
	For notational convenience, we always denote special solutions with an overline. If the vector field~$F$ of the DDE \eqref{eq:DDEnonautonomous} fulfills Assumption \ref{ass:weaklynonlinear} and the product of the Lipschitz constant~$K$ and the delay $\tau$ satisfies $K\tau e < 1$, then the upcoming theorem shows that for every $(t_0,y_0)\in\mathbb{R}\times \mathbb{R}^m$, a unique special solution $\bar{y}$ with $\bar{y}(t_0) = y_0$ exists. In contrast to general solutions of the DDE \eqref{eq:DDEnonautonomous}, special solutions are uniquely determined by the finite-dimensional initial data $y_0 \in \mathbb{R}^m$ at a time point $t_0 \in \mathbb{R}$. In the general case, it is necessary to define an initial function $u\in \mathcal{C}$ on the time interval $[t_0-\tau,t_0]$ to specify a unique solution. The next theorem states additionally that the unique special solution through a given point $(t_0,y_0)\in\mathbb{R}\times \mathbb{R}^m$ is the only solution with $\bar{y}(t_0) = y_0$ satisfying the growth condition \eqref{eq:growthcondition}.

	\begin{theorem}[Existence of Special Solutions \cite{Driver1968}]\label{th:specialsolutions_uniqueness} 
		Let  $F\in C^{0,0}(\mathbb{R}\times \mathcal{C}, \mathbb{R}^m)$ fulfill Assumption \ref{ass:weaklynonlinear} and assume $K \tau e < 1$. Then for every $(t_0,y_0)\in\mathbb{R}\times \mathbb{R}^m$ there exists a special solution $\bar{y}:\mathbb{R}\rightarrow \mathbb{R}^m$ of \eqref{eq:DDEnonautonomous} which satisfies $\bar{y}(t_0) = y_0$. The special solution $\bar{y}$ is the only solution of \eqref{eq:DDEnonautonomous} satisfying the growth condition \eqref{eq:growthcondition} and fulfilling $\bar{y}(t_0) = y_0$.
	\end{theorem}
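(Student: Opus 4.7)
The plan is to set up a fixed-point argument in a Banach space tailored to the growth condition~\eqref{eq:growthcondition}, so that special solutions appear as the unique fixed points of an integral operator. First, I would reformulate the DDE~\eqref{eq:DDEnonautonomous} together with $\bar y(t_0)=y_0$ as the integral equation $\bar y(t) = y_0 + \int_{t_0}^t F(s,\bar y_s)\,ds$ valid for all $t\in\mathbb{R}$, so that continuous solutions of this equation on $\mathbb{R}$ correspond exactly to global solutions of~\eqref{eq:DDEnonautonomous} passing through $y_0$ at time $t_0$.

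Next, I would introduce the weighted Banach space
\begin{equation*}
X \coloneqq \Bigl\{\, y \in C^0(\mathbb{R},\mathbb{R}^m) : \|y\|_w \coloneqq \sup_{t \in \mathbb{R}} e^{-|t-t_0|/\tau}\|y(t)\|_\infty < \infty \,\Bigr\},
\end{equation*}
whose finite-norm elements are precisely those satisfying a growth bound equivalent to~\eqref{eq:growthcondition}, since the weights $e^{-|t|/\tau}$ and $e^{-|t-t_0|/\tau}$ differ by the finite factor $e^{|t_0|/\tau}$. Define $\mathcal{T}: X\to C^0(\mathbb{R},\mathbb{R}^m)$ by $(\mathcal{T}y)(t)\coloneqq y_0 + \int_{t_0}^t F(s,y_s)\,ds$. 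The crucial estimate concerns the history segment: for $y\in X$ and $s\in\mathbb{R}$, every $r\in[s-\tau,s]$ satisfies $|r-t_0|\leq|s-t_0|+\tau$, hence
\begin{equation*}
\|y_s\|_\infty \leq e^{(|s-t_0|+\tau)/\tau}\|y\|_w = e\cdot e^{|s-t_0|/\tau}\|y\|_w.
\end{equation*}
Combined with Assumption~\ref{ass:weaklynonlinear}, this yields the pointwise bound $\|F(s,y_s)\|_\infty \leq A + K e\, e^{|s-t_0|/\tau}\|y\|_w$.

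Using $\bigl|\int_{t_0}^t e^{|s-t_0|/\tau}\,ds\bigr| \leq \tau e^{|t-t_0|/\tau}$ (splitting according to the sign of $t-t_0$) together with the elementary bound $|t-t_0|e^{-|t-t_0|/\tau}\leq \tau/e$, I would check that $\mathcal{T}$ maps $X$ into itself. The same computation applied to a difference gives
\begin{equation*}
e^{-|t-t_0|/\tau}\|(\mathcal{T}y)(t) - (\mathcal{T}z)(t)\|_\infty \leq K e\,\|y-z\|_w \cdot e^{-|t-t_0|/\tau}\Bigl|\int_{t_0}^t e^{|s-t_0|/\tau}\,ds\Bigr| \leq K\tau e\,\|y-z\|_w,
\end{equation*}
so $\mathcal{T}$ is a contraction on $X$ precisely because $K\tau e<1$. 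The Banach fixed-point theorem then produces a unique $\bar y\in X$ with $\mathcal{T}\bar y=\bar y$; by construction $\bar y$ is continuous on all of $\mathbb{R}$, satisfies $\bar y(t_0)=y_0$, solves~\eqref{eq:DDEnonautonomous}, and lies in $X$, hence is a special solution in the sense of Definition~\ref{def:specialsolution}. Uniqueness within the class of solutions satisfying~\eqref{eq:growthcondition} follows at once, since any such solution automatically belongs to $X$ and must therefore coincide with the unique fixed point of $\mathcal{T}$.

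The main obstacle is the careful bookkeeping of the factor $e$ that appears when passing from the weighted norm on $\mathbb{R}$ to the sup-norm over history segments of length $\tau$; this factor is exactly what forces the sharp threshold $K\tau e<1$ rather than the naive $K\tau<1$ one might guess from a standard Gronwall-type argument. Once this weighted-norm bookkeeping is organized, the remainder of the proof collapses into routine integral estimates and a direct application of Banach's theorem.
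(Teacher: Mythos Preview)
The paper does not prove this theorem; it is quoted from Driver~\cite{Driver1968} (with related ideas going back to Ryabov) and stated without proof as background for Section~\ref{sec:dde_small_delay}. Your proposal is therefore not being compared against an argument in the paper but against the literature.

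That said, your argument is correct and is essentially the modern, streamlined version of Driver's original construction. Driver proceeds by building the special solution via successive approximations extended backward in time, controlling the growth at each step; your Banach fixed-point formulation in the weighted space $X=\{y:\sup_t e^{-|t-t_0|/\tau}\|y(t)\|_\infty<\infty\}$ packages exactly the same estimates into a single contraction. The key inequality $\|y_s\|_\infty\le e\cdot e^{|s-t_0|/\tau}\|y\|_w$, which produces the extra factor $e$ and hence the threshold $K\tau e<1$, is precisely the mechanism that appears in Driver's iteration as well. Your observation that the weights $e^{-|t|/\tau}$ and $e^{-|t-t_0|/\tau}$ are equivalent is the right way to reconcile your space $X$ with Definition~\ref{def:specialsolution}, and the bound $|t-t_0|e^{-|t-t_0|/\tau}\le\tau/e$ cleanly handles the inhomogeneous term coming from $A$. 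One small point worth making explicit in a full write-up: the fixed point $\bar y$ is automatically $C^1$ because $s\mapsto F(s,\bar y_s)$ is continuous, so the integral equation does yield a classical solution of~\eqref{eq:DDEnonautonomous}; you allude to this but it deserves one sentence.
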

	
	Hence, special solutions are characterized by an initial condition at one time point, like solutions of initial value problems of ordinary differential equations. As there exists a one-to-one correspondence between special solutions and initial conditions in $\mathbb{R}^m$, the manifold of special solutions is finite-dimensional, as discussed in the upcoming section. In the following example, we illustrate Definition \ref{def:specialsolution} and Theorem  \ref{th:specialsolutions_uniqueness} for a one-dimensional linear DDE with discrete delay.

	\begin{example}[One-Dimensional Linear DDE] \label{ex:linear1} \normalfont
		Consider the one-dimensional linear DDE with discrete delay
		\begin{equation} \label{eq:DDEexample1}
			\frac{\dd y}{\dd t} = F(t,y_t) = K_0 y(t-\tau)
		\end{equation}
		with $F\in C^{0,\infty}_b(\mathbb{R}\times \mathcal{C},\mathbb{R})$, $K_0 \in \mathbb{R}$ and delay $\tau>0$. By linearity, the DDE \eqref{eq:DDEexample1} is Lipschitz continuous in $y_t$ with Lipschitz constant $K = \abs{K_0}$ and Assumption \ref{ass:weaklynonlinear} is fulfilled with $A = 0$.
		
		If  $K\tau e \geq 1$, the smallness condition of Theorem \ref{th:specialsolutions_uniqueness} is not fulfilled, hence uniqueness of special solutions of the DDE \eqref{eq:DDEexample1} through a given initial condition $(t_0,y_0)\in\mathbb{R}\times \mathbb{R}$ cannot be guaranteed. If for example $K_0\tau = - \frac{\pi}{2}$, then $K \tau e = \frac{e \pi}{2} \geq 1$ and the DDE~\eqref{eq:DDEexample1} is solved by
		\begin{equation*}
			\tilde y_c(t) = y_0 \cos(K_0(t-t_0))+ c \sin(K_0(t-t_0))
		\end{equation*}
		for any constant $c \in \mathbb{R}$. Every solution $ \tilde y_c$ fulfills the initial condition $\tilde y_c(t_0) = y_0$ and the growth condition \eqref{eq:growthcondition}, as $\tilde y_c$ is a bounded function. Consequently, for  $K_0\tau = - \frac{\pi}{2}$ infinitely many special solutions of the DDE \eqref{eq:DDEexample1} exist. The solutions $\tilde y_{0}$ and $\tilde y_{1}$ are visualized in Figure \ref{fig:DDEsolutions}.

		If the smallness condition $K\tau e<1$ of Theorem \ref{th:specialsolutions_uniqueness} is fulfilled, it holds $-\frac{1}{e} < K_0 \tau<\frac{1}{e} $. The unique solution of \eqref{eq:DDEexample1} with initial condition $y(t_0) = y_0$ can be found with the exponential ansatz
		\begin{equation*}
			y_\lambda(t_0,y_0)(t) = y_0 e^{\lambda (t-t_0)}.
		\end{equation*}
		To be a solution of the DDE \eqref{eq:DDEexample1}, $y_\lambda$ has to fulfill the `characteristic equation' $\lambda = K_0e^{-\lambda \tau}$ \cite{Driver1976}. 
		This equation has at most two solutions, determined by Lambert's W-function:
		\begin{equation} \label{eq:characteristic}
			\lambda = K_0e^{-\lambda \tau} \quad \Leftrightarrow \quad \lambda \tau e^{\lambda \tau} = K_0 \tau \quad \Rightarrow \quad \lambda_1 = \frac{W_0(K_0\tau)}{\tau}, \; \lambda_2 = \frac{W_{-1}(K_0\tau)}{\tau}.
		\end{equation}
		The solution $W_0(K_0\tau)$ exists if $-\frac{1}{e} \leq K_0 \tau $ and fulfills $-1<W_0(K_0\tau)<1$ for $-\frac{1}{e} <K_0\tau<e$. The solution $W_{-1}(K_0\tau)$ exists if $-\frac{1}{e} \leq K_0\tau < 0$ and fulfills $W_{-1}(K_0\tau) < -1$ if $-\frac{1}{e}<K_0 \tau$ \cite{Corless1996}. If the smallness condition $K\tau e<1$ is fulfilled, it holds $-1<K_0\tau e< 1$, such that the solution $\lambda_1$ of the characteristic equation, corresponding to the branch $W_0$,  exists and fulfills $-\frac{1}{\tau}<\lambda_1<\frac{1}{\tau}$. If $K\tau e<1$ and  $K_0 \tau < 0$,  additionally the solution $\lambda_2$ corresponding to the branch $W_{-1}$ exists and fulfills $\lambda_2 < -\frac{1}{\tau}$ \cite{Driver1968}. The two branches $W_0$ and $W_{-1}$ of Lambert's W-function and the solutions $\lambda_1$ and $\lambda_2$ are  visualized in Figure~\ref{fig:lambertsW}. The solution $y_{\lambda_1}$ of the DDE \eqref{eq:DDEexample1} is a special solution, as it is defined on all of $\mathbb{R}$ and it holds
		\begin{equation*}
			\sup_{t \in \mathbb{R}} e^{-\frac{\abs{t}}{\tau} }  \norm{e^{\lambda_1t}} \leq \sup_{t \in \mathbb{R}} e^{-\frac{\abs{t}}{\tau} }  e^{\abs{\lambda_1} \abs{t}}= \sup_{t \in \mathbb{R}}  e^{\left(\abs{\lambda_1}-\frac{1}{\tau}\right) \abs{t}} < \infty,
		\end{equation*}
		as $\abs{\lambda_1}-\frac{1}{\tau}<0$ because $-\frac{1}{\tau} < \lambda_1  < \frac{1}{\tau}$. By Theorem \ref{th:specialsolutions_uniqueness}, the special solution $y_{\lambda_1}$ is unique and it holds $\bar{y}(t_0,y_0) = y_{\lambda_1}(t_0,y_0)$. Indeed, if additionally the solution $y_{\lambda_2}$ exists, then $y_{\lambda_2}$ is no special solution: it holds
		\begin{equation*}
			\sup_{t \in \mathbb{R}} e^{-\frac{ \abs{t}}{\tau} }  \norm{e^{\lambda_2t}} \geq \lim_{t \rightarrow - \infty} e^{\left(\abs{\lambda_2}-\frac{1}{\tau}\right) \abs{ t} } = \infty, 
		\end{equation*}
		as $\abs{\lambda_2}-\frac{1}{\tau}>0$ because $\lambda_2 < -\frac{1}{\tau}$. For the case $-1< K_0 \tau e < 0$, the special solution $y_{\lambda_1}$ and the second solution $y_{\lambda_2}$ of the DDE \eqref{eq:DDEexample1} are shown in Figure \ref{fig:DDEsolutions}. 
		
			\begin{figure}
			\centering
			\begin{overpic}[scale = 0.7,,tics=10]
				{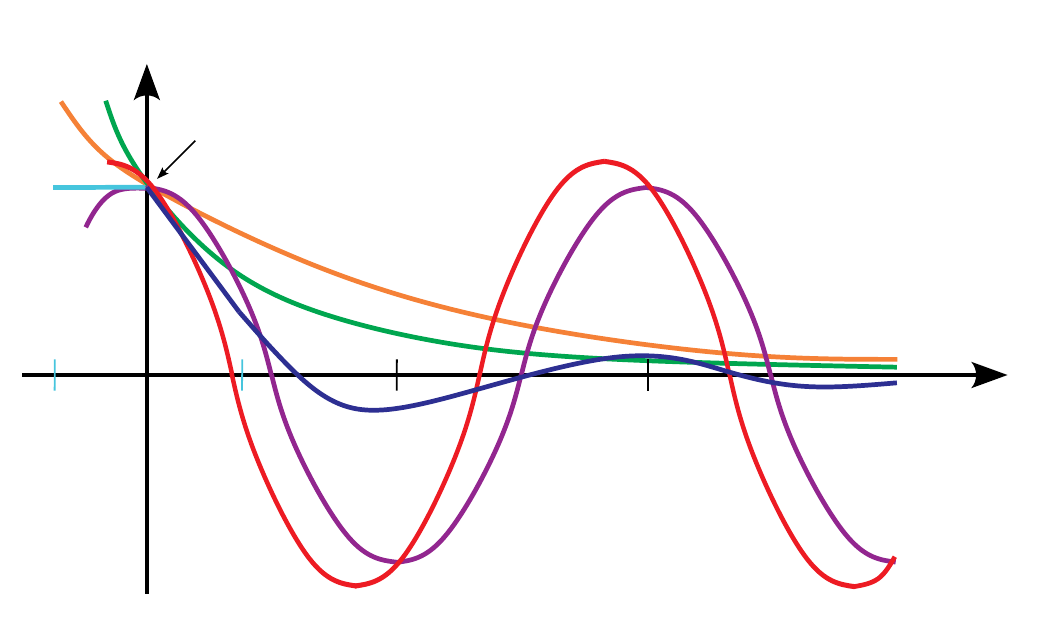}
				\put(4,42.5){\textcolor{SkyBlue}{$c_{y_0}$}}
				\put(1,19){\textcolor{SkyBlue}{$t_0-\tau$}}
				\put(19,19){\textcolor{SkyBlue}{$t_0+\tau$}}
				\put(14.5,24.5){\textcolor{Black}{$t_0$}}
				\put(12,54){\textcolor{Black}{$y(t)$}}
				\put(96.5,22.5){\textcolor{Black}{$t$}}
				\put(19,46){\textcolor{Black}{$y_0$}}
				\put(33.5,17){\textcolor{Black}{$t_0+\frac{\pi}{K}$}}	
				\put(57,17){\textcolor{Black}{$t_0+\frac{2\pi}{K}$}}
				\put(34,33){\textcolor{Orange}{$y_{\lambda_1}$}}
				\put(30,26){\textcolor{Green}{$y_{\lambda_2}$}}
				\put(58,45){\textcolor{Red}{$\tilde y_1$}}
				\put(63,42){\textcolor{Plum}{$\tilde y_0$}}
				\put(76.5,18.5){\textcolor{Blue}{$y(t_0,c_{y_0})$}}
			\end{overpic}
			\caption{Solutions of the DDE \eqref{eq:DDEexample1} though the point $(t_0,y_0)$. If $K_0 \tau = -\frac{\pi}{2}$, $\tilde y_{0}$ and $\tilde y_{1}$ are two of infinitely many special solutions. If $-1<K_0\tau e<0$, then there exists a special solution $y_{\lambda_1}$ and a second globally defined solution $y_{\lambda_2}$ though the point $(t_0,y_0)$, but $y_{\lambda_2}$ does not satisfy the grow condition \eqref{eq:growthcondition}. If the constant function $c_{y_0}$ is chosen as initial data, the solution exists on the time interval $[t_0-\tau,\infty)$. The solution $y(t_0,c_{y_0})$ is plotted for some $K_0<0$ and shows damped oscillatory behavior.}
			\label{fig:DDEsolutions}			
		\end{figure}

		The initial condition $y(t_0) = y_0$ uniquely determines the initial function on the time interval $[t_0-\tau,t_0]$, given by the special solution $y_{\lambda_1}$ restricted to the time interval $[t_0-\tau,t_0]$. Other initial functions, such as the constant function $c_{y_0}:[t_0-\tau,t_0]\rightarrow \mathbb{R}$, $c_{y_0}(t) = y_0$, can lead to solutions which fulfill the  growth condition \eqref{eq:growthcondition}, but which are not defined on all of $\mathbb{R}$: $y(t_0,c_{y_0})$ cannot exist as a continuous function for $t<t_0-\tau$, as the initial function $c_{y_0}$ would imply $y(t_0,c_{y_0})(t) = 0$ for $t\in [t_0-2\tau,t_0-\tau)$, such that the solution $y(t_0,c_{y_0})$ would be discontinuous at $t = t_0-\tau$. The solution $y(t_0,c_{y_0})$ of the DDE \eqref{eq:DDEexample1} with constant initial data $c_{y_0}$ is given by
		\begin{equation*}
			y(t_0,c_{y_0})(t) = y_0 \left[ 1 + \sum_{k = 1}^m K_0^k \frac{(t-(k-1)\tau)^k}{k!}\right]
		\end{equation*}
		for $t\in[(n-1)\tau,n\tau]$ and plotted on the time interval $[t_0-\tau,\infty)$ in Figure \ref{fig:DDEsolutions}. Depending on the sign and the magnitude of $K_0$, the solution  $y(t_0,c_{y_0})$ shows oscillatory behavior \cite{Smith2011}.
	\end{example}
	
		\begin{figure}
		\centering
		\begin{overpic}[scale = 0.55,,tics=10]
			{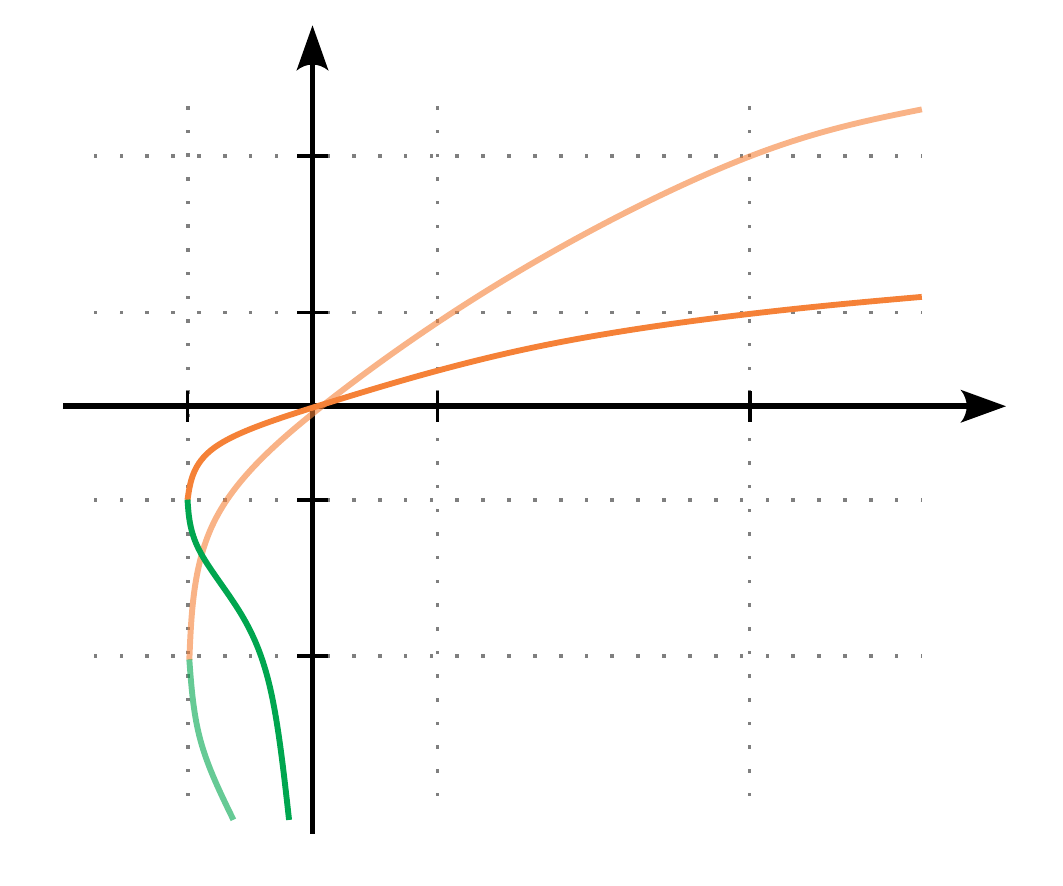}
			\put(96,44){\textcolor{Black}{$K_0\tau$}}
			\put(72,42){\textcolor{Black}{$e$}}
			\put(42,41){\textcolor{Black}{$\frac{1}{e}$}}
			\put(13,41){\textcolor{Black}{$-\frac{1}{e}$}}
			\put(30.5,71){\textcolor{Black}{$\frac{1}{\tau}$}}
			\put(30.5,55){\textcolor{Black}{$1$}}	
			\put(30.5,18){\textcolor{Black}{$-\frac{1}{\tau}$}}	
			\put(30.5,33){\textcolor{Black}{$-1$}}	
			\put(65,63){\textcolor{Orange!60}{$\lambda_1$}}
			\put(60,49){\textcolor{Orange}{$W_0$}}
			\put(21,12){\textcolor{Green!60}{$\lambda_2$}}
			\put(22,28){\textcolor{Green}{$W_{-1}$}}
		\end{overpic}
		\vspace{-2mm}
		\caption{The two branches $W_0$ and $W_{-1}$ of Lambert's W-function and the solutions $\lambda_1$ and $\lambda_2$ of the characteristic equation of the DDE \eqref{eq:DDEexample1}.}
		\label{fig:lambertsW}			
	\end{figure}
	
	\subsection{Exponential Attraction of Special Solutions}
	\label{sec:expattraction}
	
	Special solutions are an important concept of DDEs with small delay $\tau$, as they also characterize the behavior of general solutions of the DDE \eqref{eq:DDEnonautonomous}. Recall that a general solution $y:[t_0-\tau,t_0+c)$ of the DDE \eqref{eq:DDEnonautonomous} with $c \in(0,\infty]$ and initial data $y_{t_0}(t) = u(t)$ for $t\in[-\tau,0]$ is denoted by $y(t_0,u)$ for $(t_0,u) \in \mathbb{R}\times \mathcal{C}$. If Assumption \ref{ass:weaklynonlinear} is fulfilled, then by Theorem \ref{th:globalexistence} every solution of the DDE~\eqref{eq:DDEnonautonomous} exists on the time interval $[t_0-\tau,\infty)$. The following theorem shows that every solution of the DDE~\eqref{eq:DDEnonautonomous} is exponentially fast attracted to some special solution.
	
	\begin{theorem}[Exponential Attraction \cite{Jarnik1975}] \label{th:specialsolutions_expattraction}
		Let  $F\in C^{0,0}(\mathbb{R}\times \mathcal{C}, \mathbb{R}^m)$ fulfill Assumption \ref{ass:weaklynonlinear} and assume $K \tau e < 1$. Then for each initial data $(t_0,u)\in\mathbb{R}\times \mathcal{C}$ with solution $y(t_0,u):[t_0-\tau,\infty)\rightarrow \mathbb{R}^m$ of the DDE \eqref{eq:DDEnonautonomous}, there exists the limit
		\begin{equation}\label{eq:ICspecialsolution}
			\bar{y}_0 \coloneqq \lim_{s \rightarrow \infty} \bar{y}(s,y(t_0,u)(s))(t_0) \in \mathbb{R}^m.
		\end{equation}
		The special solution $\bar{y}(t_0,\bar{y}_0)$ of \eqref{eq:DDEnonautonomous} through $(t_0,\bar{y}_0)\in \mathbb{R}\times \mathbb{R}^m$, uniquely determined by the limit $\bar{y}_0\in \mathbb{R}^m$ corresponds to the given solution $y(t_0,u)$ and is denoted by 
		\begin{equation*}
			S(y(t_0,u)) = \bar{y}(t_0,\bar{y}_0).
		\end{equation*} 
		For the given solution and the corresponding special solution, it holds
		\begin{equation*}
			C_u \coloneqq \sup_{t \geq t_0} e^{\frac{t}{\tau}} \norm{y(t_0,u)(t)-S(y(t_0,u))(t)}_\infty < \infty.
		\end{equation*}
		For fixed $t_0 \in \mathbb{R}$, the upper bound $C_u$ depends continuously on the initial data $u$.
	\end{theorem}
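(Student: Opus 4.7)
My plan is first to construct $\bar{y}_0$ via the displayed formula in \eqref{eq:ICspecialsolution}, second to derive the exponential-attraction bound together with the finiteness of $C_u$, and third to handle the continuous dependence. Fix $(t_0,u)\in\mathbb{R}\times\mathcal{C}$ and let $y := y(t_0,u)$, which is globally defined on $[t_0-\tau,\infty)$ by Theorem~\ref{th:globalexistence}. For each $s\geq t_0$ I apply Theorem~\ref{th:specialsolutions_uniqueness} to obtain the unique special solution $\bar{y}^{(s)}:\mathbb{R}\to\mathbb{R}^m$ with $\bar{y}^{(s)}(s)=y(s)$, and set $\bar{y}_0^{(s)}:=\bar{y}^{(s)}(t_0)$. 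The crux of the whole argument is to show that the family $\{\bar{y}_0^{(s)}\}_{s\geq t_0}$ is Cauchy in $\mathbb{R}^m$.

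To this end, for $s'>s$ consider $w := \bar{y}^{(s)} - \bar{y}^{(s')}$, a globally defined continuous function satisfying $\|w'(t)\|_\infty \leq K \|w_t\|_\infty$ by Assumption~\ref{ass:weaklynonlinear} and, being a difference of two special solutions, also obeying the growth bound $\sup_{t\in\mathbb{R}} e^{-|t|/\tau}\|w(t)\|_\infty<\infty$ from Definition~\ref{def:specialsolution}. Because $K\tau e<1$, I would select $\alpha\in(0,1/\tau]$ solving $\alpha = Ke^{\alpha\tau}$ via the $W_0$-branch of Lambert's W-function (compare the discussion in Example~\ref{ex:linear1}) and work in the weighted norm $\|w\|_\alpha := \sup_{t}e^{-\alpha|t|}\|w(t)\|_\infty$. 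Integrating $\|w'(t)\|_\infty\leq K\|w_t\|_\infty$ from $s$ backward to $t_0$ and using this norm produces a constant $C=C(K,\tau)$ with $\|\bar{y}_0^{(s)}-\bar{y}_0^{(s')}\|_\infty \leq C e^{-(s-t_0)/\tau}\|w(s)\|_\infty$. Since $w(s)=y(s)-\bar{y}^{(s')}(s)$ grows at most exponentially in $s$ by Theorem~\ref{th:globalexistence}, the factor $e^{-(s-t_0)/\tau}$ dominates and the limit $\bar{y}_0 := \lim_{s\to\infty} \bar{y}_0^{(s)}$ exists. I then set $S(y(t_0,u)) := \bar{y}(t_0,\bar{y}_0)$; the same contraction estimate, now applied to $v(t) := y(t) - S(y(t_0,u))(t)$ whose residual $\|v(s)\|_\infty$ at a large anchor $s$ equals $\|\bar{y}(t_0,\bar{y}_0)(s)-\bar{y}^{(s)}(s)\|_\infty$ and is controlled by $\|\bar{y}_0-\bar{y}_0^{(s)}\|_\infty$, yields the desired bound $\sup_{t\geq t_0}e^{t/\tau}\|v(t)\|_\infty = C_u < \infty$.

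Continuity of $C_u$ in $u$ follows by combining three ingredients: continuous dependence of $y(t_0,u)$ on $u$ on compact time intervals (Theorem~\ref{th:DDEexistenceuniqueness}), continuous dependence of each special solution $\bar{y}^{(s)}$ on its anchor value $y(s)$ (Theorem~\ref{th:specialsolutions_uniqueness}), and the fact that the contraction estimate in the previous paragraph is uniform over bounded sets of initial data. The projection $u\mapsto\bar{y}_0$ is thus continuous, and the sup defining $C_u$ depends continuously on the parameterized family $v$. The principal obstacle is precisely the contraction step: the weight $e^{-|t|/\tau}$ built into Definition~\ref{def:specialsolution} has to be calibrated sharply against the Lipschitz bound $K$ so that the exponent $-1/\tau$ is strictly achievable, and this is exactly where $K\tau e<1$ is used in its sharp form via Lambert's W-function. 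The sharpness is consistent with the failure of uniqueness of special solutions at $K\tau e\geq 1$, as exhibited by the case $K_0\tau=-\pi/2$ in Example~\ref{ex:linear1}.
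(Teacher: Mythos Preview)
The paper does not supply its own proof of Theorem~\ref{th:specialsolutions_expattraction}; the result is quoted from Jarn\'{i}k--Kurzweil \cite{Jarnik1975} without argument. I therefore assess your sketch on its own merits.

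Your contraction step contains a genuine gap. You assert that integrating $\|w'(t)\|_\infty \leq K\|w_t\|_\infty$ backward from $s$ to $t_0<s$, with $w=\bar y^{(s)}-\bar y^{(s')}$, yields
\[
\|\bar{y}_0^{(s)} - \bar{y}_0^{(s')}\|_\infty \;\leq\; C\, e^{-(s-t_0)/\tau}\,\|w(s)\|_\infty,
\]
i.e., that differences of special solutions \emph{contract} backward in time at rate $1/\tau$. This is false in general. Lemma~\ref{lem:specialsolutions} records precisely the opposite direction: for $t\le t_0$ one has $\|w(t)\|_\infty\le \|w(t_0)\|_\infty\,e^{|\lambda_1|(t_0-t)}$, so propagating from time $s$ back to $t_0$ the bound \emph{amplifies} by $e^{|\lambda_1|(s-t_0)}$. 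The linear model of Example~\ref{ex:linear1} with $K_0<0$ already shows this concretely: special solutions are $y_0 e^{\lambda_1(t-t_0)}$ with $\lambda_1<0$, hence two of them diverge as $t$ decreases. Choosing $\alpha$ via the $W_0$-branch with $\alpha=Ke^{\alpha\tau}$ only reproduces $|\lambda_1|$ and cannot reverse this sign. Consequently the product $e^{-(s-t_0)/\tau}\|w(s)\|_\infty$ you form is not what the backward integration delivers, and the Cauchy argument does not close. A secondary issue is that your bound on $\|w(s)\|_\infty=\|y(s)-\bar y^{(s')}(s)\|_\infty$ is not shown to be uniform in $s'>s$.

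The mechanism in the cited literature is different in kind: one needs a \emph{forward} estimate showing that a general solution $y$ and the special solution $\bar y^{(s)}$ sharing the value $y(s)$ at time $s$ separate only through the mismatch of their histories on $[s-\tau,s]$, and that this mismatch is damped forward at a rate strictly faster than $1/\tau$ (in the linear heuristic, governed by the branch $\lambda_2<-1/\tau$ rather than $\lambda_1$). It is this forward decay, combined with the backward growth bound of Lemma~\ref{lem:specialsolutions} applied at an anchor far to the right, that makes $\{\bar y_0^{(s)}\}$ Cauchy and produces the weighted bound defining $C_u$. Your proposal never isolates this fast mode, so even after repairing the sign you would still be missing the ingredient that makes the two exponents balance.
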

	
	The limit in Theorem \ref{th:specialsolutions_expattraction} determines the initial condition $\bar{y}_0 \in\mathbb{R}^m$ at time $t_0 \in \mathbb{R}$ of the special solution $\bar{y}$, such that in the limit $s \rightarrow \infty$, the special solution $\bar{y}(t_0,y_0)$ and the given solution $y(t_0,u)$ coincide. The estimate of Theorem \ref{th:specialsolutions_expattraction} guarantees that every solution is exponentially fast attracted towards a special solution $\bar{y}$. In the proof of our main theorem in Section \ref{sec:proof}, we need the continuous dependence of $C_u$ on $u$ to find a common upper bound $C$ for all initial conditions $u$ in a given compact set.

	A direct consequence of Theorem \ref{th:specialsolutions_expattraction} is that all solutions are exponentially fast attracted to the invariant and finite-dimensional manifold of special solutions of the DDE~\eqref{eq:DDEnonautonomous}. A manifold with this property is called an inertial manifold.

	\begin{corollary}[Inertial Manifold \cite{Chicone2003}] \label{cor:inertialmanifold}
		The manifold of special solutions 
		\begin{equation*}
			\mathcal{M} \coloneqq \left\{ \bar{y}_t : \bar{y} \textup{ is a special solution of the DDE \eqref{eq:DDEnonautonomous}} \right\} \subset \mathcal{C}
		\end{equation*}
		is an inertial manifold, i.e., it is invariant, finite-dimensional and exponentially attracts all solutions of \eqref{eq:DDEnonautonomous}.
	\end{corollary}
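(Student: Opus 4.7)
The plan is to verify each of the three defining properties of an inertial manifold separately, essentially assembling the two input theorems \ref{th:specialsolutions_uniqueness} and \ref{th:specialsolutions_expattraction} into the required conclusions.

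For \textbf{invariance}, I would take an arbitrary $\phi \in \mathcal{M}$, so that $\phi = \bar{y}_{t_0}$ for some special solution $\bar{y}$ and some $t_0 \in \mathbb{R}$. Initializing the DDE \eqref{eq:DDEnonautonomous} at time $t_0$ with initial segment $\phi$ produces a solution $y(t_0,\phi)$, which by the uniqueness part of Theorem \ref{th:DDEexistenceuniqueness} coincides with the restriction $\bar{y}|_{[t_0,\infty)}$. Consequently $y(t_0,\phi)_t = \bar{y}_t \in \mathcal{M}$ for every $t \geq t_0$, which is forward invariance under the DDE semiflow.

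For \textbf{finite-dimensionality}, I would fix $t \in \mathbb{R}$ and examine the fiber $\mathcal{M}_t := \{\bar{y}_t : \bar{y} \text{ a special solution}\}$. Theorem \ref{th:specialsolutions_uniqueness} provides a bijection between $\mathbb{R}^m$ and the set of all special solutions, sending $y_0$ to the unique special solution $\bar{y}(t,y_0)$ satisfying $\bar{y}(t,y_0)(t) = y_0$. Composing with the window operation $\bar{y} \mapsto \bar{y}_t$ produces a bijection $\Psi_t:\mathbb{R}^m \to \mathcal{M}_t$ whose inverse is the evaluation map $\phi \mapsto \phi(0)$. Hence each time-$t$ fiber is parameterized by $\mathbb{R}^m$; in the autonomous case $\mathcal{M}_t$ is independent of $t$ and $\mathcal{M}$ itself is an $m$-dimensional submanifold of $\mathcal{C}$, while in the non-autonomous case $\mathcal{M}$ fibers over $\mathbb{R}$ with $m$-dimensional fibers.

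For \textbf{exponential attraction}, I would invoke Theorem \ref{th:specialsolutions_expattraction} directly: given any initial data $(t_0,u)\in \mathbb{R}\times \mathcal{C}$, there is a corresponding special solution $\bar{y} = S(y(t_0,u))$ with $\|y(t_0,u)(t)-\bar{y}(t)\|_\infty \leq C_u\, e^{-t/\tau}$ for all $t \geq t_0$. Transferring this pointwise bound to the $\mathcal{C}$-norm by taking a supremum over $s\in[-\tau,0]$ yields, for $t \geq t_0+\tau$,
\begin{equation*}
\|y(t_0,u)_t - \bar{y}_t\|_\infty \;\leq\; C_u\, e \cdot e^{-t/\tau},
\end{equation*}
which is exponential attraction of the orbit to $\mathcal{M}$ in the phase space $\mathcal{C}$.

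I do not anticipate serious obstacles here since the two input theorems carry essentially all of the analytic content; the only point warranting care is the interpretation of "finite-dimensional" in the non-autonomous setting, where $\mathcal{M}$ is a time-parameterized family of $m$-dimensional fibers rather than a single $m$-dimensional submanifold. Everything else amounts to routine set-theoretic and norm manipulations.
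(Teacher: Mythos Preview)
Your proposal is correct and matches the paper's approach: the paper does not give an explicit proof but presents the corollary as a direct consequence of Theorems \ref{th:specialsolutions_uniqueness} and \ref{th:specialsolutions_expattraction}, citing \cite{Chicone2003}, which is precisely the assembly you carry out. Your added care regarding the non-autonomous interpretation of finite-dimensionality and the passage from pointwise to $\mathcal{C}$-norm bounds is appropriate and slightly more detailed than what the paper spells out.
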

	
	In the following, we continue with the one-dimensional linear DDE with discrete delay of Example~\ref{ex:linear1}, illustrate the exponential convergence of Theorem \ref{th:specialsolutions_expattraction}, and explicitly determine the inertial manifold $\mathcal{M}$ of Corollary \ref{cor:inertialmanifold}.
	
	\begin{example}[One-Dimensional Linear DDE continued]  \label{ex:linear2} \normalfont
		Consider the one-dimensional linear DDE with discrete delay
		\begin{equation} \label{eq:DDEexample2}
			\frac{\dd y}{\dd t} = F(t,y_t) = K_0 y(t-\tau)
		\end{equation}
		with $F\in C^{0,\infty}_b(\mathbb{R}\times \mathcal{C},\mathbb{R})$, $K_0 \in \mathbb{R}$, delay $\tau>0$ and Lipschitz constant $K = \abs{K_0}$, which fulfills Assumption \ref{ass:weaklynonlinear}.  If the smallness assumption $K\tau e<1$ is fulfilled, the unique special solution with initial condition $y(t_0) = y_0$ is by Example \ref{ex:linear1} given by
		\begin{equation*}
			y_{\lambda_1}(t_0,y_0)(t) = y_0e^{\lambda_1(t-t_0)},
		\end{equation*}
		where $\lambda_1$ is the unique solution of the characteristic equation $\lambda = K_0 e^{-\lambda \tau}$ with $\lambda_1 > - \frac{1}{\tau}$. Consequently, the inertial manifold of special solutions is given by
		\begin{equation*}
			\mathcal{M}\coloneqq \;\left\{\bar{y}_t: \bar{y} = 	y_{\lambda_1}(t_0,y_0), y_0 \in \mathbb{R} \right\} = \left\{y_0e^{\lambda_1(t+s-t_0)} : y_0 \in \mathbb{R}, s\in[-\tau,0] \right\} \subset \mathcal{C}.
		\end{equation*}
		To illustrate the exponential attraction of  Theorem \ref{th:specialsolutions_expattraction}, we consider as in Example \ref{ex:linear1} the solution $y(t_0,c_{y_0}):[t_0-\tau,\infty) \rightarrow \mathbb{R}$ of \eqref{eq:DDEexample2} with constant initial data $c_{y_0} \in \mathcal{C}$. If $K\tau e <1$, then Theorem \ref{th:specialsolutions_expattraction} implies that the limit
		\begin{equation*}
			\bar{y}_0 \coloneqq \lim_{s \rightarrow \infty} \bar{y}(s,y(t_0,c_{y_0})(s))(t_0) = \lim_{s \rightarrow \infty} 	y_{\lambda_1}(s,y(t_0,c_{y_0})(s))(t_0) = \lim_{s \rightarrow \infty} y(t_0,c_{y_0})(s) e^{\lambda_1 (t_0-s)} \in \mathbb{R}
		\end{equation*}
		exists. Hence, the special solution corresponding to $y(t_0,c_{y_0})$ is given by $S(y(t_0,c_{y_0})) = \bar{y}(t_0,\bar{y}_0)$. Furthermore, there exists a tube of size $\pm C_{y_0}e^{-\frac{t}{\tau}}$ around the special solution $\bar{y}(t_0,\bar{y}_0)$, in which the solution $y(t_0,c_{y_0})$ is contained for $t\geq t_0$, as shown in Figure \ref{fig:expattraction}.
	\end{example}

	\subsection{Special Solutions as Solutions of ODEs}
	\label{sec:specialODE}
	
	In Section \ref{sec:specialsolutions}, we have seen that it is sufficient to specify one initial condition $y_0\in \mathbb{R}^m$ at time $t_0\in \mathbb{R}$ instead of an initial function $u \in \mathcal{C}$ to define a unique special solution of the DDE \eqref{eq:DDEnonautonomous}. Consequently, the manifold $\mathcal{M}$ of special solutions is, by Corollary \ref{cor:inertialmanifold}, a finite-dimensional inertial manifold. In the following theorem, we show that special solutions are not only solutions of DDEs, but there exists for every special solution $\bar{y}$ a finite-dimensional initial value problem, which is uniquely solved by $\bar{y}$. To prove the upcoming theorem, we need a lemma estimating the distance between special solutions. The lemma uses the solution $\lambda_1$ of the characteristic equation $\lambda = K_0 e^{-\lambda \tau}$ of the linear DDE introduced in Example \ref{ex:linear1} for the case $K_0<0$, so $K = -K_0 $. 
	
	\begin{lemma}[Exponential Growth Bound \cite{Driver1968}] \label{lem:specialsolutions}
		Let $F\in C^{0,0}(\mathbb{R}\times \mathcal{C}, \mathbb{R}^m)$ fulfill Assumption \ref{ass:weaklynonlinear} and assume $K \tau e < 1$. Let $\lambda_1$ be the unique solution of the characteristic equation $\lambda = -K e^{-\lambda \tau}$ with $-\frac{1}{\tau}<\lambda_1 < 0 $. Then it holds $K<\vert \lambda_1 \vert <Ke$  and it follows for the special solutions $\bar{y}(t_0,y_0)$, $\bar{y}(t_0,z_0)$ of the DDE \eqref{eq:DDEnonautonomous} with $(t_0,y_0),(t_0,z_0)\in\mathbb{R}\times \mathbb{R}^m$ that
		\begin{equation*}
			\norm{\bar{y}(t_0,y_0)(t) - \bar{y}(t_0,z_0)(t)}_\infty \leq \begin{cases}
				\norm{y_0-z_0}_\infty e^{\vert \lambda_1\vert (t_0-t)} & \text{if } t \leq t_0,\\
				\norm{y_0-z_0}_\infty e^{\vert\lambda_1\vert \tau + K(t-t_0)} & \text{if } t>t_0.
			\end{cases}
		\end{equation*} 
	\end{lemma}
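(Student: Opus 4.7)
The plan has three main parts, corresponding to the bounds on $|\lambda_1|$, the forward estimate, and the backward estimate (which is the main difficulty).

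\textbf{Bounds on $|\lambda_1|$.} Since $\lambda_1 \in (-1/\tau, 0)$ satisfies $\lambda_1 = -K e^{-\lambda_1 \tau}$, writing this in terms of $|\lambda_1|$ gives the identity $|\lambda_1| = K e^{|\lambda_1|\tau}$. Because $|\lambda_1| > 0$, the factor $e^{|\lambda_1|\tau}$ is strictly larger than $1$, hence $|\lambda_1| > K$. Because $|\lambda_1|\tau < 1$, the factor is strictly less than $e$, hence $|\lambda_1| < Ke$.

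\textbf{Reduction to a scalar integral inequality.} For the two special solutions I set $w(t) := \bar y(t_0,y_0)(t) - \bar y(t_0,z_0)(t)$ and $\mu(t) := \|w(t)\|_\infty$. By Assumption~\ref{ass:weaklynonlinear}, integration of the DDE yields, for all $t \in \mathbb{R}$,
\begin{equation*}
\mu(t) \leq \mu(t_0) + K \Bigl| \int_{t_0}^{t} \|w_\xi\|_\infty \, \mathrm{d}\xi \Bigr|, \qquad \mu(t_0) = \|y_0 - z_0\|_\infty =: V.
\end{equation*}
The key observation is that the candidate majorant $v(t) := V e^{|\lambda_1|(t_0-t)}$ satisfies the linear DDE $\dot v = -K v(t-\tau)$: one computes $\dot v(t) = -|\lambda_1| v(t)$ and $v(t-\tau) = v(t) e^{|\lambda_1|\tau}$, so the identity $|\lambda_1| = K e^{|\lambda_1|\tau}$ makes both sides agree. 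Since $v$ is monotonically decreasing in $t$, one further has $\|v_\xi\|_\infty = v(\xi - \tau)$.

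\textbf{Backward estimate, $t \leq t_0$.} This is the main step. I aim to show $\mu(t) \leq v(t)$ for all $t \leq t_0$ by a continuity argument. Assume for contradiction that this inequality fails and let $T^\ast := \sup\{\, t \leq t_0 : \mu(t) > v(t)\,\}$. By continuity (and $\mu(t_0) = v(t_0)$), $\mu(T^\ast) = v(T^\ast)$ and $\mu \leq v$ on $[T^\ast, t_0]$. For any $\xi \in [T^\ast, t_0]$ and $\sigma \in [-\tau,0]$ with $\xi + \sigma \geq T^\ast$, monotonicity of $v$ gives $\mu(\xi+\sigma) \leq v(\xi+\sigma) \leq v(\xi-\tau)$. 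The pieces with $\xi+\sigma < T^\ast$ must be controlled by the a priori exponential growth bound \eqref{eq:growthcondition}, which allows me to slightly enlarge $v$ to $v_\epsilon(t) := (V+\epsilon) e^{|\lambda_1|(t_0-t)}$ and push $T^\ast$ arbitrarily far into the past (since $|\lambda_1| < 1/\tau$ makes $v_\epsilon$ eventually dominate the growth bound on $\bar y$). For such $T^\ast$ the integral inequality for $\mu$ and the integral identity $v(T^\ast) = V + K\int_{T^\ast}^{t_0} v(\xi-\tau)\,\mathrm{d}\xi$ combine to give $\mu(T^\ast) \leq V + K\int_{T^\ast}^{t_0} v(\xi-\tau)\,\mathrm{d}\xi = v(T^\ast)$, with strict inequality after the $\epsilon$-perturbation, contradicting $\mu(T^\ast) = v_\epsilon(T^\ast)$. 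Letting $\epsilon \downarrow 0$ yields the claimed bound. The main obstacle is exactly this step: $\mu$ is coupled to its values on an interval of length $\tau$ into the past, so a naive induction on $[t_0 - n\tau, t_0]$ does not close, and one must use the a priori growth bound from Definition~\ref{def:specialsolution} together with the fact that $|\lambda_1| < 1/\tau$ to start the comparison from $-\infty$.

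\textbf{Forward estimate, $t > t_0$.} Using the backward bound just established on $[t_0-\tau, t_0]$, I have $\mu(s) \leq V e^{|\lambda_1|\tau}$ for $s \in [t_0-\tau, t_0]$, so $\|w_{t_0}\|_\infty \leq V e^{|\lambda_1|\tau}$. Defining $\psi(t) := \sup_{s \in [t_0-\tau,\,t]} \mu(s)$, the integral inequality yields $\psi(t) \leq V e^{|\lambda_1|\tau} + K \int_{t_0}^{t} \psi(\xi)\,\mathrm{d}\xi$ for $t \geq t_0$, and Gronwall's lemma gives $\mu(t) \leq \psi(t) \leq V e^{|\lambda_1|\tau + K(t-t_0)}$. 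This finishes the proof.
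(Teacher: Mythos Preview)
The paper does not prove this lemma; it is quoted from Driver (1968) with no argument supplied, so there is no in-paper proof to compare against. Your bounds on $|\lambda_1|$ and the forward estimate via Gronwall are correct and standard.

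The backward estimate, however, has a genuine gap. You write that ``$|\lambda_1|<1/\tau$ makes $v_\epsilon$ eventually dominate the growth bound on $\bar y$'' and use this to push $T^\ast$ to $-\infty$. The inequality goes the wrong way: the growth condition on special solutions gives only $\mu(t)\le C\,e^{|t|/\tau}$, which for $t\to-\infty$ behaves like $e^{-t/\tau}$, whereas $v_\epsilon(t)=(V+\epsilon)e^{|\lambda_1|(t_0-t)}$ behaves like $e^{-|\lambda_1|t}$. Since $|\lambda_1|<1/\tau$, it is the a~priori bound that eventually dominates $v_\epsilon$, not the reverse. Thus nothing in your argument forces $\mu\le v_\epsilon$ far in the past, $T^\ast$ need not move, and the contradiction does not close; the delay coupling on $[T^\ast-\tau,T^\ast)$ remains uncontrolled.

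In Driver's original treatment the backward bound does not come from a bare comparison against the crude growth bound of Definition~\ref{def:specialsolution}. Rather, special solutions are obtained as fixed points of the integral operator $y\mapsto y_0-\int_t^{t_0}F(\xi,y_\xi)\,\mathrm{d}\xi$ on the weighted space with norm $\sup_{t\le t_0}e^{-|\lambda_1|(t_0-t)}\|y(t)\|_\infty$, and the characteristic identity $|\lambda_1|=Ke^{|\lambda_1|\tau}$ is precisely what makes this operator a contraction there; the Lipschitz estimate for $t\le t_0$ then falls out of the contraction bound. Your comparison idea can be salvaged, but it requires this sharper weighted input (growth $e^{|\lambda_1||t-t_0|}$ for the special solutions themselves), not the $e^{|t|/\tau}$ bound.
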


	The following theorem defines for every special solution $\bar{y}$  an initial value problem, which is uniquely solved by $\bar{y}$. Hereby, the vector field of the defined initial value problem only depends on the vector field~$F$ of the DDE \eqref{eq:DDEnonautonomous}, not on the specific special solution solving the initial value problem.
	
	\begin{theorem}[ODE for Special Solutions] \label{th:specialsolutionsODEs}
		Let  $F\in C^{0,0}(\mathbb{R}\times \mathcal{C}, \mathbb{R}^m)$ fulfill Assumption \ref{ass:weaklynonlinear} and assume $K \tau e < 1$. Then for every $(t_0,y_0)\in \mathbb{R}\times \mathbb{R}^m$, the initial value problem 
		\begin{equation} \label{eq:specialODE}
			\frac{\dd z(t)}{\dd t} = \widetilde{F}(t,z(t)) \coloneqq F(t,[\bar{y}(t,z(t))]_t), \qquad z(t_0) = y_0,
		\end{equation}
		with $\widetilde{F}\in C^{0,0}(\mathbb{R} \times \mathbb{R}^m,\mathbb{R}^m)$, is uniquely solved by the special solution $\bar{y}(t_0,y_0)$ of the DDE \eqref{eq:DDEnonautonomous}. The vector field $\widetilde{F}$ is globally Lipschitz continuous with respect to the second variable with Lipschitz constant~$\vert \lambda_1 \vert < Ke $, where $\lambda_1$ is the unique solution of the characteristic equation $\lambda = -K e^{-\lambda \tau}$ with $-\frac{1}{\tau}<\lambda_1 < 0 $.
	\end{theorem}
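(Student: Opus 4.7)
The plan is to verify the three claims of the theorem in order: well-definedness of $\widetilde F$, the fact that the special solution $\bar y(t_0,y_0)$ solves the IVP \eqref{eq:specialODE}, and the Lipschitz property of $\widetilde F$ with the sharp constant $|\lambda_1|$, which simultaneously supplies uniqueness. For well-definedness, Theorem~\ref{th:specialsolutions_uniqueness} applied at $(t,z)\in\mathbb{R}\times\mathbb{R}^m$ produces a unique special solution $\bar y(t,z)$ of \eqref{eq:DDEnonautonomous} defined on all of $\mathbb{R}$, whose segment $[\bar y(t,z)]_t\in\mathcal{C}$ is an admissible argument of $F$. That $\bar y(t_0,y_0)$ solves the IVP is then a one-line uniqueness argument: for every $t\in\mathbb{R}$ the special solution $\bar y(t_0,y_0)$ itself passes through the point $(t,\bar y(t_0,y_0)(t))$, so the uniqueness clause of Theorem~\ref{th:specialsolutions_uniqueness} forces $\bar y(t,\bar y(t_0,y_0)(t)) = \bar y(t_0,y_0)$, hence $[\bar y(t,\bar y(t_0,y_0)(t))]_t = [\bar y(t_0,y_0)]_t$ and
\begin{equation*}
\frac{\dd}{\dd t}\bar y(t_0,y_0)(t) = F(t,[\bar y(t_0,y_0)]_t) = \widetilde F(t,\bar y(t_0,y_0)(t));
\end{equation*}
the initial condition $\bar y(t_0,y_0)(t_0) = y_0$ is built into the definition of $\bar y(t_0,y_0)$.

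For the Lipschitz estimate in the second variable, fix $t$ and $y,z\in\mathbb{R}^m$. Assumption~\ref{ass:weaklynonlinear} and the definition of $\widetilde F$ yield
\begin{equation*}
\norm{\widetilde F(t,y)-\widetilde F(t,z)}_\infty \leq K\sup_{s\in[-\tau,0]}\norm{\bar y(t,y)(t+s)-\bar y(t,z)(t+s)}_\infty.
\end{equation*}
Since $t+s\leq t$, Lemma~\ref{lem:specialsolutions} bounds each supremand by $\norm{y-z}_\infty e^{|\lambda_1||s|}\leq \norm{y-z}_\infty e^{|\lambda_1|\tau}$. The characteristic equation $\lambda_1 = -Ke^{-\lambda_1\tau}$ rearranges to $|\lambda_1| = K e^{|\lambda_1|\tau}$, which collapses the prefactor to exactly $|\lambda_1|$; the two-sided bound $K<|\lambda_1|<Ke$ follows from $|\lambda_1|\tau\in(0,1)$, which in turn uses $K\tau e<1$. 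Uniqueness of the IVP solution is then Picard--Lindel\"of applied to the ODE on $\mathbb{R}^m$, since $\widetilde F$ is Lipschitz in its second variable uniformly in $t$.

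The remaining and, I expect, most delicate step is joint continuity $\widetilde F\in C^{0,0}(\mathbb{R}\times\mathbb{R}^m,\mathbb{R}^m)$; continuity in the second variable is already contained in the Lipschitz bound, so the work lies in continuity in $t$. For fixed $z$ and a target time $t_*\in\mathbb{R}$, my plan is to show $\bar y(t,z)\to\bar y(t_*,z)$ uniformly on any compact time interval as $t\to t_*$ and then pass to the segment. Setting $a(t):=\bar y(t_*,z)(t)$, the uniqueness clause of Theorem~\ref{th:specialsolutions_uniqueness} identifies $\bar y(t,a(t)) = \bar y(t_*,z)$, and Lemma~\ref{lem:specialsolutions} controls $\bar y(t,z)-\bar y(t,a(t))$ on any compact interval by a constant multiple of $\norm{z-a(t)}_\infty$, which vanishes as $t\to t_*$ by continuity of $\bar y(t_*,z)$. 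Combining this with continuity of the segment operation in $\mathcal{C}$ and continuity of $F$ gives $\widetilde F(t,z)\to\widetilde F(t_*,z)$, and joint continuity then follows by the triangle inequality together with the uniform Lipschitz estimate in $z$. The main obstacle is the bookkeeping in this last continuity step, where both the underlying special solution and the sliding window $[t-\tau,t]$ depend on $t$ simultaneously; once it is resolved, all assertions of Theorem~\ref{th:specialsolutionsODEs} are in place.
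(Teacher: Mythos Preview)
Your proof is correct and follows essentially the same route as the paper: the identity $\bar y(t,\bar y(t_0,y_0)(t)) = \bar y(t_0,y_0)$ via Theorem~\ref{th:specialsolutions_uniqueness} to verify the IVP, the Lipschitz estimate via Lemma~\ref{lem:specialsolutions} collapsed through the characteristic equation to the sharp constant $|\lambda_1|$, and Picard--Lindel\"of for uniqueness. You are in fact more thorough than the paper on one point: the paper asserts $\widetilde F\in C^{0,0}(\mathbb{R}\times\mathbb{R}^m,\mathbb{R}^m)$ without justification, whereas you outline how to obtain continuity in $t$ by comparing $\bar y(t,z)$ to $\bar y(t,\bar y(t_*,z)(t)) = \bar y(t_*,z)$ and invoking Lemma~\ref{lem:specialsolutions} once more.
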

	
		\begin{figure}
		\centering
		\begin{overpic}[scale = 0.6,,tics=10]
			{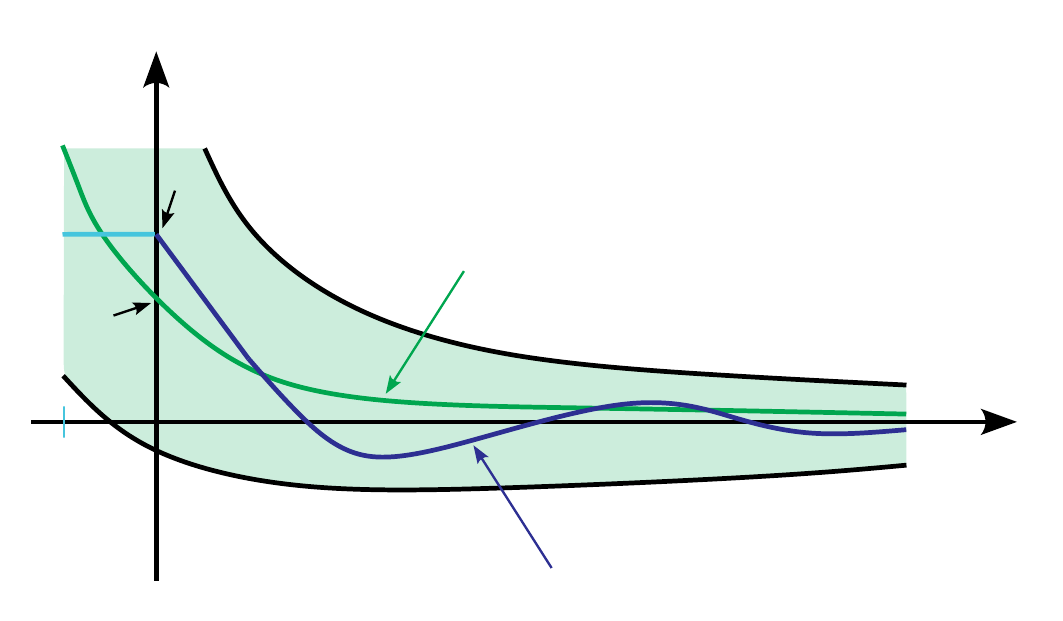}
			\put(1.5,35.5){\textcolor{SkyBlue}{$c_{y_0}$}}
			\put(2,15){\textcolor{SkyBlue}{$t_0-\tau$}}
			\put(15.5,20){\textcolor{Black}{$t_0$}}
			\put(12.5,55){\textcolor{Black}{$y(t)$}}
			\put(97,18){\textcolor{Black}{$t$}}
			\put(16,42){\textcolor{Black}{$y_0$}}
			\put(7,27){\textcolor{Black}{$\bar{y}_0$}}
			\put(53,3){\textcolor{Blue}{$y(t_0,c_{y_0})$}}
			\put(44,34){\textcolor{Green}{$\bar{y}(t_0,\bar{y}_0)$}}
		\end{overpic}
		\vspace{-2mm}
		\caption{Solution $y(t_0,c_{y_0})$ of the linear one-dimensional DDE \eqref{eq:DDEexample2} with constant initial data~$c_{y_0}$ and special solution $\bar{y}(t_0,\bar{y}_0)$ through the point $(t_0,\bar{y}_0)$ towards which the solution $y(t_0,c_{y_0})$ is exponentially attracted. The exponential tube around the special solution $\bar{y}(t_0,\bar{y}_0)$, which contains the general solution $y(t_0,c_{y_0})$, is highlighted in green.}
		\label{fig:expattraction}		
	\end{figure}

	\begin{proof}
		The idea of the construction of the vector field $\widetilde{F}$ is stated in \cite{Chicone2003} without proof. The given initial value problem is solved by the special solution $\bar{y}(t_0,y_0)$, as
		\begin{align*}
			\frac{\dd}{\dd t} \; \bar{y}(t_0,y_0)(t) & = F(t,[\bar{y}(t_0,y_0)]_t) \\
			& =  F(t,[\bar{y}(t,\bar{y}(t_0,y_0)(t))]_t) =  \widetilde F(t,\bar{y}(t_0,y_0)(t)),
		\end{align*}
		where we used in the first step, that $\bar{y}(t_0,y_0)$ is a solution of the DDE \eqref{eq:DDEnonautonomous} and in the second step that $\bar{y}(t_0,y_0) = \bar{y}(t,\bar{y}(t_0,y_0)(t))$ because the unique special solution through the point $(t_0,y_0)$ also passes through the point $(t,\bar{y}(t_0,y_0)(t))$. The last step follows from the definition of the vector field~$\widetilde{F}$. The initial condition $z(t_0) = y_0$ is fulfilled, as $\bar{y}(t_0,y_0)(t_0) = y_0$.
		
		By Assumption \ref{ass:weaklynonlinear}, the vector field $F$ is globally Lipschitz continuous with respect to the second variable with Lipschitz constant $K$. Hence, it follows for the vector field $\widetilde{F}\in C^{0,0}(\mathbb{R} \times \mathbb{R}^m,\mathbb{R}^m)$ that
		\begin{align*}
			\norm{\widetilde{F}(t,z(t))-\widetilde{F}(t,y(t))}_\infty& =  \norm{F(t,[\bar{y}(t,z(t))]_t)-F(t,[\bar{y}(t,y(t))]_t)}_\infty \\
			&\leq   K \norm{[\bar{y}(t,z(t))]_t - [\bar{y}(t,y(t))]_t}_\infty  \\
			&= K \cdot \sup_{s \in [-\tau,0]} \norm{\bar{y}(t,z(t))(t+s) - \bar{y}(t,y(t))(t+s)}_\infty \\
			& \leq  K \cdot \sup_{s \in [-\tau,0]} \norm{z(t)-y(t)}_\infty e^{-\vert \lambda_1\vert  s} \\
			& =  K e^{\vert\lambda_1 \vert \tau}\norm{z(t)-y(t)}_\infty \\
			& = \vert\lambda_1 \vert\norm{z(t)-y(t)}_\infty
		\end{align*}
		for all $(t,y(t)),(t,z(t))\in \mathbb{R}\times \mathbb{R}^m$. Consequently, $\widetilde{F}$ is globally Lipschitz continuous with respect to the second variable with Lipschitz constant $\vert \lambda_1 \vert $. In the second line, we used the global Lipschitz continuity of $F$ with respect to the second variable. In the fourth line, Lemma \ref{lem:specialsolutions} is used to estimate the difference between the two special solutions. The last step follows from the property that $\lambda_1<0$ solves $\lambda_1 = -K e^{-\lambda_1 \tau}$, so $\abs{\lambda_1} = Ke^{\abs{\lambda_1}\tau}$.   
		
		As the vector field $F$ is globally Lipschitz continuous with respect to the second variable, the Picard-Lindelöf Theorem (cf.\ \cite{Hartman2002}) implies that $\bar{y}(t_0,y_0)$ is the unique solution of the given initial value problem. 
	\end{proof}
	
	The last theorem implies that the dynamical system on the inertial manifold $\mathcal{M}$ of special solutions generated by the DDE  \eqref{eq:DDEnonautonomous} agrees with the dynamical system generated by the ODE \eqref{eq:specialODE}. For a fixed initial condition $(t_0,y_0) \in \mathbb{R}\times \mathbb{R}^m$, the special solution $\bar{y}(t_0,y_0)$ of the DDE \eqref{eq:DDEnonautonomous} agrees with the unique solution of the initial value problem \eqref{eq:specialODE}. 
	
	In the following, we study again the one-dimensional linear DDE with discrete delay of Examples~\ref{ex:linear1} and \ref{ex:linear2} and determine the ODE \eqref{eq:specialODE} of Theorem \ref{th:specialsolutionsODEs} generating all special solutions.

	\begin{example}[One-Dimensional Linear DDE continued]  \label{ex:linear3} \normalfont
		Consider the one-dimensional linear DDE with discrete delay
		\begin{equation} \label{eq:DDEexample3}
			\frac{\dd y}{\dd t} = F(t,y_t) = K_0 y(t-\tau)
		\end{equation}
		with $F\in C^{0,\infty}_b(\mathbb{R}\times \mathcal{C},\mathbb{R})$, $K_0 \in \mathbb{R}$, delay $\tau>0$ and Lipschitz constant $K = \abs{K_0}$, which fulfills Assumption \ref{ass:weaklynonlinear}.  If the smallness assumption $K\tau e<1$ is fulfilled, the unique special solution with initial condition $y(t_0) = y_0$ is by Example \ref{ex:linear1} given by
		\begin{equation*}
			y_{\lambda_1}(t_0,y_0)(t) = y_0e^{\lambda_1(t-t_0)},
		\end{equation*}
		where $\lambda_1$ is the unique solution of the characteristic equation $\lambda = K_0 e^{-\lambda \tau}$ with $\lambda_1 > - \frac{1}{\tau}$. Theorem~\ref{th:specialsolutionsODEs} states that the special solution of the DDE \eqref{eq:DDEexample3} with initial data $(t_0,y_0)\in\mathbb{R}\times\mathbb{R}$ is the unique solution of the initial value problem
		\begin{equation} \label{eq:examapleODE}
			\begin{aligned}
				\frac{\dd y}{\dd t} &= \widetilde{F}(t,y) \coloneqq F(t,[\bar{y}(t,y(t))]_t) = K_0 y(t) e^{-\lambda_1 \tau}  = \lambda_1 y, \\
				y(t_0) &= y_0,
			\end{aligned}
		\end{equation}
		where the characteristic equation $\lambda_1 = K_0 e^{-\lambda_1 \tau}$ was used.
		Indeed, the unique, $\lambda_1$-dependent solution of the linear initial value problem \eqref{eq:examapleODE} is the special solution $y_{\lambda_1}(t_0,y_0)$.
	\end{example}

	\subsection{Neural DDEs with Small Delay}
	\label{sec:NDDEsmalldelay}
	
	In this section, we aim to relate the results obtained in the last Sections \ref{sec:globalexistence} to \ref{sec:specialODE},  to neural DDEs with the architecture of Definition \ref{def:NDDE} introduced in Section \ref{sec:architecture}. A neural DDE $\Phi \in\textup{NDDE}_\tau^k(\mathcal{X},\mathbb{R}^q)$, $\mathcal{X}\subset \mathbb{R}^n$ open, $0 \leq \tau\leq T$ and $k \geq 0$ is according to equation \eqref{eq:DDEaffinelinear} defined by
	\begin{equation} \label{eq:DDEaffinelinear2}
		\Phi: \mathcal{X} \rightarrow \mathbb{R}^q, \quad x \mapsto \tilde{\lambda}(y(0,c_{\lambda(x)})(T)) = \widetilde{W} \cdot y(0,c_{Wx+b})(T) + \tilde{b} ,
	\end{equation}
	with two affine linear layers $\lambda$, $\tilde{\lambda}$ and vector field $F: \Omega\rightarrow \mathbb{R}^m$, $\Omega \subset \mathbb{R} \times \mathcal{C}$ satisfying Assumption~\ref{ass:vectorfield}. The vector field $F$ is defined on an open set $\Omega = \Omega_t \times \Omega_y \subset \mathbb{R}\times \mathcal{C}$ with $[0,T]\subset \Omega_t$. For general neural DDEs, it is sufficient if the vector field is defined on an open set  $\Omega \subset \mathbb{R} \times \mathcal{C}$, but for the results of the last sections, it is necessary to study vector fields which are globally defined on $\mathbb{R}\times \mathcal{C}$. In the following, we introduce a notation for all neural DDE architectures $\Phi \in\textup{NDDE}_\tau^k(\mathcal{X},\mathbb{R}^q)$, $\mathcal{X}\subset \mathbb{R}^n$, where the underlying vector field is globally defined and fulfills Assumption \ref{ass:weaklynonlinear}.
	
	\begin{definition}[Weakly Nonlinear Neural DDE] \label{def:NDDEglobal}
		The subset of all neural DDE architectures $\Phi \in \textup{NDDE}_\tau^k(\mathcal{X},\mathbb{R}^q)$, $\mathcal{X}\subset \mathbb{R}^n$ open, $k \geq 0$, for which the underlying vector field $F$ is globally defined and weakly nonlinear, i.e., it fulfills Assumption~\ref{ass:weaklynonlinear}, is denoted by $\overline{\textup{NDDE}}^k_\tau(\mathcal{X},\mathbb{R}^q) \subset C^k(\mathcal{X},\mathbb{R}^q)$. 
	\end{definition}
	
	In the following we show that for any neural DDE $\Phi \in \textup{NDDE}_\tau^k(\mathcal{X},\mathbb{R}^q)$, $\mathcal{X}\subset \mathbb{R}^n$ open, $k\geq1$, with vector field $F:\Omega_t \times \mathcal{C}\rightarrow \mathbb{R}^m$, $k\geq 1$ and $ [0,T] \subset \Omega_t$, there exists a globally defined, weakly nonlinear vector field $\overline{F} \in C_b^{0,k}(\mathbb{R}\times \mathcal{C},\mathbb{R}^m)$ corresponding to a neural DDE $\overline{\Phi}\in\overline{\textup{NDDE}}^k_\tau(\mathcal{X},\mathbb{R}^q)$, such that $\Phi(x) = \overline{\Phi}(x)$ for all $x \in \mathcal{X}$. For general neural DDEs $\Phi \in \textup{NDDE}_\tau^k(\mathcal{X},\mathbb{R}^q)$, $\mathcal{X}\subset \mathbb{R}^n$ open, $k\geq1$, it holds for the underlying vector field that $F\in C_b^{0,k}(\Omega_t \times \Omega_y,\mathbb{R}^m)$, $k\geq 1$, where $\Omega_t\times \Omega_y \subset \mathbb{R} \times \mathcal{C}$ open. The following theorem makes the additional assumption on the vector field $F$ that $\Omega_y = \mathcal{C}$, i.e., the vector field is globally defined with respect to the second variable.

	
	

	\begin{theorem}[Equivalent Weakly Nonlinear Neural DDE] \label{th:NDDEextended}
		Let $\Phi \in \textup{NDDE}_\tau^k(\mathcal{X},\mathbb{R}^q)$, $\mathcal{X}\subset \mathbb{R}^n$ open, $k\geq1$, with vector field $F:\Omega_t \times \mathcal{C}\rightarrow \mathbb{R}^m$ with global Lipschitz constant $K$ with respect to the second variable. Then there exists a neural DDE $\overline{\Phi}\in \overline{\textup{NDDE}}_\tau^k(\mathcal{X},\mathbb{R}^q)$, such that  $\Phi(x) = \overline{\Phi}(x)$ for all $x \in \mathcal{X}$, which is based on a vector field $\overline{F}\in C_b^{0,k}(\mathbb{R}\times \mathcal{C},\mathbb{R}^m)$ with global Lipschitz constant $K$ with respect to the second variable and $$\overline{F}\big\vert_{[0,T]\times \mathcal{C}}= F\big\vert_{[0,T]\times \mathcal{C}}.$$
	\end{theorem}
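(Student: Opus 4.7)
The plan is to extend the vector field $F$ in the time variable only, by multiplying it with a smooth cutoff function equal to $1$ on $[0,T]$, so that the new vector field is globally defined, weakly nonlinear, and agrees with $F$ on the slab $[0,T]\times\mathcal{C}$. Since the DDE underlying $\Phi$ is solved on $[0,T]$ with constant initial data on $[-\tau,0]$, the vector field is only ever evaluated at times in $[0,T]$, so such a modification leaves the input-output map unchanged.

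First, I would use that $\Omega_t \subset \mathbb{R}$ is open and contains the compact set $[0,T]$ to pick $\delta>0$ with $[-\delta,T+\delta]\subset\Omega_t$. Then, by the smooth-bump construction already employed in Section~\ref{sec:modeling_discretization} (based on \cite[Lemma 2.21]{Lee2013}), I would obtain $\chi\in C^\infty(\mathbb{R},[0,1])$ with $\chi(t)=1$ for $t\in[0,T]$ and $\operatorname{supp}(\chi)\subset(-\delta/2,T+\delta/2)$. Define
\begin{equation*}
\overline{F}(t,y_t)\coloneqq \begin{cases}\chi(t)\,F(t,y_t) & \text{if } t\in(-\delta,T+\delta),\\ 0 & \text{otherwise},\end{cases}
\end{equation*}
which is well-defined on all of $\mathbb{R}\times\mathcal{C}$ because $F$ is defined on $(-\delta,T+\delta)\times\mathcal{C}$ and $\chi$ vanishes near the endpoints of its support.

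Next, I would verify that $\overline{F}$ fulfills Assumption~\ref{ass:weaklynonlinear} and lies in $C^{0,k}_b(\mathbb{R}\times\mathcal{C},\mathbb{R}^m)$. Continuity in $t$ follows from smoothness of $\chi$ and continuity of $F$; the derivatives up to order $k$ with respect to the second variable are $\chi(t)\,D_2^j F(t,y_t)$ for $j\leq k$, hence bounded by the corresponding bounds for $F$ since $|\chi(t)|\leq 1$. For the Lipschitz condition,
\begin{equation*}
\|\overline{F}(t,y_t)-\overline{F}(t,z_t)\|_\infty = \chi(t)\,\|F(t,y_t)-F(t,z_t)\|_\infty \leq K\,\|y_t-z_t\|_\infty
\end{equation*}
for $t\in(-\delta,T+\delta)$, and trivially for the remaining $t$. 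Finally, $\|\overline{F}(t,0)\|_\infty$ is bounded by some $A\geq 0$, since $\overline{F}(t,0)=0$ outside $\operatorname{supp}(\chi)$ and $t\mapsto F(t,0)$ is continuous on the compact set $[-\delta/2,T+\delta/2]$. Restriction $\overline{F}\big|_{[0,T]\times\mathcal{C}}=F\big|_{[0,T]\times\mathcal{C}}$ is immediate from $\chi\equiv 1$ on $[0,T]$.

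To conclude, let $\overline{\Phi}$ be defined by \eqref{eq:DDEaffinelinear2} with the same affine layers $\lambda,\tilde{\lambda}$ and with the new vector field $\overline{F}$. By Theorem~\ref{th:globalexistence}, the DDE driven by $\overline{F}$ has a unique globally defined solution for every constant initial datum $c_{\lambda(x)}$, so Assumption~\ref{ass:vectorfield} is satisfied and $\overline{\Phi}\in\overline{\textup{NDDE}}^k_\tau(\mathcal{X},\mathbb{R}^q)$. Equality $\Phi(x)=\overline{\Phi}(x)$ holds because the solutions of both DDEs with initial datum $c_{\lambda(x)}$ are, by uniqueness on $[0,T]$ (Theorem~\ref{lem:dde_regularity}), determined entirely by the values of the vector field on $[0,T]\times\mathcal{C}$, where $F$ and $\overline{F}$ coincide, so in particular their time-$T$ maps agree and $\tilde{\lambda}$ is the same. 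There is no genuine obstacle in the argument; the only point requiring care is choosing the support of $\chi$ strictly inside $\Omega_t$ so that the product $\chi(t)F(t,y_t)$ is well-defined and extends smoothly by zero.
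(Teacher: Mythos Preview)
Your proof is correct and follows the same overall strategy as the paper: extend $F$ only in the time variable so that the extension agrees with $F$ on $[0,T]\times\mathcal{C}$, verify Assumption~\ref{ass:weaklynonlinear} and membership in $C_b^{0,k}$, then invoke global existence and uniqueness to conclude that the input-output maps coincide.

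The technical device differs. You multiply by a smooth cutoff $\chi$ supported in a slightly larger interval $(-\delta/2,T+\delta/2)\subset\Omega_t$, obtaining an extension that vanishes outside a compact time set. The paper instead freezes the time variable at the endpoints, setting $\overline{F}(t,y_t)=F(0,y_t)$ for $t<0$ and $\overline{F}(t,y_t)=F(T,y_t)$ for $t>T$. The paper's construction is marginally simpler in that it needs no bump function and no $\delta$-neighborhood; your construction has the minor advantage that $\overline{F}$ has compact support in time (so boundedness of $\overline{F}(\cdot,0)$ is automatic), though this is not used anywhere. Both verifications of the Lipschitz constant and of $C_b^{0,k}$-membership go through identically, and both rely on the same uniqueness argument for the final equality $\Phi=\overline{\Phi}$.
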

	
	\begin{proof}
		Let  $\Phi \in \textup{NDDE}_\tau^k(\mathcal{X},\mathbb{R}^q)$, $\mathcal{X}\subset \mathbb{R}^n$ open, $k\geq1$, with vector field $F:\Omega_t \times \mathcal{C}\rightarrow \mathbb{R}^m$. By Definition~\ref{def:NDDE}, it holds $F\in C_b^{0,k}(\Omega_t \times \mathcal{C},\mathbb{R}^m)$, $k\geq 1$ and Assumption \ref{ass:vectorfield} implies $[0,T]\subset \Omega_t$ with $\Omega_t \subset \mathbb{R}$ open. Hence, the restricted vector field $\widetilde F \coloneqq F \vert_{[0,T]\times \mathcal{C}}$ fulfills $\widetilde F\in C_b^{0,k}([0,T]\times \mathcal{C},\mathbb{R}^m)$. We define the globally extended vector field
		\begin{equation*}
			\overline{F}: \mathbb{R} \times \mathcal{C} \rightarrow \mathbb{R}^m, \qquad \overline{F}(t,y_t) \coloneqq \begin{cases}
				\widetilde F(0,y_t)  &\text{ if } t<0, \\
				\widetilde F(t,y_t)  &\text{ if } 0\leq t \leq T, \\
				\widetilde F(T,y_t) &\text{ if } t >T.
			\end{cases}
		\end{equation*}
		It holds $\overline F\in C_b^{0,k}(\Omega,\mathbb{R}^m)$, as by construction $\overline{F}$ is continuous in time and as $\widetilde F\in C_b^{0,k}([0,T]\times \mathcal{C},\mathbb{R}^m)$, also $\overline{F}$ has in the second variable bounded derivatives up to order $k$ for every fixed $t$.
		As $F$ is globally Lipschitz continuous with respect to the second variable on $\Omega_t\times \mathcal{C}$, the vector field $\overline{F}$ is globally Lipschitz continuous on $\mathbb{R}\times \mathcal{C}$ with respect to the second variable with the same Lipschitz constant~$K$, as
		\begin{align*}
			\sup_{t \in \mathbb{R}}\norm{\overline{F}(t,y_t)-\overline{F}(t,z_t)}_\infty& = \sup_{t \in [0,T]}\norm{\widetilde F(t,y_t)- \widetilde F(t,z_t)}_\infty \\
			& = \sup_{t \in [0,T]}\norm{F(t,y_t)-F(t,z_t)}_\infty\leq K \norm{y_t-z_t}_\infty \qquad \text{for all } y_t,z_t \in \mathcal{C}.
		\end{align*}
		The constructed vector field $\overline{F}$ fulfills Assumption \ref{ass:weaklynonlinear}: $\overline{F}$ is continuous and globally defined and we can estimate
		\begin{equation*}
			\sup_{t \in \mathbb{R}}\norm{\overline{F}(t,0)}_\infty = 	\sup_{t \in [0,T]}\norm{\overline{F}(t,0)}_\infty \eqqcolon A < \infty
		\end{equation*}
		for some constant $A \geq 0$, as $t \mapsto \norm{\overline{F}(t,0)}_\infty$ is a continuous function, which attains its maximum on the compact interval $[0,T]$. Furthermore, $\overline{F}$ is globally Lipschitz continuous with respect to the second variable, where the global bound of the first derivative with respect to the second variable can be taken as a global Lipschitz constant. Consequently, the DDE 
		\begin{equation*}
			\begin{aligned}
				\frac{\dd y}{\dd t} &= \overline{F}(t,y_t) \qquad &&\text{for } t\geq t_0, \\  
				y_{t_0}(t) &= u(t) &&\text{for } t \in [-\tau,0],
			\end{aligned}
		\end{equation*}
		has by Theorem \ref{th:globalexistence} for every initial data $(t_0,u)\in\mathbb{R}\times \mathcal{C}$ a unique, continuous and globally defined solution $y(t_0,u):[t_0-\tau,\infty)\rightarrow \mathbb{R}^m$.
		
		The vector field $\overline{F}$ also fulfills Assumption \ref{ass:vectorfield}: it holds $[0,T]\subset \mathbb{R}$ and $[-\tau,T]\subset \mathcal{I}_{y_0}$, as all solutions of the DDE with initial data $(0,c_{y_0})\in\mathbb{R}\times\mathcal{C}$ exist globally on the time interval $[-\tau,\infty)$. Furthermore, it holds in the notation of Assumption \ref{ass:vectorfield} that $\Omega_0 \subset \Omega_y = \mathcal{C}$. Hence,  $\overline{\Phi}:\mathcal{X}\rightarrow \mathbb{R}^q$, $\mathcal{X}\subset \mathbb{R}^n$ open, defined by \eqref{eq:DDEaffinelinear2} with vector field $\overline{F}\in C_b^{0,k}(\mathbb{R}\times \mathcal{C},\mathbb{R}^m)$, $k\geq 1$, is by Definition \ref{def:NDDE} a neural DDE $\overline{\Phi}\in \textup{NDDE}_\tau^k(\mathcal{X},\mathbb{R}^q)$. Definition \ref{def:NDDEglobal} implies, that it holds especially $\overline{\Phi}\in \overline{\textup{NDDE}}_\tau^k(\mathcal{X},\mathbb{R}^q)$. As by construction of the vector field $\overline{F}$ it holds 
		\begin{equation*}
			\overline{F}\big\vert_{[0,T]\times \mathcal{C}} = \widetilde{F}\big\vert_{[0,T]\times \mathcal{C}} = F\big\vert_{[0,T]\times \mathcal{C}},
		\end{equation*}
		it follows that $\Phi(x) = \overline{\Phi}(x)$ for all $x \in \mathcal{X}$.
	\end{proof}
	
	\begin{remark}
		The proof of Theorem \ref{th:NDDEextended} even shows a stronger statement: it is not necessary to assume $\Phi \in \textup{NDDE}_\tau^k(\mathcal{X},\mathbb{R}^q)$, $\mathcal{X}\subset \mathbb{R}^n$ open, $k\geq1$, which includes checking that the underlying vector field fulfills Assumption \ref{ass:vectorfield}. If a map $\Phi$ is defined by \eqref{eq:DDEaffinelinear2} with vector field $F\in C_b^{0,k}([0,T] \times \mathcal{C},\mathbb{R}^m)$, $k\geq 1$, then the extended vector field $\overline{F}$ automatically fulfills Assumptions~\ref{ass:vectorfield} and~\ref{ass:weaklynonlinear}. 
	\end{remark}

	\section{Approximation of Local Extreme Points}
	\label{sec:sectionproof}
	
	In this section, we formulate and prove our main result, Theorem \ref{th:noapproximation}, about the non-universal approximation property of non-augmented neural DDEs with sufficiently small memory capacity $K \tau$. Under certain assumptions on the Lipschitz constant $K$ and the delay $\tau$ of the neural DDE, we show that for functions $\Psi$, which have in one component a non-degenerate local extreme point, there exists no neural DDE approximating $\Psi$ with accuracy $\eps$ in the sup-norm. As such, a function $\Psi$ can be chosen smooth; it follows directly that non-augmented neural DDEs cannot have the universal approximation property with respect to the space $ C^j(\mathbb{R}^n,\mathbb{R}^q)$ for every $j \geq 0$, as stated in the upcoming Corollary \ref{cor:noapproximation}. In Section \ref{sec:nonaugmentedgeneral}, we already introduced Theorem \ref{th:tau0}, which is a direct consequence of the results shown in this section. 
	
	To prove our main result, we introduce in Section \ref{sec:nondegenerateCP} functions with local non-degenerate extreme points. These functions can be characterized via Morse functions, which are functions where every critical point is non-degenerate. For our main result, we are interested in those critical points that are either a local minimum or a local maximum. In Section \ref{sec:mainresult}, we state our main Theorem \ref{th:noapproximation} and its Corollary \ref{cor:noapproximation}. In preparation to prove our main result, we study in Section \ref{sec:initialbehavior} the initial behavior of the neural DDE, given by the first affine linear transformation and the dynamics of the underlying DDE itself under the assumption that the memory capacity $K\tau$ is sufficiently small. Afterwards, we study in Section \ref{sec:levelsets} necessary conditions for an approximation of $\Psi$ by a neural DDE and geometrically interpret these conditions as a separation of the phase space by level sets. Finally, we are able to prove in Section \ref{sec:proof} our main Theorem \ref{th:noapproximation}.
	
	\subsection{Characterization of Non-Degenerate Extreme Points}
	\label{sec:nondegenerateCP}
	
	A twice continuously scalar differentiable function $\Psi \in C^2(\mathcal{X},\mathbb{R})$, $\mathcal{X}\subset \mathbb{R}^n$ open, has a critical point $p \in \mathcal{X}$ if its gradient vanishes, i.e., $\nabla \Psi(p) = 0$. Depending on the rank of the Hessian matrix of $\Psi$ at the point $p$, the critical point is called degenerate or non-degenerate. Functions, where every critical point is non-degenerate, are called Morse functions, as defined in the following.  
	
	\begin{definition}[Morse Function~\cite{Hirsch1976, Morse1934}] \label{def:morse}
		A map $\Psi \in C^2(\mathcal{X}, \mathbb{R})$ with $\mathcal{X} \subset \mathbb{R}^n$ open is called a Morse function if all critical points of $\Psi$ are non-degenerate, i.e., for every critical point $p \in \mathcal{X}$ defined by a zero gradient $\nabla \Psi(p) = 0 \in \mathbb{R}^n$, the Hessian matrix $H_{\Psi}(p)\in \mathbb{R}^{n \times n}$ is non-singular. A critical point $p \in \mathcal{X}$ of a Morse function has index $r$, if $r$ eigenvalues of $H_{\Psi}(p)$ are negative.
	\end{definition}
	
	It is of interest to study Morse functions, as they are dense in the Banach space of $k$ times continuously differentiable functions from $\mathbb{R}^n$ to $\mathbb{R}$, if $k \geq n+1 \geq 2$, cf.\ \cite{Kuehn2024}. Hence, it is a generic property of a sufficiently smooth function to be a Morse function. The following lemma shows that every Morse function can be locally transformed into a quadratic form. 
	
	\begin{lemma}[Morse-Palais Lemma~\cite{Hirsch1976,Palais1969}] \label{lem:morse}
		Let $\Psi \in C^{k+2}(\mathcal{X},\mathbb{R})$ with $\mathcal{X} \subset \mathbb{R}^n$ open, $k \geq 1$ be a Morse function with critical point $p \in \mathcal{X}$  with index $r$. Then there exists a neighborhood $\mathcal{U}$ of $0 \in \mathbb{R}^n$ and a $C^k$-diffeomorphism $\mu: \mathcal{U} \rightarrow \mu(\mathcal{U}) \subset \mathcal{X}$ with $\mu(0) = p$, such that for $(u_1,\ldots,u_n) \in \mathcal{U}$
		\begin{equation}\label{eq:morse_palais}
			\Psi(\mu(u_1,\ldots,u_n)) = \Psi(p) - \sum_{j = 1}^r u_j^2 + \sum_{j = r+1}^n u_j^2.
		\end{equation}
	\end{lemma}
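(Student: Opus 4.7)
The plan is to reduce the statement to a smooth normal form via Taylor's theorem and then diagonalize a quadratic form in coordinates that depend smoothly on the base point. First I would translate coordinates so that without loss of generality $p = 0$ and $\Psi(p) = 0$. Since $\nabla \Psi(0) = 0$, applying Taylor's theorem with integral remainder on a convex neighborhood of $0$ yields a representation
\[
\Psi(x) = \sum_{i,j=1}^n x_i x_j h_{ij}(x), \qquad h_{ij}(x) = \int_0^1 (1-t)\, \frac{\partial^2 \Psi}{\partial x_i \partial x_j}(tx) \, \dd t,
\]
where the matrix $H(x) = (h_{ij}(x))$ is symmetric, of class $C^k$, and satisfies $H(0) = \tfrac{1}{2} H_{\Psi}(0)$, which is non-singular by the Morse assumption.

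Next I would diagonalize the quadratic form $x^{\mathsf{T}} H(x) x$ inductively, in the spirit of completing the square. At the $j$-th step, a preliminary linear change of coordinates can be chosen so that the leading diagonal entry $h_{jj}(0)$ is nonzero; by continuity it remains nonzero on a smaller neighborhood, so the function $\sqrt{|h_{jj}(x)|}$ is $C^k$ there. One then defines the local change of variables
\[
v_j(x) = \sqrt{|h_{jj}(x)|}\left(x_j + \sum_{i>j} \frac{h_{ji}(x)}{h_{jj}(x)}\, x_i\right), \qquad v_i(x) = x_i \text{ for } i \neq j,
\]
which eliminates the $j$-th row and column of cross terms while leaving a symmetric $C^k$ matrix-valued remainder on the remaining $(n-j)$-dimensional block. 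The inverse function theorem at $0$ confirms that this is a local $C^k$-diffeomorphism, since its Jacobian at $0$ is triangular with nonzero diagonal.

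After $n$ such inductive steps one obtains new coordinates $(v_1,\ldots,v_n)$ in a neighborhood of $0$ for which $\Psi = \sum_{j=1}^n \varepsilon_j v_j^2$ with $\varepsilon_j \in \{-1,+1\}$, and Sylvester's law of inertia applied to $H(0)$ determines that exactly $r$ of the signs $\varepsilon_j$ are negative, by the definition of the index. A final permutation of coordinates places the negative signs first and produces the desired diffeomorphism $\mu$ as the inverse of the composition of these coordinate changes with the permutation; the identity $\Psi(\mu(u)) = -\sum_{j \leq r} u_j^2 + \sum_{j > r} u_j^2$ then holds on the chosen neighborhood $\mathcal{U}$ of $0$.

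The main obstacle is guaranteeing at every inductive stage that the pivot $h_{jj}(x)$ is nonzero and of definite sign on a common neighborhood of $0$, since otherwise the square root would lose regularity. Non-degeneracy of $H(0)$ makes this possible after a preparatory linear adjustment at each step, but the successive shrinking of the domain and the bookkeeping that the residual matrix retains $C^k$-regularity and symmetry is the delicate part; once this is handled, the rest of the argument is a routine application of the inverse function theorem and Sylvester's law.
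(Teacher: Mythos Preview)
Your proposal outlines the classical proof of the Morse Lemma via Taylor's integral remainder and inductive diagonalization by completing the square, which is precisely the argument one finds in the cited references (Hirsch, Palais, or Milnor). The paper itself does not prove this lemma at all; it is stated as a known result with citations and used as a black box, so there is no in-paper proof to compare against --- your sketch is correct and matches the standard literature argument.
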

	
	In the case that the index $r$ is zero, the map $\Psi\circ\mu:\mathcal{U}\rightarrow \mathbb{R}$ has a local minimum at the critical point $0\in\mathcal{U}$, in the case that $r = n$, the map $\Psi \circ \mu$ has a local maximum at $0\in\mathcal{U}$. Our main Theorem~\ref{th:noapproximation} is based on non-degenerate local extreme points, i.e., Morse functions with a critical point with index $0$ or $n$. Hence, our results apply to all functions $\Psi$, which have somewhere in their domain of definition a local non-degenerate extreme point.
	
	In order to restrict our analysis to the open neighborhood $\mathcal{U}$, we define in the following the radius of definition $r_0$ of a critical point of a map $\Psi$ of Lemma \ref{lem:morse}. To use the symmetry properties of \eqref{eq:morse_palais}, we first define the open and closed Euclidean balls, given for $s>0$ and $v \in \mathbb{R}^n$ by
	\begin{align*}
		B_{s}(v) &\coloneqq \left\{ x \in \mathbb{R}^n: \;  \sum_{i =1}^n (x_i-v_i)^2 < s^2 \right\} \subset \mathbb{R}^n, \\
		K_{s}(v) &\coloneqq \left\{ x \in \mathbb{R}^n: \; \sum_{i =1}^n (x_i-v_i)^2 \leq s^2 \right\} \subset \mathbb{R}^n. 
	\end{align*}
	By definition, the boundary and interior of the open and closed balls $B_{s}(v)$ and $K_{s}(v)$ are given by
	\begin{equation*}
		\partial K_{s}(v) = \partial B_{s}(v) = K_{s}(v) \setminus B_{s}(v), \qquad \text{int}(K_{s}(v)) = \text{int}(B_{s}(v)) = B_{s}(v).
	\end{equation*}
	In the following, we can now define the radius of definition $r_0>0$ of a critical point of a function $\Psi$ of Lemma \ref{lem:morse}.
	
	\begin{definition}[Radius of Definition]\label{def:radiusmorse}
		Assume the setting of Lemma \ref{lem:morse}. Every $r_0>0$ with  $K_{r_0}(0)\subset \mathcal{U}$ is called a radius of definition of the critical point $p \in \mathcal{X}$ of the Morse function $\Psi$. 
	\end{definition}
	
	Given a radius of definition $r_0>0$ of a critical point of a function $\Psi$, the map $\Psi \circ \mu: K_{r_0}(0)\rightarrow \mathbb{R}$ attains in the case of an index $r = 0$ ($r = n$) its global maximum (minimum) at the boundary $\partial  K_{r_0}(0)$ and its global minimum (maximum) at $0 \in K_{r_0}(0)$. The main idea of the upcoming proof is the property, that in order to approximate the map~$\Psi$, a neural DDE has to linearly separate the inner point $0 \in K_{r_0}(0)$ from every point of boundary $\partial  K_{r_0}(0)$. If the index of the critical point is between $1$ and $n-1$,  the map $\Psi\circ\mu:\mathcal{U}\rightarrow \mathbb{R}$ has a saddle point at $0\in\mathcal{U}$. As our proof is based on the property that $\Psi\circ\mu$ has a constant value at the boundary $\partial  K_{r_0}(0)$, our results only apply to local non-degenerate extreme points and not to saddle points.
	
	\subsection{Statement of the Main Result}
	\label{sec:mainresult}
	
	In this section, we aim to study the universal approximation property of non-augmented neural DDEs in the sense of Definition \ref{def:universalapproximation}. A neural DDE as defined in \eqref{eq:DDEaffinelinear} is a map $\Phi: \mathcal{X}\rightarrow \mathbb{R}^q$, $\mathcal{X} \subset \mathbb{R}^n$ open, which should approximate the map $\Psi: \mathcal{X}\rightarrow \mathbb{R}^q$. For a given $\eps>0$, the neural DDE $\Phi$ approximates a map $\Psi$ with accuracy $\eps$, if
	\begin{equation} \label{eq:approximationDDE}
		\norm{\Phi(x)-\Psi(x)}_\infty<\eps \qquad \forall \; x \in\mathcal{X}.
	\end{equation}
	If the neural DDE is augmented, Theorem \ref{th:universalaugmented} guarantees, under certain assumptions, the universal embedding property with respect to the space of Lipschitz continuous functions, and hence also the universal approximation property. In the non-augmented case, we show via functions $\Psi$, which have in at least one component map $\Psi_i$ a local non-degenerate extreme point, that for a sufficiently small memory capacity $K\tau$, no universal approximation is possible.

	\begin{theorem}[No Approximation of Non-Degenerate Local Extreme Points] \label{th:noapproximation}
		Let $\Psi:\mathcal{X}\rightarrow \mathbb{R}^q$,  $\mathcal{X}\subset \mathbb{R}^n$ open, have a component map $\Psi_i\in C^3(\mathcal{X}, \mathbb{R})$, which has a non-degenerate local extreme point with radius of definition $r_0>0$ and let $\eps>0$ with $2\eps<r_0^2$. Furthermore, fix constants $K, w,\tilde{w} \in (0,\infty)$, $A\geq 0$ and $k\geq 1$.
		
		Then there exists a continuous function $\tau_0\in C^0((0,\infty),[0,T])$ with $K \tau_0(K) e<1$ for $K\in(0,\infty)$ with the following property: if $\tau \in [0,\tau_0(K))$, every neural DDE $\Phi \in \textup{NDDE}_{\tau,\textup{N},K}^k(\mathcal{X},\mathbb{R}^q)$ with
		\begin{itemize}
			\item  vector field $F:\Omega_t \times \mathcal{C} \rightarrow \mathbb{R}^m$ with global Lipschitz constant~$K$ in its second variable on $\Omega_t \times \mathcal{C}$ and $\norm{F(t,0)}_\infty \leq A$ for all $t\in [0,T]$,
			\item weight matrices $W, \widetilde{W}$ with $\normm{W}_\infty \leq w$ and $\normm{\widetilde{W}}_\infty \leq\tilde{w}$,
		\end{itemize}
		cannot approximate the map $\Psi$ with accuracy $\eps$, i.e., for every neural DDE $\Phi \in \textup{NDDE}_{\tau,\textup{N},K}^k(\mathcal{X},\mathbb{R}^q)$, there exists a point $x \in \mathcal{X}$, such that 
		\begin{equation*}
			\norm{\Phi(x)-\Psi(x)}_\infty \geq \eps.
		\end{equation*}
	\end{theorem}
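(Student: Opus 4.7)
The overall strategy is to reduce the non-approximation claim for neural DDEs in the small-memory regime to the analogous statement for non-augmented neural ODEs, by showing that when $K\tau e < 1$ the DDE time-$T$ map differs from an ODE time-$T$ map by at most $O(e^{-T/\tau})$ on the compact Morse neighborhood, and then invoking the topological obstruction that prevents non-augmented neural ODE maps from approximating functions with local non-degenerate extreme points.

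First I would apply Theorem~\ref{th:NDDEextended} to replace the given $\Phi \in \textup{NDDE}^k_{\tau,\textup{N},K}(\mathcal{X},\mathbb{R}^q)$ by an equivalent $\overline\Phi$ based on a globally defined, weakly nonlinear vector field $\overline F$ of the same Lipschitz constant $K$ satisfying $\|\overline F(t,0)\|_\infty \leq A$ on all of $\mathbb{R}$. This brings the problem into the framework of Assumption~\ref{ass:weaklynonlinear}, so for $\tau \in [0,\tau_0(K))$ with $K\tau_0(K)e<1$ the entire machinery of Section~\ref{sec:dde_small_delay} becomes available. Restricting attention to the compact Morse neighborhood $\mu(K_{r_0}(0)) \subset \mathcal{X}$ furnished by Lemma~\ref{lem:morse}, the bound $\|W\|_\infty \leq w$ combined with Theorem~\ref{th:globalexistence} gives a uniform $L^\infty$-bound on the DDE histories $y(0,c_{\lambda(x)})_t$ for $t\in[-\tau,T]$ depending only on the fixed constants $K, A, w, r_0, \mu$.

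Theorem~\ref{th:specialsolutions_expattraction} then produces, for each $x$, a unique attracting special solution $\bar y^x = S(y(0,c_{\lambda(x)}))$, and Theorem~\ref{th:specialsolutionsODEs} identifies $\bar y^x$ as the trajectory of a globally Lipschitz ODE on $\mathbb{R}^m$. Setting $\Phi^{\textup{ODE}}(x) := \tilde\lambda(\bar y^x(T))$ and using the continuous dependence of the attraction constant on the initial data plus compactness of $\mu(K_{r_0}(0))$, I obtain a uniform constant $C = C(K,A,w,\tilde w,r_0,\mu)$ with
\[
\sup_{x \in \mu(K_{r_0}(0))} \norm{\Phi(x) - \Phi^{\textup{ODE}}(x)}_\infty \;\leq\; \tilde w\, C\, e^{-T/\tau}.
\]
Structurally, $\Phi^{\textup{ODE}}$ is a non-augmented neural ODE: two affine layers $\lambda, \tilde\lambda$ sandwiching the time-$T$ map of a smooth flow on $\mathbb{R}^m$ with $m \leq \max\{n,q\}$.

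The final step is the contradiction. From the Morse-Palais form~\eqref{eq:morse_palais}, $(\Psi_i\circ\mu)(0) = \Psi_i(p)$ while $(\Psi_i\circ\mu)(u) = \Psi_i(p) \pm r_0^2$ for every $u \in \partial K_{r_0}(0)$, so an $\eps$-approximation with $2\eps < r_0^2$ would force a separation of strictly more than $r_0^2-2\eps > 0$ between $\Phi_i(\mu(0))$ and every value $\Phi_i(\mu(u))$ with $u\in\partial K_{r_0}(0)$. I would then choose a continuous $\tau_0 \in C^0((0,\infty),[0,T])$ with $K\tau_0(K)e<1$ and $\tilde w C e^{-T/\tau_0(K)} < (r_0^2-2\eps)/4$, which transfers the same quantitative separation to $\Phi^{\textup{ODE}}_i$. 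The main obstacle is then the topological step: ruling out such a separation for $\Phi^{\textup{ODE}}$, because the ODE time-$T$ map is a homeomorphism of $\mathbb{R}^m$ and the affine layers $\lambda, \tilde\lambda$ have rank at most $m\leq\max\{n,q\}$, so the level sets of $\Phi^{\textup{ODE}}_i$ cannot enclose $\mu(0)$ while simultaneously separating it from the whole sphere $\mu(\partial K_{r_0}(0))$, in direct analogy with Example~\ref{ex:minusxembedding_a} and the non-augmented neural-ODE non-approximation results of \cite{Kidger2022, Kuehn2023}. Propagating this obstruction through the Morse chart $\mu$ and keeping all error terms (affine-layer norms, exponential attraction constant, and the Lipschitz distortion of $\mu$) controlled uniformly over the entire class, so that a single continuous $\tau_0(K)$ works for every neural DDE satisfying the bounds, is the principal technical difficulty; once done, Corollary~\ref{cor:noapproximation} follows immediately by picking $\Psi$ smooth with a non-degenerate local extremum.
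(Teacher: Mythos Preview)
Your reduction to a neural ODE has a genuine structural gap. You define $\Phi^{\textup{ODE}}(x) := \tilde\lambda(\bar y^x(T))$ with $\bar y^x = S(y(0,c_{\lambda(x)}))$ and then assert that ``structurally, $\Phi^{\textup{ODE}}$ is a non-augmented neural ODE: two affine layers $\lambda,\tilde\lambda$ sandwiching the time-$T$ map of a smooth flow on $\mathbb{R}^m$.'' This is not true. The special solution $\bar y^x$ is the ODE trajectory through the point $(0,\bar y_0^x)$, where $\bar y_0^x$ is the \emph{limit} in Theorem~\ref{th:specialsolutions_expattraction}; it is \emph{not} $\lambda(x)$. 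Hence $\Phi^{\textup{ODE}} = \tilde\lambda \circ \overline H \circ g$ with $g(x)=\bar y_0^x$ a nonlinear (and a priori not even injective) map of $\lambda(x)$, and the neural-ODE obstruction you invoke does not apply to such a composition. In particular, the level-set argument you sketch relies on the first layer being affine so that the preimage of a hyperplane under $\Phi^{\textup{ODE}}_i$ is the diffeomorphic image of a hyperplane in the $x$-space; with $g$ in place of $\lambda$ this fails.

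The paper avoids exactly this pitfall: it never compares the DDE map to a neural ODE of the form $\tilde\lambda\circ\text{(flow)}\circ\lambda$. Instead it introduces an intermediate time $t^\ast=\beta\tau$ and proves two separate smallness estimates there (Lemmas~\ref{lem:mapG} and~\ref{lem:exponentialattraction}): the DDE solution map $G$ on $[0,t^\ast]$ moves points by at most $\delta_{1,t^\ast}=C_1\beta\tau$, while the gap between $G(y_1)$ and the special solution $\overline G(\bar y_1)$ at $t^\ast$ is at most $\delta_{2,\beta}=C_2e^{-\beta}$. Choosing $\beta$ and $\tau$ so that $\delta_{1,t^\ast}+\delta_{2,\beta}<\kappa$ localizes $\overline G(\bar y_1)$ near the \emph{known} point $y_1=\lambda(x)$ without ever needing to control $\bar y_0^x$ directly. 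The topological step is then carried out at time $t^\ast$: the special-solution flow $\overline I$ on $[t^\ast,T]$ is a diffeomorphism (Lemma~\ref{lem:diffeomorphisms}), and the Jordan--Brouwer separation theorem applied to the hypersurfaces $\mathcal H_z=\overline I^{-1}(\tilde\lambda_i^{-1}(z))$ (Lemmas~\ref{lem:hypersurface}--\ref{lem:distance}) produces the contradiction. If you want to repair your approach, you would need to prove that $g$ is a homeomorphism close to the identity (essentially recovering the paper's $t^\ast$-localization in disguise), at which point the argument becomes the paper's.
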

	
	The theorem is proven in the upcoming Section \ref{sec:proof}. In the setting of Theorem \ref{th:noapproximation}, we assume that the underlying DDE of the considered neural DDE $\Phi \in \textup{NDDE}_{\tau,\textup{N},K}^k(\mathcal{X},\mathbb{R}^q)$ uses constant initial data. With slight modifications, the proof also works for fixed, continuous, non-constant initial data, but the main idea stays the same. To maintain the analogy to DenseResNets as introduced in Section~\ref{sec:modeling_discretization}, we focus on DDEs with constant initial data.
	
	It is possible to show that there exist component maps $\Psi_i\in C^3(\mathcal{X},\R)$ with saddle points instead of non-degenerate local extreme points, such that under the assumptions of Theorem \ref{th:noapproximation}, it is possible to find a neural DDE $\Phi \in \textup{NDDE}_{\tau,\textup{N},K}^k(\mathcal{X},\mathbb{R}^q)$, $k \geq 1$ approximating $\Psi_i$ in the $i$-th component. Hence, it is crucial to study local extreme points, as they have other topological properties than saddle points. A direct consequence of Theorem \ref{th:noapproximation} is that for a sufficiently small memory capacity~$K\tau$, non-augmented neural DDEs, and as a special case also non-augmented neural ODEs with $\tau = 0$, cannot have the universal approximation property.
	
	\begin{corollary}[No Universal Approximation for Non-Augmented Neural DDEs with Small Memory Capacity] \label{cor:noapproximation}
		There exists a continuous function $\tau_0\in C^0((0,\infty),[0,T])$ with $K \tau_0(K) e<1$ for $K\in(0,\infty)$ with the following property: if $\tau \in [0,\tau_0(K)]$ and constants $K,w,\tilde w\in (0,\infty)$, $A\geq 0$, are fixed, the class of neural DDEs $\textup{NDDE}_{\tau,\textup{N},K}^k(\mathcal{X},\mathbb{R}^q)$, $k \geq 1$, $\mathcal{X}\subset \mathbb{R}^n$ with
		\begin{itemize}
			\item  vector field $F:\Omega_t \times \mathcal{C} \rightarrow \mathbb{R}^m$ with global Lipschitz constant~$K$ in its second variable on $\Omega_t \times \mathcal{C}$ and $\norm{F(t,0)}_\infty \leq A$ for all $t\in [0,T]$,
			\item weight matrices $W, \widetilde{W}$ with $\normm{W}_\infty \leq w$ and $\normm{\widetilde{W}}_\infty \leq\tilde{w}$,
		\end{itemize}
		does not have the universal approximation property with respect to the space  $C^j(\mathcal{X},\mathbb{R}^q)$ for every $j\geq 0$. 
	\end{corollary}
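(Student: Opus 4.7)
The plan is to reduce the corollary to a direct application of Theorem \ref{th:noapproximation} by exhibiting a single sufficiently regular target function $\Psi$ that the given class cannot approximate to an $\varepsilon$ that is independent of the map. Let $\tau_0\in C^0((0,\infty),[0,T])$ denote the function supplied by Theorem \ref{th:noapproximation}; the closed-versus-open discrepancy between $[0,\tau_0(K)]$ in the corollary and $[0,\tau_0(K))$ in the theorem is handled by replacing $\tau_0$ with, say, $\tfrac{1}{2}\tau_0$, which is still continuous and still satisfies $K\tau_0(K)e<1$.

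For the construction of the obstructing target, I would pick any point $x_0\in\mathcal{X}$ together with some $r_0>0$ small enough that $K_{r_0}(x_0)\subset\mathcal{X}$ (possible since $\mathcal{X}$ is open). Define $\Psi:\mathcal{X}\to\mathbb{R}^q$ componentwise by $\Psi_1(x)=\norm{x-x_0}_2^2$ and $\Psi_i(x)=0$ for $i\geq 2$. Then $\Psi\in C^\infty(\mathcal{X},\mathbb{R}^q)\subset C^j(\mathcal{X},\mathbb{R}^q)$ for every $j\geq 0$, and the first component $\Psi_1$ is a Morse function whose unique critical point $x_0$ is a non-degenerate local minimum with Hessian $2\,\textup{Id}_n$ and radius of definition $r_0$ in the sense of Definition \ref{def:radiusmorse} (the Morse chart can be taken to be a translation).

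Now I would invoke Theorem \ref{th:noapproximation} as follows. Fix any $\varepsilon>0$ with $2\varepsilon<r_0^2$. For any $\tau\in[0,\tau_0(K)]$ and any neural DDE $\Phi\in\textup{NDDE}^k_{\tau,\textup{N},K}(\mathcal{X},\mathbb{R}^q)$ satisfying the stated Lipschitz and weight bounds, the theorem, applied to $\Psi$ with distinguished component $i=1$, yields a point $x\in\mathcal{X}$ with $\norm{\Phi(x)-\Psi(x)}_\infty\geq\varepsilon$. Since the same $\Psi$ and the same $\varepsilon$ work uniformly over the class, the negation of Definition \ref{def:universalapproximation} is witnessed: there exists $\Psi\in C^j(\mathcal{X},\mathbb{R}^q)$ and $\varepsilon>0$ such that no element of the class achieves $\sup_{x\in\mathcal{X}}\norm{\Phi(x)-\Psi(x)}_\infty<\varepsilon$. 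As this holds for every $j\geq 0$, the universal approximation property fails with respect to $C^j(\mathcal{X},\mathbb{R}^q)$ for all $j\geq 0$.

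The main substance of the corollary is therefore entirely contained in Theorem \ref{th:noapproximation}; the only things to check here are that a smooth Morse target with a local extremum fits inside the open set $\mathcal{X}$, that its radius of definition is explicit enough to pick a permissible $\varepsilon$, and that the uniformity over the class in the theorem gives a single $(\Psi,\varepsilon)$ pair that obstructs approximation. No additional obstacle is expected in this step; the real difficulty lives in proving Theorem \ref{th:noapproximation} itself via the small-delay inertial manifold machinery of Section \ref{sec:dde_small_delay}.
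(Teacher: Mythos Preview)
Your proposal is correct and takes essentially the same approach as the paper: both construct a smooth quadratic target $\Psi_i(x)=\norm{x-x^*}_2^2$ centred at an interior point of $\mathcal{X}$, verify it has a non-degenerate local minimum with radius of definition $r_0$ via the identity Morse chart, pick $2\varepsilon<r_0^2$, and invoke Theorem~\ref{th:noapproximation}. Your explicit handling of the closed-versus-open endpoint via $\tfrac{1}{2}\tau_0$ is a detail the paper leaves implicit.
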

	
	\begin{proof}
		By Definition \ref{def:universalapproximation}, the class of neural DDEs $\textup{NDDE}_{\tau,\textup{N},K}^k(\mathcal{X},\mathbb{R}^q)$ does not have the universal approximation property, if there exists some $\eps>0$ and for a given $j \geq 0$ a map $\Psi\in C^j(\mathcal{X},\mathbb{R}^q)$, such that for every choice of the parameters of $\textup{NDDE}_{\tau,\textup{N},K}^k(\mathcal{X},\mathbb{R}^q)$, i.e., all possible weights and biases $W$, $\widetilde{W}$, $b$ and $\tilde{b}$ in the parameter space $\mathbb{V}$ and vector field $F$ of the DDE, there exists a point $x^\ast \in \mathcal{X}$, such that  
		\begin{equation*}
			\norm{\Phi(x^\ast)-\Psi(x^\ast)}_\infty\geq\eps.
		\end{equation*} 
		Let $\Psi:\mathcal{X}\rightarrow \mathbb{R}^q$,  $\mathcal{X}\subset \mathbb{R}^n$ open, $x^\ast \in \text{int}(\mathcal{X})$ with a component map $\Psi_i\in C^\infty(\mathcal{X}, \mathbb{R})$,  $\Psi_i(x) = \sum_{i = 1}^n (x_i-x_i^\ast)^2$. By choosing the other components of $\Psi$ sufficiently smooth, we can assume that $\Psi\in C^j(\mathcal{X},\mathbb{R}^q)$ for a given $j\geq 0$. Let $r_0>0$, such that $K_{r_0}(x^\ast)\subset \mathcal{X}$ and let $0<2\eps<r_0^2$. Then with the identity transformation $\mu = \text{id}_\mathcal{X}:\mathcal{X}\rightarrow \mathcal{X}$ as a $C^\infty$-diffeomorphism, the map $\Psi_i$ has at $x^\ast$ a local non-degenerate minimum with radius of definition $r_0>0$. Fix some constants $K, w,\tilde{w} \in (0,\infty)$, then it follows from Theorem \ref{th:noapproximation} that under the given assumptions, any neural DDE does not have the universal approximation property with respect to the space  $C^j(\mathcal{X},\mathbb{R}^q)$ for every $j\geq 0$. 
	\end{proof}
	
	In the following Sections \ref{sec:initialbehavior} and \ref{sec:levelsets}, we prove several preparational lemmas to show Theorem~\ref{th:noapproximation} in Section \ref{sec:proof}. In order to use the results about neural DDEs with small delay introduced in Section~\ref{sec:dde_small_delay}, we study neural DDEs  $\overline{\Phi}\in \overline{\textup{NDDE}}_{\tau,\textup{N},K}^k(\mathcal{X},\mathbb{R}^q)$, which are based on globally defined vector fields $\overline{F}\in C_b^{0,k}(\mathbb{R}\times \mathcal{C},\mathbb{R}^m)$, which fulfill Assumption \ref{ass:weaklynonlinear} with constants $A\geq 0$ and $K>0$. The dynamics of the neural DDE $\overline{\Phi}$ can be subdivided in three maps:
	\begin{equation*}
		\overline\Phi = \tilde\lambda \circ H  \circ \lambda,
	\end{equation*}
	where $\tilde \lambda: \mathbb{R}^m \rightarrow \mathbb{R}^q$ and $\lambda: \mathbb{R}^n \rightarrow \mathbb{R}^m$ are the affine linear maps before and after the DDE, as defined in \eqref{eq:DDEaffinelinear}. The solution map of the underlying DDE 
	\begin{equation}\label{eq:proof_vectorfield}
		\begin{aligned}
			\frac{\dd y}{\dd t} &= \overline  F(t,y_t) \qquad &&\text{for } t\geq 0, \\  
			y(t) &= c_{y_1}(t) &&\text{for } t \in [-\tau,0],
		\end{aligned}
	\end{equation}
	with constant initial data $c_{y_1}:[-\tau,0]\rightarrow \mathbb{R}^m$, $c_{y_1}(t) = y_1$ on the time interval $[0,T]$ is denoted by
	\begin{equation} \label{eq:mapH}
		H:  \mathbb{R}^m \rightarrow \mathbb{R}^m, \qquad H(y_1) =y(0,c_{y_1})(T).
	\end{equation}
	The dynamics of the DDE \eqref{eq:proof_vectorfield} is subdivided in two intervals $[0,t^\ast]$ and $[t^\ast,T]$ for some $t^\ast\in[0,T)$. The time $t^\ast$ is parameterized in terms of the delay $\tau\geq 0$ as $t^\ast = \beta \tau$ with $\beta>0$.  Given $\beta>0$, we need to choose $\tau<\frac{T}{\beta}$ to guarantee $t^\ast<T$. The map $G: \mathbb{R}^m \rightarrow \mathbb{R}^m$ is defined as the solution map of the DDE \eqref{eq:proof_vectorfield}  with constant initial data $(0,c_{y_1}) \in \mathbb{R} \times \mathcal{C}$ on the time interval $[0,t^\ast]$:
	\begin{equation} \label{eq:mapG}
		G: \mathbb{R}^m \rightarrow \mathbb{R}^m, \qquad G(y_1) = y(0,c_{y_1})(t^\ast).
	\end{equation}
	As the underlying DDE \eqref{eq:proof_vectorfield} is based on a vector field $\overline{F}$, which is globally Lipschitz continuous with respect to the second variable, the solution $y(0,c_{y_1})(t)$ depends by Theorem \ref{th:DDEexistenceuniqueness} continuously on the initial data $c_{y_1}\in\mathcal{C}$ and hence also continuously on $y_1\in \mathbb{R}^m$. Consequently, the solution maps $G$ and~$H$ are continuous. 
	
	
	In the upcoming analysis, we use the subscript $0$ for sets in $\mathbb{R}^n$, before the affine linear map $\lambda$ is applied. The main analysis is carried out on a compact set $\mathcal{K}_0\subset \mathbb{R}^n$. The subscript $1$ is used for all sets in~$\mathbb{R}^m$, after the application of the affine linear map $\lambda$ corresponding to the time $t = 0$ of the DDE \eqref{eq:proof_vectorfield}, such as the compact set $\mathcal{K}_1\coloneqq \lambda(\mathcal{K}_0)$. The subscript $2$ is used for all sets in~$\mathbb{R}^m$ after the application of the map $G$, corresponding to the time $t = t^\ast$, such as the compact set $\mathcal{K}_2 \coloneqq G(\mathcal{K}_1$). All sets in~$\mathbb{R}^m$, which correspond to the time $t = T$ after the map $H$ is applied, are denoted with a subscript $3$, such as the compact set $\mathcal{K}_3 \coloneqq H(\mathcal{K}_1)$. The output of the neural DDE after the application of the linear transformation $\tilde \lambda$  is denoted with a subscript 4. The subscript notation and the maps $G$ and $H$ introduced in \eqref{eq:mapH} and \eqref{eq:mapG} are visualized in Figure \ref{fig:notation}.
	
	\begin{figure}
		\centering
		\begin{overpic}[scale = 0.7,tics=10]
			{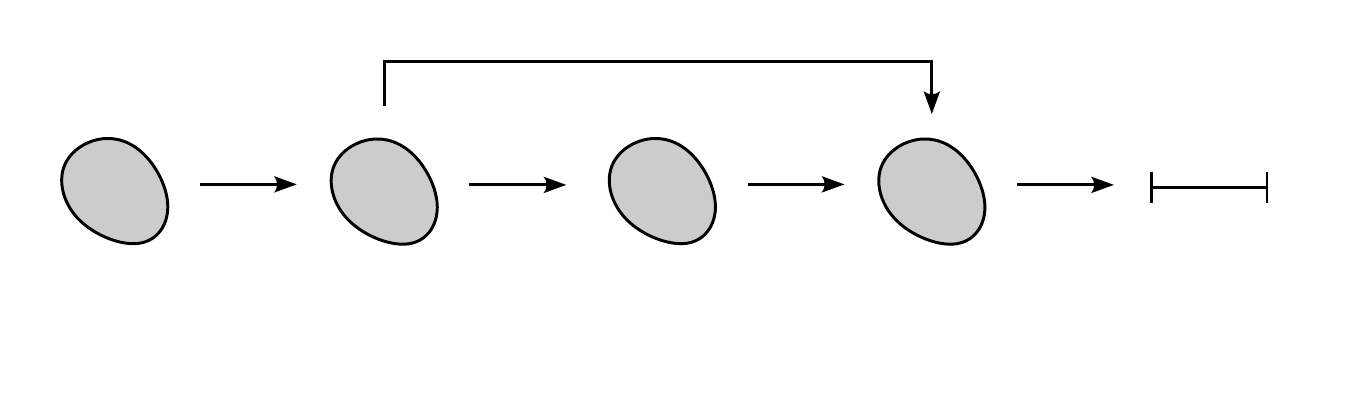}
			\put(17,18){\textcolor{Black}{$\lambda$}}
			\put(35,18){\textcolor{Black}{$G, \overline{G}$}}
			\put(58,18){\textcolor{Black}{$\overline{I}$}}
			\put(77,18){\textcolor{Black}{$\tilde{\lambda}$}}
			\put(46,27){\textcolor{Black}{$H, \overline{H}$}}
			\put(4,8){\textcolor{Gray}{$\mathcal{K}_0 \subset \R^n$}}
			\put(24,8){\textcolor{Gray}{$\mathcal{K}_1 \subset \R^m$}}
			\put(44,8){\textcolor{Gray}{$\mathcal{K}_2 \subset \R^m$}}
			\put(64,8){\textcolor{Gray}{$\mathcal{K}_3 \subset \R^m$}}
			\put(85,8){\textcolor{Gray}{$\mathcal{K}_4 \subset \R$}}
			\put(26,3){\textcolor{Black}{$t = 0$}}	\put(46,3){\textcolor{Black}{$t = t^\ast$}}		
			\put(66,3){\textcolor{Black}{$t = T$}}		
		\end{overpic}
		\caption{Visualization of the subscript notation and the maps $G$ and $H$ introduced in this section. Additionally, the corresponding time intervals of the maps $\overline{G}$, $\overline{H}$, and $\overline{I}$ introduced in the next section are shown.}
		\label{fig:notation}
	\end{figure}
	
	\subsection{Initial Behavior and DDE Dynamics}
	\label{sec:initialbehavior}
	
	In this section, we study the initial behavior and the DDE dynamics of non-augmented neural DDEs, i.e., the dynamics of the first affine linear transformation $\lambda: \mathbb{R}^n\rightarrow \mathbb{R}^m$ and the solution maps $G$ and~$H$ of the underlying DDE defined in \eqref{eq:mapH} and \eqref{eq:mapG}. 
	
	For non-augmented neural DDEs, it holds $n \geq m$, i.e., the input dimension $n$ is at least as large as the dimension $m$ of the phase space of the underlying DDE. In this section, we restrict our analysis to the initial behavior of the neural DDE in the case that $n = m$; the case $n>m$ is treated separately in the proof of Theorem \ref{th:noapproximation} in Section \ref{sec:proof}. An additional assumption we make on the map~$\Psi$ approximated by the neural DDE is that $\Psi$ has a component map $\Psi_i$ with a non-degenerate local minimum as an extreme point. The opposite case, that the non-degenerate local extreme point of $\Psi_i$ is a local maximum, is also postponed to the proof in Section \ref{sec:proof}.
	
	In the following lemma, we study the behavior of the map $\Psi$, which has a component map $\Psi_i\in C^3(\mathcal{X}, \mathbb{R})$ with a non-degenerate local minimum, under the first affine linear layer $\lambda$ and the assumption that $n = m$.
	
	\begin{lemma}[Initial Behavior under the Affine Linear Map $\lambda$] \label{lem:initial_behavior}
		Let $\Psi:\mathcal{X}\rightarrow \mathbb{R}^q$,  $\mathcal{X}\subset \mathbb{R}^n$ open, have a component map $\Psi_i\in C^3(\mathcal{X}, \mathbb{R})$, which has a non-degenerate local minimum $p \in \mathcal{X}$ with radius of definition $r_0>0$, i.e., $\Psi_i$ has on a neighborhood~$\mathcal{U}$ of $0\in \mathbb{R}^n$ under the $C^1$-diffeomorphism $\mu:\mathcal{U}\rightarrow \mu(\mathcal{U})$  with $\mu(0) = p$ the form \eqref{eq:morse_palais} and $K_{r_0}(0)\subset \mathcal{U}$. Let $\mathcal{K}_0 \coloneqq \mu(K_{r_0}(0))$ and $\lambda: \mathbb{R}^n \rightarrow \mathbb{R}^n$, $\lambda(x) = Wx+b$, $W\in \mathbb{R}^{n\times n}$, $b\in \mathbb{R}^n$ be an affine linear map with $\normm{W}_\infty\leq w$ for some constant $w>0$. Then the following assertions hold:
		\begin{enumerate}[label=(\alph*), font=\normalfont]
			\item The map $ \Psi_i: \mathcal{K}_0 \rightarrow \mathbb{R}$ attains its global minimum at $p$ with value $\Psi_i(p)$ and its global maximum at every point of the boundary $\partial \mathcal{K}_0$ with value $\Psi_i(p)+r_0^2$.
			\item If $\textup{rank}(W) = n$, there exists $r_1 > 0$, only depending on $w$ and hence uniform for all weight matrices $W$ with  $\normm{W}_\infty\leq w$, such that $K_{r_1}(\lambda(p)) \subset \mathcal{K}_1$, where $\mathcal{K}_1\coloneqq \lambda(\mathcal{K}_0)\subset \mathbb{R}^n$ is compact. The map $ \Psi_i \circ \lambda^{-1}: \mathcal{K}_1 \rightarrow \mathbb{R}$ attains its global minimum at $\lambda(p)$ with value $\Psi_i(p)$ and its global maximum at every point of the boundary $\partial \mathcal{K}_1$ with value $\Psi_i(p)+r_0^2$. 
			\item If $\textup{rank}(W)< n$, then there exists $s \in \partial \mathcal{K}_0$ with $\lambda(s)= \lambda(p)$.
		\end{enumerate}
	\end{lemma}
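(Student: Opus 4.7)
My plan is to treat the three parts separately, using the Morse--Palais Lemma \ref{lem:morse}, compactness of $\mathcal{K}_0$, and a short linear-algebra argument for the rank-deficient case.

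For part (a), since $p$ has index $r = 0$ (it is a local minimum), the Morse--Palais normal form already supplied by the hypothesis collapses to
\[
\Psi_i(\mu(u_1,\ldots,u_n)) = \Psi_i(p) + \sum_{j=1}^n u_j^2.
\]
Every $x \in \mathcal{K}_0 = \mu(K_{r_0}(0))$ can be written $x = \mu(u)$ with $\|u\|_2 \le r_0$, so $\Psi_i(x) - \Psi_i(p) = \|u\|_2^2 \in [0, r_0^2]$. The minimum $\Psi_i(p)$ is attained exactly at $u = 0$, that is at $x = p$, and the maximum $\Psi_i(p) + r_0^2$ is attained exactly on $\mu(\partial K_{r_0}(0)) = \partial \mathcal{K}_0$, since $\mu$ is a homeomorphism and $\partial K_{r_0}(0) = \{u : \|u\|_2 = r_0\}$.

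For part (b), full rank makes $\lambda$ an affine isomorphism, hence a homeomorphism, so $\mathcal{K}_1 = \lambda(\mathcal{K}_0)$ is compact with $\partial\mathcal{K}_1 = \lambda(\partial\mathcal{K}_0)$ and $\mathrm{int}(\mathcal{K}_1) = \lambda(\mathrm{int}(\mathcal{K}_0))$. Because $0 \in B_{r_0}(0)$ and $\mu$ is a diffeomorphism, $p \in \mathrm{int}(\mathcal{K}_0)$, so $\lambda(p) \in \mathrm{int}(\mathcal{K}_1)$; this yields some $r_1 > 0$ with $K_{r_1}(\lambda(p)) \subset \mathcal{K}_1$. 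The min/max statement for $\Psi_i \circ \lambda^{-1}$ on $\mathcal{K}_1$ then follows by transporting part (a) through the bijection $\lambda$. For part (c), I would pick any $v \in \ker W \setminus \{0\}$ and follow the ray $\gamma(t) = p + tv$ out of the compact set $\mathcal{K}_0$: the set $T = \{t \ge 0 : \gamma(t) \in \mathcal{K}_0\}$ is nonempty (contains $0$), closed (since $\mathcal{K}_0$ is closed), and bounded (since $\mathcal{K}_0$ is compact and $v \ne 0$), so $t^* := \max T$ exists, and $t^* > 0$ because $p \in \mathrm{int}(\mathcal{K}_0)$. Maximality forces $s := \gamma(t^*) \in \partial \mathcal{K}_0$, and $\lambda(s) - \lambda(p) = W(s-p) = t^* W v = 0$.

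The main technical difficulty lies in arranging $r_1$ in part (b) so as to depend only on $w$ and not on the specific $W$. Since $\|W\|_\infty \le w$ bounds only how much $\lambda$ can stretch, not how much it can contract, a direct inscribed-ball argument for $\lambda(K_\rho(p))$, with $\rho$ chosen from $\mu$ and $r_0$ so that $K_\rho(p) \subset \mathrm{int}(\mathcal{K}_0)$, only gives $r_1$ proportional to the smallest singular value $\sigma_{\min}(W)$, which cannot be bounded below by $w$ alone. The right reading of ``uniform'' is presumably uniformity with respect to the rest of the neural DDE data (vector field, delay, biases) for a fixed $W$, with the rank-deficient case handled separately by part (c) and folded into the subsequent proof of Theorem \ref{th:noapproximation}; the other ingredients, namely the Morse normal form, the interior/boundary dictionary under a homeomorphism, and the ray-exit construction, are essentially routine.
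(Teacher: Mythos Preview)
Your approach matches the paper's almost step for step: part (a) via the Morse--Palais normal form with index $0$, part (b) via $\lambda$ being a homeomorphism so that interior/boundary are preserved and an inscribed ball exists around $\lambda(p)$, and part (c) via a kernel direction and a ray-exit argument (the paper phrases it as ``the line $p+\alpha x^\ast$ intersects $\partial\mathcal{K}_0$ since $\mathcal{K}_0$ is homeomorphic to a closed ball'', which is your argument in slightly less detail).

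Your concern about the uniformity of $r_1$ in part (b) is well founded and is not resolved by the paper's proof either. The paper's only remark on this point is that ``$r_1$ cannot get arbitrarily large, as the considered component map $\Psi_i$ is fixed and the weight matrix $W$ is bounded by $\|W\|_\infty\leq w$''---but this is an upper bound on $r_1$, whereas what is needed for uniformity over all full-rank $W$ with $\|W\|_\infty\leq w$ is a positive \emph{lower} bound, and that indeed fails as $W$ approaches the rank-deficient boundary. Your diagnosis that the intended uniformity is with respect to the remaining neural DDE data (vector field, delay, biases) for a fixed $W$, with the degenerate case absorbed into part (c) and handled separately in the proof of Theorem~\ref{th:noapproximation}, is the natural way to read the argument and is consistent with how $r_1$ is actually used downstream.
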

	
	\begin{proof}
		As $\Psi:\mathcal{X}\rightarrow \mathbb{R}^q$,  $\mathcal{X}\subset \mathbb{R}^n$ open, has a component map $\Psi_i\in C^3(\mathcal{X}, \mathbb{R})$, which has a non-degenerate local minimum $p \in \mathcal{X}$ with radius of definition $r_0>0$, there exists a $C^1$-diffeomorphism $\mu: \mathcal{U} \rightarrow \mu(\mathcal{U})$ with $\mu(0) = p$ and $K_{r_0}\subset \mathcal{U}$, such that for $(u_1,\ldots,u_n) \in \mathcal{U}$
		\begin{equation}\label{eq:psiform}
			\Psi_i(\mu(u_1,\ldots,u_n)) = \Psi_i(p) + \sum_{j =1}^n u_j^2,
		\end{equation}
		as a non-degenerate local minimum is a critical point with index $0$.

		As the diffeomorphism $\mu$ is continuous, the set $\mathcal{K}_0 \coloneqq \mu(K_{r_0}(0))$ is compact. It follows directly from \eqref{eq:psiform} that the component map $\Psi_i: \mathcal{K}_0 \rightarrow \mathbb{R}$ attains its global minimum at $p = \mu(0)$ with value $\Psi_i(p)$ and its global maximum on all points of the boundary $\partial \mathcal{K}_0$ with value $\Psi_i(p)+r_0^2$, as the boundary of $K_{r_0}$ is mapped to the boundary of $\mathcal{K}_0$ under the diffeomorphism $\mu$, which proves part~(a).
		
		If $\text{rank}(W) = n$, the affine linear map $\lambda:\mathbb{R}^n\rightarrow \mathbb{R}^n$, $\lambda(x) = Wx+b$ is a diffeomorphism, which maps the compact set $\mathcal{K}_0$ to the compact set $\mathcal{K}_1 \coloneqq \lambda(\mathcal{K}_0)$ and the boundary $\partial \mathcal{K}_0$ to the boundary~$\partial \mathcal{K}_1$. Consequently, the map $\Psi_i \circ \lambda^{-1}: \mathcal{K}_1 \rightarrow \mathbb{R}$ attains its global minimum at $\lambda(p)$ with value $\Psi_i(p)$ and its global maximum at all points of the boundary $\partial \mathcal{K}_1$ with value $\Psi_i(p)+r_0^2$. As $\mu$ and $\lambda$ are diffeomorphisms, $\mathcal{K}_1 = \lambda(\mu(K_{r_0})(0))$ is a full-dimensional subset of $\mathbb{R}^n$ and homeomorphic to the closed ball $K_{r_0}(0)\subset \mathbb{R}^n$. As $0\in \text{int}(K_{r_0}(0))$, it follows that $p = \mu(0) \in \mu(\text{int}(K_{r_0}(0))) = \text{int}(\mathcal{K}_0)$ and that $\lambda(p)  \in \mu(\text{int}(\mathcal{K}_0)) = \text{int}(\mathcal{K}_1) $. Consequently, there exists a closed ball around $\lambda(p)$ with small enough radius, which is contained in $\mathcal{K}_1$, i.e., there exists a radius $r_1 >0$, such that  $K_{r_1}(\lambda(p)) \subset \mathcal{K}_1$, which proves part~(b). The radius $r_1$ cannot get arbitrarily large, as the considered component map~$\Psi_i$ is fixed and the weight matrix $W$ is bounded by $\normm{W}_\infty\leq w$.
		
		If $\text{rank}(W) < n$, then the kernel of the matrix $W$
		\begin{equation*}
			\text{ker}(W) = \{x \in \mathbb{R}^n: Wx = 0\}
		\end{equation*}
		is non-empty. Let $x^\ast\in\text{ker}(W)$, then there exists an intersection of the line $p + \alpha x^\ast$, $\alpha \in \mathbb{R}$ with~$\partial \mathcal{K}_0$, as $\mathcal{K}_0$ is a full-dimensional subset of $\mathbb{R}^n$ homeomorphic to the closed ball $K_{r_0}(0)\subset \mathbb{R}^n$ with $p \in \text{int}(\mathcal{K}_0)$. Let
		\begin{equation*}
			s\in \{p + \alpha x^\ast: \alpha \in \mathbb{R}\} \cap \partial \mathcal{K}_0
		\end{equation*}
		be such an intersecting point, then $s \in\partial \mathcal{K}_0$ and $s = p + \alpha^\ast x^\ast$ for some $\alpha^\ast \in \mathbb{R}$. By construction, it holds
		\begin{equation*}
			\lambda(s) = \lambda(p + \alpha^\ast x^\ast) = W(p + \alpha^\ast x^\ast) + b = Wp + \alpha^\ast Wx^\ast +b = \lambda(p),
		\end{equation*}
		which proves part (c).
	\end{proof}
	
	In the following lemma, we study the initial behavior of the neural DDE $\overline{\Phi}$, i.e., the map $G$ defined in~\eqref{eq:mapG} as the solution map of the DDE~\eqref{eq:proof_vectorfield} on the time interval $[0,t^\ast]$. Hereby, we use the property that the time $t^\ast = \beta \tau$ is sufficiently small, if the parameter $\beta$ is fixed and the delay $\tau$ is sufficiently small.

	\begin{lemma}[Initial Behavior under the Solution Map $G$] \label{lem:mapG}
		Let the following assumptions be satisfied:
		\begin{itemize}
			\item $\mathcal{K}_1 \subset \mathbb{R}^m$ is a compact set with $M \coloneqq \sup_{y_1 \in \mathcal{K}_1} \norm{y_1}_\infty>0$ and $\lambda(p)\in \mathcal{K}_1$ with $K_{r_1}(\lambda(p))\subset \mathcal{K}_1$ for some $r_1 >0$.
			\item The underlying vector field $\overline{F}$ of the neural DDE $\overline{\Phi}\in\overline{\textup{NDDE}}^k_{\tau,\textup{N},K}(\mathcal{X},\mathbb{R}^q)$, $\mathcal{X}\subset \mathbb{R}^n$ open, $k \geq 0$, fulfills Assumption \ref{ass:weaklynonlinear} with $A\geq 0$ and $K>0$, which define the constant $C_1 \coloneqq (KM+A)e^{KT}$.
			\item The map $G: \mathbb{R}^m\rightarrow \mathbb{R}^m$ is the solution map of the DDE \eqref{eq:proof_vectorfield} on the time interval $[0,t^\ast]$ as defined in \eqref{eq:mapG}. It holds $t^\ast = \beta \tau$ with $\beta>0$ and  $\delta_{1,t^\ast} \coloneqq C_1 t^\ast$.
		\end{itemize}
		Then for every $\beta>0$ and every $0<\kappa\leq r_1$, there exists an upper bound $\tau_{1,\beta,\kappa}=\frac{\kappa}{2C_1 \beta}>0$, such that for all $\tau \in [0,\tau_{1,\beta,\kappa}]$ and every $y_1\in \mathcal{K}_1$ it holds $G(y_1)\in\mathcal{K}_2\coloneqq G(\mathcal{K}_1)$ and
		\begin{equation*}
			\norm{G(y_1)-y_1}_\infty \leq \delta_{1,t^\ast} \leq \frac{\kappa}{2}.
		\end{equation*}
		Especially for $\lambda(p)\in \mathcal{K}_1$ we have
		\begin{equation*}
			G(\lambda(p))\in K_{\delta_{1,t^\ast}}(\lambda(p)) \subset K_{\kappa/2}(\lambda(p)) \subset K_{r_1}(\lambda(p)) \subset \mathcal{K}_1.
		\end{equation*}
	\end{lemma}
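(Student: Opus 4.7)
The plan is to view the displacement $G(y_1)-y_1$ as a time integral of the vector field along the solution and then bound the integrand uniformly in $y_1\in\mathcal{K}_1$ using the weakly nonlinear structure of $\overline{F}$ together with the global a priori growth estimate of Theorem~\ref{th:globalexistence}. Concretely, for each $y_1\in \mathcal{K}_1$ the unique continuous solution $y(0,c_{y_1})$ of the DDE \eqref{eq:proof_vectorfield} exists on $[-\tau,\infty)$ by Theorem~\ref{th:globalexistence} (applied to the weakly nonlinear $\overline{F}$), so for $t\in[0,T]$ I can write
\begin{equation*}
y(0,c_{y_1})(t)-y_1 \;=\; \int_{0}^{t} \overline{F}(s,y_s)\,\dd s,
\end{equation*}
which on evaluation at $t=t^\ast$ yields $G(y_1)-y_1$.

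Next I would estimate $\|\overline{F}(s,y_s)\|_\infty$. Writing $\overline{F}(s,y_s)=\bigl(\overline{F}(s,y_s)-\overline{F}(s,0)\bigr)+\overline{F}(s,0)$ and using the global Lipschitz bound $K$ and the uniform bound $A$ from Assumption~\ref{ass:weaklynonlinear} gives $\|\overline{F}(s,y_s)\|_\infty\le K\|y_s\|_\infty+A$. To control $\|y_s\|_\infty$ uniformly, I apply Theorem~\ref{th:globalexistence}(a) with initial data $c_{y_1}$: since $\|c_{y_1}\|_\infty=\|y_1\|_\infty\le M$, for every $s\in[0,T]$
\begin{equation*}
\|y(0,c_{y_1})(s)\|_\infty \;\le\; M\me^{Ks}+\tfrac{A}{K}(\me^{Ks}-1)\;\le\; M\me^{KT}+\tfrac{A}{K}(\me^{KT}-1).
\end{equation*}
Because $y_s(\sigma)=y(0,c_{y_1})(s+\sigma)$ for $\sigma\in[-\tau,0]$ and the initial function $c_{y_1}$ is constant with sup-norm $\|y_1\|_\infty\le M\le M\me^{KT}$, the same bound transfers to $\|y_s\|_\infty$. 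Inserting this into the Lipschitz estimate and simplifying produces precisely $\|\overline{F}(s,y_s)\|_\infty\le (KM+A)\me^{KT}=C_1$, uniformly for $s\in[0,T]$ and $y_1\in\mathcal{K}_1$.

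Integrating over $[0,t^\ast]$ with $t^\ast=\beta\tau$ then gives $\|G(y_1)-y_1\|_\infty\le C_1 t^\ast=\delta_{1,t^\ast}$ for every $y_1\in\mathcal{K}_1$, which is the first claimed bound. The choice $\tau_{1,\beta,\kappa}\coloneqq \frac{\kappa}{2C_1\beta}$ is now forced: for $\tau\in[0,\tau_{1,\beta,\kappa}]$ one has $\delta_{1,t^\ast}=C_1\beta\tau\le\kappa/2$, which is the second claimed inequality. The specialization to $y_1=\lambda(p)$ follows at once: $G(\lambda(p))\in K_{\delta_{1,t^\ast}}(\lambda(p))\subset K_{\kappa/2}(\lambda(p))\subset K_{r_1}(\lambda(p))\subset\mathcal{K}_1$, since $0<\kappa\le r_1$ and $K_{r_1}(\lambda(p))\subset\mathcal{K}_1$ by hypothesis. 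By construction $G(y_1)\in \mathcal{K}_2=G(\mathcal{K}_1)$ trivially.

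There is no real obstacle here; the only point requiring care is making sure the a priori bound on $\|y_s\|_\infty$ is taken over the full history window $[s-\tau,s]\subset[-\tau,T]$ rather than just at time $s$, so that the Lipschitz-in-the-second-variable estimate on the DDE vector field can be applied. This is handled by noting that the constant initial segment contributes at most $M$ and the forward segment is controlled by Theorem~\ref{th:globalexistence}(a) on all of $[0,T]$, so the uniform bound $C_1=(KM+A)\me^{KT}$ is valid on the whole interval $[0,T]\supset[0,t^\ast]$ regardless of how small $\tau$ is.
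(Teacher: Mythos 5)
Your proof is correct and follows essentially the same route as the paper's: write $G(y_1)-y_1$ as the time integral of $\overline{F}$, split $\overline{F}$ through $\overline{F}(\cdot,0)$ using weak nonlinearity, bound $\|y_s\|_\infty$ via Theorem~\ref{th:globalexistence}(a), integrate to get $C_1 t^\ast$, and then choose $\tau_{1,\beta,\kappa}$ to force $\delta_{1,t^\ast}\le\kappa/2$. If anything, you are slightly more explicit than the paper in noting that Theorem~\ref{th:globalexistence}(a) bounds the pointwise value $\|y(s)\|_\infty$ rather than the history-segment norm $\|y_s\|_\infty$, and that the monotonicity of the a priori bound together with $\|c_{y_1}\|_\infty\le M$ closes that gap — a detail the paper's proof leaves implicit.
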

	
	\begin{proof}
		For $\beta>0$ and  $0<\kappa\leq r_1$, let $\tau_{1,\beta,\kappa}\coloneqq \frac{\kappa}{2C_1 \beta}>0$, then it holds for all $\tau \in [0,\tau_{1,\beta,\kappa}]$ that
		\begin{equation*}
			2\delta_{1,t^\ast} = 2C_1 \beta \tau \leq 2 C_1 \beta\tau_{1,\beta,\kappa} = \kappa \leq   r_1.
		\end{equation*}
		For  $y_1 \in \mathcal{K}_1$ we estimate
		\begin{align*}
			&\norm{G(y_1)-y_1}_\infty = \norm{y(0,c_{y_1})(t^\ast)-y(0,c_{y_1})(0)}_\infty  \\ =\;&\norm{\int_0^{t^\ast}\overline F(t,y_t)\; \dd t }_\infty \leq \int_0^{t^\ast} \norm{\overline F(t,y_t)-\overline F(t,0) }_\infty +  \norm{\overline F(t,0)}_\infty \; \dd t \\
			\leq\;  & \int_0^{t^\ast} (K \norm{y_t}_\infty + A) \; \dd t \leq \int_0^{t^\ast} \left(K \left(Me^{Kt}+\frac{A}{K}\left(e^{Kt}-1\right)\right) + A \right) \; \dd t \\
			\leq \; & \int_0^{t^\ast} (KM+A)e^{Kt} \; \dd t  \leq (KM+A)e^{KT} t^\ast = C_1 t^\ast = C_1 \beta \tau =   \delta_{1,t^\ast}.
		\end{align*}
		Hereby we first used the integral formulation of the DDE \eqref{eq:proof_vectorfield}, then the Lipschitz continuity of $\overline{F}$ and finally Theorem \ref{th:globalexistence}(a) to estimate $\norm{y_t}_\infty$ with initial data $c_{y_1}$, which fulfills $\norm{c_{y_1}}_\infty = \norm{y_1}_\infty \leq M$. Consequently, under the map $G$ points move at most the distance $\delta_{1,t^\ast}$, such that
		\begin{equation*}
			G(\lambda(p))\in K_{\delta_{1,t^\ast}}(\lambda(p)) \subset K_{\kappa/2}(\lambda(p)) \subset K_{r_1}(\lambda(p)) \subset \mathcal{K}_1,
		\end{equation*}
		as $K_{r_1}(\lambda(p))\subset \mathcal{K}_1$. 
	\end{proof}

	The geometric objects and statements of Lemma \ref{lem:initial_behavior} and Lemma \ref{lem:mapG} are visualized in Figure \ref{fig:proof_lambda}. After having studied the initial behavior of the neural DDE $\overline{\Phi}$, we aim to track the behavior until time $T$, which is characterized by an exponential attraction of general solutions of the DDE~\eqref{eq:proof_vectorfield} to special solutions. By Theorem~\ref{th:specialsolutionsODEs}, there exists an initial value problem
	\begin{equation} \label{eq:proof_specialODE}
		\frac{\dd z(t)}{\dd t} = \widetilde{F}(t,z(t)) \coloneqq \overline{F}(t,[\bar{y}(t,z(t))]_t), \qquad z(0) = y_1,
	\end{equation}
	with $\widetilde{F}:\mathbb{R}\times \mathbb{R}^m\rightarrow \mathbb{R}^m$ generating all special solutions of the DDE \eqref{eq:proof_vectorfield}. In analogy to the maps~$G$ and $H$ defined in \eqref{eq:mapH} and \eqref{eq:mapG} characterizing the solution map of the DDE \eqref{eq:proof_vectorfield} over the time intervals $[0,t^\ast]$ and $[0,T]$, we define the solution maps of the ODE \eqref{eq:proof_specialODE} over the time intervals $[0,t^\ast]$ and $[0,T]$ as 
	\begin{align} \label{eq:mapGbar}
		\overline{G}:  \mathbb{R}^m \rightarrow \mathbb{R}^m,
		\qquad \overline{G}(y_1) =\bar{y}(0,y_1)(t^\ast),  \\
		\label{eq:mapHbar} \overline{H}:  \mathbb{R}^m \rightarrow \mathbb{R}^m,
		\qquad \overline{H}(y_1) =\bar{y}(0,y_1)(T).
	\end{align}
	Hereby $\bar{y}(0,y_1)$ is the unique solution of the ODE \eqref{eq:proof_specialODE}, which is by Theorem \ref{th:specialsolutionsODEs}  the unique special solution of the DDE~\eqref{eq:proof_vectorfield} through the point $(0,y_1)\in\mathbb{R}\times \mathbb{R}^m$. The following lemma characterizes the distance between the solutions of the DDE \eqref{eq:proof_vectorfield} and their corresponding special solutions in terms of the solution maps $G$, $H$, $\overline{G}$, and $\overline{H}$.
	
	\begin{lemma}[Exponential Attraction] \label{lem:exponentialattraction}
		Let the following assumption be satisfied:
		\begin{itemize}
			\item The underlying vector field $\overline{F}$ of the neural DDE $\overline{\Phi}\in\overline{\textup{NDDE}}^k_{\tau,\textup{N},K}(\mathcal{X},\mathbb{R}^q)$, $\mathcal{X}\subset \mathbb{R}^n$ open, $k \geq 0$, fulfills Assumption~\ref{ass:weaklynonlinear} with global Lipschitz constant $K>0$ and it holds $K\tau e <1$.
			\item The maps $G$ and $H$ are the solution maps of the DDE \eqref{eq:proof_vectorfield} as defined in \eqref{eq:mapH}, \eqref{eq:mapG} and the maps   $\overline{G}$ and $\overline{H}$ are the solution maps of the ODE \eqref{eq:proof_specialODE} as defined in \eqref{eq:mapGbar} and \eqref{eq:mapHbar} with $t^\ast = \beta \tau$, $\beta>0$. 
			\item $\mathcal{K}_1 \subset \mathbb{R}^m$ and $\mathcal{K}_2 = G(\mathcal{K}_1)$ are compact sets and $r_1>0$ is a constant.
		\end{itemize}
		Then there exists a constant $C_2\geq\frac{r_1}{2}$, such that for every $y_1 \in \mathcal{K}_1$, there exists $\bar{y}_1\in \mathbb{R}^m$, uniquely determined by the limit \eqref{eq:ICspecialsolution} for the initial data $(0,c_{y_1})\in\R\times \mathcal{C}$, such that
		\begin{equation*}
			\normk{G(y_1)-\overline{G}(\bar{y}_1)}_\infty < \delta_{2,\beta} \coloneqq C_2 e^{-\beta}, \qquad \normk{H(y_1)-\overline{H}(\bar{y}_1)}_\infty < \delta_{3,\tau} \coloneqq C_2 e^{-\frac{T}{\tau}}. 
		\end{equation*}
		For $y_1 \in \mathcal{K}_1$ it holds
		\begin{equation*}
			\overline{G}(\overline{y}_1) \in 	\mathcal{K}_{2,\delta_{2,\beta}} \coloneqq \left\{ y_2 + \alpha \in \mathbb{R}^m: y_2 \in \mathcal{K}_2, \alpha \in B_{\delta_{2,\beta}}(0)\right\},
		\end{equation*}
		with $\mathcal{K}_{2,\delta_{2,\beta}}\subset \mathbb{R}^m$ open.
	\end{lemma}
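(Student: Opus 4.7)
The plan is to apply Theorem~\ref{th:specialsolutions_expattraction} (exponential attraction) pointwise over $\mathcal{K}_1$ and then uniformize the resulting constant using continuous dependence together with compactness. Fix $y_1 \in \mathcal{K}_1$ and consider the solution $y(0,c_{y_1}):[-\tau,\infty)\to\mathbb{R}^m$ of the DDE~\eqref{eq:proof_vectorfield}, which exists globally by Theorem~\ref{th:globalexistence} since $\overline F$ is weakly nonlinear. Since $K\tau e<1$, Theorem~\ref{th:specialsolutions_expattraction} produces a limit $\bar{y}_1 := \lim_{s\to\infty}\bar{y}(s,y(0,c_{y_1})(s))(0)\in\mathbb{R}^m$ and identifies the corresponding special solution $S(y(0,c_{y_1}))=\bar{y}(0,\bar{y}_1)$ via Theorem~\ref{th:specialsolutions_uniqueness}. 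By the definitions~\eqref{eq:mapH}, \eqref{eq:mapG}, \eqref{eq:mapGbar}, \eqref{eq:mapHbar}, the four maps $G,H,\overline G,\overline H$ at the point $y_1$ (respectively $\bar y_1$) are precisely the DDE solution and its corresponding special solution evaluated at times $t^\ast=\beta\tau$ and $T$.

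Next, the bound in Theorem~\ref{th:specialsolutions_expattraction} yields a finite constant $C_{c_{y_1}}$ with
\begin{equation*}
\norm{y(0,c_{y_1})(t)-\bar y(0,\bar y_1)(t)}_\infty \;\leq\; C_{c_{y_1}}\,e^{-t/\tau} \qquad\text{for all } t\geq 0.
\end{equation*}
Evaluating at $t=t^\ast=\beta\tau$ gives $\normk{G(y_1)-\overline G(\bar y_1)}_\infty\leq C_{c_{y_1}}e^{-\beta}$, and at $t=T$ gives $\normk{H(y_1)-\overline H(\bar y_1)}_\infty\leq C_{c_{y_1}}e^{-T/\tau}$. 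This is exactly the shape of the two claimed inequalities, but with a $y_1$-dependent constant that still needs to be tamed.

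To produce the single constant $C_2$, I would exploit that the map $\iota:\mathbb{R}^m\to\mathcal{C}$, $y_1\mapsto c_{y_1}$ is an isometric embedding (indeed $\norm{c_{y_1}-c_{z_1}}_\infty=\norm{y_1-z_1}_\infty$), so $\iota(\mathcal{K}_1)\subset\mathcal{C}$ is compact. The final sentence of Theorem~\ref{th:specialsolutions_expattraction} asserts that $u\mapsto C_u$ depends continuously on $u$ at fixed $t_0=0$; hence the composition $y_1\mapsto C_{c_{y_1}}$ is continuous on $\mathcal{K}_1$ and therefore attains a finite maximum $C_2' := \max_{y_1\in\mathcal{K}_1} C_{c_{y_1}}$. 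Setting $C_2 := \max\{C_2',\,r_1/2\}$ enforces the required lower bound and, after a harmless strict/non-strict adjustment (absorbed by enlarging $C_2$ by any fixed positive constant if needed), gives the two strict inequalities uniformly in $y_1\in\mathcal{K}_1$.

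The final geometric claim is immediate: for every $y_1\in\mathcal{K}_1$ we have $G(y_1)\in\mathcal{K}_2$ by definition of $\mathcal{K}_2 = G(\mathcal{K}_1)$, and the first inequality gives $\overline G(\bar y_1)=G(y_1)+\alpha$ with $\alpha:=\overline G(\bar y_1)-G(y_1)\in B_{\delta_{2,\beta}}(0)$, so $\overline G(\bar y_1)\in\mathcal{K}_{2,\delta_{2,\beta}}$. The set $\mathcal{K}_{2,\delta_{2,\beta}}=\mathcal{K}_2+B_{\delta_{2,\beta}}(0)$ is open as a union $\bigcup_{y_2\in\mathcal{K}_2}B_{\delta_{2,\beta}}(y_2)$ of open balls. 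The main (and only real) obstacle is justifying the uniformization step: one must be certain that Theorem~\ref{th:specialsolutions_expattraction} delivers continuity of $C_u$ on all of $\mathcal{C}$ with $t_0$ fixed, not merely finiteness pointwise, so that compactness of $\iota(\mathcal{K}_1)$ can be invoked. Everything else is a direct substitution into the exponential attraction bound.
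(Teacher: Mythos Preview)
Your proof is correct and follows essentially the same approach as the paper's: apply Theorem~\ref{th:specialsolutions_expattraction} pointwise, use the continuous dependence of $C_u$ on the initial data together with compactness of $\mathcal{K}_1$ to obtain a uniform constant, then add a positive slack (the paper writes $C_2:=\sup_{y_1\in\mathcal{K}_1}C_{y_1}+\delta$ with $\delta>0$ chosen so that $C_2\geq r_1/2$) to recover strict inequalities, and read off the containment $\overline G(\bar y_1)\in\mathcal{K}_{2,\delta_{2,\beta}}$ directly. Your worry about the continuity hypothesis is unfounded: the final sentence of Theorem~\ref{th:specialsolutions_expattraction} asserts exactly the continuity of $u\mapsto C_u$ at fixed $t_0$ that you need.
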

	
		\begin{figure}
		\centering
		\begin{overpic}[scale = 0.8,,tics=10]
			{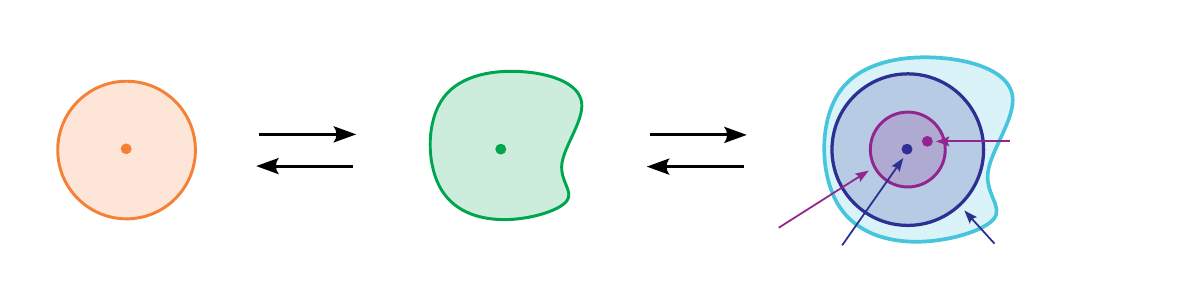}
			\put(25,15){\textcolor{Black}{$\mu$}}
			\put(25,8){\textcolor{Black}{$\mu^{-1}$}}
			\put(58,15){\textcolor{Black}{$\lambda$}}
			\put(58,8){\textcolor{Black}{$\lambda^{-1}$}}
			\put(54,21){\textcolor{Black}{$\text{rank}(W) = n$}}
			\put(12.5,11.5){\textcolor{Orange}{$0$}}
			\put(8,3){\textcolor{Orange}{$K_{r_0}(0)$}}
			\put(44,11.5){\textcolor{Green}{$p$}}
			\put(36,3){\textcolor{Green}{$\mathcal{K}_0 = \mu(K_{r_0}(0))$}}
			\put(79,22){\textcolor{SkyBlue}{$\mathcal{K}_1 = \lambda(\mathcal{K}_0)$}}
			\put(85,2.5){\textcolor{Blue}{$K_{r_1}(\lambda(p))$}}
			\put(69,1.5){\textcolor{Blue}{$\lambda(p)$}}
			\put(87,12){\textcolor{Plum}{$G(\lambda(p))$}}
			\put(57,3){\textcolor{Plum}{$K_{\kappa/2}(\lambda(p))$}}
		\end{overpic}
		\caption{Visualization of the geometric objects and statements of Lemma \ref{lem:initial_behavior} and Lemma \ref{lem:mapG}: the $C^1$-diffeomorphism $\mu$ maps the set $K_{r_0}(0)$ onto the set $\mathcal{K}_0$ and it holds $p = \mu(0)$. The affine linear map $\lambda$ maps in the case that $\text{rank}(W) = n$ the set $\mathcal{K}_0$ onto the set $\mathcal{K}_1$, which contains the closed ball $K_{r_1}(\lambda(p))$ for some $r_1>0$. Furthermore, for $\tau$ sufficiently small, it holds $G(\lambda(p))\in K_{\kappa/2}(\lambda(p))$ for a given constant $0<\kappa\leq r_1$.}
		\label{fig:proof_lambda}
	\end{figure}

	\begin{proof}
		As the underlying vector field $\overline{F}$ of the neural DDE $\overline{\Phi}\in\overline{\textup{NDDE}}^k_{\tau,\textup{N},K}(\mathcal{X},\mathbb{R}^q)$ fulfills Assumption~\ref{ass:weaklynonlinear} and $K\tau e<1$, Theorem \ref{th:specialsolutions_expattraction} implies that for every initial data $(0,c_{y_1})\in\mathbb{R}\times \mathcal{C}$ specified by $y_1\in\mathcal{K}_1$, there exists
		\begin{equation*}
			\bar{y}_1 \coloneqq \lim_{s \rightarrow \infty} \bar{y}(s,y(0,c_{y_1})(s))(0) \in \mathbb{R}^m \quad \text{and} \quad C_{y_1} \coloneqq \sup_{t \geq 0} e^{\frac{t}{\tau}} \norm{y(0,c_{y_1})(t)-S(y(0,c_{y_1}))(t)}_\infty < \infty,
		\end{equation*}
		where $S(y(0,c_{y_1})) = \bar{y}(0,\bar{y}_1)$ is the special solution of the DDE \eqref{eq:proof_vectorfield} through the point $(0,\bar{y}_1)\in\mathbb{R}\times\mathbb{R}^m$. As the upper bound $C_{y_1}$ depends continuously on the initial function $c_{y_1}\in\mathcal{C}$, $C_{y_1}$ also depends continuously on $y_1\in \mathcal{K}_1$. As $\mathcal{K}_1$ is compact, there exists $C_2 \coloneqq \sup_{y_1 \in \mathcal{K}_1} C_{y_1}+\delta$ with some constant $\delta>0$, such that $C_2 \geq \frac{r_1}{2}$ for the given constant $r_1>0$. For every $y_1 \in \mathcal{K}_1$ it follows
		\begin{align*}
			\normk{G(y_1)-\overline{G}(\bar{y}_1)}_\infty &= \norm{y(0,c_{y_1})(t^\ast)-S(y(0,c_{y_1}))(t^\ast)}_\infty < C_2 e^{-\frac{t^\ast}{\tau}} = C_2 e^{-\beta} \eqqcolon \delta_{2,\beta}, \\
			\normk{H(y_1)-\overline{H}(\bar{y}_1)}_\infty &= \norm{y(0,c_{y_1})(T)-S(y(0,c_{y_1}))(T)}_\infty < C_2 e^{-\frac{T}{\tau}}  \eqqcolon \delta_{3,\tau}.
		\end{align*}
		As $\overline{G}(\bar{y}_1)$ has  distance less than $\delta_{2,\beta}$ from $G(y_1)\in \mathcal{K}_2$, it follows that $\overline{G}(\bar{y}_1)$ is contained in the set
		\begin{equation*}
			\mathcal{K}_{2,\delta_{2,\beta}} \coloneqq \left\{ y_2 + \alpha \in \mathbb{R}^m: y_2 \in \mathcal{K}_2, \alpha \in B_{\delta_{2,\beta}}(0)\right\},
		\end{equation*}
		which is an open subset of $\mathbb{R}^m$ as a sum of the compact set $\mathcal{K}_2$ and the open ball $B_{\delta_{2,\beta}}(0)\subset \mathbb{R}^m$.
	\end{proof}
	
	The solution maps $\overline{G}$ and $\overline{H}$ characterize the solutions of the initial value problem \eqref{eq:proof_specialODE} over the time intervals $[0,t^\ast]$ and $[0,T]$. To additionally characterize the solution of the ODE \eqref{eq:proof_specialODE} over the time interval $[t^\ast,T]$, we introduce the map $\overline{I}$ defined by
	\begin{equation} \label{eq:mapIbar}
		\overline{I}: \mathbb{R}^m \rightarrow \mathbb{R}^m,
		\qquad \overline{I}(y_2) =\bar{y}(t^\ast,y_2)(T).
	\end{equation}
	By definition it holds $\overline{H} = \overline{I}\circ \overline{G}$.  In Figure \ref{fig:notation}, the corresponding time intervals of the solution maps $\overline{G}$, $\overline{H}$, and $\overline{I}$ are visualized. At time $t = 0$, our main analysis is restricted to the compact set $\mathcal{K}_1$, which is mapped under the continuous map $G$ to the compact set $\mathcal{K}_2 = G(\mathcal{K}_1)$. By Lemma \ref{lem:exponentialattraction}, all special solutions corresponding to solutions of the DDE \eqref{eq:proof_vectorfield} with initial data $(0,c_{y_1})\in \mathbb{R}\times \mathcal{C}$ with $y_1 \in \mathcal{K}_1$ have at time $t=t^\ast$ at most the distance $\delta_{2,\beta}$ to the general solutions, so $G(\bar{y}_1) \in \mathcal{K}_{2,\delta_{2,\beta}}$. The following lemma shows that the map $\overline{I}: \mathcal{K}_{2,\delta_{2,\beta}} \rightarrow \overline{I}(\mathcal{K}_{2,\delta_{2,\beta}})$ is a $C^k$-diffeomorphism.

	\begin{lemma}[Diffeomorphic Transformation] \label{lem:diffeomorphisms}
		Let the underlying vector field $\overline{F}$ of the neural DDE $\overline{\Phi}\in\overline{\textup{NDDE}}^k_{\tau,\textup{N},K}(\mathcal{X},\mathbb{R}^q)$, $\mathcal{X}\subset \mathbb{R}^n$ open, $k \geq 1$, fulfill Assumption \ref{ass:weaklynonlinear} with global Lipschitz constant $K>0$. Let $K\tau e <1$ and the map $\overline{I}$ be defined by \eqref{eq:mapIbar}, which is the solution map over the time interval $[t^\ast,T]$ of the ODE~\eqref{eq:proof_specialODE} generating all special solutions. Then $\overline{I}\in C^k(\mathbb{R}^m,\mathbb{R}^m)$ and the map $\overline{I}: \mathcal{K}_{2,\delta_{2,\beta}} \rightarrow \overline{I}(\mathcal{K}_{2,\delta_{2,\beta}})$ is a $C^k$-diffeomorphism. Furthermore, the Jacobian matrix $J_{\overline{I}}(x)$ has full rank for every $x \in \mathbb{R}^m$.
	\end{lemma}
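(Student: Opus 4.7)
The plan is to reduce the statement to the standard fact that the flow of a sufficiently smooth, globally Lipschitz ODE is a global $C^k$-diffeomorphism of $\mathbb{R}^m$, and then restrict to $\mathcal{K}_{2,\delta_{2,\beta}}$. The main preparation is verifying that the vector field $\widetilde{F}$ of the ODE \eqref{eq:proof_specialODE}, which generates the map $\overline{I}$, has enough regularity in its second argument.

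First, I would establish that $\widetilde{F} \in C^{0,k}_b(\mathbb{R} \times \mathbb{R}^m, \mathbb{R}^m)$. The global Lipschitz continuity in the second variable with constant $|\lambda_1| < Ke$ is already given by Theorem \ref{th:specialsolutionsODEs}. For higher regularity, the composition $\widetilde{F}(t,z) = \overline{F}(t, [\bar{y}(t,z)]_t)$ inherits $C^k$-smoothness in $z$ from the $C^k$-smoothness of $\overline{F}$ in its second argument (which is given since $\overline{\Phi} \in \overline{\textup{NDDE}}^k_{\tau,\textup{N},K}(\mathcal{X},\mathbb{R}^q)$), provided that the map $z \mapsto [\bar{y}(t,z)]_t \in \mathcal{C}$ is $C^k$. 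The latter follows from differentiable dependence of special solutions on their finite-dimensional initial data: under the smallness assumption $K\tau e < 1$, the special solution $\bar{y}(t,z)$ is characterized by Theorem \ref{th:specialsolutions_uniqueness} as the unique solution of the DDE through $(t,z)$ satisfying the growth condition, and this characterization is compatible with the differentiable dependence result of Lemma \ref{lem:dde_regularity}, yielding $C^k$-dependence on $z$. Boundedness of the derivatives on $\mathbb{R} \times \mathbb{R}^m$ follows from the exponential contraction estimate of Lemma \ref{lem:specialsolutions} together with the chain rule and the uniform bounds on derivatives of $\overline{F}$.

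With $\widetilde{F}$ shown to be $C^k$ and globally Lipschitz in the second variable, I would invoke the classical result on smooth dependence of ODE solutions on initial conditions: for each pair of times $s \le t$ in $\mathbb{R}$, the flow map $\Phi_{s,t}: \mathbb{R}^m \to \mathbb{R}^m$, $y \mapsto z(t)$ where $z$ solves \eqref{eq:proof_specialODE} with $z(s) = y$, is a $C^k$-diffeomorphism of $\mathbb{R}^m$. The diffeomorphism property comes from reversibility of the ODE (one can integrate the ODE backwards from $t$ to $s$ to obtain a $C^k$ inverse $\Phi_{t,s} = \Phi_{s,t}^{-1}$), and global existence of the forward and backward flows follows from the global Lipschitz property via Theorem \ref{th:globalexistence} applied to \eqref{eq:proof_specialODE} viewed as a DDE with zero delay. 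In particular, $\overline{I} = \Phi_{t^\ast, T}$ is a $C^k$-diffeomorphism from $\mathbb{R}^m$ onto $\mathbb{R}^m$, proving $\overline{I} \in C^k(\mathbb{R}^m, \mathbb{R}^m)$ and, by the inverse function theorem, $J_{\overline{I}}(x)$ has full rank for every $x \in \mathbb{R}^m$. The restriction to the open set $\mathcal{K}_{2,\delta_{2,\beta}}$ (open by Lemma \ref{lem:exponentialattraction}) is then a $C^k$-diffeomorphism onto its image $\overline{I}(\mathcal{K}_{2,\delta_{2,\beta}})$, which is open as the image of an open set under a homeomorphism.

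The main obstacle is the regularity step: it requires translating the smoothness of the infinite-dimensional DDE right-hand side $\overline{F}$ into smoothness of the finite-dimensional reduced vector field $\widetilde{F}$, which in turn hinges on differentiable dependence of special solutions on their scalar initial data in the $\mathcal{C}$-topology. Once this is in place, everything else reduces to standard ODE machinery. A clean way to encapsulate this is to note that $\widetilde{F}$ is the pullback of $\overline{F}$ by the smooth graph map $z \mapsto [\bar{y}(t,z)]_t$, so the regularity bookkeeping follows from the chain rule together with the exponential bounds already provided in Lemma \ref{lem:specialsolutions}.
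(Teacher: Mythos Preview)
Your approach is correct and arrives at the same conclusion, but the route differs from the paper's in how the $C^k$-regularity of $\overline{I}$ is obtained. You first establish that the reduced ODE vector field $\widetilde{F}$ lies in $C^{0,k}_b(\mathbb{R}\times\mathbb{R}^m,\mathbb{R}^m)$ --- via smoothness of the parametrization $z \mapsto [\bar{y}(t,z)]_t$ and the chain rule --- and then invoke standard ODE flow regularity to conclude that $\overline{I}=\Phi_{t^\ast,T}$ is a $C^k$-diffeomorphism of $\mathbb{R}^m$. The paper bypasses the analysis of $\widetilde{F}$ entirely: since every special solution is in particular a solution of the DDE~\eqref{eq:proof_vectorfield} with vector field $\overline{F}\in C_b^{0,k}$, it obtains $\overline{I}\in C^k(\mathbb{R}^m,\mathbb{R}^m)$ in one line by a direct appeal to the DDE regularity Lemma~\ref{lem:dde_regularity}. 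Your approach has the merit of making explicit the regularity of the inertial-manifold ODE, which is a meaningful structural fact; the paper's shortcut trades this for brevity but leaves implicit how the finite-dimensional parametrization by $y_2$ interacts with the $\mathcal{C}$-valued initial-data dependence in Lemma~\ref{lem:dde_regularity} --- precisely the step you single out as the main obstacle. The remaining parts (diffeomorphism via uniqueness/reversibility of the ODE flow, full rank of $J_{\overline{I}}$ via the chain rule on $\overline{I}^{-1}\circ\overline{I}=\mathrm{id}$, restriction to the open set $\mathcal{K}_{2,\delta_{2,\beta}}$) are handled the same way in both arguments.
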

	
	\begin{proof}
		As $\overline{\Phi}\in\overline{\textup{NDDE}}^k_{\tau,\textup{N},K}(\mathcal{X},\mathbb{R}^q)$, $\mathcal{X}\subset \mathbb{R}^n$ open and $k \geq 1$, it holds  $\overline{F}\in C_b^{0,k}(\mathbb{R}\times \mathcal{C},\mathbb{R}^m)$. As every solution of the ODE~\eqref{eq:proof_specialODE} is also a solution of the DDE~\eqref{eq:proof_vectorfield}, Lemma \ref{lem:dde_regularity} implies that $\overline{I}\in C^k(\mathbb{R}^m,\mathbb{R}^m)$. By Theorem \ref{th:specialsolutionsODEs}, the solution curves of the ODE~\eqref{eq:proof_specialODE} are unique, such that $\overline{I}: \mathcal{U} \rightarrow \overline{I}(\mathcal{U})$ is a $C^k$-diffeomorphism for every open subset $\mathcal{U}\subset \mathbb{R}^m$, especially for $\mathcal{U} = \mathcal{K}_{2,\delta_{2,\beta}}$. The chain rule applied to the identity map $\text{id}_\mathcal{U} =  \overline{I}^{-1}\circ \overline{I}: \mathcal{U}\rightarrow \mathcal{U}$ implies  that  $$\left[J_{\overline{I}^{-1}}(\overline{I}(x)) \right]^{-1}= J_{\overline{I}}(x) \qquad \text{for all } x \in \mathcal{U},$$
		which shows that $J_{\overline{I}}(x)$ is invertible and has full rank for all $x\in \mathbb{R}^m$, as $\mathcal{U}\subset \mathbb{R}^m$ was arbitrary.
	\end{proof}
	
	\subsection{Separation of the Phase Space by Level Sets}
	\label{sec:levelsets}
	
	After having studied the first affine linear transformation $\lambda$ and the DDE dynamics of the neural DDE $\overline{\Phi}\in\overline{\textup{NDDE}}^k_{\tau,\textup{N},K}(\mathcal{X},\mathbb{R}^q)$ via the maps $G$, $H$, $\overline{G}$ and $\overline{H}$ in the last section, we continue with the second affine linear transformation $\tilde \lambda$ and the solution map $\bar{I}$ in this section. Via the Jordan-Brouwer Separation Theorem, we derive conditions, which are necessary for a neural DDE $\overline{\Phi}\in\overline{\textup{NDDE}}^k_{\tau,\textup{N},K}(\mathcal{X},\mathbb{R}^q)$ in order to approximate in the $i$-th component a given map $\Psi_i$ with a non-degenerate local extreme point with accuracy $\eps>0$, as defined in Theorem \ref{th:noapproximation}. To state the Jordan-Brouwer Separation Theorem, we need to define an orientable hypersurface.
	
	\begin{definition}[Orientable Hypersurface \cite{Lima1988}]\label{def:hypersurface}
		A subset $\mathcal{M} \subset \mathbb{R}^m$ is called a $C^k$-hypersurface, $k \geq 1$, if for every $x \in \mathcal{M}$ there exists a function $\varphi \in C^k(\mathcal{U},\mathbb{R})$ with $x \in \mathcal{U}$, $\mathcal{U} \subset \mathbb{R}^m$ open, such that $\nabla \varphi (x) \neq 0$ for all $x \in \mathcal{U}$ and $\varphi^{-1}(z) = \mathcal{M} \cap \mathcal{U}$ for some $z \in \mathbb{R}$. A $C^k$-hypersurface, $k \geq 1$, is called orientable if there exists a continuous vector field of normal unit vectors, i.e., a map $v\in C^0(\mathcal{M},\mathbb{R}^m)$ with $\norm{v(x)}_\infty = 1$ and $v(x)$ is normal to $\mathcal{M}$ at $x$ for every $x \in \mathcal{M}$.
	\end{definition}
	
	For closed, orientable hypersurfaces $\mathcal{M}$, we introduce in the following the Jordan-Brouwer Separation Theorem, which states that the complement of $\mathcal{M}$ in $\mathbb{R}^m$ consists of exactly two components. If the hypersurface $\mathcal{M}$ is compact, the space $\mathbb{R}^m$ is separated by $\mathcal{M}$ into an interior and an exterior set.
	
	\begin{theorem}[Jordan-Brouwer Separation Theorem \cite{Lima1988}] \label{th:JordanBrouwer}
		Let $\mathcal{M}\subset \mathbb{R}^m$ be a connected, closed, orientable $C^k$-hypersurface, $k\geq 1$. Then its complement $\mathbb{R}^m\setminus\mathcal{M}$ has two connected open components $\mathcal{M}_1$ and $\mathcal{M}_2$, each of which has $\mathcal{M}$ as its boundary. 
	\end{theorem}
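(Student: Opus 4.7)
The plan is to prove the theorem in two complementary parts: first that $\mathbb{R}^m\setminus\mathcal{M}$ has at most two path components, and then that it has at least two. Openness of each component is automatic from $\mathcal{M}$ being closed in $\mathbb{R}^m$, and the claim that $\mathcal{M}$ is the common boundary will drop out of the tubular-neighborhood picture used below, since every point of $\mathcal{M}$ will be shown to be accumulated by points of both sides.

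For the upper bound, I would first build a tubular neighborhood of $\mathcal{M}$ using the continuous unit normal field $v$ guaranteed by orientability. The map $(x,t)\mapsto x+tv(x)$ on $\mathcal{M}\times(-\varepsilon,\varepsilon)$ is, for $\varepsilon$ small enough (chosen locally and then patched via a continuous function $\varepsilon(x)>0$ when $\mathcal{M}$ is not compact), an injective local diffeomorphism onto an open neighborhood $N\supset\mathcal{M}$. The complement $N\setminus\mathcal{M}$ splits canonically into a positive side $N^+$ corresponding to $t\in(0,\varepsilon)$ and a negative side $N^-$ corresponding to $t\in(-\varepsilon,0)$, each of which is connected because $\mathcal{M}$ is connected. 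Any connected component $C$ of $\mathbb{R}^m\setminus\mathcal{M}$ must accumulate on $\mathcal{M}$, since otherwise $C$ would be both open and closed in $\mathbb{R}^m$, forcing $C=\mathbb{R}^m$; hence $C$ meets, and therefore contains, one of $N^\pm$. This gives at most two components.

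For the lower bound I would rule out that $N^+$ and $N^-$ lie in the same component. Pick $x_0\in\mathcal{M}$ and the short transverse segment $s\mapsto x_0+sv(x_0)$ for $s\in[-\varepsilon,\varepsilon]$, with endpoints $p^\pm\in N^\pm$. If there were a path $\gamma$ in $\mathbb{R}^m\setminus\mathcal{M}$ joining $p^+$ and $p^-$, concatenation with the segment would produce a loop in $\mathbb{R}^m$ that, after a small transverse perturbation, meets $\mathcal{M}$ in exactly one point. But the $\mathbb{Z}/2$-intersection number of a loop with a closed embedded hypersurface is a homotopy invariant, and every loop in the simply connected space $\mathbb{R}^m$ is null-homotopic, so this intersection number must be zero, yielding a contradiction. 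Equivalently, compactifying to $S^m$ and applying Alexander duality gives $\tilde{H}_0(S^m\setminus\mathcal{M})\cong\check{H}^{m-1}(\mathcal{M})\cong\mathbb{Z}$ since $\mathcal{M}$ is a closed connected orientable $(m-1)$-manifold, producing the two components at once.

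The main obstacle is this lower bound: the tubular construction and the upper bound are essentially formal consequences of orientability and connectedness of $\mathcal{M}$, but preventing a global path between the two local sides intrinsically requires a nontrivial topological input, either the mod-$2$ intersection pairing together with transversality, or Alexander duality on the one-point compactification. The connectedness hypothesis on $\mathcal{M}$ enters here in an essential way, since otherwise the complement can have more than two components (for instance, two disjoint spheres in $\mathbb{R}^3$). Given both bounds, the boundary statement is immediate: each $x\in\mathcal{M}$ is an accumulation point of both $N^+\subset\mathcal{M}_1$ and $N^-\subset\mathcal{M}_2$, so $\mathcal{M}\subset\partial\mathcal{M}_1\cap\partial\mathcal{M}_2$, while the reverse inclusions follow because $\mathcal{M}_1$ and $\mathcal{M}_2$ are open and disjoint.
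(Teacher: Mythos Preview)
The paper does not supply its own proof of this theorem; it is quoted from \cite{Lima1988} and used as a black box, with only the remark that Lima's argument for compact smooth hypersurfaces extends to closed $C^1$ hypersurfaces. There is therefore nothing in the paper to compare your argument against beyond that citation.

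Your sketch is essentially the standard route and, as far as I can tell, is the one Lima takes: a tubular neighborhood splits a collar into two connected sides $N^\pm$ (giving at most two components and the boundary claim), and the impossibility of connecting $N^+$ to $N^-$ in the complement is obtained from a mod-$2$ intersection argument with a transverse loop. One caution: your Alexander-duality alternative, phrased via the one-point compactification $S^m$, is clean only when $\mathcal{M}$ is compact; for merely closed, noncompact $\mathcal{M}\subset\mathbb{R}^m$ the image in $S^m$ need not be a manifold at $\infty$, so that branch would require additional care (e.g.\ \v{C}ech cohomology with supports, or simply sticking to the intersection-number argument, which only uses compactness of the loop). With that caveat, your proposal is correct and matches the approach of the cited reference.
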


	The original theorem in \cite{Lima1988} is given under the additional assumption that the hypersurface $\mathcal{M}$ is compact and smooth, but the author states in a remark that the same proof applies for closed and continuously differentiable hypersurfaces. In Figure \ref{fig:separationtheorem}, the Jordan-Brouwer Separation Theorem is visualized both for the case that the hypersurface $\mathcal{M}$ is compact and the case that $\mathcal{M}$ is non-compact.

	In Lemma \ref{lem:initial_behavior} we introduced the compact set $\mathcal{K}_1 \coloneqq \lambda(\mathcal{K}_0) = \lambda(\mu(K_{r_0}(0))) $, where $\lambda:\mathbb{R}^n\rightarrow \mathbb{R}^n$ is the first affine linear transformation of the neural DDE for the case that $m = n$ and $\mu:\mathcal{U}\rightarrow \mu(\mathcal{U})$ is a $C^1$-diffeomorphism. The upcoming lemma shows, that its boundary $\mathcal{M} = \partial \mathcal{K}_1$ is a compact $C^1$-hypersurface subdividing $\mathbb{R}^m\setminus\partial \mathcal{K}_1$ into one open bounded component $\mathcal{M}_1 = \textup{int}(\mathcal{K}_1) = \mathcal{K}_1\setminus \partial \mathcal{K}_1$ and one open unbounded component $\mathcal{M}_2 = \mathbb{R}^m\setminus \mathcal{K}_1$, as visualized in Figure~\ref{fig:separationtheorem}\subref{fig:separationtheoremA}.

	\begin{lemma}[Phase Space Separation of the Compact Set $\mathcal{K}_1$] \label{lem:Khypersurface}
		Let  $\overline{\Phi}\in\overline{\textup{NDDE}}^k_{\tau,\textup{N},K}(\mathcal{X},\mathbb{R}^q)$, $\mathcal{X}\subset \mathbb{R}^n$ open, $k \geq 1$, be a neural DDE and define the compact set $\mathcal{K}_1 \coloneqq  \lambda(\mu(K_{r_0}(0))) \subset \mathbb{R}^n$, where $\lambda:\mathbb{R}^n\rightarrow \mathbb{R}^n$, $\lambda(x) = Wx+b$, $W \in \mathbb{R}^{n \times n}$, $b\in \mathbb{R}^n$, is the first affine linear transformation of $\overline{\Phi}$ with $\textup{rank}(W) = n$ and $\mu:\mathcal{U}\rightarrow \mu(\mathcal{U})$, $\mathcal{U}\subset \mathbb{R}^n$ open, is a $C^1$-diffeomorphism. Then the boundary $\partial \mathcal{K}_1$ is a compact $C^1$-hypersurface and $\mathbb{R}^m\setminus \partial \mathcal{K}_1$ consists of one open bounded component $\textup{int}(\mathcal{K}_1) = \mathcal{K}_1\setminus \partial \mathcal{K}_1$ and one open unbounded component $\mathbb{R}^m\setminus \mathcal{K}_1$.
	\end{lemma}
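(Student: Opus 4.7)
The plan is to establish that $\partial\mathcal{K}_1$ is a compact, connected, orientable $C^1$-hypersurface by transporting the standard sphere $\partial K_{r_0}(0)$ through the composite diffeomorphism $\psi\coloneqq\lambda\circ\mu$, and then to invoke the Jordan--Brouwer Separation Theorem (Theorem~\ref{th:JordanBrouwer}) and identify the two resulting components explicitly. Here $m=n$ in the lemma, since $\lambda:\mathbb{R}^n\to\mathbb{R}^n$.

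First I would observe that because $W\in\mathbb{R}^{n\times n}$ has full rank, the affine map $\lambda$ is a $C^\infty$-diffeomorphism of $\mathbb{R}^n$ (with inverse $y\mapsto W^{-1}(y-b)$), so $\psi:\mathcal{U}\to\psi(\mathcal{U})$ is a $C^1$-diffeomorphism of $\mathcal{U}$ onto the open set $\psi(\mathcal{U})\subset\mathbb{R}^n$. Since $K_{r_0}(0)\subset\mathcal{U}$, homeomorphisms preserve the interior--boundary decomposition of a set relative to an open ambient, yielding $\textup{int}(\mathcal{K}_1)=\psi(B_{r_0}(0))$ and $\partial\mathcal{K}_1=\psi(\partial K_{r_0}(0))$; because $\mathcal{K}_1$ is compact (hence closed) and contained in the open set $\psi(\mathcal{U})$, these identifications remain valid as subsets of $\mathbb{R}^n$. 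To verify the hypersurface condition of Definition~\ref{def:hypersurface}, I would use the global defining function $\varphi(x)=\sum_{i=1}^{n}x_i^2-r_0^2$ of the sphere and set $\tilde\varphi\coloneqq\varphi\circ\psi^{-1}:\psi(\mathcal{U})\to\mathbb{R}$; the chain rule gives
\begin{equation*}
\nabla\tilde\varphi(y) \;=\; J_{\psi^{-1}}(y)^{\top}\,\nabla\varphi(\psi^{-1}(y)),
\end{equation*}
which is nonzero on all of $\psi(\mathcal{U})\cap\{\tilde\varphi=0\}=\partial\mathcal{K}_1$ because $J_{\psi^{-1}}(y)$ is invertible and $\nabla\varphi\neq 0$ on the sphere. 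Orientability then follows with the continuous unit normal $v(y)=\nabla\tilde\varphi(y)/\lVert\nabla\tilde\varphi(y)\rVert$; connectedness of $\partial\mathcal{K}_1$ for $n\geq 2$ is inherited from the sphere $\partial K_{r_0}(0)=S^{n-1}$ via the continuity of $\psi$; and compactness is immediate.

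With the hypotheses of Theorem~\ref{th:JordanBrouwer} met, $\mathbb{R}^n\setminus\partial\mathcal{K}_1$ decomposes into two open connected components, each having $\partial\mathcal{K}_1$ as its boundary. To identify them, I would note that $\textup{int}(\mathcal{K}_1)=\psi(B_{r_0}(0))$ is open, bounded (as a subset of the compact $\mathcal{K}_1$), and connected as the homeomorphic image of the connected open ball, while $\mathbb{R}^n\setminus\mathcal{K}_1$ is open, unbounded, and (for $n\geq 2$) connected because the complement of any compact set in $\mathbb{R}^n$ is path-connected via arcs routed through a sufficiently large sphere enclosing $\mathcal{K}_1$. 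These two disjoint, non-empty, open, connected sets cover $\mathbb{R}^n\setminus\partial\mathcal{K}_1$, so by the uniqueness of the Jordan--Brouwer decomposition they coincide exactly with the two components, which proves the lemma.

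The main obstacle is the subtlety that $\mu$, and hence $\psi$, is only defined on the open set $\mathcal{U}$ rather than globally, so one must make sure that $\partial\mathcal{K}_1$ qualifies as a hypersurface of $\mathbb{R}^n$ and not merely of $\psi(\mathcal{U})$. This is resolved by the observation that the chain-rule defining function $\tilde\varphi$ lives on the open neighborhood $\psi(\mathcal{U})$ of every boundary point, which is precisely the local structure Definition~\ref{def:hypersurface} asks for; combined with the compactness of $\mathcal{K}_1$ inside $\psi(\mathcal{U})$, the topological boundary computed in $\psi(\mathcal{U})$ agrees with the one computed in $\mathbb{R}^n$, so the global separation theorem applies without requiring a global extension of $\psi$.
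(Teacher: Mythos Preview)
Your proof is correct and follows essentially the same route as the paper: pull back the Euclidean norm squared through the composite $C^1$-diffeomorphism $\psi=\lambda\circ\mu$ to obtain a defining function for $\partial\mathcal{K}_1$, verify non-vanishing of the gradient via the chain rule, read off orientability from the normalized gradient, and apply the Jordan--Brouwer Separation Theorem. The paper's version differs only in that it explicitly excises the point $\lambda(\mu(0))$ from the domain so that $\nabla\varphi\neq 0$ holds on \emph{all} of the open set $\mathcal{V}$ (as Definition~\ref{def:hypersurface} literally requires), whereas you check non-vanishing only on $\partial\mathcal{K}_1$; this is trivially repaired by restricting your $\tilde\varphi$ to $\psi(\mathcal{U})\setminus\{\psi(0)\}$. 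Your treatment is actually a bit more careful than the paper's in two respects: you make the $n\geq 2$ hypothesis explicit for connectedness of $S^{n-1}$ and of $\mathbb{R}^n\setminus\mathcal{K}_1$, and you argue directly why the two candidate sets $\textup{int}(\mathcal{K}_1)$ and $\mathbb{R}^n\setminus\mathcal{K}_1$ are themselves connected rather than inferring this only from the separation theorem.
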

	
	\begin{proof}
		As $\textup{rank}(W) = n$, both maps $\mu:\mathcal{U}\rightarrow \mu(\mathcal{U})$ and $\lambda: \mu(\mathcal{U})\rightarrow \lambda(\mu(\mathcal{U}))$ are  $C^1$-diffeomorphisms, such that  it follows
		\begin{equation*}
			\partial \mathcal{K}_1 =  \partial \lambda(\mu(K_{r_0}(0))) =  \lambda(\mu(\partial K_{r_0}(0))).
		\end{equation*}
		By Definition \ref{def:hypersurface}, $\partial\mathcal{K}_1$ is a $C^1$-hypersurface: the map
		\begin{equation*}
			\varphi: \lambda(\mu(\mathcal{U}))\setminus \lambda(\mu(0))\rightarrow\mathbb{R}, \quad \varphi(x) = \norm{\mu^{-1}(\lambda^{-1}(x))}_2^2
		\end{equation*}
		is defined on the open set $\mathcal{V} \coloneqq \lambda(\mu(\mathcal{U}))\setminus \lambda(\mu(0))$, $\partial \mathcal{K}_1 \subset \mathcal{V}$, $\partial \mathcal{K}_1 = \varphi^{-1}(r_0)$ and by the chain rule $\varphi$ is continuously differentiable. Furthermore,
		\begin{equation*}
			\nabla \varphi(x)= \left[2 \mu^{-1}(\lambda^{-1}(x)) \cdot J_{\mu^{-1}}(\lambda^{-1}(x)) \cdot J_{\lambda^{-1}}(x)\right]^\top \neq 0 \qquad \text{for all } x \in \mathcal{V},
		\end{equation*}
		as $2 \mu^{-1}(\lambda^{-1}(x)) \neq 0$ for $x \in \mathcal{V}$ and the Jacobian matrices $J_{\mu^{-1}}$ and $J_{\lambda^{-1}}$ have for every point in their respective domain of definition full rank, as $\mu$ and $\lambda$ are diffeomorphisms. The hypersurface~$\partial\mathcal{K}_1$ is orientable, as $v:\mathcal{K}_1\rightarrow \mathbb{R}^n$, $v(x) \coloneqq \frac{\nabla \varphi(x)}{\norm{\nabla \varphi(x)}_\infty}$ is a continuous vector field of normal unit vectors. The hypersurface $\partial\mathcal{K}_1$ is closed, as it is the pre-image of the closed set $\{r_0\}\subset \mathbb{R}$ under the continuous map~$\varphi$. The Jordan-Brouwer-Separation Theorem \ref{th:JordanBrouwer} implies that $\mathbb{R}^m\setminus \partial \mathcal{K}_1$ consists of two connected open components: as $\partial \mathcal{K}_1$ is compact, the interior component $\textup{int}(\mathcal{K}_1) = \mathcal{K}_1\setminus \partial \mathcal{K}_1$ is bounded and the exterior component $\mathbb{R}^m\setminus \mathcal{K}_1$ is unbounded.
	\end{proof}
	
		\begin{figure}
		\centering
		\vspace{-2mm}
		\begin{subfigure}{0.44\textwidth}
			\centering
			\includegraphics[width=0.7\textwidth]{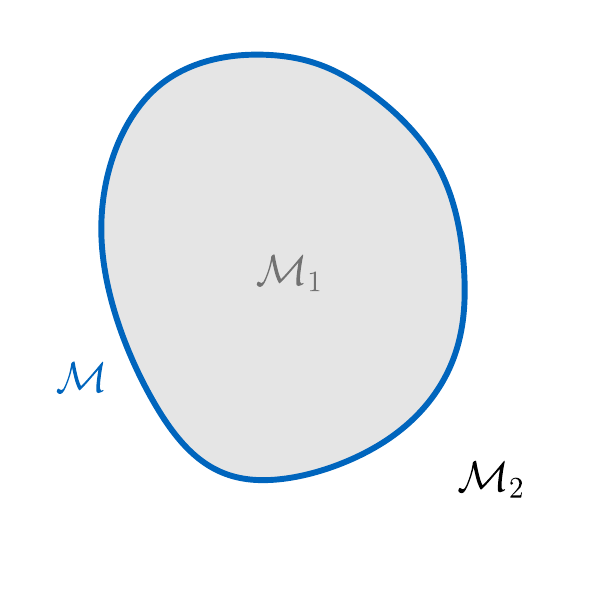}
			\vspace{-3mm}
			\caption{If the hypersurface $\mathcal{M}$ is compact,  $\mathbb{R}^m\setminus\mathcal{M}$ consists of one bounded component $\mathcal{M}_1$ and one unbounded component $\mathcal{M}_2$.}
			\label{fig:separationtheoremA}
		\end{subfigure}
		\begin{subfigure}{0.05\textwidth}
			\textcolor{white}{.}
		\end{subfigure}
		\begin{subfigure}{0.44\textwidth}
			\centering
			\includegraphics[width=0.7\textwidth]{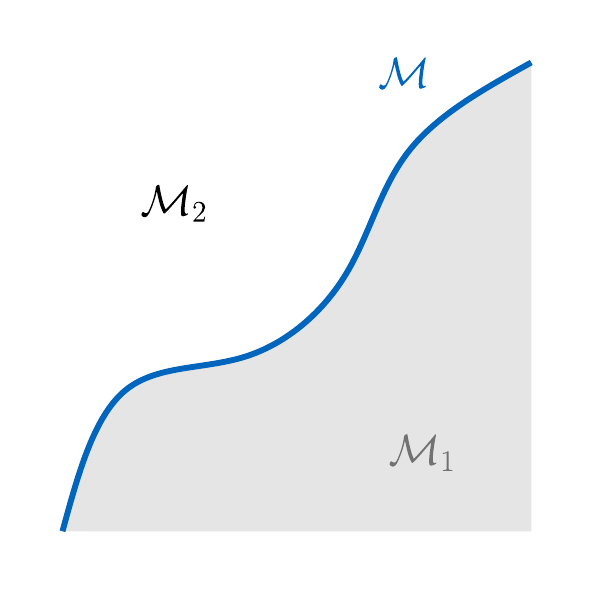}
			\vspace{-3mm}
			\caption{If the hypersurface $\mathcal{M}$ is non-compact, the complement $\mathbb{R}^m\setminus\mathcal{M}$ consists of two unbounded components $\mathcal{M}_1$ and $\mathcal{M}_2$.}
			\label{fig:separationtheoremB}
		\end{subfigure}
		\caption{Visualization of the Jordan-Brouwer Separation Theorem \ref{th:JordanBrouwer}.}
		\label{fig:separationtheorem}
	\end{figure}

	As our main Theorem \ref{th:noapproximation} treats the $i$-th component of a given map $\Psi_i$, we restrict our upcoming analysis to the $i$-th component of the neural DDE $\overline{\Phi}$, which is calculated as 
	\begin{equation*}
		\overline\Phi_i = \tilde\lambda_i \circ H  \circ \lambda.
	\end{equation*}
	The $i$-th component of the affine linear map $\tilde \lambda$ is the affine linear map $\tilde \lambda_i: \mathbb{R}^m \rightarrow \mathbb{R}$, $\lambda_i(y) = [\widetilde{W}y]_i + \tilde  b_i$. If we assume, that the $i$-th row of the weight matrix $\widetilde{W}$ is non-zero, then the pre-image of $z\in \mathbb{R}$ under the map $\tilde \lambda_i$ 
	\begin{equation} \label{eq:lambdai_preimage1}
		\tilde{\lambda}_i^{-1}\left(z\right) \coloneqq \left\{y \in \mathbb{R}^m: \tilde\lambda_i(y) = z  \right\} \subset \mathbb{R}^m 
	\end{equation}
	is by definition a smooth hyperplane, a special case of a smooth hypersurface. As the map $\overline I$ defined in \eqref{eq:mapIbar} is by Lemma \ref{lem:diffeomorphisms} a $C^k$-diffeomorphism, the pre-image of $\tilde{\lambda}^{-1}\left(z\right)$ under the map $\overline{I}$ is an unbounded $C^k$-hypersurface $\mathcal{M} = \mathcal{H}_z$, which separates the space $\mathbb{R}^m$ in two unbounded components  $\mathcal{M}_1 = \mathcal{H}_z^{<}$ and $\mathcal{M}_2 = \mathcal{H}_z^{>}$, as visualized in Figure \ref{fig:separationtheorem}\subref{fig:separationtheoremB}.
	
	\begin{lemma}[Separating Hypersurfaces] \label{lem:hypersurface}
		Let  $\overline{\Phi}\in\overline{\textup{NDDE}}^k_{\tau,\textup{N},K}(\mathcal{X},\mathbb{R}^q)$, $\mathcal{X}\subset \mathbb{R}^n$ open, $k \geq 1$, be a neural DDE with underlying vector field $\overline{F}$ satisfying Assumption \ref{ass:weaklynonlinear}. Let  $\tilde \lambda_i: \mathbb{R}^m \rightarrow \mathbb{R}$, $\lambda_i(y) = [\widetilde{W}y]_i + \tilde  b_i$ be an affine linear map with  $\widetilde{W}\in\mathbb{R}^{q \times m}$, $\tilde b_i \in \mathbb{R}$ and $[\widetilde{W}^\top]_i \neq 0$ and let $\overline{I}$ be the map defined in \eqref{eq:mapIbar}. Then
		\begin{equation*}
			\mathcal{H}_z \coloneqq \overline{I}^{-1}(\tilde{\lambda}_i^{-1}(z))
		\end{equation*}
		is a closed, unbounded $C^k$-hypersurface and $\mathbb{R}^m\setminus \mathcal{H}_z$ consists of two connected unbounded open components $\mathcal{H}_z^{<}$ and $\mathcal{H}_z^{>}$ with $\mathcal{H}_z$ as their boundary. If $y\in \mathcal{H}_z^{<}$, then $\tilde{\lambda}_i(\overline{I}(y))<z$ and if $y\in \mathcal{H}_z^{>}$, then $\tilde{\lambda}_i(\overline{I}(y))>z$. The closure of the open sets $\mathcal{H}_z^{<}$ and $\mathcal{H}_z^{>}$ is denoted by $\mathcal{H}_z^{\leq} \coloneqq  \mathcal{H}_z^{<} \cup \mathcal{H}_z $ and $\mathcal{H}_z^{\geq}\coloneqq  \mathcal{H}_z^{>} \cup \mathcal{H}_z $, respectively.
	\end{lemma}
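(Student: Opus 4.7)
The plan is to reduce everything to the well-understood hyperplane $\tilde\lambda_i^{-1}(z)$ and transport its properties through a global diffeomorphism $\overline{I}$. First I would observe that the hypothesis $[\widetilde{W}^\top]_i \neq 0$ says the $i$-th row of $\widetilde W$ is a nonzero vector $v \in \mathbb{R}^m$, so $\tilde\lambda_i:\mathbb{R}^m\to\mathbb{R}$ is an affine submersion; its level set $\tilde\lambda_i^{-1}(z)$ is therefore a closed, connected, unbounded affine hyperplane in $\mathbb{R}^m$, a smooth orientable hypersurface with constant unit normal $v/\norm{v}_\infty$, whose complement splits canonically into the two open half-spaces $\{\tilde\lambda_i<z\}$ and $\{\tilde\lambda_i>z\}$.

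Next I would upgrade $\overline{I}$ to a global $C^k$-diffeomorphism of $\mathbb{R}^m$ onto itself. From Lemma \ref{lem:diffeomorphisms} I already have $\overline{I}\in C^k(\mathbb{R}^m,\mathbb{R}^m)$ with $J_{\overline{I}}(x)$ of full rank everywhere. Because the ODE \eqref{eq:proof_specialODE} has a globally defined vector field $\widetilde{F}$ which is globally Lipschitz in its second variable (with constant $|\lambda_1|<Ke$ by Theorem \ref{th:specialsolutionsODEs}), the flow exists for all time in both directions, and running it backward from $T$ to $t^\ast$ provides a $C^k$ inverse of $\overline{I}$. Combined with the full-rank Jacobian and the inverse function theorem, this makes $\overline{I}$ a global $C^k$-diffeomorphism on $\mathbb{R}^m$; in particular $\overline{I}^{-1}$ is a homeomorphism preserving connectedness and, via compactness arguments, boundedness of closed sets.

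With these two ingredients in hand, the rest is a transport. I would identify $\mathcal{H}_z$ as the zero set of the $C^k$ function $\Phi_z \coloneqq \tilde\lambda_i \circ \overline{I} - z$; by the chain rule $\nabla \Phi_z(x) = J_{\overline{I}}(x)^\top [\widetilde{W}^\top]_i$, which is nonzero for every $x$ because $J_{\overline{I}}(x)^\top$ is invertible. Hence $\mathcal{H}_z$ is a closed $C^k$-hypersurface in the sense of Definition \ref{def:hypersurface}, orientable via the normalized gradient $\nabla \Phi_z/\norm{\nabla \Phi_z}_\infty$. Since $\overline{I}^{-1}$ is a homeomorphism of $\mathbb{R}^m$, $\mathcal{H}_z$ is connected and unbounded: if $\mathcal{H}_z$ were bounded it would be compact, so $\overline{I}(\mathcal{H}_z)=\tilde\lambda_i^{-1}(z)$ would be compact, contradicting unboundedness of the hyperplane. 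The Jordan-Brouwer Theorem \ref{th:JordanBrouwer} then produces two connected open components of $\mathbb{R}^m\setminus\mathcal{H}_z$, and since $\overline{I}^{-1}$ sends the two half-spaces of $\tilde\lambda_i^{-1}(z)$ bijectively to the two components of $\mathbb{R}^m\setminus\mathcal{H}_z$, I would simply define $\mathcal{H}_z^{<}\coloneqq \overline{I}^{-1}(\{\tilde\lambda_i<z\})$ and $\mathcal{H}_z^{>}\coloneqq \overline{I}^{-1}(\{\tilde\lambda_i>z\})$. The sign conditions $\tilde\lambda_i(\overline{I}(y))<z$ on $\mathcal{H}_z^{<}$ and $>z$ on $\mathcal{H}_z^{>}$ then hold by construction, and unboundedness of each component follows from the same compactness argument applied to its closure.

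The main obstacle I anticipate is the global bijectivity of $\overline{I}$ on $\mathbb{R}^m$: Lemma \ref{lem:diffeomorphisms} only literally asserts $\overline{I}$ is a diffeomorphism onto its image on open subsets, so one has to justify carefully that the image is all of $\mathbb{R}^m$ by invoking the global Lipschitz bound on $\widetilde{F}$ and hence backward-in-time existence. Once global invertibility is secured, identifying $\mathcal{H}_z$ as a regular level set and applying Jordan-Brouwer together with the homeomorphism property are essentially bookkeeping.
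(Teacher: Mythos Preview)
Your proposal is correct and follows essentially the same route as the paper: both identify $\mathcal{H}_z$ as the level set of $\varphi=\tilde\lambda_i\circ\overline{I}$, verify $\nabla\varphi(x)=J_{\overline{I}}(x)^\top[\widetilde{W}^\top]_i\neq 0$ via the chain rule and Lemma~\ref{lem:diffeomorphisms}, orient via the normalized gradient, and invoke the Jordan--Brouwer Separation Theorem~\ref{th:JordanBrouwer}. Your treatment is in fact slightly more careful than the paper's: you explicitly justify the global surjectivity of $\overline{I}$ (via backward-time existence for the globally Lipschitz ODE~\eqref{eq:proof_specialODE}) and the unboundedness of $\mathcal{H}_z$, whereas the paper simply asserts unboundedness and handles the sign conditions by a short connectedness-plus-continuity contradiction rather than your direct preimage definition of $\mathcal{H}_z^{<}$ and $\mathcal{H}_z^{>}$.
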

	
	\begin{proof}
		By Definition \ref{def:hypersurface}, $\mathcal{H}_z$ is a $C^k$-hypersurface: the map $\varphi\coloneqq \tilde\lambda_i\circ\overline{I}: \mathbb{R}^m\rightarrow\mathbb{R}$ is $k$ times continuously differentiable, as $\tilde\lambda_i\in C^\infty(\mathbb{R}^m,\mathbb{R})$ and   $\overline{I}\in C^k(\mathbb{R}^m,\mathbb{R}^m)$ by Lemma \ref{lem:diffeomorphisms}. For $z\in\mathbb{R}$ it holds $\mathcal{H}_z=\varphi^{-1}(z)$ and the chain rule implies
		\begin{equation*}
			\nabla \varphi(x)= J_{\overline{I}}(x)^\top \cdot [\widetilde{W}^\top]_i \neq 0 \qquad \text{for all } x \in \mathbb{R}^m,
		\end{equation*}
		as by assumption $[\widetilde{W}^\top]_i \neq 0$ and by Lemma \ref{lem:diffeomorphisms} the Jacobian $J_{\overline{I}}(x)$ has  full rank for every $x \in \mathbb{R}^m$. The hypersurface $\mathcal{H}_z$ is orientable, as $v:\mathcal{H}_z\rightarrow \mathbb{R}^m$, $v(x) \coloneqq \frac{\nabla \varphi(x)}{\norm{\nabla \varphi(x)}_\infty}$ is a continuous vector field of normal unit vectors. The hypersurface $\mathcal{H}_z$ is closed, as it is the pre-image of the closed set $\{z\}\subset \mathbb{R}$ under the continuous map $\varphi$. As $\mathcal{H}_z$ is unbounded, Theorem \ref{th:JordanBrouwer} implies that $\mathbb{R}^m\setminus \mathcal{H}_z$ consists of two connected unbounded open components $\mathcal{H}_z^{<}$ and $\mathcal{H}_z^{>}$ with $\mathcal{H}_z$ as their boundary.
		
		Let $y \in \mathbb{R}^m\setminus \mathcal{H}_z$ with $\varphi(y)>z$, define the open component of $ \mathbb{R}^m\setminus \mathcal{H}_z$ in which $y$ is included to be~$\mathcal{H}_z^{>}$. Let $w\in \mathcal{H}_z^{>}$ with $w \neq y$, then it also holds  $\varphi(w)>z$: assume by contradiction that  $\varphi(w)<z$. Then it follows
		\begin{equation*}
			[\widetilde{W}\overline{I}(w)]_i+\tilde{b}_i = \varphi(w)<z<\varphi(y) = 	[\widetilde{W}\overline{I}(y)]_i+\tilde{b}_i,
		\end{equation*}
		such that the points  $\overline{I}(w)$ and $\overline{I}(y)$ are separated by the hyperplane $\mathcal{H}_z = \{x \in \mathbb{R}^m: [\widetilde{W}x]_i + \tilde{b}_i= z\}$. As the map $\overline{I}$ is continuous, the fact that $w$ and $y$ are in the same connected component $\mathcal{H}_z$ implies that  $\overline{I}(w)$ and $\overline{I}(y)$ are in the same connected component, which is a contradiction to the assumption  $\varphi(w)>z$. Consequently, $\varphi(y)>z$ if $y \in \mathcal{H}_z^>$ and $\varphi(y)<z$ if $y \in \mathcal{H}_z^<$.
	\end{proof}
	
	In the following, we prove a result about the relationship between the compact hypersurface $\partial \mathcal{K}_1$ introduced in Lemma \ref{lem:Khypersurface} and the unbounded hypersurface $\mathcal{H}_z$ introduced in Lemma \ref{lem:hypersurface}.
	
	\begin{lemma}[Existence of an Intersection Point of $\partial \mathcal{K}_1$ and $\mathcal{H}^{\leq}_{z}$] \label{lem:intersectionpoint}
		Let $\mathcal{K}_1\subset \mathbb{R}^m$ be a compact set and its boundary $\partial\mathcal{K}_1$ be a compact $C^1$-hypersurface. Denote the bounded open component of $\mathbb{R}^m\setminus \partial \mathcal{K}_1$ by $\textup{int}(\mathcal{K}_1) = \mathcal{K}_1\setminus \partial \mathcal{K}_1$ and the open unbounded component by $\mathbb{R}^m\setminus \mathcal{K}_1$. Let the assumptions of Lemma~\ref{lem:hypersurface} be fulfilled and let $\mathcal{H}_z$ be the unbounded $C^k$-hypersurface defined therein for a given $z\in\mathbb{R}$. Assume there is a point $y_1\in\mathcal{K}_1$ with $y_1 \in \mathcal{H}^{<}_{z}$, then it follows that there exists $y_1^\ast \in \partial \mathcal{K}_1$ with $y_1^\ast \in \mathcal{H}^{\leq}_{z}$. 	
	\end{lemma}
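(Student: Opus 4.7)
The plan is to combine the separating behaviour of the two hypersurfaces via a path-connectedness argument. First I would exploit the asymmetry in boundedness: the component $\mathcal{H}_z^{<}$ is unbounded (by Lemma~\ref{lem:hypersurface}), while the compact set $\mathcal{K}_1$ is bounded, so $\mathcal{H}_z^{<} \not\subset \mathcal{K}_1$ and there must exist a point $p^{\ast}\in \mathcal{H}_z^{<}\setminus \mathcal{K}_1$. Together with the hypothesis $y_1 \in \mathcal{K}_1\cap \mathcal{H}_z^{<}$, we have one point of $\mathcal{H}_z^{<}$ inside $\mathcal{K}_1$ and one outside.

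Next, since $\mathcal{H}_z^{<}$ is an open connected subset of $\mathbb{R}^m$, it is path-connected, so there is a continuous path $\gamma:[0,1]\rightarrow \mathcal{H}_z^{<}$ with $\gamma(0)=y_1$ and $\gamma(1)=p^{\ast}$. The image $\gamma([0,1])$ is then a connected subset of $\mathbb{R}^m$ that meets both $\mathcal{K}_1$ and $\mathbb{R}^m\setminus \mathcal{K}_1$. If $\gamma([0,1])$ did not intersect the boundary $\partial \mathcal{K}_1$, it would be contained in the disjoint union of open sets $\textup{int}(\mathcal{K}_1)\cup(\mathbb{R}^m\setminus \mathcal{K}_1)$, which would contradict its connectedness together with the fact that it has points in each part. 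Hence there exists some $t^{\ast}\in [0,1]$ with $\gamma(t^{\ast})\in \partial \mathcal{K}_1$.

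Finally, set $y_1^{\ast} \coloneqq \gamma(t^{\ast})$. By construction $y_1^{\ast}\in \partial \mathcal{K}_1$, and since the entire path lies in $\mathcal{H}_z^{<}$ we have $y_1^{\ast}\in \mathcal{H}_z^{<}\subset \mathcal{H}_z^{\leq}$, as required.

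I do not expect any genuine obstacle here; the argument is a routine application of the Jordan--Brouwer setup already available through Lemmas~\ref{lem:Khypersurface} and~\ref{lem:hypersurface}. The only technical care needed is to ensure that the connecting path actually stays inside the open component $\mathcal{H}_z^{<}$ (so that the crossing point inherits membership in $\mathcal{H}_z^{\leq}$), which is guaranteed by path-connectedness of open connected subsets of $\mathbb{R}^m$.
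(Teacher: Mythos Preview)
Your argument is correct and actually a bit cleaner than the paper's. The paper proceeds by a case split on whether the hypersurface $\mathcal{H}_z$ itself meets $\mathcal{K}_1$: if $\mathcal{H}_z\subset\mathbb{R}^m\setminus\mathcal{K}_1$, then the connected set $\mathcal{K}_1$ lies entirely in one of the two components, and since $y_1\in\mathcal{H}_z^{<}$ this forces $\partial\mathcal{K}_1\subset\mathcal{K}_1\subset\mathcal{H}_z^{<}$; if instead $\mathcal{H}_z\cap\mathcal{K}_1\neq\varnothing$, then the connected unbounded set $\mathcal{H}_z$ must cross $\partial\mathcal{K}_1$, giving a point $y_1^\ast\in\mathcal{H}_z\subset\mathcal{H}_z^{\leq}$. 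Your route instead exploits unboundedness of the open component $\mathcal{H}_z^{<}$ directly and runs a single path-connectedness argument inside it, which avoids the case distinction and even yields the slightly sharper conclusion $y_1^\ast\in\mathcal{H}_z^{<}$ (strict). Both approaches rest on the same Jordan--Brouwer ingredients from Lemmas~\ref{lem:Khypersurface} and~\ref{lem:hypersurface}; yours is just a more streamlined packaging.
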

	
	\begin{proof}
		$\mathcal{H}_{z}$ is by Lemma \ref{lem:hypersurface} a closed, unbounded $C^k$-hypersurface with $k \geq 1$. As the hypersurface~$\mathcal{H}_{z}$ is unbounded and $\mathcal{K}_1$ is compact and hence bounded, it is impossible that $\mathcal{H}_{z} \subset \mathcal{K}_1$. Consequently it either holds  $\mathcal{H}_{z} \subset \mathbb{R}^m\setminus\mathcal{K}_1$ or  $\mathcal{H}_{z} \cap \mathcal{K}_1 \neq \varnothing$, see Figure \ref{fig:intersection}.
		
		If $\mathcal{H}_{z} \subset \mathbb{R}^m \setminus \mathcal{K}_1$, then $\mathcal{K}_1$ is completely on one of the two sides of the hypersurface, hence $\mathcal{K}_1 \subset \mathcal{H}_{z}^{<}$ or $\mathcal{K}_1 \subset \mathcal{H}_{z}^{>}$. As by assumption there exists  a point $y_1\in\mathcal{K}_1$ with $y_1 \in \mathcal{H}^{<}_{z}$, it follows that $\mathcal{K}_1 \subset \mathcal{H}_{z}^{<} \subset \mathcal{H}_{z}^{\leq}$. The statement of the lemma follows for every point $y^\ast_1\in\partial \mathcal{K}_1$ as $\partial \mathcal{K}_1 \subset \mathcal{K}_1$, see Figure \ref{fig:intersection}\subref{fig:intersectionA}
		
		If $\mathcal{H}_{z} \cap \mathcal{K}_1 \neq \varnothing$, then also  $\mathcal{H}_{z} \cap \partial\mathcal{K}_1 \neq \varnothing$ as $\mathcal{K}_1$ is closed, hence there exists $y^\ast_1\in\partial \mathcal{K}_1$ with $y_1^\ast \in \mathcal{H}_{z}\subset \mathcal{H}_{z}^{\leq}$, as visualized in Figure \ref{fig:intersection}\subref{fig:intersectionB}.
	\end{proof}
	
	\begin{figure}
		\centering
		\begin{subfigure}{0.44\textwidth}
			\centering
			\includegraphics[width=0.8\textwidth]{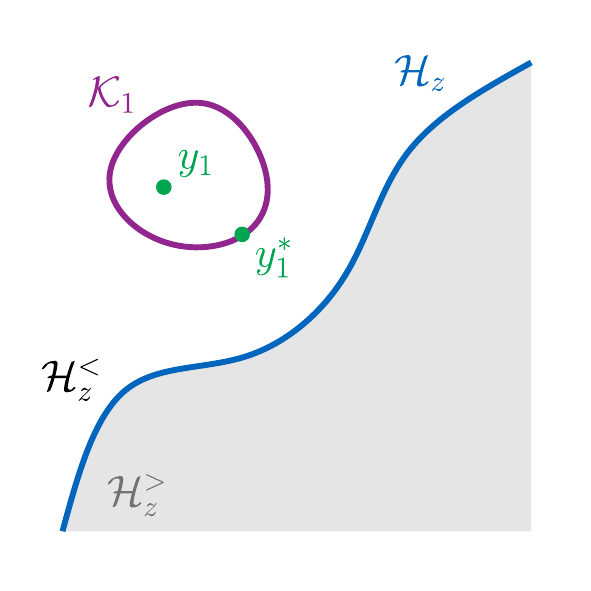}
			\caption{It holds $y_1 \in \mathcal{H}_z^<$ and $\mathcal{H}_{z} \subset \mathbb{R}^m\setminus\mathcal{K}_1$, hence every point $y^\ast_1\in\partial \mathcal{K}_1$ fulfills  $y_1^\ast \in \mathcal{H}^{\leq}_{z}$.}
			\label{fig:intersectionA}
		\end{subfigure}
		\begin{subfigure}{0.05\textwidth}
			\textcolor{white}{.}
		\end{subfigure}
		\begin{subfigure}{0.44\textwidth}
			\centering
			\includegraphics[width=0.8\textwidth]{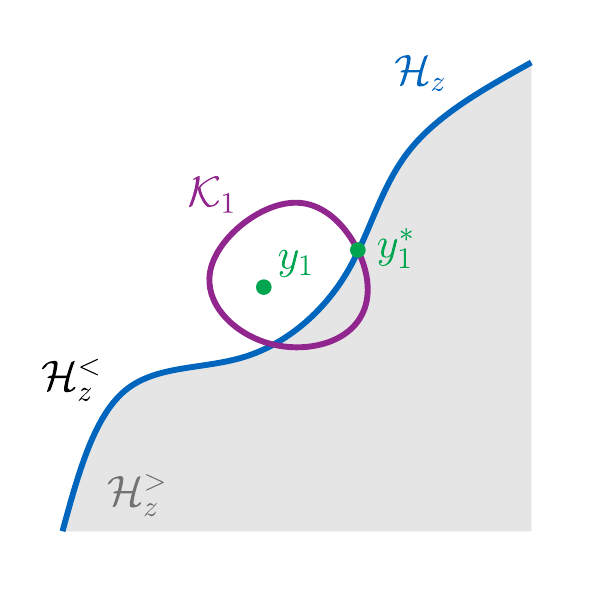}
			\caption{It holds $\mathcal{H}_{z} \cap \mathcal{K}_1 \neq \varnothing$, hence there exists  $y^\ast_1\in\mathcal{H}_{z} \cap \mathcal{K}_1 \neq \varnothing$ and it holds $y_1^\ast \in \mathcal{H}^{\leq}_{z}$.}
			\label{fig:intersectionB}
		\end{subfigure}
		\caption{Visualization of the proof of Lemma \ref{lem:intersectionpoint}.}
		\label{fig:intersection}
	\end{figure}

	In addition to pre-images of points under the map $\tilde \lambda_i$ defined in \eqref{eq:lambdai_preimage1}, we also define the pre-image of an open interval
	\begin{equation} \label{eq:lambdai_preimage2}
		\tilde{\lambda}_i^{-1}\left((c,d)\right) \coloneqq \left\{y \in \mathbb{R}^m: \tilde\lambda_i(y) \in (c,d)  \right\} \subset \mathbb{R}^m,
	\end{equation}
	which is an open subset of $\mathbb{R}^m$.
	In our main Theorem \ref{th:noapproximation}, we study the approximation of a given component map $\Psi_i$ by the $i$-th component of the neural DDE, denoted by $\overline{\Phi}_i$. By Lemma \ref{lem:initial_behavior}, the map $\Psi_i:\mathcal{K}_0\rightarrow \mathbb{R}$ attains its global minimum at $p$ with value $\Psi_i(p)$ and its global maximum at the boundary $\partial \mathcal{K}_0$ with value $\Psi_i(p)+r_0^2$. To study, if $\Psi_i$ can be approximated by $\overline{\Phi}_i$ with accuracy $\eps$, we define the following pre-images under the map $\tilde \lambda_i$:
	\begin{align}
		\label{eq:Y3}\mathcal{Y}_3 &\coloneqq \tilde{\lambda}^{-1}_i\left((\Psi_i(p)-\eps,\Psi_i(p)+\eps)\right) \subset \mathbb{R}^m, \\
		\label{eq:Z3}\mathcal{Z}_3 & \coloneqq \tilde{\lambda}^{-1}_i\left((\Psi_i(p)+r_0^2-\eps,\Psi_i(p)+r_0^2+\eps)\right) \subset \mathbb{R}^m.
	\end{align}
	In addition, we define for the $\tau$-dependent constant $\delta_{3,\tau}$ of Lemma \ref{lem:exponentialattraction} the extensions of the pre-images $\mathcal{Y}_3 $ and $\mathcal{Z}_3$, given by 
	\begin{align}
		\label{eq:Y3delta}\mathcal{Y}_{3,\delta_{3,\tau}} &\coloneqq \tilde{\lambda}_i^{-1}\left((\Psi_i(p)-\eps-\delta_{3,\tau},\Psi_i(p)+\eps+\delta_{3,\tau})\right) \\
		&= \left\{z+ \alpha:z \in \mathcal{Y}_3, \alpha \in B_{\delta_{3,\tau}}(0) \right\} \subset \mathbb{R}^m, \nonumber\\
		\label{eq:Z3delta}\mathcal{Z}_{3,\delta_{3,\tau}} &\coloneqq \tilde{\lambda}^{-1}_i\left((\Psi_i(p)+r_0^2-\eps-\delta_{3,\tau},\Psi_i(p)+r_0^2+\eps+\delta_{3,\tau})\right) \\
		&= \left\{z+ \alpha:z \in \mathcal{Z}_3, \alpha \in B_{\delta_{3,\tau}}(0) \right\}\subset \mathbb{R}^m. \nonumber
	\end{align}
	In the following Lemma we show that under the assumption $0<2\eps<r_0^2$ of our main Theorem \ref{th:noapproximation}, there exists an upper bound $\tau_3$, such that for all $\tau\in[0,\tau_3]$, the sets $\mathcal{Y}_{3,\delta_{3,\tau}} $ and $\mathcal{Z}_{3,\delta_{3,\tau}}$ are disjoint. The sets $\mathcal{Y}_3$, $\mathcal{Z}_3$, $\mathcal{Y}_{3,\delta_{3,\tau_3}}$ and $\mathcal{Z}_{3,\delta_{3,\tau_3}}$ are visualized in Figure \ref{fig:setsYandZ}.

	\begin{lemma}[Positive Distance between $ \mathcal{Y}_{3,\delta_{3,\tau}}$ and $\mathcal{Z}_{3,\delta_{3,\tau}}$] \label{lem:distance}
		Let $0<2\eps<r_0^2$ and  $\tilde \lambda_i: \mathbb{R}^m \rightarrow \mathbb{R}$, $\tilde\lambda_i(y) = [\widetilde{W}y]_i + \tilde  b_i$ be an affine linear map with  $\widetilde{W}\in\mathbb{R}^{q \times m}$, $\tilde b_i \in \mathbb{R}$ and $[\widetilde{W}^\top]_i \neq 0$.  Let the assumptions of Lemma~\ref{lem:hypersurface} be fulfilled and let $\mathcal{H}_z$ be the unbounded $C^k$-hypersurface defined therein. Then there exists $\tau_3>0$, such that for all $\tau\in [0,\tau_3]$ it holds
		\begin{equation*}
			\mathcal{Y}_{3,\delta_{3,\tau}}\subset \mathcal{Y}_{3,\delta_{3,\tau_3}}, \qquad  \mathcal{Z}_{3,\delta_{3,\tau}}\subset\mathcal{Z}_{3,\delta_{3,\tau_3}}, \qquad \mathcal{Y}_{3,\delta_{3,\tau}} \cap	\mathcal{Z}_{3,\delta_{3,\tau}} = \varnothing
		\end{equation*} 
		and 
		\begin{equation*}
			\normm{\tilde{\lambda}_i(y_3)-\tilde{\lambda}_i(z_3)}_\infty \geq 	r_0^2-2\eps- 2\delta_{3,\tau_3} \eqqcolon \delta^\ast > 0
		\end{equation*}
		for all $y_3 \in \mathcal{Y}_{3,\delta_{3,\tau}}$ and $z_3 \in \mathcal{Z}_{3,\delta_{3,\tau}}$. Furthermore, it holds
		\begin{equation*}
			\normm{\tilde{\lambda}_i(\overline{I}(y_2))-\tilde{\lambda}_i(\overline{I}(z_2))}_\infty \geq \delta^\ast > 0
		\end{equation*}
		for all $y_2\in	\mathcal{H}^{\leq}_{\Psi_i(p)+\eps+\delta_{3,\tau_3}}$  and $z_2\in\mathcal{H}^{\geq}_{\Psi_i(p)+r_0^2-\eps-\delta_{3,\tau_3}}$. 
	\end{lemma}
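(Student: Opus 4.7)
The plan is to reduce everything to a one-dimensional interval computation on the real line, since the sets $\mathcal{Y}_{3,\delta_{3,\tau}}$ and $\mathcal{Z}_{3,\delta_{3,\tau}}$ are by \eqref{eq:Y3delta}--\eqref{eq:Z3delta} pre-images under $\tilde\lambda_i$ of two open intervals centered at $\Psi_i(p)$ and $\Psi_i(p)+r_0^2$, both of half-width $\eps+\delta_{3,\tau}$. The distance between the centers is exactly $r_0^2$, so the intervals are disjoint iff $2(\eps+\delta_{3,\tau})<r_0^2$, i.e.\ iff $2\delta_{3,\tau}<r_0^2-2\eps$. Since $r_0^2-2\eps>0$ by hypothesis, and since $\delta_{3,\tau}=C_2 e^{-T/\tau}$ is continuous and strictly increasing in $\tau\in(0,\infty)$ with $\lim_{\tau\to 0^+}\delta_{3,\tau}=0$, I would first choose $\tau_3>0$ small enough that $2\delta_{3,\tau_3}<r_0^2-2\eps$, which defines $\delta^\ast\coloneqq r_0^2-2\eps-2\delta_{3,\tau_3}>0$.

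With this $\tau_3$ fixed, the inclusions $\mathcal{Y}_{3,\delta_{3,\tau}}\subset\mathcal{Y}_{3,\delta_{3,\tau_3}}$ and $\mathcal{Z}_{3,\delta_{3,\tau}}\subset\mathcal{Z}_{3,\delta_{3,\tau_3}}$ for $\tau\in[0,\tau_3]$ follow immediately from the monotonicity of $\tau\mapsto\delta_{3,\tau}$ and the fact that pre-images respect interval inclusion. Disjointness $\mathcal{Y}_{3,\delta_{3,\tau}}\cap\mathcal{Z}_{3,\delta_{3,\tau}}=\varnothing$ follows from $2\delta_{3,\tau}\le 2\delta_{3,\tau_3}<r_0^2-2\eps$.

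For the first distance estimate, I would simply unpack the definitions: for $y_3\in\mathcal{Y}_{3,\delta_{3,\tau}}$ we have $\tilde\lambda_i(y_3)<\Psi_i(p)+\eps+\delta_{3,\tau}$, and for $z_3\in\mathcal{Z}_{3,\delta_{3,\tau}}$ we have $\tilde\lambda_i(z_3)>\Psi_i(p)+r_0^2-\eps-\delta_{3,\tau}$. Subtracting gives $\tilde\lambda_i(z_3)-\tilde\lambda_i(y_3)>r_0^2-2\eps-2\delta_{3,\tau}\ge r_0^2-2\eps-2\delta_{3,\tau_3}=\delta^\ast$, and since both $\tilde\lambda_i(y_3)$ and $\tilde\lambda_i(z_3)$ are real scalars the sup-norm reduces to the absolute value.

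The second distance estimate is the payoff of having introduced the hypersurfaces $\mathcal{H}_z$ in Lemma \ref{lem:hypersurface}: for $y_2\in\mathcal{H}^{\le}_{\Psi_i(p)+\eps+\delta_{3,\tau_3}}$, that lemma gives $\tilde\lambda_i(\overline I(y_2))\le\Psi_i(p)+\eps+\delta_{3,\tau_3}$, and symmetrically $\tilde\lambda_i(\overline I(z_2))\ge\Psi_i(p)+r_0^2-\eps-\delta_{3,\tau_3}$ for $z_2\in\mathcal{H}^{\ge}_{\Psi_i(p)+r_0^2-\eps-\delta_{3,\tau_3}}$; subtracting yields the same $\delta^\ast$. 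No step here is a real obstacle: the whole lemma is essentially bookkeeping that packages the smallness assumption $2\eps<r_0^2$ together with the decay of $\delta_{3,\tau}\to 0$ into a single positive separation constant $\delta^\ast$, which will later serve in Section \ref{sec:proof} as the quantitative lower bound that rules out approximation by $\overline\Phi_i$.
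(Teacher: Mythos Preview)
Your proposal is correct and follows essentially the same approach as the paper: both reduce the question to a one-dimensional interval separation on $\mathbb{R}$, use the monotone decay $\delta_{3,\tau}=C_2 e^{-T/\tau}\to 0$ as $\tau\to 0^+$ to secure $2\delta_{3,\tau_3}<r_0^2-2\eps$, and then read off the two distance estimates directly from the interval endpoints and from the characterization of $\mathcal{H}_z^{\leq},\mathcal{H}_z^{\geq}$ in Lemma~\ref{lem:hypersurface}. The only cosmetic difference is that the paper first fixes an arbitrary $0<\delta^\ast<r_0^2-2\eps$ and then solves explicitly for $\tau_3=\tfrac{T}{\ln(2C_2/(r_0^2-2\eps-\delta^\ast))}$, whereas you first choose $\tau_3$ small and then set $\delta^\ast\coloneqq r_0^2-2\eps-2\delta_{3,\tau_3}$; the two orderings are equivalent.
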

	
	\begin{proof}
		By Lemma \ref{lem:exponentialattraction} it holds $\delta_{3,\tau} = C_2 e^{-\frac{T}{\tau}}$ for some constant $C_2\geq \frac{r_1}{2}>0$. Let $0<\delta^\ast<r_0^2-2\eps$, which exists as by assumption $0<2\eps<r_0^2$, and define $\tau_3 \coloneqq \frac{T}{\ln\left(\frac{2C_2}{r_0^2-2\eps-\delta^\ast}\right)}>0$. Then it holds for all $\tau \in [0,\tau_3]$ that
		\begin{equation*}
			r_0^2-2\eps- 2\delta_{3,\tau} \geq	r_0^2-2\eps- 2\delta_{3,\tau_3} = r_0^2-2\eps-2C_2\cdot \frac{r_0^2-2\eps-\delta^\ast}{2C_2} = \delta^\ast> 0. 
		\end{equation*}
		As it holds by definitions \eqref{eq:Y3delta} and \eqref{eq:Z3delta} that
		\begin{equation*}
			\mathcal{Y}_{3,\delta_{3,\tau}}\subset \mathcal{Y}_{3,\delta_{3,\tau_3}} \quad \text{and} \quad \mathcal{Z}_{3,\delta_{3,\tau}}\subset\mathcal{Z}_{3,\delta_{3,\tau_3}},
		\end{equation*}
		it follows $\mathcal{Y}_{3,\delta_{3,\tau}} \cap \mathcal{Z}_{3,\delta_{3,\tau}} =  \varnothing$ for every $\tau\in [0,\tau_3]$ and
		\begin{equation*}
			\normm{\tilde{\lambda}_i(y_3)-\tilde{\lambda}_i(z_3)}_\infty = r_0^2-2\eps- 2\delta_{3,\tau}  \geq r_0^2-2\eps- 2\delta_{3,\tau_3} = \delta^\ast > 0
		\end{equation*}
		for all $y_3 \in \mathcal{Y}_{3,\delta_{3,\tau}}$ and $z_3 \in \mathcal{Z}_{3,\delta_{3,\tau}}$.	By the definition of the halfspaces $\mathcal{H}_z^{\leq}$ and $\mathcal{H}_z^{\geq}$ of Lemma \ref{lem:hypersurface} it follows for $y_2\in\mathcal{H}^{\leq}_{\Psi_i(p)+\eps+\delta_{3,\tau_3}}$ and $z_2\in\mathcal{H}^{\geq}_{\Psi_i(p)+r_0^2-\eps-\delta_{3,\tau_3}}$ that
		\begin{equation*}
			\normm{\tilde{\lambda}_i(\overline{I}(y_2))-\tilde{\lambda}_i(\overline{I}(z_2))}_\infty \geq \delta^\ast > 0. \qedhere
		\end{equation*}
	\end{proof}

	\begin{figure}
		\centering
		\includegraphics[width=0.8\textwidth]{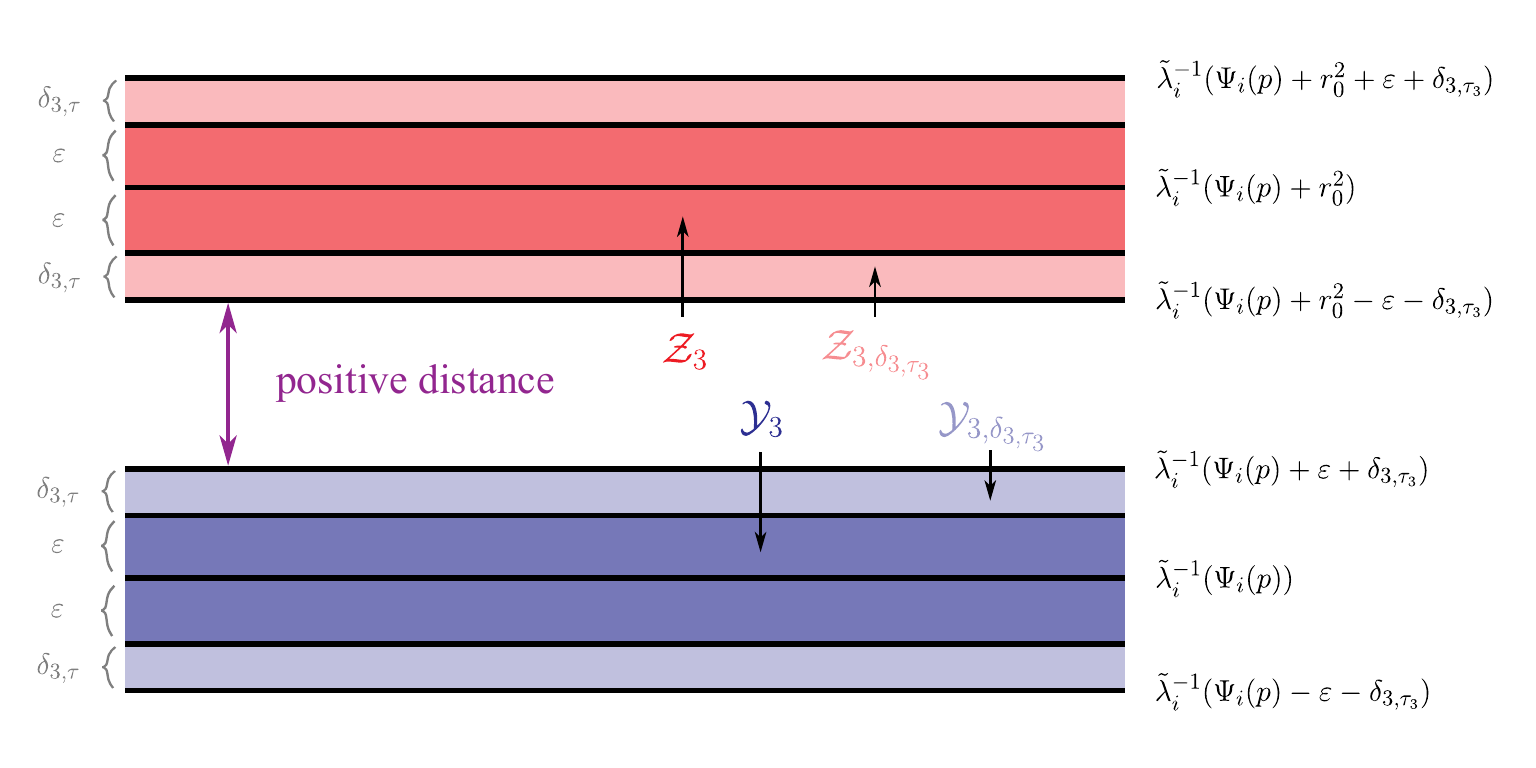}
		\caption{Visualization of the sets $\mathcal{Y}_3$, $\mathcal{Z}_3$, $\mathcal{Y}_{3,\delta_{3,\tau_3}}$ and $\mathcal{Z}_{3,\delta_{3,\tau_3}}$. By Lemma \ref{lem:distance}, the sets $\mathcal{Y}_{3,\delta_{3,\tau}}$ and $\mathcal{Z}_{3,\delta_{3,\tau}}$ have for all $\tau \in [0,\tau_3]$ a positive distance.}
		\label{fig:setsYandZ}
	\end{figure}

	\subsection{Proof of the Main Result}
	\label{sec:proof}

	In this section, we combine the results obtained in the previous Sections \ref{sec:initialbehavior} and \ref{sec:levelsets} to prove Theorem~\ref{th:noapproximation}, which is stated for better readability here again.
	
	\begin{customthm}{\ref{th:noapproximation}}[No Approximation of Non-Degenerate Local Extreme Points] 
			Let $\Psi:\mathcal{X}\rightarrow \mathbb{R}^q$,  $\mathcal{X}\subset \mathbb{R}^n$ open, have a component map $\Psi_i\in C^3(\mathcal{X}, \mathbb{R})$, which has a non-degenerate local extreme point with radius of definition $r_0>0$ and let $\eps>0$ with $2\eps<r_0^2$. Furthermore, fix constants $K, w,\tilde{w} \in (0,\infty)$, $A\geq 0$ and $k\geq 1$.
	
	Then there exists a continuous function $\tau_0\in C^0((0,\infty),[0,T])$ with $K \tau_0(K) e<1$ for $K\in(0,\infty)$ with the following property: if $\tau \in [0,\tau_0(K))$, every neural DDE $\Phi \in \textup{NDDE}_{\tau,\textup{N},K}^k(\mathcal{X},\mathbb{R}^q)$ with
	\begin{itemize}
		\item  vector field $F:\Omega_t \times \mathcal{C} \rightarrow \mathbb{R}^m$ with global Lipschitz constant~$K$ in its second variable on $\Omega_t \times \mathcal{C}$ and $\norm{F(t,0)}_\infty \leq A$ for all $t\in [0,T]$,
		\item weight matrices $W, \widetilde{W}$ with $\normm{W}_\infty \leq w$ and $\normm{\widetilde{W}}_\infty \leq\tilde{w}$,
	\end{itemize}
	cannot approximate the map $\Psi$ with accuracy $\eps$, i.e., for every neural DDE $\Phi \in \textup{NDDE}_{\tau,\textup{N},K}^k(\mathcal{X},\mathbb{R}^q)$, there exists a point $x \in \mathcal{X}$, such that 
	\begin{equation*}
		\norm{\Phi(x)-\Psi(x)}_\infty \geq \eps.
	\end{equation*}
	\end{customthm}
	
	\begin{proof}
		We subdivide the proof into seven steps.
		\begin{enumerate}
			\item \textbf{Restriction to non-degenerate local minima:} Let $\Psi:\mathcal{X}\rightarrow \mathbb{R}^q$,  $\mathcal{X}\subset \mathbb{R}^n$ open, have a component map $\Psi_i\in C^3(\mathcal{X}, \mathbb{R})$, which has a non-degenerate local extreme point $p \in \mathcal{X}$ with radius of definition $r_0>0$. By Definitions \ref{def:morse} and \ref{def:radiusmorse}, there exists  a neighborhood $\mathcal{U}$ of $0 \in \mathbb{R}^n$ with  $K_{r_0}(0)\subset \mathcal{U}$ for some $r_0>0$ and a $C^1$-diffeomorphism $\mu: \mathcal{U} \rightarrow \mu(\mathcal{U}) \subset \mathcal{X}$ with $\mu(0) = p$, such that for $(u_1,\ldots,u_n) \in \mathcal{U}$
			\begin{equation} \label{eq:psiform2}
				\Psi_i(\mu(u_1,\ldots,u_n)) = \Psi_i(p) \pm \sum_{j = 1}^n u_j^2,
			\end{equation}
			as the non-degenerate local extreme point $p \in \mathcal{X}$ is a critical point with index $r = 0$ in case of a local minimum and index $r = n$ in case of a local maximum. The case $r = 0$ corresponds to a plus sign in \eqref{eq:psiform2}, the case $r = n$ to a minus sign in.
			
			Without loss of generality, we can assume that the extreme point $p\in \mathcal{X}$ is a local minimum; otherwise, the proof is carried out for the approximation of the map $-\Psi$ by a neural DDE. A map $\Psi$ can be approximated by a neural DDE  $\Phi \in \textup{NDDE}_{\tau,\textup{N},K}^k(\mathcal{X},\mathbb{R}^q)$ if and only if $-\Psi$ can be approximated by some neural DDE  $\Phi_{-} \in \textup{NDDE}_{\tau,\textup{N},K}^k(\mathcal{X},\mathbb{R}^q)$: given a neural DDE $\Phi \in \textup{NDDE}_{\tau,\textup{N},K}^k(\mathcal{X},\mathbb{R}^q)$ approximating a given map $\Psi$ with accuracy $\eps$, i.e.,
			\begin{equation*}
				\norm{\Phi(x)-\Psi(x)}_\infty < \eps \qquad \text{for all } x \in \mathcal{X},
			\end{equation*}
			then the neural DDE $\Phi_{-} \in \textup{NDDE}_{\tau,\textup{N},K}^k(\mathcal{X},\mathbb{R}^q)$, which is constructed by replacing the last affine linear layer  $\tilde{\lambda}$ by $-\tilde{\lambda}$, approximates $-\Psi$ with accuracy $\eps$ and vice versa.
			\item \textbf{Restriction to the case $m = n$ and parameters in $\mathbb{V}^\ast$:} In this theorem, we study non-augmented neural DDEs $\Phi\in \textup{NDDE}_{\tau,\textup{N},K}^k(\mathcal{X},\mathbb{R}^q)$, i.e., it holds $n \geq m$. It is sufficient to restrict the upcoming analysis to the case $m = n$: if for a given map $\Psi$ no approximation with accuracy $\eps$ is possible for an underlying DDE in dimension $m=n$, then also no approximation with accuracy $\eps$ is possible for an underlying DDE in dimension $m<n$, as every vector field in dimension $m<n$ can be extended to a vector field in dimension $n$ by adding zero components, which do not change the input-output map $\Phi$.
			
			Due to the assumption $m = n$, the first affine linear map of the neural DDE architecture is a map $\lambda: \mathbb{R}^n \rightarrow \mathbb{R}^n$, $\lambda(x) = Wx + b$. If the weight matrix $W \in \mathbb{R}^{n \times n}$ has not full rank $n$, then there exist by Lemma \ref{lem:initial_behavior}(c) a point $s \in \partial \mathcal{K}_0$ with $\lambda(s) = \lambda(p)$, where $\mathcal{K}_0 \coloneqq \mu(K_{r_0}(0))\subset \mathcal{X}$. For every neural DDE $\Phi\in\textup{NDDE}_{\tau,\textup{N},K}^k(\mathcal{X},\mathbb{R}^q)$ it follows $\Phi(p) = \Phi(s)$, as after the affine linear transformation $\lambda$ the same initial data is used for the underlying DDE as $\lambda(s) = \lambda(p)$. By Lemma \ref{lem:initial_behavior}(a) it holds $\Psi_i(s) = \Psi_i(p)+r_0^2$, which implies
			\begin{align*}
				&\sup_{x \in \mathcal{X}}\norm{\Phi(x)-\Psi(x)}_\infty \geq 	\sup_{x\in \mathcal{K}_0}\abs{\Phi_i(x)-\Psi_i(x)}\geq \max_{x\in \{p,s\}}\abs{\Phi_i(x)-\Psi_i(x)} \\
				= \; & \max\left\{ \abs{\Phi_i(p)-\Psi_i(p)}, \abs{\Phi_i(p) - \Psi_i(p)-r_0^2} \right\} \geq \frac{r_0^2}{2}> \eps,
			\end{align*}
			as by assumption $2 \eps < r_0^2$. Hence, Theorem \ref{th:noapproximation} holds for weight matrices $W$ with $\text{rank}(W)<n$, such that we restrict the upcoming analysis to the case that $\text{rank}(W)=n$.
			
			The second affine linear map of the neural DDE architecture is a map $\tilde{\lambda}: \mathbb{R}^m \rightarrow \mathbb{R}^q$, $\tilde \lambda (x) = \widetilde{W} x + \tilde b$. In the upcoming analysis, we assume that the $i$-th row of the weight matrix $\widetilde{W} \in \mathbb{R}^{q \times m}$, denoted by $[\widetilde{W}^\top]_i \in \mathbb{R}^m$ is not the zero vector. If $[\widetilde{W}^\top]_i = 0$, then the output of the $i$-th component $\Phi_i$ of every neural DDE $\Phi\in\textup{NDDE}_{\tau,\textup{N},K}^k(\mathcal{X},\mathbb{R}^q)$ would be constant $\Phi_i(x) = \tilde b_i$ for all $x \in \mathcal{X}$, as $\tilde \lambda_i$ then becomes the constant function $x \mapsto \tilde b_i$. A constant component $\Phi_i$ of the neural DDE cannot approximate the given map $\Psi_i$ with accuracy $\eps$, as
			\begin{align*}
				&\sup_{x \in \mathcal{X}}\norm{\Phi(x)-\Psi(x)}_\infty \geq 	\sup_{x\in \mathcal{K}_0}\abs{\Phi_i(x)-\Psi_i(x)} \geq \sup_{x\in\{p,\partial \mathcal{K}_0\}}\abs{\tilde b_i-\Psi_i(x)} \\
				\geq \; & \max\left\{ \abs{\tilde b_i-\Psi_i(p)}, \abs{\tilde b_i- \Psi_i(p)-r_0^2} \right\} \geq \frac{r_0^2}{2}> \eps,
			\end{align*} 
			where we used again Lemma \ref{lem:initial_behavior}(a) that $\Psi_i(x) = \Psi_i(p)+r_0^2$ for all $x \in \partial \mathcal{K}_0$ and $2 \eps < r_0^2$ by assumption.  Hence, Theorem \ref{th:noapproximation} holds for weight matrices $\widetilde{W}$ with $[\widetilde{W}^\top]_i = 0$, such that we restrict the upcoming analysis to the case that $[\widetilde{W}^\top]_i \neq 0$. Restricted to the $i$-th component of the neural DDE $\Phi$, the assumptions $\text{rank}(W)=n$ and $[\widetilde{W}^\top]_i \neq 0$ imply that the weights and biases are restricted to the parameter space $\mathbb{V}^\ast$, cf.\ Definition \ref{def:DDEweightspace}.
			
			\item \textbf{Change to a globally defined vector field:} In this theorem, we study neural DDEs  $\Phi \in \textup{NDDE}_{\tau,\textup{N},K}^k(\mathcal{X},\mathbb{R}^q)$, $k \geq 1$, with vector field $F:\Omega_t \times \mathcal{C} \rightarrow \mathbb{R}^m$ with Lipschitz constant~$K$ in the second variable on $\Omega_t \times \mathcal{C}$ and $\norm{F(t,0)}_\infty \leq A$ for all $t\in [0,T]$. By Theorem \ref{th:NDDEextended}, there exists a neural DDE $\overline{\Phi}\in \overline{\textup{NDDE}}_{\tau,\textup{N},K}^k(\mathcal{X},\mathbb{R}^q)$, such that  $\Phi(x) = \overline{\Phi}(x)$ for all $x \in \mathcal{X}$, which is based on a vector field $\overline{F}\in C_b^{0,k}(\mathbb{R}\times \mathcal{C},\mathbb{R}^m)$ fulfilling Assumption \ref{ass:weaklynonlinear} with with constants $K>0$, $A\geq 0$, and $$\overline{F}\big\vert_{[0,T]\times \mathcal{C}}= F\big\vert_{[0,T]\times \mathcal{C}}.$$
			From now on, we study the neural DDE $\overline{\Phi}$ with globally defined vector field $\overline{F}\in C_b^{0,k}(\mathbb{R}\times \mathcal{C},\mathbb{R}^m)$. By Theorem \ref{th:NDDEextended}, the final result directly holds for all neural DDEs  $\Phi \in \textup{NDDE}_{\tau,\textup{N},K}^k(\mathcal{X},\mathbb{R}^q)$, $k \geq 1$, which are considered in this theorem. The dynamics of the neural DDE $\overline \Phi$ is subdivided into three maps
			\begin{equation*}
				\overline\Phi = \tilde\lambda \circ H  \circ \lambda,
			\end{equation*}
			with affine linear transformations $\lambda$, $\tilde \lambda$ and the solution map $H$  of the underlying DDE~\eqref{eq:proof_vectorfield} over the time interval $[0,T]$ as defined in  \eqref{eq:mapH}. The solution map of the DDE \eqref{eq:proof_vectorfield} over the time interval $[0,t^\ast]$ with $t^\ast \in [0,T)$ and $t^\ast = \beta \tau$ for some $\beta>0$ specified in the following is denoted by $G:\mathbb{R}^m\rightarrow \mathbb{R}^m$ as defined in \eqref{eq:mapG}. Given $\beta>0$, we need to choose $\tau<\frac{T}{\beta}$ to guarantee a well defined solution map $G$ on the time interval $[0,t^\ast]$.
			\item \textbf{Analysis of the initial behavior via the map $G$: } As we assume by point 2.) that $m = n$ and $\text{rank}(W) = n$, Lemma \ref{lem:initial_behavior}(b) implies that  there exists $r_1 > 0$, only depending on the constant $w$, such that $K_{r_1}(\lambda(p)) \subset \mathcal{K}_1$, where $\mathcal{K}_1\coloneqq \lambda(\mathcal{K}_0)\subset \mathbb{R}^m$ is compact and $\mathcal{K}_0 \coloneqq \mu(K_{r_0}(0))$. Furthermore, the component map $ \Psi_i \circ \lambda^{-1}: \mathcal{K}_1 \rightarrow \mathbb{R}$ attains its global minimum at $\lambda(p)$ with value $\Psi_i(p)$ and its global maximum at every point of the boundary $\partial \mathcal{K}_1$ with value $\Psi_i(p)+r_0^2$. After the first affine linear layer $\lambda$, the initial behavior of the DDE solution can be characterized via the solution map $G$ over the time interval $[0,t^\ast]$, as defined in \eqref{eq:mapG}. In our setting, all assumptions of Lemma \ref{lem:mapG} are fulfilled. Let $\delta_{1,t^\ast}\coloneqq C_1 t^\ast$ with some constant $C_1>0$, which only depends on the given parameters $K, A, T$ and $M \coloneqq \sup_{y_1 \in \mathcal{K}_1} \norm{y_1}_\infty$, hence $C_1$ is independent of the choice of $\tau$ and $\beta$. From Lemma \ref{lem:mapG} it follows that for every $\beta>0$ and every $0<\kappa\leq r_1$, there exists an upper bound $\tau_{1,\beta,\kappa}=\frac{\kappa}{2C_1 \beta}>0$, such that for all $\tau \in [0,\tau_{1,\beta,\kappa}]$ and every $y_1\in \mathcal{K}_1$ it holds $G(y_1)\in\mathcal{K}_2\coloneqq G(\mathcal{K}_1)$ and
			\begin{equation} \label{eq:Glemma}
				\norm{G(y_1)-y_1}_\infty \leq \delta_{1,t^\ast} \leq \frac{\kappa}{2}.
			\end{equation}
			Especially for $\lambda(p)\in \mathcal{K}_1$ we have
			\begin{equation} \label{eq:GlambdaP}
				G(\lambda(p))\in K_{\delta_{1,t^\ast}}(\lambda(p)) \subset K_{r_1}(\lambda(p)) \subset \mathcal{K}_1.
			\end{equation}
			\item  \textbf{Exponential attraction towards special solutions:} After having analyzed the dynamics of the DDE \eqref{eq:proof_vectorfield} on the time interval $[0,t^\ast]$ via the map $G$, we study now the solution map of the DDE \eqref{eq:proof_vectorfield} on the time interval $[t^\ast,T]$, which is characterized by an exponential attraction of general solutions towards special solutions. Special solutions are characterized by an exponential growth bound and a single initial condition and can be obtained as solutions of the initial value problem \eqref{eq:proof_specialODE}. Assume $\tau < \frac{1}{K e}$, then $K\tau e<1$ and all assumptions of Lemma \ref{lem:exponentialattraction} are fulfilled. In the following we denote by  $\overline{G}$ and $\overline{H}$ the solution maps of the ODE \eqref{eq:proof_specialODE} over the time intervals $[0,t^\ast]$ and $[0,T]$ as defined in \eqref{eq:mapGbar} and \eqref{eq:mapHbar}. By Lemma \ref{lem:exponentialattraction}, there exists for the considered radius $r_1>0$ of point 4.) a constant $C_2\geq \frac{r_1}{2}>0$, such that for every $y_1 \in \mathcal{K}_1$ there exists $\bar{y}_1\in \mathbb{R}^m$,  uniquely determined by the limit \eqref{eq:ICspecialsolution} for the initial data $(0,c_{y_1})\in\R\times \mathcal{C}$ with
			\begin{align}
				\normk{G(y_1)-\overline{G}(\bar{y}_1)}_\infty < \delta_{2,\beta} &\coloneqq C_2 e^{-\beta}, \label{eq:Gestimate} \\
				\normk{H(y_1)-\overline{H}(\bar{y}_1)}_\infty < \delta_{3,\tau} &\coloneqq C_2 e^{-\frac{T}{\tau}}. \label{eq:Hestimate}
			\end{align}
			These inequalities estimate the difference between the considered solution of the DDE \eqref{eq:proof_vectorfield} and the corresponding special solution $S(y(0,c_{y_1})) = \bar{y}(0,\bar{y}_1)$ towards which it converges, via the maps $G$ and $\overline{G}$  at time $t = t^\ast$, and via the maps $H$ and $\overline{H}$ at time $t = T$. For every $y_1 \in \mathcal{K}_1$ it holds
			\begin{equation*}
				\overline{G}(\overline{y}_1) \in 	\mathcal{K}_{2,\delta_{2,\beta}} \coloneqq \left\{ y_2 + \alpha \in \mathbb{R}^m: y_2 \in \mathcal{K}_2, \alpha \in B_{\delta_{2,\beta}}(0)\right\},
			\end{equation*}
			with $\mathcal{K}_{2,\delta_{2,\beta}}\subset \mathbb{R}^m$ open. In the following, we aim to combine the estimate \eqref{eq:Gestimate} with the statements \eqref{eq:Glemma} and \eqref{eq:GlambdaP}. To that purpose, let
			\begin{equation} \label{eq:ass_beta_tau}
				0<\kappa \leq r_1, \quad \beta=\beta_\kappa \coloneqq \ln\left(\frac{2C_2}{\kappa}\right)\geq 0 \quad \text{and} \quad 0\leq \tau< \min\left\{\tau_{1,\beta,\kappa},\frac{1}{Ke},\frac{T}{\beta}\right\}.
			\end{equation}
			Hereby $\beta_\kappa$ depends on $\kappa$ and the previously defined constant $C_2$ with $2C_2 \geq r_1 \geq \kappa>0$ and $\tau_{1,\beta,\kappa}=\frac{\kappa}{2C_1 \beta}>0$ is the upper bound of Lemma \ref{lem:mapG} necessary for the estimates \eqref{eq:Glemma}, \eqref{eq:GlambdaP}. The upper bound $\frac{1}{Ke}$ is necessary for the exponential estimates \eqref{eq:Gestimate} and \eqref{eq:Hestimate} and the upper bound $\frac{T}{\beta}$ guarantees that $t^\ast = \beta \tau <T$. 
			Then it follows from \eqref{eq:Glemma} and \eqref{eq:Gestimate} that
			\begin{equation} \label{eq:Gdistance}
				\norm{\overline{G}(\bar{y}_1)-y_1}_\infty \leq \norm{\overline{G}(\bar{y}_1)-G(y_1)}_\infty + \norm{G(y_1)-y_1}_\infty \leq \delta_{1,t^\ast}+\delta_{2,\beta} < \kappa,
			\end{equation}
			as
			\begin{equation*}
				\delta_{1,t^\ast}+\delta_{2,\beta} = C_1 \beta \tau + C_2 e^{-\beta} < C_1 \beta \tau_{1,\beta,\kappa} + C_2 e^{-\beta_\kappa} = \frac{\kappa}{2} + \frac{\kappa}{2} = \kappa.
			\end{equation*}
			Especially for $\lambda(p)\in \mathcal{K}_1$ of \eqref{eq:GlambdaP} it follows
			that there exists a point $\overline{\lambda(p)}\in\mathbb{R}^m$ specifying the corresponding special solution $S(y(0,c_{\lambda(p)})) = \bar{y}\big(0,\overline{\lambda(p)}\big)$, such that 
			\begin{equation} \label{eq:Gbarlambdap}
				\overline{G}\big(\overline{\lambda(p)}\big) \in B_{\delta_{1,t^\ast}+\delta_{2,\beta}}(\lambda(p)) \subset K_{\kappa}(\lambda(p))\subset K_{r_1}(\lambda(p))\subset \mathcal{K}_{1},
			\end{equation}
			as by Lemma \ref{lem:initial_behavior} it holds $K_{r_1}(\lambda(p))\subset \mathcal{K}_{1}$. 
			\item  \textbf{Necessary conditions for approximation:} We prove our main Theorem~\ref{th:noapproximation} by contradiction. To that purpose we assume that there exists a neural DDE $\Phi\in \textup{NDDE}^k_{\tau,\text{N},K}(\mathcal{X},\mathbb{R}^q)$ satisfying the assumptions of the theorem, for which the corresponding globally defined neural DDE is denoted by $\overline{\Phi}\in \overline{\textup{NDDE}}^k_{\tau,\text{N},K}(\mathcal{X},\mathbb{R}^q)$ with $\overline{\Phi}(x) = \Phi(x)$ for all $x \in \mathcal{X}$, which approximates the given map $\Psi$ with accuracy $\eps>0$. In particular this implies the component map $\Psi_i$ is approximated by $\overline{\Phi}_i$ with accuracy $\eps$, i.e.,
			\begin{equation} \label{eq:proof_approximation}
				\norm{\overline{\Phi}_i(x) - \Psi_i(x)}_\infty \leq \norm{\overline{\Phi}(x) - \Psi(x)}_\infty < \eps. 
			\end{equation}
			Inserting the non-degenerate local minimum $p \in \mathcal{X}$ of $\Psi_i$ in \eqref{eq:proof_approximation} and using the estimate \eqref{eq:Hestimate}, it follows
			\begin{align}
				&\norm{\overline{\Phi}_i(p) - \Psi_i(p)}_\infty = \norm{\tilde\lambda_i(H(\lambda(p)))-\Psi_i(p)}_\infty < \eps, \nonumber\\
				\Rightarrow \quad & H(\lambda(p)) \in \mathcal{Y}_3 \coloneqq \tilde{\lambda}^{-1}_i\left((\Psi_i(p)-\eps,\Psi_i(p)+\eps)\right) \subset \mathbb{R}^m, \label{eq:Y3estimate0}\\	
				\Rightarrow \quad & \overline{H}\big(\overline{\lambda(p)}\big) \in \mathcal{Y}_{3,\delta_{3,\tau}} \coloneqq \tilde{\lambda}^{-1}_i\left((\Psi_i(p)-\eps-\delta_{3,\tau},\Psi_i(p)+\eps+\delta_{3,\tau})\right) \subset \mathbb{R}^m, \label{eq:Y3estimate}
			\end{align}
			with the open sets $\mathcal{Y}_3$ and $\mathcal{Y}_{3,\delta_{3,\tau}}$ defined in \eqref{eq:Y3} and \eqref{eq:Y3delta}. For any point $y_0 \in \partial \mathcal{K}_0$ it holds by Lemma \ref{lem:initial_behavior}(a) that $\Psi_i(y_0) = \Psi_i(p) + r_0^2$. As we assume that $m = n$ and $\text{rank}(W) = n$, Lemma \ref{lem:initial_behavior}(b) additionally states that $\Psi_i(\lambda^{-1}(y_1)) = \Psi_i(p) + r_0^2$ for every $y_1 \in \partial \mathcal{K}_1$. It follows for $y_1 \in \partial \mathcal{K}_1$ with the estimates \eqref{eq:Hestimate} and \eqref{eq:proof_approximation} that 
			\begin{align}
				&\norm{\overline{\Phi}_i(\lambda^{-1}(y_1)) - \Psi_i(\lambda^{-1}(y_1))}_\infty = \norm{\tilde\lambda_i(H(y_1))-\Psi_i(p)-r_0^2}_\infty < \eps, \nonumber\\
				\Rightarrow \quad & H(y_1) \in \mathcal{Z}_3 \coloneqq \tilde{\lambda}^{-1}_i\left((\Psi_i(p)+r_0^2-\eps,\Psi_i(p)+r_0^2+\eps)\right) \subset \mathbb{R}^m, \label{eq:Z3estimate0}\\	
				\Rightarrow \quad & \overline{H}(\bar{y}_1) \in \mathcal{Z}_{3,\delta_{3,\tau}} \coloneqq \tilde{\lambda}^{-1}_i\left((\Psi_i(p)+r_0^2-\eps-\delta_{3,\tau},\Psi_i(p)+r_0^2+\eps+\delta_{3,\tau})\right) \subset \mathbb{R}^m, \label{eq:Z3estimate}
			\end{align}
			with the open sets $\mathcal{Z}_3$ and $\mathcal{Z}_{3,\delta_{3,\tau}}$ defined in \eqref{eq:Z3} and \eqref{eq:Z3delta}. The necessary conditions for approximation  \eqref{eq:Y3estimate0}-\eqref{eq:Z3estimate} are visualized in Figure \ref{fig:necessaryconditions}\subref{fig:necessary_a}. As by assumption of Theorem~\ref{th:noapproximation} it holds $0<2\eps<r_0^2$, it follows from Lemma \ref{lem:distance} that there exists $\tau_3>0$, such that for all $\tau\in [0,\tau_3]$ it holds	
			\begin{equation*}
				\mathcal{Y}_{3,\delta_{3,\tau}}\subset \mathcal{Y}_{3,\delta_{3,\tau_3}}, \qquad  \mathcal{Z}_{3,\delta_{3,\tau}}\subset\mathcal{Z}_{3,\delta_{3,\tau_3}}, \qquad \mathcal{Y}_{3,\delta_{3,\tau}} \cap	\mathcal{Z}_{3,\delta_{3,\tau}} = \varnothing
			\end{equation*} 
			and 
			\begin{equation*}
				\normm{\tilde{\lambda}_i(y_3)-\tilde{\lambda}_i(z_3)}_\infty \geq 	r_0^2-2\eps- 2\delta_{3,\tau_3} \eqqcolon \delta^\ast > 0
			\end{equation*}
			for all $y_3 \in \mathcal{Y}_{3,\delta_{3,\tau}}$ and $z_3 \in \mathcal{Z}_{3,\delta_{3,\tau}}$. 
			
			To also characterize the solution map of the initial value problem \eqref{eq:proof_specialODE}, which generates all special solutions of the DDE \eqref{eq:proof_vectorfield}, over the time interval $[t^\ast,T]$, the solution map $\overline{I}:\mathbb{R}^m\rightarrow\mathbb{R}^m$ is defined in \eqref{eq:mapIbar}. By definition, it holds $\overline{H} = \overline{I} \circ \overline{G}$, where $\overline{G}$ and $\overline{H}$ are the solution maps of the ODE \eqref{eq:proof_specialODE} over the time intervals $[0,t^\ast]$ and $[0,T]$ as defined in \eqref{eq:mapGbar} and \eqref{eq:mapHbar}.
			
			By Lemma~\ref{lem:diffeomorphisms}, the map $\overline{I}: \mathcal{K}_{2,\delta_{2,\beta}} \rightarrow \overline{I}(\mathcal{K}_{2,\delta_{2,\beta}})$ is a $C^k$-diffeomorphism. Applying the inverse $\overline{I}^{-1}$ to the estimates \eqref{eq:Y3estimate} and \eqref{eq:Z3estimate} results in the following necessary conditions:
			\begin{align}
				\overline{I}^{-1}\big(\overline{H}\big(\overline{\lambda(p)}\big)\big) = \overline{G}\big(\overline{\lambda(p)}\big)&\in \mathcal{Y}_2 \coloneqq \overline{I}^{-1}( \mathcal{Y}_{3,\delta_{3,\tau_3}}) \subset\mathcal{H}_{\Psi_i(p)+\eps+\delta_{3,\tau_3}}^{<}, \label{eq:Y2estimate}\\
				\overline{I}^{-1}(\overline{H}(\bar{y}_1)) = \overline{G}(\bar{y}_1) &\in \mathcal{Z}_2 \coloneqq \overline{I}^{-1}( \mathcal{Z}_{3,\delta_{3,\tau_3}}) \subset\mathcal{H}_{\Psi_i(p)+r_0^2-\eps-\delta_{3,\tau_3}}^{>},\label{eq:Z2estimate}
			\end{align}
			where $\bar{y}_1$ is the initial condition of the corresponding special solution of $y(0,c_{y_1})$ with $y_1\in\partial\mathcal{K}_1$. Hereby $\mathcal{H}_z^{<}$ and $\mathcal{H}_z^{>}$ are by Lemma \ref{lem:hypersurface} the unbounded open components of $\mathbb{R}^m\setminus \mathcal{H}_z$, where
			\begin{equation*}
				\mathcal{H}_z\coloneqq \overline{I}^{-1}(\tilde{\lambda}_i^{-1}(z))
			\end{equation*} 
			is for every $z\in\mathbb{R}$ a closed $C^k$-hypersurface. If $y\in \mathcal{H}_z^{<}$, then $\tilde{\lambda}_i(\overline{I}(y))<z$ and if $y\in \mathcal{H}_z^{>}$, then $\tilde{\lambda}_i(\overline{I}(y))>z$. The closure of the open sets $\mathcal{H}_z^{<}$ and $\mathcal{H}_z^{>}$ is denoted by $\mathcal{H}_z^{\leq} \coloneqq  \mathcal{H}_z^{<} \cup \mathcal{H}_z $ and $\mathcal{H}_z^{\geq}\coloneqq  \mathcal{H}_z^{>} \cup \mathcal{H}_z $ respectively. The necessary conditions \eqref{eq:Y2estimate} and \eqref{eq:Z2estimate} are visualized in Figure \ref{fig:necessaryconditions}\subref{fig:necessary_b}.

			\begin{figure}
				\centering
				\begin{subfigure}[t]{0.45\textwidth}
					\centering
					\includegraphics[width=0.9\textwidth]{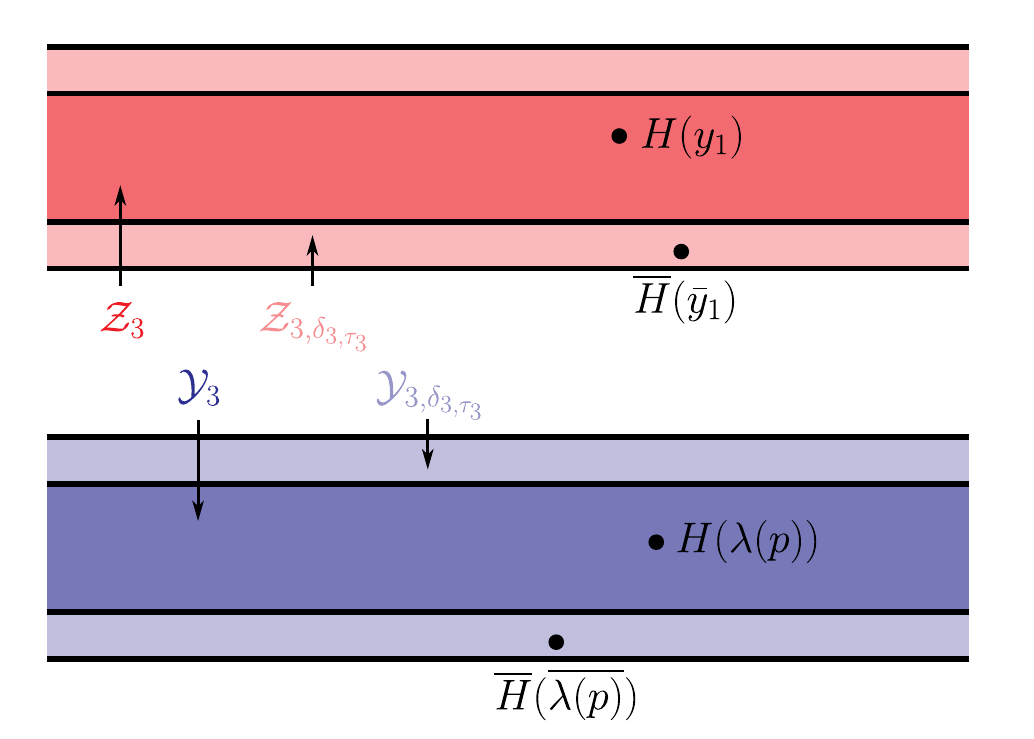}
					\caption{Necessary conditions \eqref{eq:Y3estimate0} and \eqref{eq:Z3estimate0} for the map $H$ and necessary conditions \eqref{eq:Y3estimate} and \eqref{eq:Z3estimate} for the map $\overline{H}$ at time $t = T$.}
					\vspace{2mm}
					\label{fig:necessary_a}
				\end{subfigure}
				\begin{subfigure}[t]{0.05\textwidth}
					\textcolor{white}{.}
				\end{subfigure}
				\begin{subfigure}[t]{0.45\textwidth}
					\centering
					\includegraphics[width=0.9\textwidth]{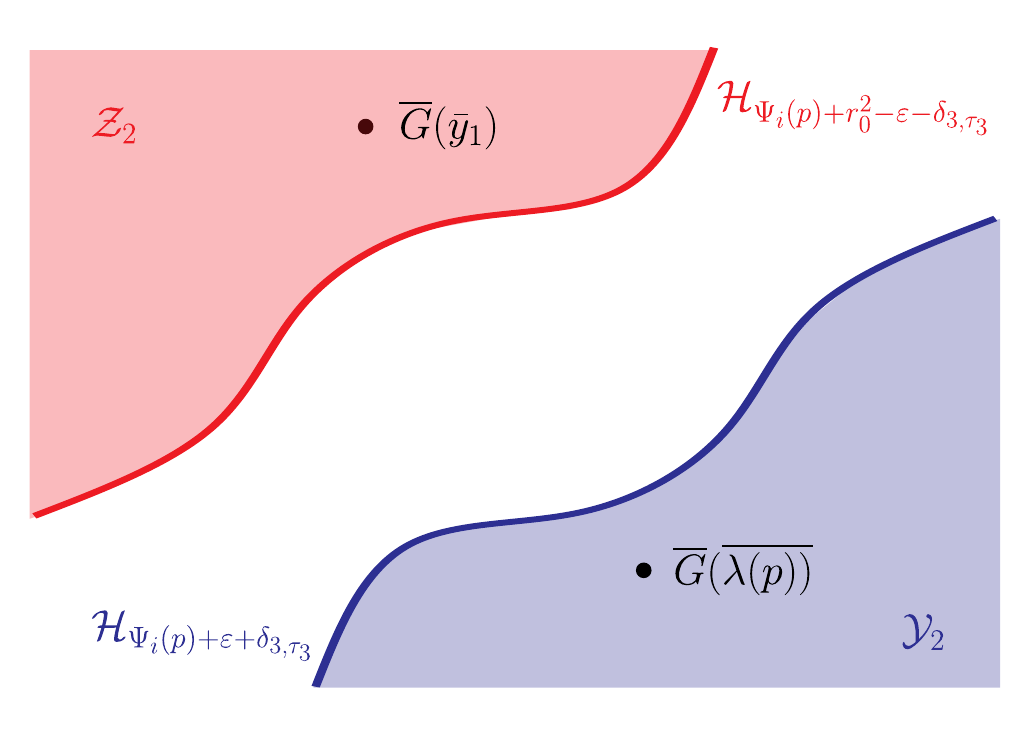}
					\caption{Necessary conditions  \eqref{eq:Y2estimate} and \eqref{eq:Z2estimate} for the map $\overline{G}$ at time $t = t^\ast$, obtained after applying the map $\overline{I}^{-1}$ to the conditions \eqref{eq:Y3estimate} and \eqref{eq:Z3estimate}.}
					\label{fig:necessary_b}
				\end{subfigure}
				\caption{Necessary conditions for approximation implied by the estimate \eqref{eq:proof_approximation} for the local minimum $p \in \mathcal{X}$ and all points of the boundary $y_0 \in \partial \mathcal{K}_0$ or  $y_1\in\partial\mathcal{K}_1$ respectively.}
				\label{fig:necessaryconditions}
			\end{figure}	
			
			By Theorem \ref{th:specialsolutionsODEs}, the initial value problem \eqref{eq:proof_specialODE} has a globally defined and in the second variable Lipschitz continuous vector field with Lipschitz constant $Ke$. By Theorem \ref{th:globalexistence}(b), which applies also for ODEs, as they are DDEs with zero delay, it holds
			\begin{align}
				\norm{\overline{I}(y_2)-\overline{I}(z_2)}_\infty = & \norm{y(t^\ast,y_2)(T)-y(t^\ast,z_2)(T)}_\infty \nonumber\\
				\leq &\norm{y_2-z_2}_\infty e^{Ke(T-t^\ast)}\leq \norm{y_2-z_2}_\infty e^{KeT} \label{eq:IbarLipschitz}
			\end{align}		
			for all $y_2,z_2\in\mathbb{R}^m$.

			\item \textbf{Deriving a contradiction:} Let the approximation assumption \eqref{eq:proof_approximation} hold and assume
			\begin{equation} \label{eq:ass_beta_tau_2}
				0<\kappa \leq r_1, \quad \beta=\beta_\kappa \coloneqq \ln\left(\frac{2C_2}{\kappa}\right)\geq 0 \quad \text{and} \quad 0\leq \tau< \tau_0 \coloneqq \min\left\{\tau_{1,\beta,\kappa},\frac{1}{Ke},\frac{T}{\beta},\tau_3\right\},
			\end{equation}
			which are the assumptions \eqref{eq:ass_beta_tau} combined with $\tau\in[0,\tau_3]$ to guarantee that the sets $\mathcal{Y}_{3,\delta_{3,\tau_3}}$ and $\mathcal{Z}_{3,\delta_{3,\tau_3}}$ do not intersect. By Lemma \ref{lem:distance}, it holds under these assumptions that
			\begin{equation}\label{eq:distance}
				\normm{\tilde{\lambda}_i(\overline{I}(y_2))-\tilde{\lambda}_i(\overline{I}(z_2))}_\infty \geq
				\delta^\ast > 0
			\end{equation}
			for all $y_2\in	\mathcal{H}^{\leq}_{\Psi_i(p)+\eps+\delta_{3,\tau_3}}$  and $z_2\in\mathcal{H}^{\geq}_{\Psi_i(p)+r_0^2-\eps-\delta_{3,\tau_3}}$. 
			
			By Lemma \ref{lem:intersectionpoint} for $z = \Psi_i(p)+\eps+\delta_{3,\tau_3}$, there exists $y_1^\ast \in \partial \mathcal{K}_1$ with  $y_1^\ast \in \mathcal{H}^{\leq}_{\Psi_i(p)+\eps+\delta_{3,\tau_3}}$. The assumption of Lemma \ref{lem:intersectionpoint} that there exists $y_1\in \mathcal{K}_1$ with $y_1\in\mathcal{H}^{<}_{\Psi_i(p)+\eps+\delta_{3,\tau_3}}$ is fulfilled as by~\eqref{eq:Gbarlambdap} it holds $\overline{G}(\overline{\lambda(p)}) \in\mathcal{K}_1$ and by \eqref{eq:Y2estimate} it holds $\overline{G}(\overline{\lambda(p)}) \in\mathcal{H}^{<}_{\Psi_i(p)+\eps+\delta_{3,\tau_3}}$. The points $y_1^\ast \in \partial \mathcal{K}_1$ and $\overline{G}(\overline{\lambda(p)}) \in\mathcal{K}_1$ are visualized in Figure \ref{fig:proof}.
			
			Now fix $\kappa = \min\left\{\frac{\delta^\ast}{\tilde w e^{KeT}},r_1\right\}>0$, where $\tilde w$ is the fixed constant with $\normm{\widetilde W}_\infty \leq \tilde w$. As in the setting of Theorem \ref{th:noapproximation}, the weight matrices $W, \widetilde{W}$ are bounded in the sup-norm by $w, \tilde w$ and the constants $A$, $r_0$, $\eps$ and the final time $T$ are fixed, the constant $\kappa$, and hence also $\beta_\kappa$ and $\tau_0$, depend continuously on the Lipschitz constant $K$ of the neural DDE and are independent of the explicit choice of the vector field of the neural DDE. The function $\tau_0\in C^0((0,\infty),[0,T])$ is continuous with $K \tau_0(K) e<1$ for $K\in(0,\infty)$.

			In the following, we use the Lipschitz continuity of the map $\tilde \lambda_i\circ\overline{I}:\mathbb{R}^m\rightarrow \mathbb{R}$ at the considered point $y_1^\ast\in\partial\mathcal{K}_1$ with $y_1^\ast \in \mathcal{H}^{\leq}_{\Psi_i(p)+\eps+\delta_{3,\tau_3}}$. We can estimate for all  $y_1\in\mathbb{R}^m$ with $\norm{y_1^\ast-y_1}_\infty < \kappa$ that 
			\begin{align}
				\normm{\tilde{\lambda}_i(\overline{I}(y_1^\ast))-\tilde{\lambda}_i(\overline{I}(y_1))}_\infty \leq \; &\tilde w  \norm{\overline{I}(y_1^\ast)-\overline{I}(y_1)}_\infty \nonumber \\
				\leq \; &\tilde w  e^{KeT} \norm{y_1^\ast-y_1}_\infty < \tilde w e^{KeT} \kappa \leq \delta^\ast, \label{eq:estimatecontinuity}
			\end{align}
			where we first used the affine linear structure of the map $\tilde \lambda$ and then the estimate \eqref{eq:IbarLipschitz} and the definition of $\kappa$.
			
			Now we consider the points $y_1^\ast$ and $\overline G(\bar y_1^\ast)$, for which it holds by the estimate \eqref{eq:Gdistance},
			\begin{equation*}
				\norm{y_1^\ast-\overline G(\bar y_1^\ast)}_\infty  < \kappa,
			\end{equation*}
			hence $\overline G(\bar y_1^\ast) \in B_\kappa(\bar y_1^\ast)$, as visualized in Figure \ref{fig:proof}. From this estimate, the inequality~\eqref{eq:estimatecontinuity}  implies that
			\begin{equation}\label{eq:contradiction}
				\normm{\tilde{\lambda}_i(\overline{I}(y_1^\ast))-\tilde{\lambda}_i(\overline{I}(G(\bar y_1^\ast)))}_\infty < \delta^\ast.
			\end{equation}
			On the other side,  it is by inclusion \eqref{eq:Z2estimate} a necessary condition for approximation that $\overline G(\bar y_1^\ast)\in \mathcal{H}^{\geq}_{\Psi_i(p)+r_0^2-\eps-\delta_{3,\tau_3}}$ as $y_1^\ast\in\partial\mathcal{K}_1$, which is also visualized  in Figure~\ref{fig:proof}. As a consequence, for $y_2 = y_1^\ast$ and $z_2 =\overline  G(\bar y_1^\ast)$ the estimate \eqref{eq:distance} implies that 
			\begin{equation*}
				\normm{\tilde{\lambda}_i(\overline{I}(y_1^\ast))-\tilde{\lambda}_i(\overline{I}(\overline G(\bar y_1^\ast)))}_\infty \geq \delta^\ast,
			\end{equation*}
			which contradicts the continuity estimate \eqref{eq:contradiction}. The contradiction is visualized in Figure \ref{fig:proof} as follows: for an approximation it is necessary, that $\overline  G(\bar y_1^\ast)\in\mathcal{Z}_2$, but by continuity it holds  $\overline G(\bar y_1^\ast) \in B_\kappa(\bar y_1^\ast)$. For every $\tau\in[0,\tau_0(K))$, the constant $\kappa$ is sufficiently small such that $\mathcal{Z}_2\cap B_\kappa(\bar y_1^\ast) = \varnothing$ and under the given assumptions no approximation of $\Psi$ by a neural DDE $\overline{\Phi}$ is possible with accuracy $\eps$.

			\begin{figure}
				\centering
				\includegraphics[width=0.6\textwidth]{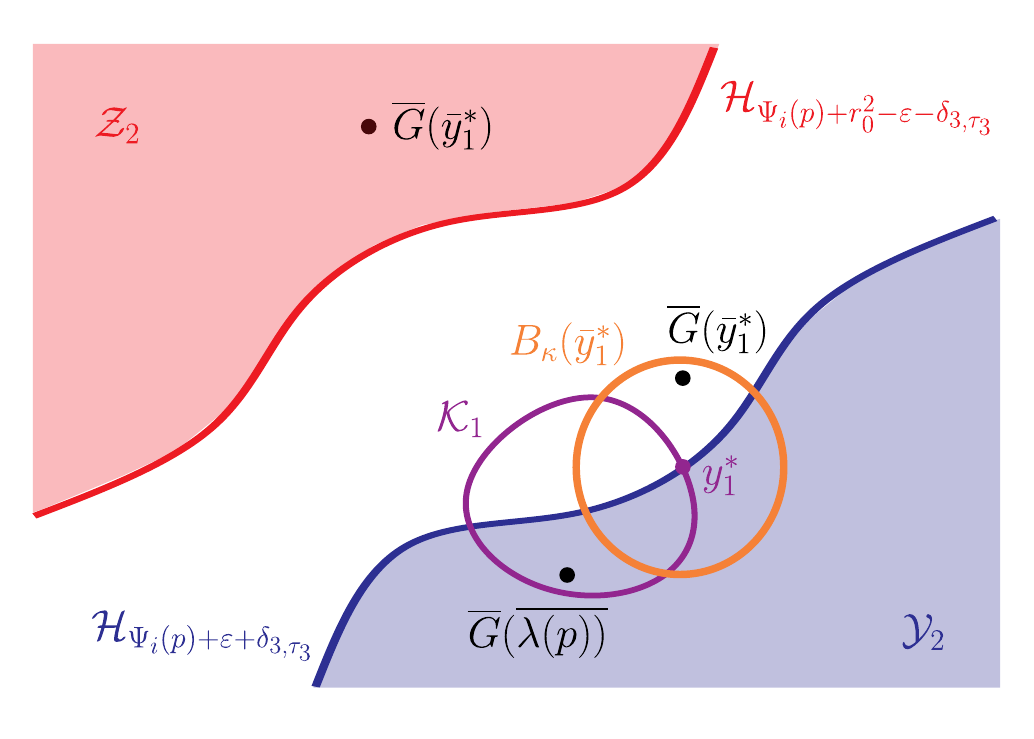}
				\caption{Visualization of the contradiction derived in point 7.) of the proof of Theorem \ref{th:noapproximation} at time $t = t^\ast$. On the one hand side, it is by \eqref{eq:Z2estimate} a necessary condition for approximation that $\overline{G}(\bar{y}_1^\ast)\in\mathcal{Z}_2$. On the other side, for $\tau\in[0,\tau_0(K))$, the constant $\kappa$ is sufficiently small, such that the open ball around the considered point $y_1^\ast \in \partial \mathcal{K}_1$ with radius $\kappa$ does not intersect with $\mathcal{Z}_2$, i.e., $\mathcal{Z}_2\cap B_\kappa(\bar y_1^\ast) = \varnothing$. As this is a contradiction to  $\overline{G}(\bar{y}_1^\ast)\in\mathcal{Z}_2$, no approximation of the map $\Psi$ by any considered neural DDE is possible.}
				\label{fig:proof}
			\end{figure}
			
			Hence, for $\tau \in [0,\tau_0(K))$, every neural DDE $\overline{\Phi} \in \overline{\textup{NDDE}}_{\tau,\textup{N},K}^k(\mathcal{X},\mathbb{R}^q)$, $k \geq 1$, with underlying vector field $F:\Omega_t \times \mathcal{C} \rightarrow \mathbb{R}^m$ with global Lipschitz constant~$K$ in its second variable on $\Omega_t \times \mathcal{C}$ and $\norm{F(t,0)}_\infty \leq A$ for all $t\in [0,T]$ cannot approximate the map $\Psi$ with accuracy~$\eps$. With the argumentation of point 3.), the same result follows for every neural DDE $\Phi \in \textup{NDDE}_{\tau,\textup{N},K}^k(\mathcal{X},\mathbb{R}^q)$, which satisfies the assumptions of the theorem.
			\qedhere
		\end{enumerate}
	\end{proof}

	\section{Conclusion and Outlook}
	
	In this work, we studied the approximation capability of neural DDEs depending on their memory capacity, i.e., depending on the product $K\tau $ of the Lipschitz constant and the delay of the vector field. Neural DDEs can be interpreted as an infinite-depth limit of densely connected residual neural networks (DenseResNets), which allow for feed-forward shortcut connections between all layers. DenseResNets unify the general idea of existing neural network architectures with memory terms and inter-layer connections. Studying the continuous-time neural network allows the usage of well-developed theory of delay differential equations. By an Euler discretization of the considered neural DDEs, results for discrete-time neural networks such as DenseResNets can be established. 
	
	Of significant interest are non-augmented architectures where the dimension of the DDE is not larger than the input or output dimension. Non-augmented MLPs, ResNets, and neural ODEs show a restricted approximation capability \cite{Hanin2018,Johnson2018,Kuehn2023,Park2020}. As neural ODEs are neural DDEs with delay $\tau = 0$, it follows that if $K\tau = 0$, neural DDEs cannot have the universal approximation property. In Theorem~\ref{th:tau0}, we have shown that if the memory capacity $K\tau$ is sufficiently small, neural DDEs still lack the universal approximation property. The infinite-dimensional phase space of DDEs with $ \tau>0$ is not sufficient to approximate every element of the function space $C^j(\mathcal{X},\R^q)$, $\mathcal{X}\subset \R^n$ open, for every $j\geq 0$. The proof uses a scalar function $\Psi_i$ with a non-degenerate local extreme point, i.e., locally, the function $\Psi_i$ has a minimum or maximum. The topological constraints of unique solution curves in the finite-dimensional state space of ODEs prevent scalar neural ODEs from approximating local extreme points. For $K\tau$ sufficiently small, this property carries over to neural DDEs. It is important to work with non-degenerate local extreme points, as it is possible that scalar neural ODEs and neural DDEs approximate saddle points.
	
	As numerically observed for many feed-forward neural network architectures with additional inter-layer connections, the approximation capability increases with the memory capacity \cite{Huang2017,Sander2021,Srivastava2015,Wang2018}. If the memory capacity $K\tau$ is sufficiently large, we have shown in Theorem \ref{th:universal_embedding} that neural DDEs have the universal embedding property with respect to the space of Lipschitz continuous functions. We were able to show an exact representation of any Lipschitz continuous function, as the vector field of the DDE is a parameter of the network itself. If the vector field has an explicit and rich enough parameterization, parameterized neural DDEs with sufficient memory capacity have the universal approximation property. The parameter region where universal embedding is possible can be increased for augmented architectures with $m \geq p+q$, as shown in Theorem \ref{th:universalaugmented}. The proof works similarly to neural ODEs and does not use any delayed terms. Overall, we specify for large parts of the three-dimensional parameter space $(K,\tau,m)$ of Lipschitz constant, delay, and dimension of the vector field, where universal embedding and no universal approximation are possible. Nevertheless, the behavior near critical parameter values remains for future work. In that way, we provide, in the context of continuous-time models, a fundamental understanding of how the memory capacity influences the universal approximation property. 
	
	It is left for future work to transfer the established results to discrete-time architectures. In Section \ref{sec:modeling_discretization}, we stated the necessary assumptions for the discretization of a neural DDE to become a DenseResNet. For specific discrete-time architectures, the concrete parameterized form of the neural DDE needs to be specified. Furthermore, the discretization error between neural DDEs and DenseResNets needs to be studied, as done for neural ODEs and ResNets in \cite{Sander2022}. 
	
	Further interesting research areas are the implications that a large or a small memory capacity has on the training process or the generalization properties of neural DDEs and DenseResNets. In the context of learning algorithms, additional memory in a neural network can mitigate the vanishing gradient problem and influence the convergence behavior. Regarding generalization properties, it is important to analyze whether the increasing expressivity due to a larger memory capacity promotes overfitting.
	
	\vspace{5mm}

	\textbf{Acknowledgments:} CK and SVK would like to thank the DFG for partial support via the SPP2298 `Theoretical Foundations of Deep Learning'. CK would like to thank the VolkswagenStiftung for support via a Lichtenberg Professorship. SVK would like to thank the Munich Data Science Institute (MDSI) for partial support via a Linde doctoral fellowship.
	
	\vspace{3mm}
	
	
	\nomenclature[A,02]{ResNet}{Residual Neural Network}
	\nomenclature[A,04]{ODE}{Ordinary Differential Equation}
	\nomenclature[A,06]{DDE}{Delay Differential Equation}
	\nomenclature[A,05]{NODE}{Neural Ordinary Differential Equation}
	\nomenclature[A,04]{IVP}{Initial Value Problem}
	\nomenclature[A,01]{FNN}{Feed-Forward Neural Network}
	\nomenclature[A,07]{NDDE}{Neural Delay Differential Equation}
	\nomenclature[A,03]{DenseResNet}{Densely Connected Residual Neural Network}
	\nomenclature[A,08]{UA}{Universal Approximation}
	\nomenclature[A,09]{UE}{Universal Embedding}
	
	\nomenclature[B, 01]{\(\mathbb{R}\)}{real numbers}
	\nomenclature[B, 02]{\(\mathbb{N}\)}{natural numbers $\mathbb{N} \coloneqq \{1,2,3,\ldots \}$}
	\nomenclature[B,04]{$\partial \mathcal{U}$}{boundary of the set $\mathcal{U}$}
	\nomenclature[B,05]{$\textup{int}( \mathcal{U})$}{interior of the set $\mathcal{U}$, $\textup{int}( \mathcal{U}) \coloneqq \mathcal{U} \setminus \partial \mathcal{U}$}
	\nomenclature[B,06]{$\overline{ \mathcal{U}}$}{closure of the set $\mathcal{U}$, $\overline{\mathcal{U}} \coloneqq \textup{int}(\mathcal{U}) \cup \partial \mathcal{U}$}
	\nomenclature[B,07]{$B_r(x) \subset \R^n$}{open euclidean ball with radius $r\geq 0$ around the point $x \in \R^n$}
	\nomenclature[B,08]{$K_r(x) \subset \R^n$}{closed euclidean ball with radius $r\geq 0$ around the point $x \in \R^n$}
	\nomenclature[B,09]{$[v]_{i,\ldots,j}$}{components $\{i,\ldots,j\}\subset \{1,\ldots,n\}$ of the vector $v\in \mathbb{R}^n$}
	\nomenclature[B,10]{$A_{i,j}$}{$i,j$-th entry of a matrix $A\in \mathbb{R}^{m \times n}$}
	\nomenclature[B,11]{$\text{Id}_n$}{identity matrix $\text{Id}_n \in \mathbb{R}^{n \times n}$}
	\nomenclature[B,12]{$\text{id}_\mathcal{X}:\mathcal{X}\rightarrow \mathcal{X}$}{identity function $\text{id}(x) = x$ for $x\in\mathcal{X}$}
	\nomenclature[B,13]{$\nabla f(x)$}{gradient of a scalar function $f: \mathbb{R}^n \rightarrow \mathbb{R}$ at $x \in \mathbb{R}^n$ }
	\nomenclature[B,14]{$J_f(x)$}{Jacobian matrix of function $f: \mathbb{R}^n \rightarrow \mathbb{R}^m$ at $x \in \mathbb{R}^n$}
	\nomenclature[B,15]{$H_f(x)$}{Hessian matrix of function $f: \mathbb{R}^n \rightarrow \mathbb{R}^m$ at $x \in \mathbb{R}^n$}

	\nomenclature[C,01]{$C^k(\mathcal{X},\mathcal{Y})$}{space of  $k$ times continuously differentiable functions $f: \mathcal{X} \rightarrow \mathcal{Y}$}
	\nomenclature[C,02]{$C^{k,l}(\mathcal{X}_1\times \mathcal{X}_2,\mathcal{Y}$)}{space of $k$ times in $\mathcal{X}_1$ and $l$ times in $\mathcal{X}_2$ continuously differentiable functions $f: \mathcal{X}_1 \times \mathcal{X}_2 \rightarrow \mathcal{Y}$}
	\nomenclature[C,03]{$C^{0,k}_b(\mathbb{R}\times \mathcal{C},\mathbb{R}^m)$}{space of continuous functions with bounded derivatives up to order $k$ with respect to the second variable}
	\nomenclature[C,05]{$\norm{x}_2$}{Euclidean norm $\norm{x}_2 \coloneqq \left(\sum_{i = 1}^n x_i^2\right)^{1/2}$ of $x\in \mathbb{R}^n$}
	\nomenclature[C,06]{$\norm{x}_\infty$}{maximums norm $\norm{x}_\infty \coloneqq  \max\{x_1,...,x_n\}$ of a vector $x\in \mathbb{R}^n$}
	\nomenclature[C,08]{$\norm{f}_\infty$}{sup-norm $\norm{f}_\infty \coloneqq \sup_{x \in \R \times \mathcal{C}} \norm{ f(x)}_\infty$ of a bounded function $f: \mathcal{X} \rightarrow \mathbb{R}$}
	\nomenclature[C,09]{$\norm{f}_k$}{$k$-norm, which takes the supremum over all derivatives up to order $k$ in the second variable, $\norm{f}_k \coloneqq \sup_{l \in \{0,\ldots,k\}, x \in \mathbb{R}\times \mathcal{C}} \norm{ f^{(0,l)}(x)}_\infty$ of a function $f \in C^{0,k}(\mathbb{R}\times \mathcal{C},\mathbb{R}^m)$}
	\nomenclature[C,07]{$\norm{A}_\infty$}{sup-norm of a matrix $A \in \mathbb{R}^{r \times s}$ defined by $\norm{A}_\infty = \max_{\norm{x}_\infty = 1} \norm{Ax}_\infty$, $x \in \mathbb{R}^s$}

	\nomenclature[D, 01]{\(h_l\in\R^{n_l}\)}{layers of a feed-forward neural network with $n_l$ neurons for \(l \in \{0,1,\ldots,L\}\), \\ hidden layers for \(l \in \{1,\ldots,L-1\}\)}
	\nomenclature[D, 02]{\(h_0 \)}{input layer}
	\nomenclature[D, 03]{\(h_L\)}{output layer}
	\nomenclature[D, 04]{\(\theta_l\in\R^{p_l}\)}{parameters for \(l \in \{0,1,\ldots,L-1\}\), \(\theta = (\theta_1,\ldots,\theta_L)\)}
	\nomenclature[D, 05]{\(L\)}{depth of the network, number of layers}
	\nomenclature[D, 06]{\(n_\text{max}\)}{width of the network, maximum number of neurons per layer, \(n_\text{max} = \max\{n_0,\ldots,n_L\}\)}
	\nomenclature[D, 07]{\(\Phi_\theta:\R^{n_0}\rightarrow \R^{n_L}\)}{input-output map of the FNN with weights $\theta = (\theta_1,\ldots,\theta_L)$}
	\nomenclature[D, 08]{\(\Psi\)}{ground truth function to approximate}

	\nomenclature[E, 01]{\(\mathcal{C}\)}{state space $\mathcal{C} = C^0([-\tau,0],\R^m)$ of a DDE with delay $\tau \geq 0$}
	\nomenclature[E, 02]{\(y_t \in \mathcal{C}\)}{delayed function defined by $y_t(s) = y(t+s)$ for $s\in[-\tau,0]$}
	\nomenclature[E, 03]{\(c_{y_0} \in \mathcal{C}\)}{constant initial function $c_{y_0}(t) = y_0 \in \R^m$}
	\nomenclature[E, 04]{\(y(t_0,u)\)}{solution of the DDE with initial data $y(t_0,u)_{t_0} = u \in \mathcal{C}$}
	\nomenclature[E, 05]{\(\bar{y}(t_0,y_0)\)}{special solution of the DDE passing through $(t_0,y_0)\in\R \times \R^m$}
	\nomenclature[E, 06]{\(S(y(t_0,u))\)}{special solution corresponding to the solution $y(t_0,u)$, cf.\ Theorem \ref{th:specialsolutionsODEs}}
	\nomenclature[E, 07]{\(\mathcal{I}_{y_0}\subset \R\)}{maximal time interval of existence of a DDE with constant initial data }
	\nomenclature[E, 08]{$K$}{Lipschitz constant w.r.t.\ the second variable of the vector field $F\in C^0(\R\times \mathcal{C},\R)$}
	\nomenclature[E, 09]{$A$}{upper bound $\norm{F(t,0)}_\infty\leq A$ of weakly nonlinear vector field $F\in C^0(\R\times \mathcal{C},\R)$}
	
	\nomenclature[F, 02]{\(\lambda:\R^p\rightarrow\R^m\)}{affine linear layer before the DDE, $\lambda(x) = Wx+b$}
	\nomenclature[F, 03]{\(\tilde \lambda:\R^m\rightarrow\R^q\)}{affine linear layer after the DDE, $\tilde \lambda(y) = \widetilde Wy+ \tilde b$}
	\nomenclature[F, 04]{\(\mathbb{V},\mathbb{V}^\ast,\mathbb{V}^0\)}{parameter spaces for the weights and biases $W,\widetilde{W},b,\tilde{b}$, cf.\ Definition \ref{def:DDEweightspace}}
	\nomenclature[F, 06]{\( \textup{NDDE}^k_{\tau,i}(\mathcal{X},\R^q)\)}{Neural DDE of Definition \ref{def:NDDE},  $\mathcal{X}\subset \R^n$, augmented if $i = \text{A}$, non-augmented if $i = \text{N}$, unspecified if $i = \varnothing$}
	\nomenclature[F, 07]{$\Phi:\mathcal{X}\rightarrow\R^q$}{$\Phi \in \textup{NDDE}^k_{\tau}(\mathcal{X},\R^q)$ is the input-output map of the general neural DDE}
	\nomenclature[F, 09]{\(\theta:[0,T]\rightarrow\R^p\)}{parameter function of the parameterized vector field $f_\text{DDE}:\mathcal{C}\times \R^p \rightarrow \R^m$}
	\nomenclature[F, 10]{\(\Theta^k(\R,\R^p)\)}{parameter space for the parameter functions $\theta$ of Definition \ref{def:NDDEparameterized}}
	\nomenclature[F, 11]{\( \textup{NDDE}^k_{\tau,\theta}(\mathcal{X},\R^q)\)}{parameterized neural DDE of Definition \ref{def:NDDEparameterized}, $\mathcal{X}\subset \R^n$}	
	\nomenclature[F, 12]{$\Phi_\theta:\mathcal{X}\rightarrow\R^q$}{$\Phi_\theta \in \textup{NDDE}^k_{\tau,\theta}(\mathcal{X},\R^q)$ is the input-output map of a parameterized neural DDE}
	\nomenclature[F, 13]{\( \overline{\textup{NDDE}}^k_{\tau}(\mathcal{X},\R^q)\)}{subset of $\textup{NDDE}^k_{\tau,i}(\mathcal{X},\R^q)$, where the underlying vector field is globally defined and weekly nonlinear, cf.\ Definition \ref{def:NDDEglobal}}
	\nomenclature[F, 14]{$\overline{\Phi}:\mathcal{X}\rightarrow\R^q$}{$\overline{\Phi} \in \overline{\textup{NDDE}}^k_{\tau}(\mathcal{X},\R^q)$ is the input-output map of globally defined weakly nonlinear neural DDE}

	\printnomenclature[2.7cm]

	\addcontentsline{toc}{section}{References}
	\bibliographystyle{abbrvurl}
	\bibliography{0literature}
	
	
\end{document}